\def\1{\mathbbm{1}}
\newcommand{\s}[1]{\left\lvert #1 \right\rvert}
\newcommand{\J}{\widetilde{\mathcal{J}}}
\newcommand{\eps}{\varepsilon}
\newcommand{\vphi}{\varphi}
\newcommand{\N}{\mathbb{N}}
\newcommand{\R}{{\mathcal{R}}}
\newcommand{\Mod}[1]{\ (\mathrm{mod}\ #1)}
\newtheorem{theorem}{Theorem}[section]
\newtheorem{lemma}[theorem]{Lemma}
\newtheorem{corollary}[theorem]{Corollary}
\newtheorem{conjecture}[theorem]{Conjecture}
\newtheorem{definition}[theorem]{Definition}
\newtheorem{claim}[theorem]{Claim}
\newtheorem{proposition}[theorem]{Proposition}
\newtheorem{fact}[theorem]{Fact}
\newenvironment{proofclaim}[1][Proof of claim]{\begin{proof}[#1]}{\end{proof}}
\numberwithin{equation}{section}
\newcommand{\EMAIL}[1]{\textit{{E-mail}}: \texttt{\href{mailto:#1}{#1}}}
\newcommand{\Thmstop}{\hglue-6pt. \kern6pt}
\title{Polynomial bounds for monochromatic tight cycle partition in $r$-edge-coloured~$K_n^{(k)}$}
\author{
Debmalya Bandyopadhyay 
\thanks{School of Mathematics, University of Birmingham, \EMAIL{dxb209@student.bham.ac.uk}} 
\and Allan Lo \thanks{School of Mathematics, University of Birmingham, \EMAIL{s.a.lo@bham.ac.uk}. }
}
\date{\today}
\def\db#1{}
	\renewcommand{\db}[1]{\footnote{\textbf{DB:}#1}}
\def\COMMENT#1{}
	\renewcommand{\COMMENT}[1]{\footnote{\textbf{DB:}#1}}
\begin{document}

\maketitle

\begin{abstract}
    Let $K_n^{(k)}$ be the complete $k$-graph on $n$ vertices. A $k$-uniform tight cycle is a $k$-graph with its vertices cyclically ordered so that every~$k$ consecutive vertices form an edge and any two consecutive edges share exactly~$k-1$ vertices. A result of Bustamante, Corsten, Frankl, Pokrovskiy and Skokan shows that all $r$-edge coloured $K_{n}^{(k)}$ can be partitioned into~$c_{r,k}$ vertex disjoint monochromatic tight cycles. However, the constant~$c_{r,k}$ is of tower-type. In this work, we show that~$c_{r, k}$ is a polynomial in~$r$. 
\end{abstract}

\section{Introduction} \label{Introduction}
An \emph{$r$-edge-colouring} of a graph or a $k$-uniform hypergraph is a colouring of its edges with $r$ colours. The set of colours is usually identified with the set $\{1, 2, \dots, r\}$. A \emph{monochromatic subgraph} of an edge-coloured graph is a subgraph where all the edges are assigned the same colour. On the other hand, a \emph{rainbow subgraph} is a subgraph all of whose edges have a different colour.

Lehel conjectured that for any $2$-edge-colouring of a complete graph, there exist two vertex-disjoint monochromatic cycles (one of each colour) covering all vertices. Isolated vertices and single-edges are considered to be degenerate cycles. For large~$n$, this conjecture was proved by \L uczak, R\"{o}dl and Szemer\'{e}di~\cite{MR1680072} using Szemer\'{e}di's Regularity Lemma. The bound on $n$ was improved later by Allen~\cite{MR2433934}. In 2010, Bessy and Thomass\'{e}~\cite{MR2595702} finally resolved this conjecture for all $n \in \mathbb{N}$.

When $r \ge 3$, Erd\H{o}s, Gy\'{a}rf\'{a}s and Pyber~\cite{MR1088628} proved that any~$r$-edge-coloured complete graph can be partitioned into $O(r^2\log r)$ monochromatic cycles and conjectured that~$r$ monochromatic cycles would be enough. This was one of the first instances of using the absorbing method. Gy\'{a}rf\'{a}s, Ruszink\'{o}, S\'{a}rk\"{o}zy and Szemer\'{e}di~\cite{MR2274080} improved their result and proved that $O(r\log r)$ monochromatic cycles suffice. However, Pokrovskiy~\cite{MR3194196} found a counter-example disproving the conjecture. A weaker conjecture was proposed stating that any~$r$-edge-coloured~$K_n$ contains~$r$ vertex-disjoint monochromatic cycles covering all but $c(r)$ of the vertices, where~$c(r)$ is a constant depending only on~$r$. Pokrovskiy~\cite{MR4489843} also proved that $c(3) \le 43000$. Kor\'andi, Lang, Letzter and Pokrovskiy~\cite{korandi2021minimum} determined a tight minimum degree threshold for monochromatic cycle partition of edge-coloured graphs, namely they showed that there exists a constant $c> 0$ such that any~$r$-edge-coloured graph~$G$ on~$n$ vertices with $\delta(G) \ge n/2 + cr\log n$ has a partition into~$O(r^2)$ monochromatic cycles.

A local $r$-colouring of a graph is an edge-colouring such that every vertex is incident with at most~$r$ edges of distinct  colours. S\'ark\"ozy~\cite{MR4131926} showed that any large locally $r$-coloured $K_n$ can be partitioned into $O(r\log r)$ monochromatic cycles. 

A $k$-uniform hypergraph (or $k$-graph) is a pair $H = (V(H), E(H))$ where $E(H) \subseteq \binom{V(H)}{k}$. \footnote{For a set $V$ and $k \in \mathbb{N}, \binom{V}{k}$ denotes the set of all subsets of $V$ of size $k$.}  Let $K_n^{(k)}$ denote the complete graph on $n$ vertices, where all $\binom{n}{k}$ edges are present. For positive integers $1 \le \ell < k \le n$, a \emph{$k$-uniform $\ell$-cycle} is a $k$-graph with its vertices cyclically ordered so that every edge contains~$k$ consecutive vertices and any two consecutive edges share exactly $\ell$ vertices. Note that $1$-cycles are called \emph{loose cycles} and $(k-1)$-cycles are called \emph{tight cycles}. 

Lehel's problem has been generalised for hypergraphs and studied for both tight and loose cycles. As in the case of graphs, any set of at most~$k$ vertices is considered as a degenerate cycle. Gy\'{a}rf\'{a}s and S\'{a}rk\"{o}zy~\cite{MR3035028} showed that for loose cycles, every $r$-edge-coloured $K_n^{(k)}$ can be partitioned into $C(k, r)$ vertex-disjoint monochromatic loose cycles. For sufficiently large~$n$, S\'{a}rk\"{o}zy~\cite{MR3240466} proved that $50kr\log(kr)$ loose cycles suffice. For an overview of results related to monochromatic partitions of (hyper)graphs we refer the reader to the surveys~\cite{fujita2015monochromatic} and~\cite{GyarfasSurvey}. 

In this paper, we focus on monochromatic tight cycle partition. 
For $k=3$, Bustamante, Han and Stein~\cite{Stein-Bustamante-Han} proved that any $2$-edge-coloured $K_n^{(3)}$ contains two vertex-disjoint monochromatic cycles of distinct colours covering all but at most $o(n)$ vertices. Lo and Pfenninger~\cite{MR4533706} proved the corresponding result
when $k=4$. Recently, Pfenninger~\cite{Vincent} generalised the result to all $k$ improving a previous result~\cite{MR1111111}.

For any $r, k \ge 3$, Bustamante, Corsten, Frankl, Pokrovskiy and Skokan~\cite{MR4117300} proved that every finite $r$-edge-coloured $K_n^{(k)}$ can be partitioned into at most $C'(k, r)$ monochromatic tight cycles. In fact, they proved that the result still holds if $K_n^{(k)}$ is replaced by large $k$-graphs with bounded independence number. It is used to solve a problem of
Elekes, Soukup, Soukup and Szentmikl\'ossy~\cite{Elekes}, that every $r$-edge-coloured infinite complete graph~$K_{\N}$ can be partitioned into a finite number of $k$-th powered paths\footnote{A $k$-th powered path is a sequence $v_1, v_2, \dots, v_{\ell}$ of distinct vertices such that for every distinct $i, j \in [\ell]$ such that $|i-j|\le k$, $v_iv_j$ is an edge.}. The bound $C'(k, r)$ of Bustamante, Corsten, Frankl, Pokrovskiy and Skokan~\cite{MR4117300} is achieved using the regularity method for hypergraphs, as a result of which the constant $C'(k, r)$ is a tower bound. 

In this paper we show that $C'(k, r)$ is a polynomial of~$r$ (for fixed $k$).

\begin{theorem} \label{Thm: ourthm}
  For all $r \in \mathbb{N} \text{ and } k\ge 3$, there exists an integer $n_0=n_0(r, k)$ such that for all $r$-edge-coloured $K_n^{(k)}$ with $n \ge n_0$, there exists a monochromatic tight cycle partition of $V\left(K_n^{(k)}\right)$ into at most $(2r)^{2^{k+4}} + 2^{k+8}r\log(2r)$ tight cycles.
\end{theorem}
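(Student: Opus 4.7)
The plan is to induct on the uniformity $k$, with the graph case $k=2$ handled by the $O(r\log r)$ monochromatic cycle partition theorem of Gy\'arf\'as, Ruszink\'o, S\'ark\"ozy and Szemer\'edi~\cite{MR2274080}. That graph result is the source of the second additive term $2^{k+8}r\log(2r)$ in the bound, with a factor of $2$ picked up per recursive layer as the induction walks from $k$ back down to $2$.

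For the inductive step from $(k-1)$-uniform to $k$-uniform, the plan is to build an auxiliary $(k-1)$-uniform edge-colouring on (nearly) the same vertex set, in which each auxiliary colour encodes local information about the colouring of the $k$-edges extending a given $(k-1)$-tuple --- for instance, a pair (dominant colour on the link, one extra bit), giving on the order of $2r^2$ auxiliary colours. Applying the inductive hypothesis to this auxiliary colouring produces a tight $(k-1)$-cycle partition, which a \emph{lifting lemma} then converts into a monochromatic tight $k$-cycle partition of $K_n^{(k)}$. Because substituting $2r^2$ for $r$ squares the main term $(2r)^{2^{k+4}}$, the recursion $f(k,r)\le f(k-1,2r^2)+\ldots$ unrolls telescopically to the claimed main term.

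A short second phase handles any uncovered ``leftover'' vertex set $W$. The idea is to reserve an absorbing set $A$ of size $O(r\log r)$ at the outset whose internal structure can be partitioned into a controlled number of monochromatic tight cycles under any colouring; after the main phase cycles cover $V\setminus(A\cup W)$, we extend them through $A$ to swallow $W$, at the cost of at most $O(r\log r)$ extra cycles per induction layer, which sums to the $2^{k+8}r\log(2r)$ term.

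The principal obstacle is the lifting lemma: a tight $(k-1)$-cycle in the auxiliary colouring does \emph{not} automatically lift to a tight $k$-cycle, since consecutive $k$-edges must share exactly $k-1$ vertices, a constraint not directly visible in the $(k-1)$-uniform auxiliary structure. For the lift to succeed, the auxiliary colours must record enough data that a greedy extension --- adding one vertex at a time into the link of the current $(k-1)$-tuple --- stays monochromatic, and small connector segments provided by the absorbing set $A$ must be inserted to bridge the junctions. Holding the auxiliary colour blow-up to $O(r^2)$ rather than $r^{O(k)}$ is exactly what preserves the polynomial-in-$r$ (for fixed $k$) dependence and is the delicate core of the argument.
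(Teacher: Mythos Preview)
Your plan is not the paper's approach, and it has a genuine gap at precisely the point you identify as the ``principal obstacle''. The paper does not induct on the uniformity. Instead it runs an absorption argument directly at level $k$: it reserves a vertex set $R$ (built from a monochromatic $k$-uniform triangle cycle together with structures obtained via the hypergraph regularity lemma), uses a tight-cycle Tur\'an-type theorem of Allen, B\"ottcher, Cooley and Mycroft to greedily strip off $O(r\log(1/\eps))$ monochromatic tight cycles from $V\setminus R$ until at most $\eps n$ vertices remain, and then shows that $R$ together with any such small leftover can be partitioned into polynomially-in-$r$ many monochromatic tight cycles. That last absorbing step is reduced, through a chain of lemmas, to finding rainbow cycle systems in auxiliary edge-coloured multigraphs; nowhere is a $(k-1)$-uniform problem invoked.

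The gap in your plan is that you provide no mechanism for the lift, and there is no evident one. A monochromatic tight $(k-1)$-cycle $v_1\cdots v_m$ in your auxiliary colouring tells you that every $(k-1)$-window $\{v_i,\ldots,v_{i+k-2}\}$ has the same dominant link colour $c$; it does \emph{not} say that the specific $k$-edge $\{v_i,\ldots,v_{i+k-1}\}$ has colour $c$, which is exactly what a monochromatic tight $k$-cycle on the same vertex sequence requires. Inserting extra vertices from an absorbing set $A$ does not rescue this: to sit inside a tight $k$-cycle an inserted vertex must lie in $k$ consecutive $k$-edges, each sharing $k-1$ vertices with its neighbours, and the dominant-link information gives no common witness across overlapping windows, let alone witnesses that are pairwise disjoint and disjoint from the rest of the partition. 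Since the number of windows where the lift can fail is linear in $m$ while $|A|=O(r\log r)$, the absorbing set cannot patch all the junctions. The arithmetic consistency of the recursion $f(k,r)\approx f(k-1,2r^2)$ with the stated bound is therefore not evidence that the lift can actually be carried out.
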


We make no attempt to improve the coefficients. 

To prove Theorem~\ref{Thm: ourthm}, we use the absorption method motivated by the works of Erd\H{o}s, Gy\'{a}rf\'{a}s and Pyber~\cite{MR1088628} and Gy\'{a}rf\'{a}s, Ruszink\'{o}, S\'{a}rk\"{o}zy and Szemer\'{e}di~\cite{MR2274080}, as well as the connected matching method that is often credited to {\L}uczak~\cite{luczak1999r}. We first reserve a special vertex subset. Then we use a result by Allen, B\"ottcher, Cooley and Mycroft~\cite{MR3620730} (Theorem~\ref{Thm: almostcoverref}) to greedily remove monochromatic tight cycles from the rest of the hypergraph, until there is a small leftover set of vertices. We then use properties of the reserved structures to `absorb' the leftover set with a few monochromatic tight cycles. 

We now outline the layout of the paper. We prove Theorem~\ref{Thm: ourthm} through a series of reductions. In Section~\ref{Sec: proof of our theorem}, we reduce Theorem~\ref{Thm: ourthm} to the absorbing lemma, see Lemma~\ref{lma: reserved set absorbs everything}. Roughly speaking, Lemma~\ref{lma: reserved set absorbs everything} finds a vertex set~$R$ such that, given any small vertex subset~$B$, $R \cup B$ can be partitioned into few monochromatic tight cycles. The rest of this paper is focused on proving Lemma~\ref{lma: reserved set absorbs everything}. In Sections~\ref{Notation} and~\ref{section:regularity}, we introduce some basic notations and the hypergraph regularity lemma. In Section~\ref{sec: triangle cycles, larger absorption, tight cycle}, we reduce the size of the small vertex subset~$B$ needed in Lemma~\ref{lma: reserved set absorbs everything}. In Section~\ref{section: Multigraph properties}, we translate the problem into finding rainbow cycle partitions in edge-coloured multigraphs, see Lemma~\ref{lma: Lemma Rainbow Cycle System}. We prove this lemma in Sections~\ref{sec: rainbow path systems} and~\ref{Bowties}. In Section~\ref{Sec: Conclusion}, we end the paper with some concluding remarks and further directions. There will be further motivation and discussion in each section.

\section{Proof of Theorem~\ref{Thm: ourthm}} \label{Sec: proof of our theorem} 
We typically assume $n$ to be a large integer. Let $[n] = \{ 1, \dots, n\}$ and for integers $a \le b$, let $[a, b] = \{a, a+1, \dots, b\}$. We will use hierarchies in our statements. The phrase ``$a \ll b$'' means ``for every~$b > 0$, there exists $a_0 > 0$, such that for all $0 < a \leq a_0$ the following statements hold''. We implicitly assume all constants in such hierarchies are positive and if $1/m$ appears we assume~$m$ is an integer. For the rest of this paper,~$r$ and~$k$ will denote the number of colours and uniformity of the hypergraph, respectively.

We first show that one can cover most vertices of an $r$-edge-coloured~$K_n^{(k)}$ with few vertex-disjoint monochromatic tight cycles. We need the following theorem on the Tur\'an density of tight cycles.

\begin{theorem}{\rm{(Allen, B\"ottcher, Cooley and Mycroft}~\cite{MR3620730})} \label{Thm: almostcoverref}
Let $1/n \ll \delta, 1/k \le 1/3$ and $\alpha \in [0, 1]$. Let~$G$ be a $k$-graph on~$n$ vertices with $e(G)\ge (\alpha+\delta)\binom{n}{k}$. Then~$G$ contains a tight cycle of length~$\ell$ for every $\ell \le \alpha n$ that is divisible by~$k$. 
\end{theorem}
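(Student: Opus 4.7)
The plan is to combine the hypergraph regularity lemma with an embedding argument in a suitable reduced hypergraph. First, I apply the weak hypergraph regularity lemma with parameters satisfying $\eps \ll \delta, 1/k$ to obtain a partition $V(G) = V_0 \cup V_1 \cup \cdots \cup V_t$ with $|V_0| \le \eps n$ and clusters $V_1, \dots, V_t$ of common size $m$. I then define the reduced $k$-graph $R$ on $[t]$ by placing $\{i_1, \dots, i_k\} \in E(R)$ precisely when $(V_{i_1}, \dots, V_{i_k})$ is $\eps$-regular with density at least $\alpha + \delta/2$. A standard double-counting using $e(G) \ge (\alpha+\delta)\binom{n}{k}$ yields $e(R) \ge (\alpha + \delta/3)\binom{t}{k}$, so $R$ is still a dense $k$-graph.

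Next, for a target length $\ell \le \alpha n$ with $k \mid \ell$, I look for a tight closed walk $i_1, i_2, \dots, i_\ell, i_1$ in $R$ (meaning that any $k$ consecutive entries span an edge of $R$) that visits each cluster at most $m$ times, and then lift it greedily to a tight cycle of length exactly $\ell$ in $G$ by choosing $v_j \in V_{i_j}$ vertex by vertex. At each step, $\eps$-regularity of the edge $\{i_{j-k+1}, \dots, i_j\}$ in $R$ guarantees that a positive fraction of the still-available vertices of $V_{i_j}$ extend the current tight path to a new tight edge of $G$, so the greedy procedure does not get stuck provided the walk is balanced across clusters. A standard slack argument lets one reserve a small buffer in each cluster to handle the ``closing up'' of the last $k-1$ edges of the cycle.

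The main obstacle is to construct tight closed walks in $R$ of every prescribed length divisible by $k$, with roughly balanced cluster-multiplicities. The divisibility by $k$ is natural: a closed tight walk in $R$ inherits a $k$-colouring of its positions modulo $k$, forcing its length to be a multiple of $k$. To realise all admissible lengths up to $\alpha n$, I would produce a long ``skeleton'' tight cycle in $R$ together with flexible absorbers that can shift the length by any multiple of $k$ within a wide range, or alternatively exhibit two tight paths in $R$ with common endpoints whose length-difference generates every multiple of $k$ in the required interval. Matching $\ell$ \emph{exactly}, rather than just producing some sufficiently long tight cycle, is the delicate step; the threshold $\alpha n$ ultimately reflects the total vertex budget $tm \approx n$ together with the density $\alpha + \delta/3$ of $R$, which governs how many clusters a tight walk can feasibly incorporate before exhausting the edges of $R$.
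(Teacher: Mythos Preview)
This theorem is not proved in the present paper; it is quoted verbatim as a result of Allen, B\"ottcher, Cooley and Mycroft~\cite{MR3620730} and used as a black box. So there is no proof here to compare your attempt against.

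That said, your proposed argument has a genuine gap. You invoke the \emph{weak} hypergraph regularity lemma and then claim that $\eps$-regularity of a $k$-tuple $(V_{i_{j-k+1}},\dots,V_{i_j})$ lets you greedily pick $v_j\in V_{i_j}$ extending the current tight path. This fails for $k\ge 3$: weak regularity controls only the density of the whole $k$-partite $k$-graph, not the link of a \emph{specific} $(k-1)$-tuple $v_{j-k+1},\dots,v_{j-1}$ already chosen. It is entirely consistent with weak $\eps$-regularity that the particular $(k-1)$-tuple you have committed to has almost no neighbours in $V_{i_j}$, so the greedy embedding can get stuck immediately. This is precisely the well-known obstruction that forces one to use the \emph{strong} hypergraph regularity machinery (regular complexes / regular slices, as set up in Section~\ref{section:regularity} of this paper) together with an extension/counting lemma that does control such links level by level. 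The actual proof in~\cite{MR3620730} proceeds via this strong regularity framework and a hypergraph blow-up lemma; your sketch would need to be upgraded to that setting, after which the ``find a tight closed walk in the reduced graph and lift it'' picture becomes essentially the right intuition, but with the embedding step carried by the regular-slice counting lemma rather than a naive greedy argument.
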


We use this theorem in the following proposition to greedily remove monochromatic tight cycles until we have a small leftover.

\begin{proposition} \label{Prp: leftover}
    Let $ 1/n \ll \eps, 1/k, 1/r$. Let $K_{n}^{(k)}$ be $r$-edge-coloured. Then all but at most~$\varepsilon n$ vertices of $K_n^{(k)}$ can be covered by $2r\log(1/\varepsilon)$ vertex-disjoint monochromatic tight cycles.
\end{proposition}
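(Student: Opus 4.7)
The plan is to repeatedly extract a long monochromatic tight cycle by pigeonholing on colours and applying Theorem~\ref{Thm: almostcoverref}, stopping once the uncovered vertex set has size at most $\varepsilon n$. Since each removed cycle will use roughly a $1/r$-fraction of the currently uncovered vertices, the leftover shrinks by a factor of $1-1/r$ per iteration, so $O(r\log(1/\varepsilon))$ iterations will suffice.

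Concretely, I would first fix an auxiliary constant $\delta$ with $1/n \ll \delta \ll \varepsilon, 1/k, 1/r$ and set $\alpha := 1/r - \delta$, so that $\alpha + \delta = 1/r$ and $\alpha \ge 1/(2r)$. Initialize $V_0 := V(K_n^{(k)})$. Given $V_i$ with $|V_i| > \varepsilon n$, pigeonhole yields a colour whose edges inside $V_i$ number at least $\binom{|V_i|}{k}/r = (\alpha + \delta)\binom{|V_i|}{k}$, and Theorem~\ref{Thm: almostcoverref} then produces a monochromatic tight cycle $C_i \subseteq V_i$ of length $\ell_i$, where $\ell_i$ is chosen to be the largest multiple of~$k$ with $\ell_i \le \alpha |V_i|$; in particular $\ell_i \ge \alpha|V_i| - k$. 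Set $V_{i+1} := V_i \setminus V(C_i)$ and repeat until $|V_i| \le \varepsilon n$.

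The recurrence $|V_{i+1}| \le (1-\alpha)|V_i| + k$ unrolls to $|V_t| \le (1-\alpha)^t n + k/\alpha$. Since $n \gg r, k, \varepsilon^{-1}$, the additive term is at most $\varepsilon n /2$, and the exponential term drops below $\varepsilon n /2$ once $t \ge 2r\log(1/\varepsilon)$ (using $\alpha \ge 1/(2r)$ together with $-\log(1-\alpha) \ge \alpha$). This gives the desired bound on the number of cycles, all of which are vertex-disjoint by construction.

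The only real checkpoint is that the hypothesis $1/|V_i| \ll \delta, 1/k$ of Theorem~\ref{Thm: almostcoverref} must hold at every iteration. This is immediate from the stopping rule $|V_i| > \varepsilon n$ combined with the top-level hierarchy $1/n \ll \varepsilon, \delta, 1/k$. Beyond this bookkeeping, I don't anticipate any substantial obstacle: the argument is a single-loop greedy analysis driven by a geometric recurrence, and the stated bound $2r\log(1/\varepsilon)$ leaves ample slack, so the suboptimal choice $\alpha = 1/r - \delta$ costs nothing.
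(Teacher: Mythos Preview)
Your proposal is correct and follows essentially the same greedy approach as the paper: pigeonhole on colours, apply Theorem~\ref{Thm: almostcoverref} to extract a long monochromatic tight cycle, and iterate while tracking a geometric recurrence for the uncovered set. One minor bookkeeping point: in the final estimate you should invoke $\alpha \ge 2/(3r)$ (which you have, since $\alpha = 1/r - \delta$ with $\delta \ll 1/r$) rather than merely $\alpha \ge 1/(2r)$, as the latter only gives $(1-\alpha)^{2r\log(1/\varepsilon)}n \le \varepsilon n$ and leaves no room for the additive $k/\alpha$ term --- the paper sidesteps this by absorbing the $-k$ rounding loss into the cycle-length bound up front (taking $\alpha = 2/(3r)$, $\delta = 1/(3r)$ and noting $2m/(3r) - k \ge m/(2r)$), which yields the clean multiplicative recurrence $|V_{i+1}| \le (1-1/(2r))|V_i|$ directly.
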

\begin{proof}
    Since $K_n^{(k)}$ is $r$-edge-coloured, there exists a monochromatic subgraph $G$ of~$K_n^{(k)}$ such that $e(G) \ge \binom{n}{k}/{r}$ edges. Theorem~\ref{Thm: almostcoverref} with $ 1/3r, 2/3r$ playing the roles of $\delta, \alpha,$ respectively, implies that there is a monochromatic tight cycle of length at least $2n/3r - k \ge n/2r$. Remove this cycle and repeat the argument. After removing $i$ tight cycles, there are at most $\left(1-\frac{1}{2r}\right)^i n \le ne^{-i/2r}$ many vertices left. By setting $i=2r\log(1/\varepsilon)$, we have at most $\varepsilon n$ vertices uncovered.   
\end{proof}

We need the following absorbing lemma which will be proved later.

\begin{lemma} \label{lma: reserved set absorbs everything}
Let $k \ge 3$ and $1/N \ll  1/r, 1/k$. Let $H$ be an $r$-edge-coloured~$K_N^{(k)}$. Then there exists a vertex set $R \subseteq V(H)$ such that for any $B \subseteq V(H) \setminus R$ with $|B| \le \left(2r\right)^{-2^{k+7}}k^{-1}N$, $H[R \cup B]$ can be partitioned into at most $2^{19}r (2r)^{5\cdot 2^{k+1}} + 3$ monochromatic tight cycles. 
\end{lemma}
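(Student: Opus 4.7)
The plan is to build the reserve set $R$ using the $k$-graph regularity lemma applied to $H$, and then to absorb any given $B$ by lifting a rainbow cycle partition of a suitably defined reduced multigraph back to monochromatic tight cycles in $R\cup B$. Concretely, I would regularise $H$ to obtain a partition $V_0,V_1,\dots,V_t$ of $V(H)$ with a small exceptional set $V_0$ and equal-sized clusters $V_i$, together with a reduced multigraph $\mathcal{M}$ on $[t]$ whose multi-edges are coloured by the majority colour of each dense, $\eps$-regular $k$-tuple of clusters. The reserve set $R$ is obtained by removing a small but fixed proportion of each cluster $V_i$, ensuring that (i) the remaining $R\cap V_i$ still dominates every dense regular $k$-tuple it belongs to, and (ii) $R$ accommodates many copies of the small absorbing gadgets (bow-ties and triangle-cycles) used in Section~\ref{sec: triangle cycles, larger absorption, tight cycle}.

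Given $B\subseteq V(H)\setminus R$ with $|B|\le(2r)^{-2^{k+7}}k^{-1}N$, the first stage of the absorption redistributes the vertices of $B$ across the clusters. Using the reduction of Section~\ref{sec: triangle cycles, larger absorption, tight cycle}, each $b\in B$ is swallowed by a short monochromatic tight path inside a gadget of $R$ whose colour and cluster-type match a neighbourhood of $b$ in $H$; the abundance of such gadgets guarantees this succeeds. The cost of this stage is a bounded number of monochromatic tight cycles (independent of $t$ and $|B|$), and the remaining task is to partition most of $R$ together with the residues of the absorbers into few monochromatic tight cycles.

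For the second stage, Lemma~\ref{lma: Lemma Rainbow Cycle System} is applied to $\mathcal{M}$ to obtain a partition of its vertex set into at most $2^{19}r(2r)^{5\cdot 2^{k+1}}$ rainbow cycles. Each such rainbow cycle encodes a connected monochromatic structure in the reduced picture that can be lifted, via standard tight-path stitching along the dense regular $k$-tuples, to a single monochromatic tight cycle covering nearly all of the corresponding clusters. The small leftover from each cluster, together with the exceptional set $V_0$, is mopped up by at most three extra monochromatic tight cycles, accounting for the `$+3$' in the statement.

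The main obstacle is ensuring that every step above contributes only polynomially many cycles in $r$, independent of the (tower-type) number $t$ of clusters produced by the regularity lemma. This is precisely what separates this proof from the tower-type bound in~\cite{MR4117300}: the absorbing gadgets must each occupy only $O_k(1)$ clusters, and the rainbow cycle partition of $\mathcal{M}$ (the combinatorial heart of the argument, proved in Sections~\ref{sec: rainbow path systems} and~\ref{Bowties}) must have size polynomial in $r$ regardless of $|V(\mathcal{M})|$. Once these two ingredients are in place, the rest of the proof is essentially bookkeeping: controlling blow-up losses, handling the slight cluster imbalance caused by $B$, and verifying that the connecting paths used to glue structures together do not inflate the cycle count.
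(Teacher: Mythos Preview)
Your proposal has a genuine gap in how you intend to use the rainbow cycle lemma. In the paper, the edge-coloured multigraph is \emph{not} a reduced graph with colour set $[r]$: for each colour $i\in[r]$ one builds a multigraph~$G^i$ whose ``colours'' are the vertices of $X_k^i\subseteq Z$ (the set to be absorbed), and an edge $v_jv_{j'}$ of colour $z\in X_k^i$ records a $K_{k-1}^{(k-1)}(2)$ in the link graph of~$z$; this is what ``$G$ respects $H$'' means. A \emph{rainbow} cycle in~$G^i$ then lifts, via Fact~\ref{fct: equivalence with multigraph covering problem}, to a \emph{monochromatic} tight cycle in $H$ (of colour~$i$) that covers precisely the $Z$-vertices used as edge-colours. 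Your plan instead colours the reduced multigraph by the $r$ colours of~$H$ and asks for a rainbow cycle partition of its \emph{vertex set}; but Lemma~\ref{lma: Lemma Rainbow Cycle System} produces a cycle system covering all \emph{colours}, not all vertices, and a rainbow cycle with colour set contained in~$[r]$ has no reason to encode a monochromatic object. So the lifting step ``each such rainbow cycle encodes a connected monochromatic structure'' does not go through as stated.

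The paper's proof has a rather different architecture. The reserve $R$ is a union of three pieces: a monochromatic triangle cycle $T_{\tilde m}^{(k)}$ (Corollary~\ref{Cor: Triangle cycle exists}), a set $U^*$ coming from a half-dense matching in the reduced $k$-graph (Lemma~\ref{Lma: larger absorption}), and a ``reservoir'' $W$ that contains tight cycles of every admissible length (Lemma~\ref{Lem: tight cycle reservoir}). Given~$B$, one first spends a tight cycle from $W$ to pad the leftover to exactly $4\eps^* n$ vertices, then covers $U^*\cup B^+$ by $2^{18}r\delta_0^{-5}+1$ cycles via Lemma~\ref{Lma: larger absorption}, leaving at most~$\psi n$ stragglers; these stragglers are absorbed into the triangle cycle by Lemma~\ref{Lma: how to absorb} (this is where the multigraph/rainbow machinery actually enters, with colours being vertices of~$Z$), and the absorbing property of $T_{\tilde m}^{(k)}$ yields one final cycle. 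The ``$+3$'' comes from $C_1$, $C_2$ and the $+1$ in Lemma~\ref{Lma: larger absorption}, not from mopping up an exceptional set~$V_0$.
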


We now prove Theorem~\ref{Thm: ourthm} assuming Lemma~\ref{lma: reserved set absorbs everything}.

\begin{proof}[Proof of Theorem~\ref{Thm: ourthm}]
   Let $\eps^* = (2r)^{-2^{k+7}}k^{-1}$ and $m = 2^{19}r (2r)^{5\cdot 2^{k+1}} + 3$. Choose constants $t_0, \eps_k, \psi$ such that $1/n \ll 1/t_0 \ll \eps_k \ll \psi \ll 1/r, 1/k$. By Lemma~\ref{lma: reserved set absorbs everything} with~$n$ playing the role of~$N$, there exists a vertex set $R \subseteq V(H)$ such that, for any $B \subseteq V(H)\setminus R$ with $|B| \le \eps^*n$, $H[R \cup B]$ can be partitioned into at most $m$ monochromatic tight cycles. Let $H' = H \setminus R$ and~$n' = |V(H')|$. By Proposition~\ref{Prp: leftover} with $\eps^*, n'$ playing the roles of $\eps, n$, respectively, all but at most $\eps^*n' \le \eps^* n$ vertices of~$H'$ can be covered by at most \begin{align*}
       2r\log(1/\eps^*) = 2^{k+8}r\log(2r)+2r\log(k)
   \end{align*} vertex-disjoint monochromatic tight cycles. Let~$B$ be the set of uncovered vertices, so~$|B|\le \eps^*n$.
   Thus $H[R \cup B]$ can be partitioned into at most $m$ monochromatic tight cycles. This covers up~$H$ with at most \begin{align*}
      2^{k+8}r\log(2r)+2r\log(k) + m &\le 2^{20+5\cdot 2^{k+1}}r^{5\cdot 2^{k+1}+1}+ 2^{k+8}r\log(2r)\\ &\le (2r)^{2^{k+4}} + 2^{k+8}r\log(2r)
   \end{align*} vertex-disjoint monochromatic tight cycles. This concludes the proof of the theorem.
\end{proof}

\section{Notation} \label{Notation}
We omit floors and ceilings if they do not affect the calculations. For two sets~$A$ and~$B$,~$A \Delta B$ denotes their set difference. We often write $v_1v_2\dots v_k$ for $\{v_1, \dots, v_k\}$.

Let~$G$ be a graph. The \emph{neighbourhood} of a vertex~$u$, denoted $N_G(u)$, is the set of vertices $\{v \in V(G)\setminus \{u\}: uv \in E(G)\}$. The \emph{closed neighbourhood} of a vertex~$u$ is defined as $\{u\} \cup N_G(u)$ and denoted as~$N_G[u]$. For a path $P = v_1\cdots v_{\ell}$, the \emph{internal vertices of~$P$}, denoted by $\mbox{int}(P)$, are $v_2, \dots , v_{\ell-1}$. For a vertex set $U \subseteq V(G)$, we denote the compliment of~$U$ as~$\overline{U}$. When~$G$ is a digraph, its minimum out-degree is denoted as~$\delta^+(G)$.  

Let $H$ be a $k$-graph.  We write $e(H) = |E(H)|$. For a set of vertices $U \subseteq V(H)$, $H[U]$ denotes the subgraph induced on~$U$. For $k$-graphs~$G$ and~$H$, $H \setminus G$ denotes the subgraph obtained by deleting $V(G)$ from~$H$ and $H-G$ denotes the subgraph after deleting $E(G)$ from~$E(H)$. For a vertex set $U \subseteq V(H)$, $H\setminus U = H[V(H)\setminus U]$.  The \emph{link graph} of a vertex $z$, denoted by $H(z)$, is the $(k-1)$-graph on $V(H)$ so that $S \in E(H(z))$ if and only if $S \cup z \in E(H)$. For $S \subseteq V(H)$, $N_H(S) = \{T \subseteq V(H)\setminus S: S \cup T \in E(H)\}$ and $d_H(S) = |N_H(S)|$. Denote $$\delta_{\ell}(H) = \min_{\substack{S \in \binom{V(H)}{\ell}}} d(S) \text{ and } \Delta_{\ell}(H) = \max_{S \in \binom{V(H)}{\ell}} d(S).$$ We write~$\delta$ and~$\Delta$ respectively for the above when $\ell=1$.  For vertex sets~$V$ and~$W$, we define $N_H(V, W)$ to be the set $\{e \in \binom{W}{k-|V|}\colon e\cup V \in E(H)\}$. In particular when $V = \{v\}$ and $W = V(H)$, we have $|N_H(V, W)|=d(v)$.

A set of edges $\mathscr{E}$ in a $k$-graph $H$ is called \emph{tightly connected} if for any pair of distinct edges $e, f \in \mathscr{E}$, there exists a sequence of edges $e_1,\dots, e_t \in E(H)$ such that $e_1 = e$, $e_t = f$ and for $i \in [t-1]$, $|e_i \cap e_{i+1}| = k-1$. 
A \emph{tight component} of a $k$-graph is a set of edges that is maximal with respect to this property.  Note that we treat tight components as $k$-graphs. A \emph{$k$-partite $k$-graph} $H$ has a partition of its vertex set into distinct and disjoint vertex classes $V_1,\dots, V_k$ such that for any edge $e \in E(H)$, $|e \cap V_i| = 1$ for each $i \in [k]$.  For vertex sets $X_1, \dots, X_k \subseteq V(H)$, an $X_1X_2\dots X_k$-edge is an edge $x_1\dots x_k$ where for each $i \in [k]$, we have $x_i \in X_i$. 

We work with edge-coloured multigraphs and use the following notation frequently. We denote the edge-colouring by $\phi$. Let $G$ be an edge-coloured (multi-)$k$-graph. The \emph{colour set}~$\phi(G)$ of~$G$ is the set of colours that appear in~$G$. When~$G$ is a multi-$k$-graph, for an edge~$e$,~$\phi(e)$ is known from the context. For a colour subset $C \subseteq \phi(G)$,~$G_C$ is the induced subgraph of~$G$ with edges of colours in~$C$ after removing any isolated vertices. Let $c \in \phi(G)$ and $S \subseteq V(G)$. We write $N_{c, G} (S)$ for $N_{G_c}(S)$ and $d_{c, G} (S) = |N_{c, G} (S)|$.  We say~$S$ \emph{sees a colour~$c$} in a graph~$G$, if $d_{c, G}(S) > 0$. The set of colours that are seen by~$S$ is denoted by $\phi_G (S)$.  We write $\delta_{\text{mon}}(G) = \min_{c \in \phi(G)} \delta(G_c)$.\footnote{Note that this is different from the monochromatic colour degree in literature, as~$G_c$ has no isolated vertices.} Let $V^*(G) = \{v \in V(G): |\phi_G(v)| \ge 2\}$.  We also define $$\delta_{\text{mon}}^* (G) = \min_{v \in V^*(G)}\ \min_{c \in \phi_G(v)} d_c(v).$$

An edge-coloured $k$-graph $H$ is \emph{locally $r$-edge-coloured} if any set of $k-1$ vertices see at most~$r$ colours in $H$. It is easy to check that an $r$-edge-coloured $k$-graph is also locally $r$-edge-coloured. A \emph{monochromatic tight component} is a tight component that is monochromatic, i.e. between any two edges in the monochromatic tight component there is a monochromatic tight path in a colour fixed for that component. Note that a monochromatic tight component~$T$ has colour~$\phi_H(T)$. 

We drop the subscript when the underlying $k$-graph is clear from context. Additional notation will be introduced within a section if only needed there.

We use the following three standard concentration inequalities in this work.

\begin{lemma}[Chernoff Bound c.f. {\cite[Remark 2.5]{MR1782847}}] \label{Lma: Chernoff}
    Let $0< \delta\le 3/2$ and $X\sim \rm{Bin}(n, p)$. Then $\mathbb{P}(|X - \mathbb{E}(X)| \ge \delta \mathbb{E}(X)) \le 2e^{-\frac{\delta^2 \mathbb{E}(X)}{3}}.$
\end{lemma}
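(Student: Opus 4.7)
The plan is to apply the standard exponential moment (Chernoff–Cramér) method to the upper and lower tails separately and then union-bound them, which accounts for the factor of $2$ in the statement. Write $\mu := \mathbb{E}(X) = np$ and $X = \sum_{i=1}^n X_i$ with the $X_i$ i.i.d.\ $\mathrm{Bernoulli}(p)$.

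First I would use independence to factorise the moment generating function:
\[
\mathbb{E}(e^{tX}) = (1 - p + pe^t)^n \le \exp\bigl(np(e^t - 1)\bigr) = \exp\bigl(\mu(e^t - 1)\bigr),
\]
where the inequality uses $1 + x \le e^x$ applied to $x = p(e^t - 1)$. For any $t > 0$, Markov's inequality applied to $e^{tX}$ then gives
\[
\mathbb{P}\bigl(X \ge (1+\delta)\mu\bigr) \le \exp\bigl(\mu(e^t - 1) - t(1+\delta)\mu\bigr),
\]
and optimising by taking $t = \log(1+\delta)$ produces the classical bound
\[
\mathbb{P}\bigl(X \ge (1+\delta)\mu\bigr) \le \exp\bigl(-\mu((1+\delta)\log(1+\delta) - \delta)\bigr).
\]
The mirror argument with $t < 0$ and the choice $t = \log(1-\delta)$ delivers
\[
\mathbb{P}\bigl(X \le (1-\delta)\mu\bigr) \le \exp\bigl(-\mu((1-\delta)\log(1-\delta) + \delta)\bigr).
\]

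It then reduces to the two analytic inequalities
\[
(1+\delta)\log(1+\delta) - \delta \ge \frac{\delta^2}{3} \quad\text{and}\quad (1-\delta)\log(1-\delta) + \delta \ge \frac{\delta^2}{3}
\]
for $0 < \delta \le 3/2$ (the second only being meaningful for $\delta < 1$). The lower-tail inequality follows at once from the Taylor series $(1-\delta)\log(1-\delta) + \delta = \sum_{j \ge 2} \delta^j / (j(j-1))$, whose leading $\delta^2/2$ term alone exceeds $\delta^2/3$. For the upper tail, the cleanest route is the Bennett-type intermediate bound $(1+\delta)\log(1+\delta) - \delta \ge 3\delta^2/(6 + 2\delta)$, after which $3\delta^2/(6+2\delta) \ge \delta^2/3$ is equivalent to $\delta \le 3/2$; this is precisely where the hypothesis is used. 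Summing the two tail bounds contributes the leading factor of $2$.

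The only subtle step is the upper-tail analytic inequality. A naive Taylor expansion around $\delta = 0$ does not work uniformly on $[0, 3/2]$: setting $f(\delta) = (1+\delta)\log(1+\delta) - \delta - \delta^2/3$, one has $f''(\delta) = 1/(1+\delta) - 2/3$, which changes sign at $\delta = 1/2$, so $f$ is not convex on the whole interval. One therefore uses either the Bennett refinement above or a short case analysis: verify $f(0) = f'(0) = 0$, observe that $f'$ is first increasing then decreasing on $[0, 3/2]$ with a single interior zero, and check $f(3/2) > 0$ directly, so that $f \ge 0$ throughout. Beyond this mild calculus, every step is the textbook Chernoff argument.
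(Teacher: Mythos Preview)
Your argument is correct and is the standard exponential-moment derivation of the Chernoff bound. Note, however, that the paper does not give its own proof of this lemma at all: it is merely stated with a citation to \cite[Remark~2.5]{MR1782847} (Janson--\L uczak--Ruci\'nski), so there is nothing to compare against. Your write-up would serve perfectly well as a self-contained justification, and the place where the constraint $\delta \le 3/2$ enters---via the Bennett-type inequality $(1+\delta)\log(1+\delta)-\delta \ge 3\delta^2/(6+2\delta)$ followed by $3\delta^2/(6+2\delta) \ge \delta^2/3 \iff \delta \le 3/2$---is exactly the standard explanation for the threshold in the cited reference.
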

Let $N, n, m$ be positive integers such that $\max\{n, m\} \le N$. The \emph{hypergeometric distribution}~$\rm{Hyp}(N, n, m)$ is the distribution of the random variable $X$ obtained by choosing a set~$\sigma$ of~$m$ elements from a set~$\tau$ of~$N$ elements, with $X = |\sigma \cap \tau_n|$ where $\tau_n$ is a random subset of~$\tau$ with~$n$ elements. We need the following concentration inequalities for hypergeometric random variables and martingales. 

\begin{lemma}[{Hoeffding's inequality c.f.~\cite[Theorem 2.10]{MR1782847}}]\label{Lma: Chernoff hyp}
    Let $0< \eps \le 3/2$ and $X\sim \rm{Hyp}(N, n, m)$. Then $\mathbb{P}[|X-\mathbb{E}[X]| \ge \varepsilon \mathbb{E}[X]] \le 2e^{-\frac{\eps^2}{3}\mathbb{E}[X]}$.
\end{lemma}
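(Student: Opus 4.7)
The plan is to reduce the hypergeometric tail bound to the standard Chernoff bound for the binomial distribution (Lemma~\ref{Lma: Chernoff}) via Hoeffding's classical comparison theorem. Hoeffding showed that for any continuous convex function $\psi\colon\mathbb{R}\to\mathbb{R}$, if $X\sim\rm{Hyp}(N,n,m)$ and $Y\sim\rm{Bin}(n,m/N)$, then $\mathbb{E}[\psi(X)]\le\mathbb{E}[\psi(Y)]$. A clean proof of this goes via coupling: draw $Z_{1},\dots,Z_{N}$ i.i.d.\ Bernoulli$(m/N)$ with total $S=Z_{1}+\dots+Z_{N}$, observe that the conditional distribution of $Z_{1}+\dots+Z_{n}$ given $S=m$ is exactly $\rm{Hyp}(N,n,m)$, and apply Jensen's inequality to the conditional expectation.

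Applying the comparison with $\psi(x)=e^{tx}$ for $t\in\mathbb{R}$ yields $\mathbb{E}[e^{tX}]\le\mathbb{E}[e^{tY}]$. Writing $\mu:=\mathbb{E}[X]=\mathbb{E}[Y]=nm/N$, the standard binomial moment-generating-function bound $\mathbb{E}[e^{tY}]\le\exp\!\big(\mu(e^{t}-1)\big)$, combined with Markov's inequality at the optimal choice $t=\log(1+\eps)$ for the upper tail and $t=\log(1-\eps)$ for the lower tail, gives $\mathbb{P}[X\ge(1+\eps)\mu]\le e^{-\eps^{2}\mu/3}$ and $\mathbb{P}[X\le(1-\eps)\mu]\le e^{-\eps^{2}\mu/3}$ for $0<\eps\le 3/2$, by the same calculation that proves Lemma~\ref{Lma: Chernoff}. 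A union bound over the two tails then yields the stated factor of $2$.

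The main obstacle is the Hoeffding comparison itself; once it is in hand the rest is a routine MGF manipulation identical to the binomial case. An alternative route that avoids the comparison is a direct martingale argument: letting $X_{i}$ be the indicator that the $i$-th sampled element lies in the distinguished $m$-set and $\mathcal{F}_{i}=\sigma(X_{1},\dots,X_{i})$, the Doob martingale $M_{i}=\mathbb{E}[X\mid\mathcal{F}_{i}]$ has bounded differences $|M_{i}-M_{i-1}|\le 1$, so Azuma's inequality produces a Hoeffding-type tail bound; a small refinement using the hypergeometric variance $\mathrm{Var}(X)=\mu(1-m/N)(N-n)/(N-1)\le\mu$ recovers the $1/3$ constant in the exponent.
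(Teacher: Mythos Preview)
The paper does not prove this lemma at all; it is quoted as a black box from~\cite[Theorem~2.10]{MR1782847}. So there is no proof in the paper to compare yours against. Your overall strategy---Hoeffding's convex-order comparison between $\mathrm{Hyp}(N,n,m)$ and $\mathrm{Bin}(n,m/N)$, then the standard binomial MGF computation---is the classical route and does yield the stated bound.

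One genuine gap, however: the ``coupling plus Jensen'' justification you sketch for the comparison step does not work as written. Conditioning $Y=Z_{1}+\dots+Z_{n}$ on $S=Z_{1}+\dots+Z_{N}$ gives $\mathbb{E}[\psi(Y)]=\sum_{s}\mathbb{P}(S=s)\,\mathbb{E}[\psi(H_{s})]$ with $H_{s}\sim\mathrm{Hyp}(N,n,s)$, which only exhibits $\mathbb{E}[\psi(Y)]$ as a \emph{mixture} of hypergeometric expectations; Jensen applied to the inner conditional expectation points the wrong way and in any case does not isolate the single value $s=m$. Correct proofs of the comparison use either negative association of the without-replacement indicators (so that $\mathbb{E}[e^{tX}]\le\prod_{j}\mathbb{E}[e^{tW_{j}}]$ directly) or Hoeffding's original induction on~$n$. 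Your martingale alternative is also viable, but Azuma with unit increments only gives $\exp(-\eps^{2}\mu^{2}/(2n))$, which is weaker than $\exp(-\eps^{2}\mu/3)$ unless $m/N$ is bounded away from~$0$; the ``small refinement'' you allude to (a Bernstein/Freedman-type bound exploiting $\mathrm{Var}(X)\le\mu$) is doing the real work there and would have to be written out.
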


\begin{lemma}[{Azuma's inequality c.f.~\cite[Theorem 2.25]{MR1782847}}] \label{Lma: azuma}
    Let $Z_0, ..., Z_n$ be a martingale with $|Z_i - Z_{i-1}| \le c_i$ for all $i \in [n]$. Then for all $a>0$, $$\mathbb{P}[|Z_n - Z_0| \ge a] \le 2e^{-{\frac{a^2}{2\sum_{i=1}^n c_i^2}}}.$$
\end{lemma}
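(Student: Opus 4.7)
The plan is to prove Azuma's inequality by the exponential moment method (Chernoff-type argument), which is the standard route. Set $X_i = Z_i - Z_{i-1}$, so the martingale property gives $\mathbb{E}[X_i \mid Z_0, \dots, Z_{i-1}] = 0$ and the hypothesis gives $|X_i| \le c_i$. Then $Z_n - Z_0 = \sum_{i=1}^n X_i$. By Markov's inequality, for any $\lambda > 0$,
\[
\mathbb{P}[Z_n - Z_0 \ge a] \le e^{-\lambda a}\, \mathbb{E}\!\left[e^{\lambda(Z_n - Z_0)}\right].
\]

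Next I would peel off the factors of the moment generating function one at a time using the tower property of conditional expectation. Writing $\mathcal{F}_{i-1} = \sigma(Z_0, \dots, Z_{i-1})$,
\[
\mathbb{E}\!\left[e^{\lambda \sum_{i=1}^n X_i}\right] = \mathbb{E}\!\left[e^{\lambda \sum_{i=1}^{n-1} X_i}\, \mathbb{E}\!\left[e^{\lambda X_n} \mid \mathcal{F}_{n-1}\right]\right],
\]
and iterate. The key auxiliary estimate is Hoeffding's lemma: if $Y$ is a (possibly conditional) random variable with mean zero and $|Y| \le c$, then $\mathbb{E}[e^{\lambda Y}] \le e^{\lambda^2 c^2 / 2}$. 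I would prove this by convexity: on the interval $[-c, c]$, the function $y \mapsto e^{\lambda y}$ is bounded above by the chord joining $(-c, e^{-\lambda c})$ and $(c, e^{\lambda c})$; taking expectations and using $\mathbb{E}[Y] = 0$ yields $\mathbb{E}[e^{\lambda Y}] \le \tfrac{1}{2}(e^{\lambda c} + e^{-\lambda c}) = \cosh(\lambda c) \le e^{\lambda^2 c^2/2}$, where the last inequality follows by comparing Taylor series coefficients.

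Applied conditionally, Hoeffding's lemma gives $\mathbb{E}[e^{\lambda X_i} \mid \mathcal{F}_{i-1}] \le e^{\lambda^2 c_i^2/2}$ almost surely. Iterating through the tower decomposition yields
\[
\mathbb{E}\!\left[e^{\lambda(Z_n - Z_0)}\right] \le \exp\!\left(\tfrac{\lambda^2}{2} \sum_{i=1}^n c_i^2\right),
\]
so the Markov bound becomes $\exp\!\left(-\lambda a + \tfrac{\lambda^2}{2}\sum_i c_i^2\right)$. Optimising over $\lambda > 0$ by setting $\lambda = a / \sum_{i=1}^n c_i^2$ gives
\[
\mathbb{P}[Z_n - Z_0 \ge a] \le \exp\!\left(-\frac{a^2}{2 \sum_{i=1}^n c_i^2}\right).
\]
The same argument applied to the martingale $-Z_i$ bounds $\mathbb{P}[Z_n - Z_0 \le -a]$ by the same quantity, and a union bound yields the claimed two-sided tail with the factor of $2$.

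The only non-routine step is Hoeffding's lemma, and even that is a short convexity argument; the rest is the standard Chernoff template adapted to martingales via the tower property, so I do not anticipate any real obstacle beyond bookkeeping.
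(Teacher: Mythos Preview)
Your proof is correct and is the standard argument for Azuma's inequality. However, the paper does not prove this lemma at all: it is stated with a citation (``c.f.~[Theorem~2.25]'' of the reference) and used as a black box. So there is no proof in the paper to compare against; your proposal simply supplies the classical proof that the cited reference would contain.
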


\section{Hypergraph regularity} \label{section:regularity}

In this section, we formulate the notion of hypergraph regularity that we use, closely following the formulation from Allen, B\"ottcher, Cooley and Mycroft~\cite{MR3620730}.
Recall that a \emph{hypergraph} $\mathcal{H}$ is an ordered pair $(V(\mathcal{H}), E(\mathcal{H}))$, where $E(\mathcal{H}) \subseteq 2^{V(\mathcal{H})}$. 
We identify the hypergraph $\mathcal{H}$ with its edge set $E(\mathcal{H})$. 
A subgraph $\mathcal{H}'$ of $\mathcal{H}$ is a hypergraph with $V(\mathcal{H}') \subseteq V(\mathcal{H})$ and $E(\mathcal{H}') \subseteq E(\mathcal{H})$. 
It is \emph{spanning} if $V(\mathcal{H}') = V(\mathcal{H})$.
For $U \subseteq V(\mathcal{H})$, we define $\mathcal{H}[U]$ to be the subgraph of $\mathcal{H}$ with $V(\mathcal{H}[U]) = U$ and $E(\mathcal{H}[U]) = \{ e \in E(\mathcal{H}) \colon e \subseteq U\}$.

A hypergraph~$\mathcal{H}$ is called a \emph{complex} if~$\mathcal{H}$ is down-closed, that is if for an edge $e \in \mathcal{H}$ and $f \subseteq e$, then $f \in \mathcal{H}$.
A \emph{$k$-complex} is a complex having only edges of size at most~$k$. 
We denote by~$\mathcal{H}^{(i)}$ the spanning subgraph of~$\mathcal{H}$ containing only the edges of size~$i$.
Let~$\mathcal{P}$ be a partition of~$V(\mathcal{H})$ into vertex classes $V_1, \dots, V_s$. Then we say that a set $S \subseteq V(\mathcal{H})$ is \emph{$\mathcal{P}$-partite} if $\s{S \cap V_i} \leq 1$ for all $i \in [s]$. 
For $\mathcal{P}' = \{V_{i_1}, \dots, V_{i_r}\} \subseteq \mathcal{P}$, we define the subgraph of~$\mathcal{H}$ induced by~$\mathcal{P}'$, denoted by~$\mathcal{H}[\mathcal{P'}]$ or $\mathcal{H}[V_{i_1}, \dots, V_{i_r}]$, to be the subgraph of $\mathcal{H}[\bigcup \mathcal{P}']$ containing only the edges that are~$\mathcal{P}'$-partite. 
The hypergraph~$\mathcal{H}$ is said to be~$\mathcal{P}$-partite if all of its edges are~$\mathcal{P}$-partite. We say that~$\mathcal{H}$ is \emph{$s$-partite} if it is~$\mathcal{P}$-partite for some partition~$\mathcal{P}$ of~$V(\mathcal{H})$ into~$s$ parts.
Let~$\mathcal{H}$ be a~$\mathcal{P}$-partite hypergraph. 
If~$X$ is a~$k$-set of vertex classes of~$\mathcal{H}$, then we write~$\mathcal{H}_X$ for the~$k$-partite subgraph of~$\mathcal{H}^{(k)}$ induced by~$\bigcup X$, whose vertex classes are the elements of~$X$. 
Moreover, we denote by~$\mathcal{H}_{X^<}$ the~$k$-partite hypergraph with $V(\mathcal{H}_{X^<}) = \bigcup X$ and $E(\mathcal{H}_{X^<}) = \bigcup_{X'\subsetneq X} \mathcal{H}_{X'}$.
In particular, if~$\mathcal{H}$ is a complex, then $\mathcal{H}_{X^<}$ is a $(k-1)$-complex because~$X$ is a set of size~$k$.

Let $i \geq 2$ and let~$\mathcal{P}_i$ be a partition of a vertex set~$V$ into~$i$ parts. Let~$H_i$ and~$H_{i-1}$ be a~$\mathcal{P}_i$-partite $i$-graph and a~$\mathcal{P}_i$-partite $(i-1)$-graph on a common vertex set~$V$, respectively. We say that a $\mathcal{P}_i$-partite $i$-set in~$V$ is \emph{supported on}~$H_{i-1}$ if it induces a copy of the complete $(i-1)$-graph~$K_i^{(i-1)}$ on~$i$ vertices in~$H_{i-1}$. We denote by~$K_i(H_{i-1})$ the~$\mathcal{P}_i$-partite $i$-graph on~$V$ whose edges are all~$\mathcal{P}_i$-partite $i$-sets contained in~$V$ which are supported on~$H_{i-1}$. Now we define the \emph{density of~$H_i$ with respect to~$H_{i-1}$} to be
\[
d(H_i \mid H_{i-1}) = \frac{\s{K_i(H_{i-1}) \cap H_i}}{\s{K_i(H_{i-1})}}
\]
if $\s{K_i(H_{i-1})} > 0$ and $d(H_i \mid H_{i-1}) = 0$ if $\s{K_i(H_{i-1})} = 0$.
So $d(H_i \mid H_{i-1})$ is the proportion of $\mathcal{P}_i$-partite copies of~$K_i^{i-1}$ in~$H_{i-1}$ which are also edges of~$H_i$. More generally, if $\mathbf{Q} = (Q_1, Q_2, \dots, Q_r)$ is a collection of~$r$ (not necessarily disjoint) subgraphs of~$H_{i-1}$, we define $K_i(\mathbf{Q}) = \bigcup_{j=1}^r K_i(Q_j)$ and 
\[
d(H_i \mid \mathbf{Q}) = \frac{\s{K_i(\mathbf{Q}) \cap H_i}}{\s{K_i(\mathbf{Q})}}
\]
if $\s{K_i(\mathbf{Q})} > 0$ and $d(H_i \mid \mathbf{Q}) = 0$ if $\s{K_i(\mathbf{Q})} = 0$.
We say that~$H_i$ is \emph{$(d_i, \eps, r)$-regular with respect to~$H_{i-1}$}, if we have $d(H_i \mid \mathbf{Q}) = d_i \pm \eps$ for every~$r$-set~$\mathbf{Q}$ of subgraphs of~$H_{i-1}$ with $\s{K_i(\mathbf{Q})} > \eps \s{K_i(H_{i-1})}$.
We say that~$H_i$ is \emph{$(\eps, r)$-regular with respect to~$H_{i-1}$} if there exists some~$d_i$ for which~$H_i$ is $(d_i, \eps, r)$-regular with respect to~$H_{i-1}$. 
Finally, given an~$i$-graph~$G$ whose vertex set contains that of~$H_{i-1}$, we say that~$G$ is \emph{$(d_i, \eps, r)$-regular with respect to~$H_{i-1}$} if the~$i$-partite subgraph of~$G$ induced by the vertex classes of~$H_{i-1}$ is $(d_i, \eps, r)$-regular with respect to~$H_{i-1}$. 
We refer to the density of this~$i$-partite subgraph of~$G$ with respect to~$H_{i-1}$ as the \emph{relative density of~$G$ with respect to~$H_{i-1}$}.

Now let $s \geq k \geq 3$ and let $\mathcal{H}$ be an~$s$-partite~$k$-complex on vertex classes $V_1, \dots, V_s$. For any set $A \subseteq [s]$, we write~$V_A$ for $\bigcup_{i \in A} V_i$. Note that, if $e \in \mathcal{H}^{(i)}$ for some $2 \leq i \leq k$, then the vertices of~$e$ induce a copy of~$K_i^{i-1}$ in $\mathcal{H}^{(i-1)}$. Therefore, for any set $A \in \binom{[s]}{i}$, the density $d(\mathcal{H}^{(i)}[V_A] \mid \mathcal{H}^{(i-1)}[V_A])$ is the proportion of `possible edges' of~$\mathcal{H}^{(i)}[V_A]$, which are indeed edges. We say that~$\mathcal{H}$ is \emph{$(d_k, \dots, d_2, \eps_k, \eps, r)$-regular} if
\begin{enumerate}[label=(\alph*)]
    \item for any $2 \leq i \leq k-1$ and any $A \in \binom{[s]}{i}$, the induced subgraph $\mathcal{H}^{(i)}[V_A]$ is $(d_i, \eps, 1)$-regular with respect to~$\mathcal{H}^{(i-1)}[V_A]$ and 
    \item for any $A \in \binom{[s]}{k}$, the induced subgraph $\mathcal{H}^{(k)}[V_A]$ is $(d_k, \eps_k, r)$-regular with respect to~$\mathcal{H}^{(k-1)}[V_A]$.
\end{enumerate}
For $\mathbf{d} = (d_k, \dots, d_2)$, we write $(\mathbf{d}, \eps_k, \eps, r)$-regular to mean $(d_k, \dots, d_2, \eps_k, \eps, r)$-regular.
We say that a $(k-1)$-complex $\mathcal{J}$ is \emph{$(t_0, t_1, \eps)$-equitable} if it has the following properties.
\begin{enumerate}[label = (\alph*)]
    \item $\mathcal{J}$ is $\mathcal{P}$-partite for some $\mathcal{P}$ which partitions $V(\mathcal{J})$ into~$t$ parts, where $t_0 \leq t \leq t_1$, of equal size. We refer to $\mathcal{P}$ as the \emph{ground partition} of $\mathcal{J}$ and to the parts of $\mathcal{P}$ as the \emph{clusters} of~$\mathcal{J}$.
    \item There exists a \emph{density vector} $\mathbf{d} = (d_{k-1}, \dots, d_2)$ such that, for each $2 \leq i \leq k-1$, we have $d_i \geq 1/t_1$ and $1/d_i \in \mathbb{N}$ and $\mathcal{J}$ is $(\mathbf{d}, \eps, \eps, 1)$-regular.
\end{enumerate}
For any $k$-set~$X$ of clusters of $\mathcal{J}$, we denote by $\hat{\mathcal{J}}_X$ the $k$-partite $(k-1)$-graph $(\mathcal{J}_{X^<})^{(k-1)}$ and call $\hat{\mathcal{J}}_X$ a \emph{polyad}. Given a $(t_0, t_1, \eps)$-equitable $(k-1)$-complex $\mathcal{J}$ and a $k$-graph~$G$ on~$V(\mathcal{J})$, we say that~$G$ is \emph{$(\eps_k, r)$-regular with respect to a $k$-set~$X$ of clusters of $\mathcal{J}$} if there exists some~$d$ such that~$G$ is $(d, \eps_k, r)$-regular with respect to the polyad $\hat{\mathcal{J}}_X$.
Moreover, we write $d_{G, \mathcal{J}}^*(X)$ for the relative density of~$G$ with respect to $\hat{\mathcal{J}}_X$; we may drop either subscript if it is clear from context. 

We can now give the crucial definition of a regular slice.
\begin{definition}[Regular slice]
Given $\eps, \eps_k > 0, r, t_0, t_1 \in \mathbb{N}$, a $k$-graph~$G$, a $(k-1)$-complex~$\mathcal{J}$ on~$V(G)$, is a \emph{$(t_0, t_1, \eps, \eps_k,r)$-regular slice} for~$G$ if $\mathcal{J}$ is $(t_0, t_1, \eps)$-equitable and~$G$ is $(\eps_k, r)$-regular with respect to all but at most $\eps_k \binom{t}{k}$ of the $k$-sets of clusters of~$\mathcal{J}$, where~$t$ is the number of clusters of $\mathcal{J}$.
\end{definition}

If we specify the density vector~$\mathbf{d}$ and the number of clusters~$t$ of an equitable complex or a regular slice, then it is not necessary to specify~$t_0$ and~$t_1$ (since the only role of these is to bound~$\mathbf{d}$ and~$t$). In this situation we write that $\mathcal{J}$ is $(\cdot, \cdot, \eps)$-equitable, or is a $(\cdot, \cdot, \eps, \eps_k, r)$-regular slice for~$G$.

Given a regular slice $\mathcal{J}$ for a $k$-graph~$G$, we define the $d$-reduced $k$-graph $\mathcal{R}_d^{\mathcal{J}}(G)$ as follows.
\begin{definition}[The $d$-reduced $k$-graph]
Let $k \geq 3$.
Let~$G$ be a $k$-graph and let $\mathcal{J}$ be~a $(t_0, t_1, \eps, \eps_k, r)$-regular slice for~$G$. Then, for~$d >0$, we define the \emph{$d$-reduced $k$-graph $\mathcal{R}_d^{\mathcal{J}}(G)$} to be the $k$-graph whose vertices are the clusters of $\mathcal{J}$ and whose edges are all $k$-sets~$X$ of clusters of $\mathcal{J}$ such that~$G$ is $(\eps_k, r)$-regular with respect to~$X$ and $d^*(X) \geq d$.
\end{definition}

We now state the statement of the Regular Slice Lemma that we need, that is a straightforward consequence of~\cite[Lemma 10]{MR3620730}.

\begin{lemma}[Regular Slice Lemma {\cite[Lemma 10]{MR3620730}}]
\label{lem:regular slice}
Let $k \geq 3$. For all~$t_0, r, s \in \mathbb{N}$, ~$\eps_k > 0$ and all functions $r' \colon \mathbb{N} \rightarrow \mathbb{N}$ and $\eps \colon \mathbb{N} \rightarrow (0,1]$, there are integers~$t_1$ and~$n_0$ such that the following holds for all $n \geq n_0$ which are divisible by~$t_1!$. 
Let~$H$ be an $r$-edge-coloured~$K_n^{(k)}$ with colour set~$[r]$. For $i\in [r]$, let $H_i$ denote the monochromatic subgraph of $H$ in colour~$i$.
Then there exists a $(k-1)$-complex~$\mathcal{J}$ on~$V(H)$ which is a $(t_0, t_1, \eps(t_1), \eps_k, r'(t_1))$-regular slice for each~$H_i$.
\end{lemma}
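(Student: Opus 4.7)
The lemma is the natural multi-hypergraph analogue of Lemma 10 of Allen, B\"ottcher, Cooley and Mycroft~\cite{MR3620730}, and my plan is to deduce it from that result by induction on the number of colours $r$. The base case $r=1$ is exactly Lemma 10 of~\cite{MR3620730} applied to~$H_1$, so the content is entirely in the inductive step.

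For the inductive step, suppose I have already obtained a $(k-1)$-complex $\mathcal{J}^{(r-1)}$ that is a regular slice for each of $H_1, \ldots, H_{r-1}$, with parameters $(t_0', t_1', \eps', \eps_k', r'')$ chosen far more stringently than the target output parameters (so $t_0' \gg t_0$, $\eps' \ll \eps(t_1)$, and the number of clusters $t^{(r-1)}$ is much larger than eventually needed). I would then apply Lemma 10 of~\cite{MR3620730} to $H_r$ in a way that respects $\mathcal{J}^{(r-1)}$, either by invoking the version of the Regular Slice Lemma which accepts a pre-given initial partition to be refined (standard in most formulations of the hypergraph regularity lemma), or by applying Lemma 10 to $H_r$ unconstrained and then passing to the common refinement of the two slices. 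The crucial auxiliary fact is a \emph{slicing} (or refinement-preservation) statement: if $\mathcal{J}$ is a regular slice for a $k$-graph $G$ and $\mathcal{J}^*$ refines $\mathcal{J}$ into boundedly many sub-polyads per polyad, then $\mathcal{J}^*$ is again a regular slice for $G$ with explicitly controlled parameter degradation. Applied $r$ times in series, this yields a single $(k-1)$-complex that is a regular slice for every~$H_i$ simultaneously.

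The main obstacle is parameter tracking: at each iteration the internal regularity parameter $\eps$ degrades and $t$ grows, so I would have to set up a hierarchy of the form $1/n_0 \ll 1/t_1 \ll \eps^{(r)} \ll \cdots \ll \eps^{(1)} \ll \eps(t_1)$ with the innermost parameters chosen small enough to survive $r$ rounds of refinement, and analogous hierarchies for $\eps_k^{(\cdot)}$ and the density vectors. Equivalently, and perhaps more cleanly, one could rerun the proof of Lemma 10 of~\cite{MR3620730} from scratch while treating all $r$ colour classes $H_1, \ldots, H_r$ on equal footing inside the iterative regularisation step, so that each refinement round enforces regularity for every $H_i$ at once; this inflates the number of new parts produced per round by a factor depending on $r$, but keeps the total number of clusters bounded by a function of $t_0, r, s, \eps_k$ and the functions $\eps(\cdot), r'(\cdot)$. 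Either approach yields an equitable $(k-1)$-complex with the required output parameters $(t_0, t_1, \eps(t_1), \eps_k, r'(t_1))$ that is simultaneously a regular slice for each $H_i$.
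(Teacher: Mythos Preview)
The paper does not supply a proof of this lemma; it is simply stated as ``a straightforward consequence of~\cite[Lemma~10]{MR3620730}''. Notice the parameter~$s$ in the hypotheses, which plays no role in the conclusion as written: in the original Allen--B\"ottcher--Cooley--Mycroft formulation, Lemma~10 is already stated for an arbitrary finite family $G_1,\dots,G_s$ of $k$-graphs on a common vertex set, and outputs a single $(k-1)$-complex~$\mathcal{J}$ that is a regular slice for every~$G_i$ simultaneously. So the paper's derivation is just to invoke that lemma with $s=r$ and $G_i=H_i$; there is no induction and no refinement step.

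Your inductive approach --- build a slice for $H_1,\dots,H_{r-1}$, then refine to accommodate~$H_r$, appealing to a refinement-preservation (slicing) lemma --- is not incorrect in principle, and is exactly how one would bootstrap a single-graph regular slice lemma to a multi-graph one if that were genuinely needed. But it is more work than the situation demands, and the parameter bookkeeping you describe (nested hierarchies surviving $r$ rounds of degradation) is precisely the overhead that the multi-graph statement of~\cite[Lemma~10]{MR3620730} already absorbs. Your second suggestion, rerunning the regularisation argument with all colour classes handled in parallel at each iteration, is essentially the proof of the cited lemma itself.
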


The following lemma which is a direct generalisation of~\cite[Lemma 12]{MR4533706} shows that the union of the corresponding reduced graphs $\bigcup_{i \in [r]} \mathcal{R}_d^\mathcal{J}(H_i)$ is almost complete.

\begin{lemma}
\label{lem:reduced graph edge count}
Let $k \geq 3$, $\eps, \eps_k > 0, r\in \mathbb{N}$ and $r' : \mathbb{N} \to \mathbb{N}$.
Let~$H$ be an $r$-edge-coloured~$K_n^{(k)}$ and for $i \in [r]$ let~$\mathcal{J}$ be a $(\cdot,\cdot,\eps, \eps_k,r')$-regular slice for each~$H_i$. Let~$t$ be the number of clusters of~$\mathcal{J}$. Then, provided that $d \leq 1/r$, 
we have $\s{\bigcup_{i \in [r]} \mathcal{R}_d^{\mathcal{J}}(H_i)} \geq (1 - r\eps_k)\binom{t}{k}$.
\end{lemma}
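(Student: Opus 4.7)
The plan is to run a straightforward union bound combined with the observation that on any polyad $\hat{\mathcal{J}}_X$, the relative densities of the $r$ colour classes of~$H$ sum to~$1$, which forces at least one of them to be at least~$1/r$.

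First, I would unpack the definition of a $(\cdot,\cdot,\eps,\eps_k,r')$-regular slice for each~$H_i$: for each colour $i \in [r]$, there are at most $\eps_k \binom{t}{k}$ cluster $k$-sets $X$ with respect to which $H_i$ fails to be $(\eps_k,r')$-regular. A union bound over the $r$ colours shows that, with at most $r\eps_k\binom{t}{k}$ exceptions, every cluster $k$-set $X$ has the property that $H_i$ is $(\eps_k,r')$-regular with respect to~$\hat{\mathcal{J}}_X$ for every $i \in [r]$. Call such an $X$ \emph{good}, so the number of good $X$ is at least $(1-r\eps_k)\binom{t}{k}$.

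Next, I would show that every good $X$ is an edge of $\bigcup_{i \in [r]} \R_d^{\mathcal{J}}(H_i)$. Since $H = K_n^{(k)}$ and $H_1, \dots, H_r$ partition its edge set, on the polyad $\hat{\mathcal{J}}_X$ we have
\[
\sum_{i=1}^{r} \bigl|K_k(\hat{\mathcal{J}}_X) \cap H_i\bigr| \;=\; \bigl|K_k(\hat{\mathcal{J}}_X)\bigr|,
\]
(assuming $|K_k(\hat{\mathcal{J}}_X)| > 0$, which follows from $\mathcal{J}$ being equitable and $n$ large). Dividing by $|K_k(\hat{\mathcal{J}}_X)|$ yields $\sum_{i=1}^r d^*_{H_i,\mathcal{J}}(X) = 1$, so by pigeonhole some colour $i^*$ satisfies $d^*_{H_{i^*},\mathcal{J}}(X) \ge 1/r \ge d$. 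Combined with the fact that~$X$ is good (so $H_{i^*}$ is $(\eps_k,r')$-regular with respect to~$X$), this places~$X$ in $\R_d^{\mathcal{J}}(H_{i^*}) \subseteq \bigcup_i \R_d^{\mathcal{J}}(H_i)$.

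Finally, I would combine the two steps: the number of edges of $\bigcup_{i \in [r]} \R_d^{\mathcal{J}}(H_i)$ is at least the number of good $X$, which is at least $(1-r\eps_k)\binom{t}{k}$. I do not anticipate any real obstacle here; the only mildly delicate point is the edge case where $|K_k(\hat{\mathcal{J}}_X)| = 0$, but this is ruled out by the equitability of $\mathcal{J}$ together with standard dense-counting consequences of regularity (and in any case such $X$ can be absorbed into the $r\eps_k\binom{t}{k}$ exceptional set without changing the conclusion).
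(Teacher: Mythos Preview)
Your proposal is correct and follows essentially the same approach as the paper's proof: a union bound over the $r$ colours to count the $k$-sets $X$ where every $H_i$ is $(\eps_k,r')$-regular, followed by the pigeonhole observation that $\sum_{i\in[r]} d^*_{H_i}(X)=1$ forces some $d^*_{H_{i^*}}(X)\ge 1/r\ge d$. The paper does not explicitly discuss the degenerate case $|K_k(\hat{\mathcal J}_X)|=0$, but otherwise the two arguments are identical.
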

\begin{proof}
Since $\mathcal{J}$ is a $(\cdot,\cdot,\eps, \eps_k,r)$-regular slice for each $H_i$ there are at least $(1-r \eps_k)\binom{t}{k}$ $k$-sets~$X$ of clusters of $\mathcal{J}$ such that each $H_i$ is $(\eps_k, r)$-regular with respect to~$X$. Let~$X$ be such a $k$-set. Since the $H_i$ are edge-disjoint and $\bigcup_{i \in [r]} H_i = H$, we have $\sum_{i \in [r]} d_{H_i}^*(X) = 1$. Hence for some $i \in [r]$, $d_{H_i}^*(X) \geq 1/r$ and thus, since $d \leq 1/r$, we have $X \in \bigcup_{i \in [r]} \mathcal{R}_d^{\mathcal{J}}(H_i)$.
\end{proof}

Let $ r \in \mathbb{N}, 0 < d \le 1/r, \eps >0$ and $0< \eps_k < 1/2r $. Let $H$ be an $r$-edge-coloured~$K_n^{(k)}$ on colour set~$[r]$. Let $\mathcal{J}$ be a $(\cdot,\cdot,\eps, \eps_k,r')$-regular slice for each~$H_i$ with partition~$\mathcal{P}$ of~$V(H)$. By Lemma~\ref{lem:reduced graph edge count}, we have $\s{ \mathcal{R}_d^{\mathcal{J}} (H)} \ge (1-r\eps_k)\binom{t}{k} \ge \binom{t}{k}/2$. Given the following natural edge-colouring, we will be working with an edge-coloured reduced graph. For an edge~$X$ in~$\mathcal{R}_d^{\mathcal{J}} (H)$, let $e_{H, i} (X)$ denote the number of edges in colour~$i$ among the vertex clusters in~$H$ that are induced by vertices in~$X$. Assign each such $k$-set $X$ the colour~$i$ if $e_{H, i} (X) \ge e_H(X)/r$ (if multiple colours satisfy this, then choose one of them arbitrarily). Note that  $\mathcal{R}_d^{\mathcal{J}} (H)$ is an $r$-edge-coloured $k$-graph with at least $\binom{t}{k}/2$ edges. We write $\mathcal{R}$ for $\mathcal{R}_d^{\mathcal{J}} (H)$. For each~$v \in V(\R)$, we denote $V_v$ to be the cluster of~$\mathcal{P}$ corresponding to~$v$.

Let~$H$ be a $k$-graph. 
A \emph{fractional matching} in~$H$ is a function $\omega : E(H) \rightarrow [0, 1]$ such that for all $v \in V(H)$, $\omega(v) \coloneqq \sum_{e \in H: v \in e} \omega(e) \le 1$. 
The \emph{weight} of the fractional matching is defined to be $\sum_{e \in H} \omega(e)$. 
A fractional matching is \emph{tightly connected} if the subgraph induced by the edges~$e$ with $\omega(e)>0$ is tightly connected in~$H$. For a fractional matching~$\omega$, we write the size of $\omega$ as $|\omega| = \sum_{e \in E(G)}\omega(e)$.

We would use the following lemma to lift a tightly connected fractional matching in the reduced graph to a monochromatic tight cycle covering almost all vertices in the corresponding clusters.

\begin{lemma}[{\cite[Lemma 18]{MR4533706}}] \label{Lem: Lo-Pfe frac matching}
Let $1/n \ll 1/r', \eps \ll \eps_k, d_{k-1}, \dots, d_2$ and $\eps_k \ll \eps' \ll \psi, d_k, \beta, 1/k \leq 1/3$ and $1/n \ll 1/t$ such that~$t$ divides~$n$ and $1/d_i \in \mathbb{N}$ for all $2 \leq i \leq k-1$.
Let~$G$ be a $k$-graph on~$n$ vertices and $\mathcal{J}$ be a $(\cdot, \cdot,\eps, \eps_k, r')$-regular slice for~$G$. Further, let $\mathcal{J}$ have~$t$ clusters $V_1, \dots, V_t$ all of size~$n/t$ and density vector $\mathbf{d} = (d_{k-1}, \dots, d_2)$.
Suppose that the reduced graph $\mathcal{R}_{d_k}^{\mathcal{J}}(G)$ contains a tightly connected fractional matching~$\varphi$ with weight~$\mu$. Assume that all edges with non-zero weight have weight at least~$\beta$.
For each~$i \in [t]$, let $W_i \subseteq V_i$ be such that $\s{W_i} \geq ((1-3\eps')\varphi(V_i) + \eps')n/t$.
Then $G\left[\bigcup_{i \in [t]}W_i\right]$ contains a tight cycle of length~$\ell$ for each $\ell \leq (1-\psi)k\mu n/t$ that is divisible by~$k$.
\end{lemma}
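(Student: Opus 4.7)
The plan is to realise the tightly connected fractional matching $\varphi$ as a closed tight walk in $\mathcal{R}$ of the right length, and then to embed this walk as a tight cycle of length $\ell$ in $G$ via a greedy embedding driven by the hypergraph regularity hierarchy.

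First, I would construct a closed tight walk $W = v_1 v_2 \cdots v_L$ in $\mathcal{R}$ with $L = \ell$, meaning that every $k$ consecutive entries form an edge of~$\mathcal{R}$ with positive $\varphi$-weight. Since $\varphi$ is tightly connected and bounded below by $\beta$ on its support, one can rationalise $\varphi$ to integer multiplicities and then splice cycles through the tight-component auxiliary graph of the support of $\varphi$ to obtain a single closed tight walk in which each edge $e$ appears approximately $(\ell/(k\mu))\varphi(e)$ times. Consequently each cluster vertex $v$ is visited approximately $(\ell/(k\mu))\varphi(v)$ times, which is at most $(1-\psi)\varphi(v) n/t$ by the upper bound on $\ell$. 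The divisibility of $\ell$ by $k$, together with the flexibility afforded by tight connectedness and $\beta$, allows us to realise $L=\ell$ exactly.

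Second, I would embed $W$ as a tight cycle of $G$ inside $\bigcup_i W_i$ using the Dense Counting/Extension Lemma for $(\mathbf{d},\eps,\eps,1)$-regular complexes, together with the top level $(d_k,\eps_k,r')$-regularity of $G$ on the regular polyads of $\mathcal{J}$. Concretely, I would first discard from each $W_i$ a small set of ``atypical'' vertices, namely those whose extension degree into the relevant polyads deviates substantially from its expected value; the slack $\eps' n/t$ in the hypothesis on $|W_i|$ is there precisely to absorb this loss while leaving at least $(\ell/(k\mu))\varphi(V_i) \cdot (n/t)$ usable vertices in each $W_i$. Then I would greedily extend a tight path $x_1 x_2 \cdots x_j$ one vertex at a time, choosing $x_{j+1} \in W_{v_{j+1}}$ so that $x_{j-k+2}\cdots x_{j+1}$ supports a copy of $K_k^{(k-1)}$ in $\mathcal{J}^{(k-1)}$ and is an edge of $G$. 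The regularity chain guarantees that for every ``typical'' tuple the number of valid extensions in $W_{v_{j+1}}$ is of order $d_k d_{k-1}\cdots d_2 \cdot n/t$, so the process never gets stuck, and by spreading the chosen vertices roughly uniformly within each cluster no $W_i$ is exhausted prematurely.

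Finally, the walk must be closed into a genuine tight cycle, and this is the main obstacle: the last $k-1$ chosen vertices must coincide with the first $k-1$. I would handle this by reserving a short flexible segment near the end of $W$, embedding the rest of $W$ greedily as above, and then invoking the Extension Lemma along this reserved segment to connect the current endpoint $(k-1)$-tuple to $(x_1,\ldots,x_{k-1})$. The tight connectedness of~$\varphi$ provides a short tight walk in $\mathcal{R}$ realising any such connection, and the remaining slack in the $W_i$ plus the abundance of extensions guaranteed by regularity ensure the closing step succeeds. This yields a tight cycle of length exactly $\ell$ in $G$, completing the proof.
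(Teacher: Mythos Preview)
The paper does not prove this lemma at all: it is quoted verbatim as \cite[Lemma~18]{MR4533706} and used as a black box, so there is no ``paper's own proof'' to compare against. Your sketch is in the right spirit for how results of this type are established in the hypergraph-regularity literature (e.g.\ in \cite{MR3620730} and \cite{MR4533706}): one realises the tightly connected fractional matching as a long closed tight walk in the reduced graph visiting each cluster roughly $\varphi(V_i)\ell/(k\mu)$ times, and then embeds it vertex-by-vertex using extension/counting lemmas for regular complexes, with a reserved segment to close the cycle. That said, as a self-contained proof your outline omits substantial technical work---the rationalisation of $\varphi$ into a single closed walk of prescribed length, the restriction lemma needed to pass to the subsets $W_i$ while preserving regularity (this is where the $\eps'$ slack and the hierarchy $\eps_k\ll\eps'$ are actually used), and the precise ``typical-vertex'' bookkeeping that keeps the greedy embedding alive---so it would not stand alone without importing those lemmas from the cited sources.
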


Another tool we use in the next section is the notion of a hypergraph being $(\mu, \alpha)$-dense. For constants $\mu,\alpha > 0$, we say that a $k$-graph~$H$ on~$n$ vertices is $(\mu,\alpha)$\emph{-dense} if, for each $i \in [k-1]$, we have $d_H(S) \geq \mu \binom{n}{k-i}$ for all but at most~$\alpha\binom{n}{i}$ sets $S \in \binom{V(H)}{i}$ and $d_H(S) = 0$ for all other $S \in \binom{V(H)}{i}$. We use the following result from~\cite{MR4533706}.
\begin{proposition}[{\cite[Proposition 5]{MR4533706}}]
\label{prop:dense}
Let $1/n \ll \alpha \ll 1/k \leq 1/2$.
Let~$H$ be a $k$-graph on~$n$ vertices with $\s{H} \geq (1-\alpha)\binom{n}{k}$. Then there exists a subgraph~$H'$ of~$H$ such that $V(H') = V(H)$ and~$H'$ is $(1-2\alpha^{1/4k^2}, 2\alpha^{1/4k})$-dense. 
\end{proposition}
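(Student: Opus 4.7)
The plan is to construct $H'$ via a two-phase edge-cleaning procedure. Set $\gamma := \alpha^{1/4k^2}$, so that the target density threshold is $1-2\gamma$ and the target bad-fraction is $2\alpha^{1/4k} = 2\gamma^k$.

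\emph{Phase 1 (global cleaning).} Define
\[
H_1 := \{ e \in H : \text{ for every } S \subsetneq e \text{ with } |S| \in [k-1],\ d_H(S) \ge (1-\gamma)\tbinom{n-|S|}{k-|S|} \}.
\]
The standard averaging identity $\sum_{|S|=i} d_H(S) = \binom{k}{i}|H| \ge \binom{k}{i}(1-\alpha)\binom{n}{k}$ implies that the number of $i$-sets $S$ with $d_H(S) < (1-\gamma)\binom{n-i}{k-i}$ is at most $(\alpha/\gamma)\binom{n}{i}$. Each such ``bad'' subset kills at most $\binom{n-|S|}{k-|S|}$ edges of $H$, and the identity $\binom{n}{i}\binom{n-i}{k-i} = \binom{k}{i}\binom{n}{k}$ together with $\sum_{i=1}^{k-1}\binom{k}{i} < 2^k$ gives $|H - H_1| \le 2^k(\alpha/\gamma)\binom{n}{k} = O(\alpha^{1-1/4k^2})\binom{n}{k}$.

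\emph{Phase 2 (local refinement).} A few $i$-sets may still satisfy $0 < d_{H_1}(S) < (1-2\gamma)\binom{n}{k-i}$, namely those whose $H$-degree lay above the Phase-1 threshold but was whittled below the final threshold by the global cleaning. Remove all edges through these sets to obtain $H'$. By construction every $S$ with $|S| \in [k-1]$ then has either $d_{H'}(S) = 0$ or $d_{H'}(S) \ge (1-2\gamma)\binom{n}{k-i}$, giving the density lower bound. Crucially, any Phase-2-triggered set $S$ satisfies $d_H(S) \ge (1-\gamma)\binom{n-i}{k-i}$ (else it would already have been pruned in Phase 1), so the edge-loss $L_S$ through $S$ in $H - H_1$ obeys $L_S \ge \gamma\binom{n}{k-i}$. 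Applying Markov's inequality to $\sum_S L_S = \binom{k}{i}|H-H_1|$, the number of triggered sets at level $i$ is at most $\binom{k}{i}|H-H_1|/(\gamma\binom{n}{k-i}) = O(\alpha^{1-1/2k^2})\binom{n}{i}$, and the additional edges removed in Phase 2 are at most $O(\alpha^{1-1/2k^2})\binom{n}{k}$.

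Finally, the isolated $i$-sets of $H'$ decompose into (i) those with $d_H(S)=0$ already in $H$ (at most $\alpha\binom{n}{i}$ by the same averaging identity); (ii) those with $0 < d_H(S) < (1-\gamma)\binom{n-i}{k-i}$ pruned in Phase 1 (at most $(\alpha/\gamma)\binom{n}{i} = \alpha^{1-1/4k^2}\binom{n}{i}$); and (iii) those triggered in Phase 2 (at most $O(\alpha^{1-1/2k^2})\binom{n}{i}$). Each of these is bounded by $\alpha^{1/4k}\binom{n}{i}$ for $\alpha$ small, summing to at most $2\alpha^{1/4k}\binom{n}{i}$, as required. The main obstacle is the potential cascade in Phase 2: edge removals could further drop degrees of other subsets and recreate bad sets. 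This is handled either by iterating Phase 2 to convergence, each iteration's removals contracting quickly via the Markov bound above, or by tightening the Phase-1 threshold from $1-\gamma$ to $1-\gamma/k$ to build enough margin that a single Phase-2 pass suffices.
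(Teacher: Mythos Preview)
The paper does not give its own proof of this proposition; it is quoted from \cite{MR4533706}. So there is no in-paper argument to compare against, only the question of whether your sketch stands on its own.

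Your Phase~1 and the first pass of Phase~2 are analysed correctly, and you are right that the cascade is the crux. However, neither of your two proposed fixes closes it. For the iteration: a set $S$ triggered at pass $j\ge 2$ satisfied $d_{H^{(j-2)}}(S)\ge(1-2\gamma)\tbinom{n}{k-i}$ (else it would have been triggered or isolated earlier), while $d_{H^{(j-1)}}(S)$ may lie only just below that same threshold; the loss at pass $j{-}1$ through $S$ can be a single edge, so the per-pass Markov bound gives no contraction. If instead you measure the cumulative loss from $H$, the bound becomes $|T^{(j)}_i|\le\tbinom{k}{i}\,|H-H^{(j-1)}|\big/\bigl(\gamma\tbinom{n}{k-i}\bigr)$, and feeding this back into $R_j\le\sum_i|T^{(j)}_i|\tbinom{n}{k-i}$ produces a recursion with ratio $2^k/\gamma\gg 1$, not a contraction. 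Tightening the Phase-1 threshold to $1-\gamma/k$ shifts constants but does not touch this ratio, so a single Phase-2 pass can still push previously good sets into the forbidden strip $\bigl(0,(1-2\gamma)\tbinom{n}{k-i}\bigr)$. Your final count of isolated sets also omits a fourth type: sets that are never themselves triggered but whose edges all disappear because each contains some \emph{other} triggered subset; bounding these again reduces to bounding $|H-H'|$.

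The exponents $1/(4k^2)$ and $1/(4k)$ in the statement hint that a correct argument runs through roughly $k$ rounds of cleaning with well-separated thresholds (so that the margin at each round absorbs the cumulative edge loss from the others), or proceeds by induction on $k$ via link graphs; your sketch identifies the obstacle but stops short of carrying out either route.
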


\section{The reduction of Lemma~\ref{lma: reserved set absorbs everything}} \label{sec: triangle cycles, larger absorption, tight cycle}
We now generalise the triangle cycle from Erd\H{o}s, Gy\'arf\'as and Pyber~\cite{MR1088628}. 
Let $t, k, m$ be positive integers such that $m=(k-1)t$. The \emph{$k$-uniform triangle cycle $T_m^{(k)}$} consists of a $k$-uniform tight cycle $a_1\cdots a_m$ and a vertex set $B = \{b_1,\dots, b_t\}$ such that $$a_{(k-1)i-(k-2)}a_{(k-1)i-(k-3)}\dots a_{(k-1)i}b_i a_{(k-1)i+1}\dots a_{(k-1)i + (k-1)}$$ is a $k$-uniform tight path for all $i \in [t]$ (when subscripts are taken modulo $m$). Moreover, there is a tight cycle on $V\left({T}_m^{(k)}\right) \setminus B'$ for all $B' \subseteq B$. Note that $\Delta\left({T}_m^{(k)}\right) = 2k$.

For a $k$-graph $H$ and $r \in \N$, the \emph{multicolour Ramsey number} $R_r(H)$ is the minimum positive integer $N$ such that any $r$-edge-coloured $K_N^{(k)}$ contains a monochromatic copy of~$H$. We use the following result of Conlon, Fox and Sudakov  on the Ramsey number of hypergraphs with bounded maximum degree, to obtain a monochromatic triangle cycle~${T}_m^{(k)}$.

\begin{theorem}[Conlon, Fox and Sudakov {\cite[Theorem 5]{MR2532871}}]\label{thm: Ramsey result}
    Let $\Delta, k, r \in \mathbb{N}$. There exists a constant $c(\Delta, k, r)$ such that if $H$ is a $k$-graph on $n$ vertices with maximum degree~$\Delta$, then $R_r(H) \le c(\Delta, k, r) n$.
\end{theorem}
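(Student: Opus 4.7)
The plan is to prove this by the hypergraph regularity method combined with an embedding argument, in the standard Chv\'atal--R\"odl--Szemer\'edi--Trotter style generalised to $k$-graphs. Set $N = c(\Delta, k, r)\, n$ for a sufficiently large constant $c$, and choose hierarchies $1/N \ll \eps \ll \eps_k \ll d \ll 1/(\Delta r k)$. Given an $r$-edge-coloured $K_N^{(k)}$ with monochromatic parts $H_1, \dots, H_r$, first apply Lemma~\ref{lem:regular slice} to obtain a common $(k-1)$-complex $\mathcal{J}$ on $V(K_N^{(k)})$ that is a $(t_0, t_1, \eps, \eps_k, r')$-regular slice simultaneously for each~$H_i$, with $t$ clusters of equal size $N/t$, where $t$ is bounded in terms of $\Delta, k, r$ only.

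By Lemma~\ref{lem:reduced graph edge count} and pigeonholing over the $r$ colours, there exists a colour $i^* \in [r]$ such that the reduced graph $\mathcal{R}_d^{\mathcal{J}}(H_{i^*})$ has at least $\binom{t}{k}/(2r)$ edges. Next, apply Proposition~\ref{prop:dense} (after rescaling) to extract from $\mathcal{R}_d^{\mathcal{J}}(H_{i^*})$ a $(\mu, \alpha)$-dense subhypergraph $\mathcal{R}'$ for constants $\mu, \alpha$ that depend only on $\Delta, k, r$. The density of $\mathcal{R}'$ guarantees many $|V(H)|$-sets of clusters in which every sub-$k$-set is regular and dense in colour $i^*$; the blow-up of $\mathcal{R}'$ in $H_{i^*}$ is thus the structure into which I will embed a copy of $H$.

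The embedding itself proceeds vertex by vertex through a fixed ordering $v_1, \dots, v_n$ of $V(H)$. At each step I maintain, for every already-processed vertex $v_j$, a candidate set $C_j \subseteq V_{x_j}$ inside a chosen cluster $V_{x_j}$ so that each subset of at most $k-1$ images of partially-embedded neighbours of $v_j$ has many common $H_{i^*}$-extensions supported on $\mathcal{J}$. To place $v_{i+1}$, I first choose its host cluster so that together with the clusters assigned to its (at most $\Delta$) already-embedded neighbours the resulting $k$-sets lie in $\mathcal{R}'$, which is possible because $\mathcal{R}'$ is $(\mu,\alpha)$-dense and $\Delta$ is a constant. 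Then, using the $(\eps_k, r')$-regularity of $H_{i^*}$ with respect to the relevant polyads of $\mathcal{J}$ and the hypergraph counting lemma, I select an image for $v_{i+1}$ from its candidate set and update each neighbour's candidate set, losing only a constant factor in size per step; bounded degree ensures these updates are applied at most $\Delta$ times per cluster.

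The main obstacle is the embedding step. Unlike in the graph case, a $k$-graph counting lemma constrains not only the chosen vertex images but also their interaction with the supporting $(k-1)$-complex $\mathcal{J}$, and one must carry candidate lists not just for vertices but for the relevant $(k-1)$-tuples inside each polyad. Choosing $r' = r'(\Delta, k, r)$ large enough relative to $\Delta$, so that the $r'$-regularity can absorb the $\binom{\Delta}{k-1}$ restriction sets created by each newly-embedded vertex, is the crucial technical point; once this is set up, the greedy embedding terminates successfully, producing the desired monochromatic copy of $H$ and thus the linear Ramsey bound $R_r(H) \le c(\Delta, k, r) n$.
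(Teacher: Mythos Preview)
The paper does not prove Theorem~\ref{thm: Ramsey result}; it is quoted as a black box from Conlon, Fox and Sudakov and is used only to deduce Corollary~\ref{Cor: Triangle cycle exists}. There is therefore no proof in this paper to compare your argument against.

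As for the sketch itself: the high-level strategy---regularise, pigeonhole a colour in the reduced $k$-graph, then embed~$H$ greedily exploiting bounded degree---is indeed how Conlon--Fox--Sudakov (and earlier Cooley--Fountoulakis--K\"uhn--Osthus) proceed, so you are on the right track. Two steps, however, do not work as written. First, Proposition~\ref{prop:dense} requires its input $k$-graph to have at least $(1-\alpha)\binom{t}{k}$ edges with $\alpha \ll 1/k$, whereas $\mathcal{R}_d^{\mathcal{J}}(H_{i^*})$ is only guaranteed $\binom{t}{k}/(2r)$ edges; no ``rescaling'' repairs this gap. Second, you speak of ``many $|V(H)|$-sets of clusters'', but $|V(H)| = n$ can be arbitrarily large while the number~$t$ of clusters is bounded in terms of~$\Delta, k, r$ only, so you cannot allocate one cluster per vertex of~$H$. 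What is actually needed is a Ramsey/Tur\'an argument in the reduced graph to locate a monochromatic~$K_m^{(k)}$ for some bounded $m = m(\Delta, k)$, a partition of~$V(H)$ into~$m$ classes so that every edge of~$H$ is transversal to the partition (possible because~$\Delta$ is bounded), and then a genuine hypergraph embedding/counting lemma to place each class into its cluster. Your final paragraph correctly identifies this last step as the crux, but the outline does not resolve it.
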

Note that $c(\Delta, k, r)$ is a tower-type function in~$r$ and~$k$. Since $T_m^{(k)}$ has bounded degree, we get the following corollary.

\begin{corollary}\label{Cor: Triangle cycle exists}
  Let $m, n, r, k \in \mathbb{N}$ and $1/n \ll \alpha \ll 1/r, 1/k$ be such that $m = (k-1) \alpha n$. Then a monochromatic triangle cycle ${T}_m^{(k)}$ exists in every $r$-edge-coloured~$K_n^{(k)}$.
\end{corollary}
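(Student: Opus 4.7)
The plan is to apply Theorem~\ref{thm: Ramsey result} directly to $H = T_m^{(k)}$. First I would record two elementary facts about the triangle cycle. From $m = (k-1)t$ and $m = (k-1)\alpha n$ we get $t = \alpha n$, so the total number of vertices is $|V(T_m^{(k)})| = m + t = kt = k\alpha n$, which is linear in $n$. Moreover, the definition of $T_m^{(k)}$ already records $\Delta(T_m^{(k)}) = 2k$, a quantity depending only on $k$. Hence $T_m^{(k)}$ is a $k$-graph of bounded maximum degree, so Theorem~\ref{thm: Ramsey result} is applicable with $\Delta = 2k$.

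This application yields
\[
R_r(T_m^{(k)}) \le c(2k, k, r) \cdot k\alpha n,
\]
where $c(2k, k, r)$ is a constant depending only on $r$ and $k$. The hierarchy $\alpha \ll 1/r, 1/k$ permits us to choose $\alpha$ small enough (as a function of $r$ and $k$) to ensure $\alpha \le (k \cdot c(2k, k, r))^{-1}$, which forces $R_r(T_m^{(k)}) \le n$. Combined with $1/n \ll \alpha$, this gives $n \ge R_r(T_m^{(k)})$, so every $r$-edge-coloured $K_n^{(k)}$ contains a monochromatic copy of $T_m^{(k)}$, as required.

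There is no substantial obstacle here: the proof reduces to a single appeal to the Conlon--Fox--Sudakov Ramsey bound, made possible by the fact that the triangle cycle has bounded maximum degree. The one point worth flagging is that the Ramsey constant $c(2k, k, r)$ is of tower type in $r$, which at first glance might seem to conflict with the polynomial goal of Theorem~\ref{Thm: ourthm}. This is tolerable because the corollary is used only to locate a single monochromatic triangle cycle on a vanishingly small fraction of the vertices; the tower-type dependence is entirely absorbed into the smallness of $\alpha$ and does not propagate into the final cycle count, which is controlled by the absorption and multigraph arguments developed in the later sections.
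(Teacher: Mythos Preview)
Your proposal is correct and matches the paper's approach exactly: the paper simply states that since $T_m^{(k)}$ has bounded maximum degree, the corollary follows from Theorem~\ref{thm: Ramsey result}, without spelling out the details you have written. Your additional remarks about the tower-type constant being absorbed into $\alpha$ are accurate and well-observed.
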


The main aim of this section is to reduce Lemma~\ref{lma: reserved set absorbs everything} to the following lemma.

\begin{lemma}\label{Lma: how to absorb}
    Let $k, r \in \mathbb{N}$, $\delta_0 = \left(2r\right)^{-2^{k+1}}$ and $\alpha = 2^5 rk (2r)^{2^{k+1}}$. Let $H$ be an $r$-edge-coloured~$K_n^{(k)}$ and $X, Z$ be disjoint subsets of $V(H)$ such that $|X| \ge \alpha|Z|$. Then there exists a set~$\mathcal{C}$ of vertex-disjoint monochromatic tight cycles covering~$Z$ with $|\mathcal{C}|\le 2^{18}r\delta_0^{-5}$ and $|V(\mathcal{C})\cap X| = (k-1)|Z|$. 
\end{lemma}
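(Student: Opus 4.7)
The plan is to iteratively extract monochromatic triangle cycles $T_m^{(k)}$ whose removable vertices~$B$ lie in the uncovered portion of~$Z$ and whose cycle vertices~$A$ lie in the unused portion of~$X$. Since each such $T_m^{(k)}$ uses exactly $(k-1)|B|$ vertices of~$X$ to cover $|B|$ vertices of~$Z$, the count $|V(\mathcal{C}) \cap X| = (k-1)|Z|$ follows automatically once~$Z$ is fully covered.

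For one iteration, maintain $Z' \subseteq Z$ (uncovered) and $X' \subseteq X$ (unused), initially equal to $Z$ and $X$. Apply two pigeonhole steps: for each $z \in Z'$ some colour $c_z$ makes the colour-$c_z$ link of $z$ in $X'$ have size at least $\binom{|X'|}{k-1}/r$, and then some colour $c^*$ is the common choice $c_z$ for a subset $Z^* \subseteq Z'$ with $|Z^*| \geq |Z'|/r$. Next, use Proposition~\ref{prop:dense} to clean the dense colour-$c^*$ subgraph of $H[X']$ into a $(\mu,\alpha')$-dense $k$-graph, and apply Theorem~\ref{Thm: almostcoverref} to find a monochromatic tight cycle $a_1 \cdots a_m$ of colour $c^*$ inside $X'$ with $m=(k-1)t$ and $t$ proportional to $|Z^*|$. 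Finally, for each of the $t$ ``slots'' in the triangle cycle, inject a distinct $b_i \in Z^*$ so that $b_i$ forms all $k$ required tight edges with the window of $2(k-1)$ consecutive $a$-vertices around slot~$i$, all in colour $c^*$; the dense-link property combined with a random/Hall-type matching argument (using the concentration inequalities from Section~\ref{Notation}) yields such an injective assignment.

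Remove $V(T_m^{(k)})$ from $Z'$ and $X'$ and iterate. The ratio $|X'|/|Z'| \geq \alpha$ is preserved automatically because each triangle cycle spends exactly $k-1$ vertices of~$X$ per covered vertex of~$Z$. Each iteration covers at least a positive $(r,k)$-dependent fraction of $Z'$, so at most $2^{18}r\delta_0^{-5}$ iterations suffice to exhaust $Z'$; any constant-sized remainder at the end is mopped up by degenerate tight cycles on at most $k$ vertices, still within the cycle budget.

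The main obstacle is the injective slot-assignment: each slot requires a $z \in Z^*$ whose colour-$c^*$ link in $X'$ simultaneously contains $k$ prescribed, overlapping $(k-1)$-subsets of the $a$-vertices. Mere $1/r$-density of the link is not enough since these $(k-1)$-subsets are heavily correlated, so one first passes to a pseudorandom/uniformly dense version of the link via Proposition~\ref{prop:dense} and then uses concentration to find the matching. The iterated-exponential dependence of $\alpha$ and $\delta_0^{-1}$ on $k$ in the statement reflects the parameter degradation across this reduction (roughly doubling at each of $\sim k$ levels of a hidden inductive argument on the uniformity), while for fixed~$k$ the dependence on~$r$ stays polynomial, matching the lemma.
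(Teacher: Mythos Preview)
Your approach is fundamentally different from the paper's and, as written, has two genuine gaps that I do not see how to repair.

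\textbf{Gap 1: the slot-assignment.} After you fix the cycle $a_1\cdots a_m$ inside~$X'$, you need, for each slot~$i$, some $z\in Z^*$ whose colour-$c^*$ link contains the $k$ specific $(k-1)$-subsets of the window $a_{(k-1)(i-1)+1},\dots,a_{(k-1)(i+1)}$. But all you know about the link of~$z$ is that it has density at least~$1/r$ in~$\binom{X'}{k-1}$; this says nothing about any \emph{prescribed} $(k-1)$-set, let alone $k$ overlapping ones. Proposition~\ref{prop:dense} only tells you that most $(k-1)$-sets lie in the (cleaned) link, but the windows of a fixed cycle are an adversarially chosen $O(m)$-element family, and concentration inequalities are of no help because nothing here is random once $a_1\cdots a_m$ has been chosen. (Incidentally, you never establish that $H_{c^*}[X']$ is dense --- $c^*$ was selected from links out of~$Z$, not from edge counts inside~$X'$ --- and in fact the tight cycle on all of $V(T_m^{(k)})$ uses \emph{no} edge lying entirely in~$X'$, so the detour through Theorem~\ref{Thm: almostcoverref} is a red herring.)

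\textbf{Gap 2: the iteration count.} Even granting one cycle per iteration covering a positive $(r,k)$-fraction of~$Z'$, you get geometric decay: after $t$ rounds $|Z'|\le |Z|(1-c_{r,k})^t$, so reducing $|Z'|$ to a constant needs $\Theta_{r,k}(\log|Z|)$ rounds. Since $|Z|$ can be linear in~$n$, this produces $\Theta_{r,k}(\log n)$ cycles, not the constant $2^{18}r\delta_0^{-5}$ the lemma demands.

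The paper avoids both issues by a different mechanism (Section~\ref{section: Multigraph properties}): it splits $X$ into $k-1$ balanced parts, aligns them into ``columns'' via a random permutation, and for each colour~$j$ builds an auxiliary multigraph~$G^j$ on the columns whose edges are coloured by vertices $z\in Z^j$, with an edge of colour~$z$ between columns $i,i'$ whenever those two columns span a monochromatic $K_{k-1}^{(k-1)}(2)$ in the link of~$z$. A rainbow cycle in~$G^j$ then \emph{is} a monochromatic tight cycle in~$H$ passing through the corresponding $z$'s. Lemma~\ref{lma: Lemma Rainbow Cycle System} then finds a rainbow cycle system using every colour of~$G^j$ with only $2^{18}\delta_0^{-5}$ cycles, so all of~$Z^j$ is covered at once rather than a fraction at a time; summing over the $r$ colours gives the bound. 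The point is that the $z$'s are built into the cycle from the start (as colours of the multigraph), not injected after the fact.
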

Suppose that Lemma~\ref{Lma: how to absorb} holds. A naive approach of proving Lemma~\ref{lma: reserved set absorbs everything} is to first reserve a monochromatic $T_m^{(k)}$ using Corollary~\ref{Cor: Triangle cycle exists}. We then apply Lemma~\ref{Lma: how to absorb} with $B$ (from $T_m^{(k)}$) playing the role of $X$ to cover the remaining vertices. However, $T_m^{(k)}$ obtained is too short (unless the number of cycles we remove using Proposition~\ref{Prp: leftover} in the proof of Theorem~\ref{Thm: ourthm} is a super polynomial of~$r$). In order to achieve this, we use the following lemma, which is motivated by Gy\'{a}rf\'{a}s, Ruszink\'{o}, S\'{a}rk\"{o}zy and Szemer\'{e}di~\cite{MR2274080}.

\begin{lemma} \label{Lma: larger absorption}
   Let $1/n \ll \psi \ll 1/r, 1/k$. Let $\delta_0 = \left(2r\right)^{-2^{k+1}} \text{ and } \eps^* = (2r)^{-2^{k+7}}/k$. Let~$H$ be an $r$-edge-coloured $K_n^{(k)}$.
   Then there exists a vertex set $U^*$ with $|U^*| \le 3n/4$ such that, for any vertex subset $B^*$ of $V(H)\setminus U^*$ with $|B^*| = 4 \eps^* n$, $H[B^* \cup U^*]$ can be covered by at most $2^{18}r\delta_0^{-5}+1$ vertex-disjoint monochromatic tight cycles and at most $\psi n$ isolated vertices. 
\end{lemma}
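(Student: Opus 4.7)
The direct strategy of reserving a single monochromatic triangle cycle $T_m^{(k)}$ supplied by Corollary~\ref{Cor: Triangle cycle exists} and absorbing $B^*$ into its $B$-part via Lemma~\ref{Lma: how to absorb} fails. The Ramsey bound $c(2k,k,r)$ in Theorem~\ref{thm: Ramsey result} is tower-type in $r,k$, so such a triangle cycle has a $B$-part of size only $n/c(2k,k,r)$, whereas the hypothesis of Lemma~\ref{Lma: how to absorb} applied to $|B^*|=4\eps^*n$ demands absorbing capacity at least $\alpha\cdot 4\eps^*n = 2^7r(2r)^{-63\cdot 2^{k+1}}n$, a quantity that is merely polynomial in $r$ but far exceeds $n/c(2k,k,r)$ for large~$r$. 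My plan is to bypass Ramsey entirely by invoking the regularity framework of Section~\ref{section:regularity} and the Tur\'an-type Theorem~\ref{Thm: almostcoverref} in order to build $U^*$ as the lift of a long monochromatic tight cycle in the reduced graph of $H$.

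\textbf{Construction of $U^*$.} Apply Lemma~\ref{lem:regular slice} to the $r$-edge-coloured $H$ to obtain a $(k-1)$-complex $\mathcal{J}$ with $t$ clusters $V_1,\dots,V_t$ of equal size $n/t$ that is a regular slice for every monochromatic subgraph $H_i$. Lemma~\ref{lem:reduced graph edge count} then gives $|\R|\ge(1-r\eps_k)\binom{t}{k}$ in the reduced graph $\R=\R_d^{\mathcal{J}}(H)$ equipped with its natural $r$-edge-colouring, and by pigeonhole some colour~$c$ has $|\R_c|\ge\binom{t}{k}/(2r)$. Apply Theorem~\ref{Thm: almostcoverref} to $\R_c$ with $\alpha=1/(3r)$ and $\delta=1/(6r)$ to produce a tight cycle $C_\R\subseteq\R_c$ of some length $L$ divisible by $k$ with $L$ close to $t/(3r)$. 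Set
\[
U^* \;=\; \bigcup_{X\in V(C_\R)} V_X, \qquad |U^*|=L\cdot n/t\le n/(3r)\le 3n/4.
\]

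\textbf{Absorbing $B^*$ and covering the rest.} Given $B^*\subseteq V(H)\setminus U^*$ with $|B^*|=4\eps^*n$, apply Lemma~\ref{Lma: how to absorb} to $H[U^*\cup B^*]$ with $Z=B^*$ and $X=U^*$. The hypothesis $|X|\ge\alpha|Z|$ reduces to $1/(3r)\ge 2^7r(2r)^{-63\cdot 2^{k+1}}$, which holds comfortably. We obtain a set $\mathcal{C}_1$ of at most $2^{18}r\delta_0^{-5}$ vertex-disjoint monochromatic tight cycles covering $B^*$ and using exactly $(k-1)|B^*|$ vertices of $U^*$; by a suitable choice in the invocation we ensure that no cluster $V_X$ of $C_\R$ loses more than $2\eps'\cdot n/t$ vertices to $\mathcal{C}_1$. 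Now put weight $1/k$ on each edge of $C_\R$, yielding a tightly connected fractional matching $\varphi$ in $\R$ of colour $c$ with $\varphi(V)=1$ for every $V\in V(C_\R)$, and set $W_{V_X}=V_X\setminus V(\mathcal{C}_1)$, so $|W_{V_X}|\ge(1-2\eps')n/t$. Applying Lemma~\ref{Lem: Lo-Pfe frac matching} to the monochromatic $k$-graph $H_c$ (for which $\mathcal{J}$ is also a regular slice) with $\varphi$ and these $W$-sets produces a single monochromatic tight cycle $C_2$ of colour $c$ covering all of $U^*\setminus V(\mathcal{C}_1)$ except at most $\psi n$ isolated vertices. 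Then $\mathcal{C}_1\cup\{C_2\}$ is a collection of at most $2^{18}r\delta_0^{-5}+1$ vertex-disjoint monochromatic tight cycles, as required.

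\textbf{Main obstacle.} The central technical difficulty is enforcing the uniform-distribution condition in the absorption step: a black-box application of Lemma~\ref{Lma: how to absorb} need not spread its $(k-1)|B^*|$ used vertices evenly over the $L$ clusters of $C_\R$, and if too many are concentrated in a single cluster $V_X$ then the precondition $|W_{V_X}|\ge(1-2\eps')n/t$ of Lemma~\ref{Lem: Lo-Pfe frac matching} breaks down. Resolving this will require either revisiting the proof of Lemma~\ref{Lma: how to absorb} to confirm (or add) a cluster-respecting allocation of~$X$, or preprocessing $B^*$ into $L$ roughly equal pieces absorbed cluster-by-cluster while keeping the total cycle count within $2^{18}r\delta_0^{-5}$. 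A secondary point is checking that the parameter hierarchy $\eps_k\ll\eps'\ll\psi$ in Lemma~\ref{Lem: Lo-Pfe frac matching} is consistent with the lower bound $\eps'\ge O(r\eps^*)$ forced by the uniform-distribution tolerance, which is fine because $r\eps^*\ll 1/r$ by the specification of $\eps^*$.
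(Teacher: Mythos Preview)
The obstacle you isolate in the final paragraph is genuine and fatal for the tight-cycle-in-$\R$ approach, and neither of your proposed fixes works. A black-box invocation of Lemma~\ref{Lma: how to absorb} gives no control over which vertices of $X$ are consumed; in the worst case all $(k-1)|B^*|=4(k-1)\eps^*n$ absorbed vertices come from a single cluster $V_X$ of size $n/t$. The hypothesis $|W_{V_X}|\ge(1-2\eps')n/t$ of Lemma~\ref{Lem: Lo-Pfe frac matching} would then force $\eps'\ge 2t(k-1)\eps^*$, but $t$ is produced by the regularity lemma and sits below $\eps_k$ in the hierarchy, so no fixed polynomial relation between $\eps'$ and $\eps^*$ can absorb a factor of $t$. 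Your second fix (split $B^*$ into $L$ pieces and absorb cluster-by-cluster) multiplies the cycle count by $L\approx t/(3r)$, which is again unbounded in $r,k$. Your first fix (open up Lemma~\ref{Lma: how to absorb}) is unpromising: the cycles there arise from rainbow cycles in an auxiliary multigraph built via random partitions of $X$, with no mechanism tied to the cluster structure of $\mathcal{J}$.

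The deeper reason the tight cycle $C_\R$ is the wrong reduced object is rigidity: every cluster of $C_\R$ lies in exactly $k$ edges, so lowering the fractional weight at one deficient cluster forces the same on its $2(k-1)$ neighbours and the defect propagates around the cycle rather than dissipating. The paper replaces $C_\R$ by a $\delta t$-\emph{half-dense matching} in a monochromatic tight component of $\R$ (Corollary~\ref{Cor: half-dense from k-graph}), which yields an auxiliary bipartite graph in which every vertex on one side has $\delta t$ alternative partners. This expansion is exactly what is needed: the absorbing reservoir is placed only in a designated set $X^+$ of clusters, a matching deficit is pre-planted in a disjoint set $X^-$, and after absorption the unknown per-cluster surpluses in $X^+$ are balanced against $X^-$ by an augmenting-path argument (Lemma~\ref{Lem: nice fractional matching in bpt}, built on Lemma~\ref{Lma: X+ X-}) before Lemma~\ref{Lem: Lo-Pfe frac matching} is invoked. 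In short, the missing idea in your proposal is a reduced structure with enough edge-redundancy to rebalance the fractional matching after an uncontrolled absorption; a tight cycle does not have it, a half-dense matching does.
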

This lemma will be proved in Section~\ref{subsec: matchings}.
Observe that this lemma significantly reduces the size of the leftover vertex set. However, we would require precisely $4\eps^*n$ leftover vertices, which may not be guaranteed by Proposition~\ref{Prp: leftover}. We deal with it using the following lemma.

\begin{lemma}\label{Lem: tight cycle reservoir}
Let $1/n \ll \psi \ll 1/r, 1/k$.  
Let $H$ be an $r$-edge-coloured~$K_n^{(k)}$. Then there exists~a vertex subset $W \subseteq V(H)$ such that $n/5r \le |W| \le n/4r$ and $H[W]$ contains a monochromatic tight cycle of length $\ell$ for all $\ell \le (1-\psi)|W|$ with $\ell \equiv 0 \Mod{k}$. 
\end{lemma}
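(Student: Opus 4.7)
The proof goes through hypergraph regularity. The idea is to apply the Regular Slice Lemma to~$H$, extract a monochromatic tight cycle of a carefully chosen length in the reduced graph, and then lift it into~$H$ using Lemma~\ref{Lem: Lo-Pfe frac matching}, which will guarantee monochromatic tight cycles of all lengths up to $(1-\psi)|W|$ inside the reservoir $W$ formed by the clusters of that cycle.

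Set up constants $1/n \ll 1/t_1 \ll \eps \ll \eps_k \ll d \ll \psi \ll 1/r, 1/k$ and apply Lemma~\ref{lem:regular slice} to obtain a $(k-1)$-complex~$\mathcal{J}$ on $V(H)$ with $t$ equal-sized clusters $V_1,\dots,V_t$ that is simultaneously a regular slice for each monochromatic subgraph~$H_i$. By Lemma~\ref{lem:reduced graph edge count}, the combined reduced graph $\bigcup_{i\in [r]}\mathcal{R}_d^{\mathcal{J}}(H_i)$ has at least $(1-r\eps_k)\binom{t}{k}\ge \binom{t}{k}/2$ edges, so by averaging some colour~$c$ satisfies $|\mathcal{R}_c| \ge \binom{t}{k}/(2r)$, where $\mathcal{R}_c := \mathcal{R}_d^{\mathcal{J}}(H_c)$.

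Apply Theorem~\ref{Thm: almostcoverref} to~$\mathcal{R}_c$ with $\alpha = \delta = 1/(4r)$: since $|\mathcal{R}_c| \ge (\alpha+\delta)\binom{t}{k}$,~$\mathcal{R}_c$ contains a tight cycle of every length $\le t/(4r)$ divisible by~$k$. Choose $\ell^*$ to be the largest multiple of~$k$ with $\ell^* \le t/(4r)$; since $t\ge t_0 \gg rk$, this yields $t/(5r) \le \ell^* \le t/(4r)$. Let~$C$ be this tight cycle and set $W := \bigcup_{v\in V(C)} V_v$, so that $|W| = \ell^* n/t \in [n/(5r), n/(4r)]$ as required. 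The cycle~$C$ induces a tightly connected fractional matching~$\varphi$ on~$\mathcal{R}_c$, supported on~$V(C)$, by assigning weight $1/k$ to each edge of~$C$; its total weight is $\mu = \ell^*/k$ and $\varphi(V_v) = 1$ for every $v \in V(C)$. Let $\mathcal{J}'$ be the restriction of~$\mathcal{J}$ to the clusters in $V(C)$; since densities and regularity inside $V(C)$ are unchanged, every edge of~$C$ stays in the restricted reduced graph, so $\varphi$ is still a tightly connected fractional matching there. Applying Lemma~\ref{Lem: Lo-Pfe frac matching} to $H_c[W]$ with slice $\mathcal{J}'$, matching $\varphi$, and $W_v := V_v$ for each $v \in V(C)$, produces a monochromatic tight cycle in $H_c[W] \subseteq H[W]$ of every length $\ell \le (1-\psi)k\mu\cdot (n/t) = (1-\psi)|W|$ that is divisible by~$k$.

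The main technical burden is parameter bookkeeping for the restriction step: retaining only $\ell^* \ge t/(5r)$ of the~$t$ clusters can inflate the fraction of irregular $k$-sets by up to $(t/\ell^*)^k \le (5r)^k$, so the initial $\eps_k$ must be chosen at least this factor below the threshold required by Lemma~\ref{Lem: Lo-Pfe frac matching}. The divisibility hypothesis $t_1! \mid n$ of Lemma~\ref{lem:regular slice} is dealt with in the standard way, by first discarding at most $t_1!$ vertices; these can be absorbed into constants since the statement only asks for $|W|\ge n/5r$.
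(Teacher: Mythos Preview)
Your proposal is correct and follows essentially the same approach as the paper: apply the Regular Slice Lemma, find a dense monochromatic colour in the reduced graph, extract a tight cycle there via Theorem~\ref{Thm: almostcoverref} with $\alpha=\delta=1/(4r)$, put uniform weight $1/k$ on its edges to obtain a tightly connected fractional matching with $\mu=\ell^*/k$, and lift with Lemma~\ref{Lem: Lo-Pfe frac matching}.

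The only difference is presentational. The paper applies Lemma~\ref{Lem: Lo-Pfe frac matching} directly with the full slice $\mathcal{J}$ (writing ``$H$ playing the role of $G$''; strictly one wants the monochromatic $H_j$ here so that the output cycle is monochromatic), taking $W_i=V_{v_i}$ on the cycle clusters and relying on the fact that the support of $\varphi$ is exactly $V(C)$. You instead restrict $\mathcal{J}$ to the $\ell^*$ clusters first and then apply the lemma to $H_c[W]$; this is a perfectly valid alternative and makes explicit why the output cycle lives inside $W$. Your bookkeeping remark that the proportion of irregular $k$-sets may rise by a factor $(5r)^k$ under restriction is correct and harmless, since $\eps_k$ sits far enough below $\psi,1/r,1/k$ in the hierarchy; in fact the proof of Lemma~\ref{Lem: Lo-Pfe frac matching} only uses regularity along the edges supporting $\varphi$, all of which are regular by construction, so this concern does not actually bite.
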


\begin{proof}
    Let $d_k = 1/r$, $$1/n \ll1/t_1\ll 1/t_0,$$ $$1/n \ll 1/r', \tilde{\eps} \ll \eps_k, d_{k-1}, \dots, d_2,$$  $$\eps_k \ll \eps' \ll  \psi, \beta \ll 1/r, 1/k.$$ Let~$H'$ be an induced subgraph of~$H$ such that $|V(H')| \equiv 0 \Mod{t_1!}$ and $|V(H\setminus H')| < t_1!$. 
By Lemma~\ref{lem:regular slice} with~$H'$ playing the role of $H$, there exists a $(\cdot, \cdot, \tilde{\eps},\eps_k, r')$-regular slice~$\J$ with partition $\mathcal{P}$ of~$V(H')$. Let $m = |V|$ for $V \in \mathcal{P}$ and $t=|\mathcal{P}|$. Note that $mt \ge n - t_1!.$

    Let $\R = \R_{d_k}^{\J}(H')$. 
     Recall that for each $x \in V(\R)$, $V_x$ denotes the corresponding cluster in~$\mathcal{P}$. By Lemma~\ref{lem:reduced graph edge count} (with $d = d_k$),~$\mathcal{R}$ contains at least $(1-r\eps_k)\binom{t}{k} \ge \binom{t}{k}/2$ edges.
There exists a colour~$j \in [r]$ such that $\s{E(\mathcal{R}_{ j})} \ge \binom{t}{k}/2r$. By Theorem~\ref{Thm: almostcoverref} with $\delta= \alpha = 1/4r$, $\mathcal{R}_{ j}$ contains~a monochromatic tight cycle $C = v_1v_2\dots v_{\ell}$ with $t/5r \le \ell \le t/4r$ and ${\ell \equiv 0\Mod{k}}$. Set $W = \bigcup_{i \in [\ell]}V_{v_i}$. Let $\vphi$ be a fractional matching on $C$ such that for any edge $e \in E(C)$, we have $\vphi(e) = 1/k$. Let~$\mu$ be the total weight of~$\vphi$, so $\mu = \sum_{i \in [\ell]}\vphi(e) = \ell/k$. For $i \in [\ell]$, note that $\vphi(v_i) = 1$ and $|V_{v_i}| = m \ge ((1-3\eps')\vphi(v_i)+\eps')m.$ Therefore, by Lemma~\ref{Lem: Lo-Pfe frac matching} with~$H$ playing the role of~$G$, we have that $H[W]$ contains a tight cycle of each length upto $(1-\psi)k\mu m = (1-\psi)\ell m = (1-\psi)|W|$ that is divisible by~$k$, as required. 
\end{proof}

\subsection{Proof of Lemma~\ref{lma: reserved set absorbs everything} assuming Lemmas~\ref{Lma: how to absorb} and~\ref{Lma: larger absorption}} \label{subsec: proof of absorbing everything}

We now prove Lemma~\ref{lma: reserved set absorbs everything} assuming Lemmas~\ref{Lma: how to absorb} and~\ref{Lma: larger absorption}.

\begin{proof}[Proof of Lemma~\ref{lma: reserved set absorbs everything}]
Set \begin{equation}\label{eqn: triangle cycle constants}
     \delta_0 = \left(2r\right)^{-2^{k+1}}, \hspace{2mm} \eps^* = (2r)^{-2^{k+7}}/k, \hspace{2mm} \alpha = 2^5 rk (2r)^{2^{k+1}}, \hspace{1mm} \alpha_{\Delta}=2\alpha \psi  \text{ and } \tilde{m} = (k-1)\alpha_{\Delta}N.
    \end{equation}
By Corollary~\ref{Cor: Triangle cycle exists} with $\alpha_{\Delta}, \tilde{m}$ playing the roles of $\alpha, m$, respectively,~$H$ contains a monochromatic $k$-uniform triangle cycle~${T}_{\tilde{m}}^{(k)}$. Note that ${T}_{\tilde{m}}^{(k)}$ contains a vertex-set $\tilde{B}$ with  \begin{equation}\label{eqn: size of triangle set reservoir}
    |\tilde{B}| = \tilde{m}/(k-1) = \alpha_{\Delta}N
\end{equation} such that for any $\tilde{B}' \subseteq \tilde{B}$, there is a  monochromatic tight cycle on $V\left({T}_{\tilde{m}}^{(k)} \setminus \tilde{B}'\right)$. Note that $\left|V\left({T}_{\tilde{m}}^{(k)}\right)\right| = \tilde{m}+\alpha_{\Delta}N =  k\alpha_{\Delta}N$.  Let $H_1 = H \setminus {T}_{\tilde{m}}^{(k)}$ and
    \begin{equation} \label{eqn: relation between N and n}
        n=|V(H_1)|=N- k\alpha_{\Delta} N\ge N/2.
    \end{equation}

Lemma~\ref{Lma: larger absorption} with $H_1$ playing the role of~$H$ implies that there exists a vertex set $U^* \subseteq V(H_1)$ with $|U^*| \le 3n/4$ such that, for any $B^* \subseteq V(H_1) \setminus U^*$ with $|B^*|=4\eps^* n$, $H[B^* \cup U^*]$ can be covered by at most $2^{18}r\delta_0^{-5}+1$ vertex-disjoint monochromatic tight cycles and at most $\psi  n$ isolated vertices.  Let $H_2 = H_1\setminus U^*$ and $n'= |V(H_2)| = n - |U^*| \ge n-3n/4 = n/4$.
    
By Lemma~\ref{Lem: tight cycle reservoir} with $H_2, n'$ playing the role of~$H, n$, respectively, there exists a vertex subset $W \subseteq V(H_2)$ with $$n/20r \le n'/5r \le |W| \le n'/4r \le n/4r$$ such that $H[W]$ contains a monochromatic tight cycle of length~$\ell$ for all $\ell \le (1-\psi)|W|$ with $\ell \equiv 0 \Mod{k}$.

 Set $R = V\left({T}_{\tilde{m}}^{(k)}\right) \cup W \cup U^*$. We now show that $R$ has the desired properties. Let $B$ be a subset of $V(H)\setminus R$ with $$|B| \le \eps^*N\stackrel{\eqref{eqn: relation between N and n}}{\le} 2\eps^*n.$$
Let $\ell$ be the largest integer such that $\ell \equiv 0 \Mod{k}$ and $|B|+|W|-\ell \ge 2\eps^* n$. Since $\psi \ll \eps^*$, $\ell \le (1-\psi)|W|$ and therefore there exists a monochromatic tight cycle~$C_1$ in $H[W]$ of length~$\ell$. Let $B^+$ and $L_1$ be disjoint vertex sets such that \begin{equation} \label{eqn: upper bd on L_1}
    B^+ \cup L_1 = B \cup (W \setminus V(C_1)), \hspace{2mm} |B^+| = 4\eps^* n \text{ and } |L_1| < k.
\end{equation}
 Thus $H[U^* \cup B^+]$ can be covered by a set $\mathcal{C}$ of monochromatic tight cycles such that $|\mathcal{C}| \le 2^{18}r\delta_0^{-5}+1$ and 
\begin{equation} \label{eqn: upper bound on L_2}
    |U^*\setminus V(\mathcal{C})| \le \psi n\le \psi N.
\end{equation} 
Let $L = L_1 \cup (U^*\setminus V(\mathcal{C}))$. We deduce that $$|L| = |L_1|+|U^*\setminus V(\mathcal{C})| \stackrel{\eqref{eqn: upper bd on L_1},~\eqref{eqn: upper bound on L_2}}{\le} k+ \psi N  \stackrel{\eqref{eqn: triangle cycle constants}}{\le} \alpha_{\Delta}N/\alpha \stackrel{\eqref{eqn: size of triangle set reservoir}}{=} |\tilde{B}|/\alpha.$$
By Lemma~\ref{Lma: how to absorb} with $\tilde{B}, L$ playing the roles of $X, Z$, respectively, $H[\tilde{B} \cup L]$ contains a set~$\mathcal{C}'$ of vertex-disjoint monochromatic tight cycles such that $L \subseteq V(\mathcal{C}')$ and $|\mathcal{C}'| \le  2^{18}r\delta_0^{-5}$. By the property of~${T}_{\tilde{m}}^{(k)}$, there exists a monochromatic tight cycle $C_2$ with vertex set $V\left({T}_{\tilde{m}}^{(k)}\right) \setminus (\tilde{B} \cap V(\mathcal{C}'))$.
Thus, we have partitioned $R \cup B$ into monochromatic cycles, namely $\mathcal{C} \cup \mathcal{C}' \cup \{C_1, C_2\}$. The total number of cycles created is at most $2^{19}r\delta_0^{-5}+3 = 2^{19}r (2r)^{5\cdot 2^{k+1}} + 3$. This completes the proof of the lemma.
\end{proof}

\subsection{Proof of Lemma~\ref{Lma: larger absorption}}\label{subsec: matchings}
The proof of Lemma~\ref{Lma: larger absorption} is motivated by Gy\'{a}rf\'{a}s, Ruszink\'{o}, S\'{a}rk\"{o}zy and Szemer\'{e}di~\cite{MR2274080}. 
Let $t \in \N$. A \emph{$t$-half dense matching} in a $k$-partite $k$-graph $H$ with vertex classes $X_1,\dots, X_k$ is~a matching $M = \{x_{i, 1}x_{i, 2}\dots x_{i, k}: i \in [\ell]\}$ where for all $i \in [\ell]$, $x_{i, j} \in X_j$ and for any vertex $x_1 \in V(M) \cap X_1$, we have $$|\{j \in [\ell] : x_1 x_{j, 2}\dots x_{j, k} \in E(H)\}| \ge t.$$
We say a matching $M$ is \emph{$t$-semi-dense} if for all $i \in [\ell]$, we have $$|\{j \in [\ell]: x_{i, 1} x_{i, 2}\dots x_{i, k-1} x_{j, k} \in E(H)\}|\ge t.$$
Note that the order of $X_1,\dots X_k$ matters, but it will be clear from context. When $k=2$, they are both equivalent to every vertex $x \in X_1$ having at least~$t$ neighbours in $X_2$, leading to the following simple fact.
\begin{fact}\label{fct: semi dense iff half dense}
 A matching in a $2$-graph is $t$-semi-dense if and only if it is $t$-half-dense.   
\end{fact}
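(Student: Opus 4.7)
The plan is to simply unwind both definitions in the case $k=2$ and observe that they reduce to the same combinatorial statement about the matching. This is a definitional equivalence rather than a substantive argument, so the proof should be a single short paragraph once the quantifiers are laid out carefully.

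When $k=2$, the matching takes the form $M = \{x_{i,1}x_{i,2} : i \in [\ell]\}$ with $x_{i,1} \in X_1$ and $x_{i,2} \in X_2$. Since $M$ is a matching, the indexing gives a bijection between $[\ell]$ and $V(M) \cap X_1$ (and likewise with $V(M) \cap X_2$); in particular, the collection $\{x_{i,1} : i \in [\ell]\}$ is exactly $V(M) \cap X_1$ with no repetitions. Specialising the definition of $t$-half-dense to $k=2$, the requirement becomes: for every $x_1 \in V(M) \cap X_1$, one has $|\{j \in [\ell] : x_1 x_{j,2} \in E(H)\}| \ge t$. Specialising the definition of $t$-semi-dense to $k=2$, the requirement becomes: for every $i \in [\ell]$, one has $|\{j \in [\ell] : x_{i,1} x_{j,2} \in E(H)\}| \ge t$.

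Both conditions have the form ``every vertex of $V(M) \cap X_1$ has at least $t$ neighbours among $V(M) \cap X_2$ in $H$''. Because the quantifier ``$\forall i \in [\ell]$'' translates to ``$\forall x_1 \in V(M) \cap X_1$'' under the bijection $i \mapsto x_{i,1}$, the two requirements are literally the same statement, which yields the fact.

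There is essentially no obstacle here; the only thing one must be careful about is confirming that the indexing of $M$ is indeed a bijection (so that quantifying over $i$ and quantifying over $x_{i,1}$ coincide), and that once one of the two ``free coordinates'' disappears (as happens when $k=2$), the asymmetry in the original $k$-uniform definitions collapses.
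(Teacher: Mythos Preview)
Your proposal is correct and matches the paper's own treatment: the paper does not give a formal proof but simply notes (in the sentence preceding the fact) that for $k=2$ both conditions reduce to ``every vertex $x \in V(M)\cap X_1$ has at least $t$ neighbours in $V(M)\cap X_2$'', which is exactly what you verify by unwinding the definitions.
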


We need the following result by Gy\'{a}rf\'{a}s, Ruszink\'{o}, S\'{a}rk\"{o}zy and Szemer\'{e}di~\cite{MR2274080}.

\begin{lemma}[{\cite[Lemma 4]{MR2274080}}]\label{lma: average degree to half dense matching}
    Every graph of average degree $8k$ has a connected $k$-half dense matching.
\end{lemma}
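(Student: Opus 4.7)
My plan is to reduce to a dense subgraph, take a matching inside it, and then orient that matching so the half-dense condition holds. First, I would iteratively delete vertices of degree less than $4k$, producing a nonempty subgraph $G'$ with $\delta(G') \ge 4k$: since each deletion removes strictly fewer than $4k$ edges while $G$ begins with at least $4k\,|V(G)|$ edges, the process cannot consume every vertex. Restricting to a single connected component $C$ of $G'$ then preserves $\delta(C)\ge 4k$ and gives automatic (tight) connectivity for any matching contained in $C$, so it suffices to find a $k$-half-dense matching within $C$.

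Next, I would take a maximum matching $M=\{a_1b_1,\dots,a_\ell b_\ell\}$ of $C$. By maximality, the unmatched set $I = V(C) \setminus V(M)$ is independent, and a standard augmenting-path analysis shows that for every edge $e_i$, $a_i$ and $b_i$ cannot both send edges to \emph{distinct} vertices of $I$. Consequently, each matching edge has a ``heavy'' endpoint with at most one neighbour in $I$ and hence at least $4k-1$ neighbours inside $V(M)\setminus\{a_i,b_i\}$. I would designate this heavy endpoint as $u_i$ and the other as $v_i$, producing an orientation of $M$ in which every $u_i$ has large degree into $V(M)$.

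To obtain the half-dense property I need each $u_i$ to have at least $k$ neighbours specifically among the $v_j$'s. The natural route is a probabilistic refinement: on top of the heavy-endpoint choice, independently toggle the orientation on each edge with probability $1/2$, or equivalently randomly bipartition $V(M)$ into a $u$-side and a $v$-side. The expected number of $v$-neighbours of each $u_i$ is then at least $(4k-1)/2 \ge 2k$, and a Lov\'asz Local Lemma application -- the bad event at edge $e_i$ depends only on the orientations of edges sharing a vertex with $N(u_i)$, giving a bounded dependency neighbourhood -- should produce a single orientation that works for every $i$ simultaneously. As a fallback, one can iteratively discard the edges whose $u_i$ fails the bound and argue that the remaining edges still form a connected $k$-half-dense matching.

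The hard part will be the final step: securing the bound for \emph{every} matching edge at once. A naive Chernoff-plus-union-bound over $\ell$ matching edges is too weak when $\ell$ is large compared to $k$, so either a Lov\'asz Local Lemma argument (leveraging the bounded local dependency structure) or a structural deletion/correction argument is required. Everything preceding that step is a routine edge-counting plus maximum-matching analysis.
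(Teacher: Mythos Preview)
The paper does not prove this lemma; it is quoted from \cite{MR2274080} without proof, so there is no in-paper argument to compare against. Your reduction to a connected subgraph $C$ with $\delta(C)\ge 4k$ and the augmenting-path observation that every edge of a maximum matching has a ``heavy'' endpoint with at most one neighbour in the unmatched set $I$ are both correct and standard.

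The gap is in the final step. The Local Lemma argument does not go through as stated. If you keep the heavy-endpoint designation fixed there is nothing random left to which LLL applies, and nothing prevents every $V(M)$-neighbour of $u_i$ from being some other $u_j$. If instead you genuinely randomise the orientation, then with probability $1/2$ the role of $u_i$ falls to the non-heavy endpoint, which may send almost all of its edges into $I$; the claimed expectation $(4k-1)/2$ is then no longer valid. Even granting the expectation, the dependency degree is not bounded: the event $B_i$ depends on the orientation of every matching edge meeting $N(u_i)$, and you only control $\delta(C)$ from below, so $|N(u_i)|$ can be arbitrarily large relative to $k$. The iterative-deletion fallback has the same unbounded-degree defect --- deleting $e_i$ removes $v_i$, which may be a $v$-neighbour of arbitrarily many other $u_j$'s, so the deletions can cascade and wipe out the whole matching (and need not preserve connectivity either).

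The route taken in \cite{MR2274080} is closer in spirit to the K\"onig-type alternating-path argument that the present paper reproduces for the bipartite Lemma~\ref{Lem: bpt matching}: one extracts the half-dense matching from the reachability structure of a maximum matching rather than by a probabilistic orientation.
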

Recall that a $k$-graph $H$ is locally $r$-edge-coloured if any set of $k-1$ vertices sees at most~$r$ colours in $H$. 
We use the following result by Gy\'{a}rf\'{a}s, Lehel, Ne\v{s}et\v{r}il,
              R\"{o}dl, Schelp and Tuza~\cite{MR0904401}.
\begin{lemma}[{\cite[Corollary 3]{MR0904401}}]\label{Cor: local colouring many monochromatic edges}
    Let $G$ be a graph with average degree $d$ that is locally-$r$-edge-coloured. Then there is a monochromatic subgraph $G'$ such that $|E(G')| \ge d^2/2r^2$.  
\end{lemma}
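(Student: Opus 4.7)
The plan is to prove this by a short double-counting argument involving two natural quantities associated with the monochromatic colour classes $\{G_c\}_{c \in \phi(G)}$ of~$G$. Let $n = |V(G)|$, so that $|E(G)| = nd/2$.

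First I would record the two counting relations. Summing edges by colour gives $\sum_{c \in \phi(G)} 2\, e(G_c) = 2|E(G)| = nd$. On the other hand, the local $r$-edge-colouring assumption means $|\phi_G(v)| \le r$ for every vertex $v$, and therefore
\[
\sum_{c \in \phi(G)} |V(G_c)| \;=\; \sum_{v \in V(G)} |\phi_G(v)| \;\le\; rn,
\]
since both sides count vertex--colour incidences $(v,c)$ with $d_{c,G}(v)>0$. Taking the ratio and applying the weighted-average principle $\max_c \frac{2 e(G_c)}{|V(G_c)|} \;\ge\; \frac{\sum_c 2 e(G_c)}{\sum_c |V(G_c)|}$, I obtain a colour $c$ for which the colour class $G_c$ has average degree $\frac{2 e(G_c)}{|V(G_c)|} \ge d/r$.

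The second step converts this average-degree bound into the required edge-count bound. Every graph~$H$ satisfies $|V(H)| \ge \Delta(H)+1 \ge (\text{average degree of }H)+1$, so applied to~$G_c$ this yields $|V(G_c)| \ge d/r + 1 > d/r$. Combining this with the average-degree estimate,
\[
e(G_c) \;=\; \tfrac{1}{2}\cdot \tfrac{2 e(G_c)}{|V(G_c)|}\cdot |V(G_c)| \;\ge\; \tfrac{1}{2}\cdot\tfrac{d}{r}\cdot\tfrac{d}{r} \;=\; \tfrac{d^2}{2r^2},
\]
so $G' := G_c$ is the desired monochromatic subgraph.

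There is no serious obstacle: the argument rests on the two identities $\sum_c 2 e(G_c) = nd$ and $\sum_c |V(G_c)| \le rn$, followed by the elementary fact that a graph on $N$ vertices has average degree at most $N-1$. The only point worth flagging is that the colour~$c$ must be chosen via the weighted average above — just picking the colour with the most edges, or the colour maximising average degree among those, is not enough on its own to produce the factor $d^2/r^2$; it is the interaction between the bound on $\sum_c 2e(G_c)$ and the bound on $\sum_c |V(G_c)|$ that supplies both factors of $d/r$.
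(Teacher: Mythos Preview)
The paper does not supply a proof of this lemma; it is quoted without proof as Corollary~3 of Gy\'arf\'as, Lehel, Ne\v{s}et\v{r}il, R\"odl, Schelp and Tuza. Your argument is correct: the double count $\sum_c |V(G_c)| = \sum_v |\phi_G(v)| \le rn$ together with $\sum_c 2e(G_c) = nd$ gives a colour class of average degree at least $d/r$, and the elementary bound $|V(G_c)| \ge \text{(average degree)} + 1$ then yields $e(G_c) \ge d^2/2r^2$. This is essentially the standard proof, and there is nothing to compare against in the present paper.
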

For a $k$-graph $H$, we denote by $\partial H$ the $(k-1)$-graph on $V(H)$ whose edges are all $(k-1)$-tuples of vertices contained in some edge in~$H$. Therefore, $|E(\partial H)| \le k |E(H)|$. Recall that a monochromatic tight component is a tight component that is monochromatic, i.e. there is a sequence of edges of the same colour joining any two edges in the monochromatic tight component, such that any two consecutive edges in the sequence share~$k-1$ vertices.

The following lemma shows that one can find a linear-sized semi-dense matching in a monochromatic tight component of a locally~$r$-edge-coloured almost complete $k$-graph.

\begin{lemma}\label{lma: find a semi dense matching}
   Let $0 < 1/t \ll \eps_k \ll \eps_{k-1}\ll \dots \ll \eps_2 \ll 1/r, 1/k$ and let $\delta(r, k) = \left( 2^{2^k+3k-5}r^{2^k - 2}\right)^{-1}$. Let $R$ be a locally $r$-edge-coloured $k$-graph on $t$ vertices with $|E(R)| \ge (1-\eps_k)\binom{t}{k}$. Then there exists a monochromatic tight component of $R$ containing~a $\delta(r, k)t$-semi-dense matching. 
\end{lemma}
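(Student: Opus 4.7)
The plan is to proceed by induction on~$k$, using the given hierarchy $1/t \ll \eps_k \ll \dots \ll \eps_2 \ll 1/r, 1/k$ so that at each step I may invoke the inductive hypothesis with a slightly larger error parameter. For the base case $k=2$, $R$ is a $2$-graph of average degree at least $(1-\eps_2)(t-1) \ge t/2$. Applying Lemma~\ref{Cor: local colouring many monochromatic edges} gives a monochromatic subgraph $R'$ of average degree at least $t/(4r^2) = 8\cdot t/(32 r^2)$, and Lemma~\ref{lma: average degree to half dense matching} then furnishes a connected $t/(32r^2)$-half-dense matching in $R'$. By Fact~\ref{fct: semi dense iff half dense}, this matching is also $t/(32r^2)$-semi-dense, and in a $2$-graph any connected component is a tight component. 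This gives $\delta(r,2) = 1/(32 r^2)$, matching the formula.

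For the inductive step ($k\ge 3$), I first exploit the density of~$R$ via link graphs. Since $\sum_{v} |E(R(v))| = k|E(R)| \ge (1-\eps_k)t\binom{t-1}{k-1}$, a Markov-type inequality implies that all but at most $\sqrt{\eps_k}\,t$ vertices~$v$ are \emph{good}, meaning $|E(R(v))| \ge (1-\eps_{k-1})\binom{t-1}{k-1}$. Each $R(v)$ inherits a local $r$-edge-colouring from~$R$ (any $(k-2)$-set $S$ in $R(v)$ sees exactly the colours of the $(k-1)$-set $S\cup\{v\}$ in~$R$). Applying the inductive hypothesis to each good $R(v)$ yields a colour $c_v$ and a monochromatic tight component $T_v$ of $R(v)$ in colour $c_v$ containing a $\delta(r,k-1)(t-1)$-semi-dense matching $M_v$. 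Pigeonholing over the $r$ colours, I fix a colour~$c$ and a set $Z$ of good vertices with $c_v = c$ for all $v \in Z$ and $|Z| \ge t/(2r)$.

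The main task is then to combine the matchings $\{M_v : v \in Z\}$ into a single $k$-partite $\delta(r,k)t$-semi-dense matching inside one monochromatic tight component of $R_c$. Each $e\in M_v$ lifts to an edge $e\cup\{v\}$ of $R_c$, and all such lifts for a fixed~$v$ lie in one monochromatic tight component $\widehat T_v\subseteq R_c$ (because tight connectivity in $R(v)$ corresponds to $(k-1)$-sharing in $R$ once $v$ is prepended). A counting argument using near-completeness of~$R$ and the size of~$Z$ should show that many $\widehat T_v$'s coincide, producing a single component $\widehat T$ containing the lifts from a large subset $Z' \subseteq Z$. To extract the final matching I build an auxiliary $2$-graph~$G$ on $Z'$ whose edge $vv'$ indicates that many edges $e \in M_v$ satisfy $e \cup \{v'\} \in E(R_c)$, i.e.\ that $v'$ is a valid replacement for $v$ as the last coordinate. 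The graph~$G$ inherits a local $r$-colouring from $R_c$ via the colours of the extending edges; concentration via Lemma~\ref{Lma: Chernoff} on random samples from $Z'$ shows $G$ has average degree at least roughly $\delta(r,k-1)|Z'|$. Applying Lemma~\ref{Cor: local colouring many monochromatic edges} to~$G$ gives a monochromatic subgraph of average degree at least $\delta(r,k-1)^2 |Z'|/(2r^2) \ge 8\cdot \delta(r,k) t$, and Lemma~\ref{lma: average degree to half dense matching} with Fact~\ref{fct: semi dense iff half dense} then supplies the desired $\delta(r,k)t$-semi-dense matching, lifted to the $k$-partite structure in $\widehat T$.

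The hard part is precisely this combining step. I must simultaneously guarantee (a) that all chosen lifts lie in a \emph{single} monochromatic tight component of $R_c$, (b) that the chosen $(e, v)$ pairs form a genuine $k$-partite matching with pairwise disjoint vertex sets (so that greedy choices inside each $M_v$ avoid collisions with vertices already used), and (c) that the constants compose exactly so as to produce the stated $\delta(r,k) = \delta(r,k-1)^2/(4r^2)$. The squaring of $\delta(r,k-1)$ in this formula is the signature of the single application of Lemma~\ref{Cor: local colouring many monochromatic edges} to the auxiliary graph~$G$, while the $1/(4r^2)$ accounts jointly for the pigeonhole over colours and the constants in Lemmas~\ref{Cor: local colouring many monochromatic edges} and~\ref{lma: average degree to half dense matching}; aligning all of these with a careful choice of sample sizes is where most of the bookkeeping will lie.
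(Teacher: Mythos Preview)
Your base case is fine and matches the paper. The inductive step, however, has a genuine gap in exactly the ``hard part'' you identify, and the recursion you write down is not the one the formula encodes.

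\textbf{The recursion is wrong.} You claim $\delta(r,k)=\delta(r,k-1)^2/(4r^2)$, but the stated formula satisfies $\delta(r,k)=\delta(2r^2,k-1)/(32r^2)$: the uniformity drops by one \emph{and} $r$ is replaced by $2r^2$. Your identity only holds at $k=3$; for $k\ge 4$ it gives a strictly smaller constant than the lemma asserts, so even a flawless execution of your outline would not prove the statement.

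\textbf{Tight component control does not follow.} After pigeonholing on colour~$c$, you need the lifts $\{e\cup\{v\}:e\in M_v\}$ for many $v\in Z$ to lie in a \emph{single} monochromatic tight component of $R_c$. You propose a ``counting argument using near-completeness of~$R$,'' but near-completeness of $R$ says nothing about the number or sizes of tight components in $R_c$, which can be arbitrary. There is no reason different $\widehat T_v$'s should coincide. Moreover, your auxiliary graph~$G$ is built using membership in $E(R_c)$, and you then say ``$G$ inherits a local $r$-colouring from $R_c$ via the colours of the extending edges''; but $R_c$ is monochromatic, so there are no colours to inherit. Without a well-defined bounded local colouring on the auxiliary structure, you cannot invoke Lemma~\ref{Cor: local colouring many monochromatic edges} there, and without it you have no route back into a single tight component.

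\textbf{What the paper does instead.} The paper first passes to a $(1-o(1),o(1))$-dense subgraph $R'$ (Proposition~\ref{prop:dense}), splits $V(R')$ into a small part $V$ of size $t/(8r)$ and the rest $W$, and considers the $(k-1)$-graph $G=(\partial R')[V]$. Each $(k-1)$-edge $e\in G$ is then coloured by the \emph{monochromatic tight component} $T$ of $R'$ for which $d_T(e,W)\ge |W|/(2r)$. The crucial step is to show that this colouring of $G$ is locally $2r^2$-edge-coloured: if a $(k-2)$-set saw $2r^2+1$ tight-component-colours, two of them in the same original colour would be tightly linked through a common vertex of $W$, contradicting their distinctness. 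One then applies the inductive hypothesis to $G$ with $2r^2$ in place of~$r$ (this is where the $r\mapsto 2r^2$ in the recursion comes from) to obtain a $(k-1)$-uniform $\delta(2r^2,k-1)|V|$-semi-dense matching in a single tight-component-colour $T_0$, and finally extends each edge of this matching by a greedily chosen vertex of $W$ to obtain the $k$-uniform $\delta(r,k)t$-semi-dense matching inside $T_0$. Colouring by tight components rather than by original colours is the missing idea in your proposal; it is precisely what simultaneously handles (a) single-component containment and (c) the correct recursion.
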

\begin{proof}

    We prove the lemma by induction on $k$.  Note that 
    \begin{equation} \label{eqn: properties of delta}
        \delta(r, 2) = \left(2^5 r^2\right)^{-1} \text{ and  } \delta(r, k+1) = \delta(2r^2, k)/2^5r^2.
    \end{equation}
Suppose that $k=2$. The average degree of $R$ is $$ \frac{2|E(R)|}{t} \ge \frac{2(1-\eps_2)\binom{t}{2}}{t} \ge \frac{t}{2}.$$ 
By Lemma~\ref{Cor: local colouring many monochromatic edges}, there is a monochromatic subgraph $R'$ such that $|E(R')| \ge t^2/8r^2$. 
The average degree of $R'$ is $$ \frac{2|E(R')|}{t} \ge \frac{t}{4r^2} = 8\delta(r, 2)t.$$
By Lemma~\ref{lma: average degree to half dense matching}, $R'$ contains a connected $\delta(r, 2)t$-half-dense matching. By Fact~\ref{fct: semi dense iff half dense}, we are done.
    
    Thus, we may assume $k \ge 3$. By Proposition~\ref{prop:dense} with $\eps_k, t$ playing the role of $\alpha, n$, respectively, there exists a spanning subgraph $R'$ of $R$ that is $(1-2\eps_k^{1/4k^2}, 2\eps_k^{1/4k})$-dense. Let $\mathcal{T}$ be the set of monochromatic tight components of $R'$. For $T \in \mathcal{T}$, let $\phi_{R'}(T)$ be the colour of $T$. Let \begin{equation} \label{eqn: n is t/8r}
        n = \frac{t}{8r}.
    \end{equation} Partition $V(R')$ into $V, W$ with $|V| = n$. Let $G = \left(\partial R'\right)[V]$, so $G$ is a $(k-1)$-graph. Note that $$|E(G)| \ge \binom{n}{k-1} - 2\eps_k^{1/4k^2}\binom{t}{k-1} \ge (1-\eps_{k-1})\binom{n}{k-1}.$$
    For every edge $e \in E(G)$, we have $$d_{R'}(e, W)\ge |W| - 2\eps_k^{1/4k^2}t \ge |W|/2.$$ Define an edge-colouring $\phi_G$ of~$G$ with colour set $\mathcal{T}$ so that, for an edge $e \in E(G)$, we have $$\phi_G(e)=T \text{ if } d_{T, G}(e, W) \ge |W|/2r.$$ If multiple $T \in \mathcal{T}$ satisfy this, pick one such $T$ arbitrarily.

    We now show that $G$ is locally $2r^2$-edge-coloured. Suppose for a contradiction, there exists a set $S$ of $k-2$ vertices $x_1,\dots, x_{k-2}$ in $G$ such that $|\phi_G (S)| \ge 2r^2+1$. Since $R'$ is locally $r$-edge-coloured, there exist a colour $c \in \phi(R')$, vertices $y_1,\dots, y_{2r+1}$ and distinct tight components $T_1,\dots, T_{2r+1}$ in $R'$ for which $\phi_{R'}(T_i) = c$ and $\phi_G(S \cup y_i)=T_i$ for all~$i \in [2r+1]$. Then there exist distinct $i, j \in [2r+1]$ such that $N_{T_i}(S \cup y_i, W)\cap N_{T_j}(S \cup y_j, W) \neq \emptyset$. Let $w \in N_{T_i}(S \cup y_i, W)\cap N_{T_j}(S \cup y_j, W)$. Note that $y_i x_1\dots x_{k-2}w y_j$ is a tight path in~$R'$ where both edges are coloured $c$ implying $T_i = T_j$, a contradiction.

By our induction hypothesis, $G$ contains a monochromatic $(k-1)$-uniform $\delta(2r^2, k-1)n$-semi-dense matching $M^{(k-1)}$ of size $\ell$. Note that \begin{equation*}\label{eqn: initial bound on ell}
   \delta(2r^2, k-1)n \le \ell \le n/2 = t/16r.
\end{equation*} 
Let $T_0 \in \mathcal{T}$ be the colour of $M^{(k-1)}$ in~$G$.  Let $M^{(k-1)} = \{x_{i,1}\dots x_{i, k-1}: i \in [\ell]\}$ be such that for each $i \in [\ell]$, we have $$|\{j\in [\ell]: x_{i,1}\dots x_{i, k-2}x_{j, k-1} \in E(T_0)\}| \ge \delta(2r^2, k-1)n.$$ We now extend $M^{(k-1)}$ to a $k$-uniform  $\delta(r, k)t$-semi-dense matching $M^{(k)}$ in~$T\subseteq R' \subseteq R$ using vertices from~$W$.

Let $\eta = \delta(2r^2, k-1)$ and $m = |W|$. For each $i \in [\ell]$, let $X^i = \{x_{i,1}, x_{i, 2},\dots, x_{i, k-2}\}$ and let $X_{k-1} = \{x_{i, k-1}: i \in [\ell]\}$. Delete some edges in $T_0[V(M^{(k-1)})]$ if necessary so that for each $i \in [\ell]$, we have $d_{T_0 \left[V(M^{(k-1)})\right]}(X^i, X_{k-1}) = \eta n$. We now choose vertices $w_1,\dots w_{\ell}$ from $W$ in turns such that $d_{T_0}(X^i\cup w_i, X_{k-1})\ge \delta(r, k) t$. Fix $i \in [\ell]$. For each $x' \in N_{G[V(M^{(k-1)})]}(X^i, X_{k-1}) $, we have $d_{T_0}(X^i \cup x', W) \ge m/2r$. Therefore, $T_0$ contains at least $\eta n m/2r$ many $x_{i,1}\dots x_{i, k-2} X_{k-1} W$-edges. At most $\ell \eta n \le \eta n^2/2$ such edges contain a vertex in $w_1,\dots, w_{i-1}$. Thus there exists a vertex $w_i \in W\setminus \{w_j: j \in [i-1]\}$ such that \begin{align*} \label{eqn: induction verification}
    d_{T_0}(X^i \cup w_i, X_{k-1}) &\ge \frac{\eta n m/(2r) - \eta n^2/2}{m} \ge \left(\frac{\eta n}{2r} \right)\left(1 -  \frac{rn}{m}\right) \\  &\ge \frac{\eta n}{4r} = \frac{\delta(2r^2, k-1)n}{4r} \stackrel{\eqref{eqn: properties of delta},~\eqref{eqn: n is t/8r}}{\ge}\delta(r, k)t. \end{align*}
Let $M^{(k)} = \{w_i x_{i, 1}\dots x_{i, k-2}x_{i, k-1}: i \in [\ell]\}$. Note that $M^{(k)}$ is the desired $k$-uniform semi-dense matching.
\end{proof}

We now convert the semi-dense matching into a half-dense matching. 
The following lemma finds a half-dense matching in a bipartite graph where we control the vertex set with a large minimum degree. Its proof is based on~\cite[Lemma 3]{MR2274080}. For a matching~$M$ in a graph~$G$, an \emph{$M$-augmented path} is a path in~$G$ where every alternate edge is in~$M$.
\begin{lemma} \label{Lem: bpt matching}
    Let $1/n \ll \delta \le 1$ and $G$ be a bipartite graph with vertex classes $X$ and $Y$ such that $|X|, |Y| \le n$ and $|E(G)| \ge \delta n^2$. Then $G$ contains a matching $M$ such that for all $x \in X \cap V(M)$, we have $d_{G[V(M)]}(x)  \ge \delta^2 n/8$.
\end{lemma}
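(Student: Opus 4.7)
The plan is to combine a degree-trimming step with König's theorem on bipartite matchings. First, I would set $X_0 := \{x \in X : d_G(x) \ge \delta n/2\}$: an easy edge count (vertices of low $X$-degree contribute fewer than $|X|\cdot \delta n/2 \le \delta n^2/2$ edges) shows that $|X_0| \ge \delta n/2$ and the restricted graph $G_1 := G[X_0, Y]$ satisfies $|E(G_1)| \ge \delta n^2/2$. Then I would take a maximum matching $M^*$ in $G_1$, noting that $|M^*| \ge |E(G_1)|/n \ge \delta n/2$ by König's theorem.

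The key structural tool is the alternating-path reachability associated to $M^*$. Let $U_X := X_0 \setminus V(M^*)$, and define $Z_X \subseteq X_0$ and $Z_Y \subseteq Y$ as the vertices reachable from $U_X$ via $M^*$-alternating paths. Three standard facts then apply: $Z_Y \subseteq V(M^*) \cap Y$ (no augmenting paths exist), $N_{G_1}(Z_X) \subseteq Z_Y$ (else the alternating path extends), and $M^*$ restricts to a bijection between $Z_X \cap V(M^*)$ and $Z_Y$. If $U_X \neq \emptyset$, picking any $u \in U_X$ gives $|Z_Y| \ge d_{G_1}(u) \ge \delta n/2$, and the sub-matching $M := \{xy \in M^* : x \in Z_X\}$ has the property that $N_G(x) \subseteq Z_Y = V(M) \cap Y$ for every $x \in V(M) \cap X$. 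Hence $d_{G[V(M)]}(x) = d_G(x) \ge \delta n/2 \ge \delta^2 n/8$, as required.

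If instead $U_X = \emptyset$, so that $M^*$ saturates $X_0$, I would run the König argument symmetrically from the $Y$-side: set $U_Y := Y \setminus V(M^*)$ and define $Z'_X \subseteq X_0$, $Z'_Y \subseteq Y$ as the alternating-reachable sets from $U_Y$. Taking the complementary sub-matching $M := \{xy \in M^* : x \in X_0 \setminus Z'_X\}$, the contrapositive of $N_{G_1}(Z'_Y) \subseteq Z'_X$ gives $N_G(x) \cap Z'_Y = \emptyset$ for every $x \in X_0 \setminus Z'_X$, so once again $N_G(x) \subseteq V(M) \cap Y$ and the same lower bound on $d_{G[V(M)]}(x)$ follows. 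The hard part will be ensuring $V(M) \neq \emptyset$ in this second case: if $U_Y = \emptyset$ then $M^*$ itself is a perfect matching on $X_0$ and we are done, but in the highly structured sub-case where $Z'_X = X_0$ (equivalently $Z'_Y = Y$, so every $Y$-vertex is alternating-reachable from $U_Y$) the complementary matching is empty. To handle this degenerate configuration I would fall back to a random-sampling argument: pick $T \subseteq Y$ uniformly at random of size $|X_0|$ and use Hoeffding's inequality (Lemma~\ref{Lma: Chernoff hyp}) to guarantee that $|N_G(x) \cap T| \ge \delta^2 n/8$ for every $x \in X_0$ with positive probability; then use Hall's theorem, combined with the flexibility of the transversal matroid in this doubly-connected regime, to produce a matching of $X_0$ into $T$. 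Note that the bound $\delta^2 n/8$ in the conclusion is loose relative to the tighter $\delta n/2$ yielded by the main König cases; this slack precisely accommodates the sampling step needed to cover the degenerate sub-case.
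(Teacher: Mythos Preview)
Your alternating-path strategy is essentially the paper's, and your Case~1 and the non-degenerate part of Case~2 are correct (indeed they yield the stronger bound $\delta n/2$).  The gap is the degenerate sub-case $U_X=\emptyset$, $U_Y\neq\emptyset$, $Z'_X=X_0$.  Your fallback --- sample $T\subseteq Y$ of size $|X_0|$ and then ``use Hall's theorem, combined with the flexibility of the transversal matroid'' to match $X_0$ perfectly into $T$ --- is not a proof, and in general no such matching exists.  Concretely, take $m=\sqrt{\delta}\,n$, $X=X_0=\{x_1,\dots,x_m\}$, $Y=Y_1\cup Y_2$ with $|Y_1|=m$ and $|Y_2|=n-m$, let $G[X_0,Y_1]$ be complete bipartite, and let $x_1$ (and only $x_1$) be adjacent to every vertex of $Y_2$.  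Then $|E(G)|\ge m^2=\delta n^2$, every $x_i$ has degree at least $m\ge \delta n/2$, the matching $M^*=\{x_iy_i:i\in[m]\}$ is maximum and saturates $X_0$, and from any $y\in Y_2$ the alternating path $y,x_1,y_1,x_2,\dots$ reaches all of $X_0$, so $Z'_X=X_0$.  But a uniform $T$ of size $m$ has $|T\cap Y_1|\approx m^2/n=\delta n$, whereas $N_G(\{x_2,\dots,x_m\})=Y_1$; since $\delta n\ll m-1$ for $\delta<1$, Hall fails for $S=\{x_2,\dots,x_m\}$ and no perfect matching of $X_0$ into $T$ exists.  The Hoeffding degree bound still holds in this example, so the matching step is genuinely the obstruction.

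The paper avoids this issue by trimming \emph{both} sides to minimum degree $\delta n/2$ first, and then random-sampling the larger side down so that $|X^*|=|Y^*|$ while keeping minimum degree at least $\delta^2 n/8$ (this sampling is precisely where the $\delta^2/8$ appears).  With balanced classes, a non-perfect maximum matching necessarily leaves vertices exposed on both sides, so the K\"onig argument from the $X$-side (your Case~1) always applies and the degenerate configuration never arises.
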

\begin{proof}
   Let $X' \subseteq X$ and $Y' \subseteq Y$ be such that $\delta(G[X' \cup Y']) \ge \delta n/2$. Let $m = \min\{|X'|, |Y'| \}$, so $m \ge \delta n/2$.
    \begin{claim}
        There exist subsets $X^* \subseteq X'$ and $ Y^* \subseteq Y'$ such that $|X^*|=|Y^*|=m$ and $\delta\left({G[X^* \cup Y^*]}\right) \ge \delta^2 n/8$.
    \end{claim}
    \begin{proofclaim}
         Suppose that $|X'| = m$ (and the case $|Y'|= m$ is proved similarly).
         Let $X^*=X'$. Pick a subset $Y^*$ of $Y'$ of size $m$ uniformly at random. Note that for any $y \in Y^*$, $d(y, X^*) \ge \delta n/2$. Let $x \in X^*$. Note that $d(x, Y^*)\sim \text{Hyp}(|Y'|, m, d(x, Y'))$ and
    \begin{align*}
         \mathbb{E}d(x, Y^*) 
        = \frac{|Y^*|d(x, Y')}{|Y'|} \ge \frac{m(\delta n/2)}{n} \ge \frac{\delta^2 n}{4}.
    \end{align*}
    Applying Lemma~\ref{Lma: Chernoff hyp}, we deduce that for $x \in X^*$ $$\mathbb{P}\left(d(x, Y^*) < \frac{\delta^2 n}{8}\right) \le 2e^{-\frac{\delta^2 n}{12}}.$$ Taking a union bound over all $x \in X^*$, we have that with high probability, $\delta(G[X^* \cup Y^*]) \ge \delta^2 n/8$. Fix such $X^*$ and $Y^*$.
    \end{proofclaim}
    Let $G^* = G[X^* \cup Y^*]$.
    Pick a largest matching $M^*$ in $G^*$. If $M^*$ is spanning on $V(G^*)$, then we are done by setting $M = M^*$. Hence we may assume $Y^* \setminus V(M^*) \neq \emptyset$. Let $X_1^*$ be the set of vertices in $V(M^*) \cap X^*$ that can be reached by an $M^*$-augmented path from $X^* \setminus V(M^*)$. Observe that $E(G^*[X_1^* \cup (Y^*\setminus V(M^*))]) = \emptyset$. Otherwise  for a vertex $x \in X_1^*$, let~$xy$ be an edge in $E(G^*[X_1^* \cup (Y^*\setminus V(M^*))])$. There exists an $M^*$-augmented path~$P$ between~$x$ and $X^*\setminus V(M^*)$. But $(M^* \Delta E(P)) \cup \{xy\}$ is a larger matching, contradicting that~$M^*$ is the largest matching. 
    
    Let $Y_1^* = N_{G^*}(X_1^*) \subseteq V(M^*) \cap Y^*$, so $E(G[X_1^* \cup (Y^* \cap V(M^*))\setminus Y_1^*])=\emptyset$. 
    Since $M^*$ is maximal, $E(G^*\setminus V(M^*)) = \emptyset$. On the other hand, $E(G[X^*\setminus V(M^*), Y^*]) \ge \delta(G^*) > 0$. Hence $E(G[X^*\setminus V(M^*), Y^* \cap V(M^*)]) \neq \emptyset$ implying that $X_1^* \neq \emptyset$.
    
    Let $M = M^*[X_1^* \cup Y_1^*]$. We have $\delta_{M}(x) \ge \delta^2 n/8$ for all $x \in X_1^*$.
\end{proof}

This now lets us find a large half-dense matching in the reduced $k$-graph.

\begin{proposition}\label{prop: semi to half}
    Let $ 1/t \ll \delta \ll 1/r, 1/k$ and $R$ be a $k$-graph on $t$ vertices with a $\delta t$-semi-dense matching. Then $R$ contains a $\delta^3t/2$-half-dense matching.
\end{proposition}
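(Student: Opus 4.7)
The plan is to convert the semi-dense matching into a half-dense matching via an auxiliary bipartite graph and Lemma~\ref{Lem: bpt matching}, exploiting a duality between the two notions that emerges after a careful relabeling of vertex classes.

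Let $M = \{e_i = x_{i,1}\cdots x_{i,k} : i \in [\ell]\}$ be the given $\delta t$-semi-dense matching, so that $\ell \ge \delta t$ (to make the semi-dense condition non-vacuous) and $\ell \le t/k$ (as $M$ is vertex-disjoint inside the $t$-vertex set $V(R)$). I will first form a bipartite graph $G^*$ with left side $L = \{x_{j,k} : j \in [\ell]\}$ and right side $R^* = \{T_i := \{x_{i,1},\ldots,x_{i,k-1}\} : i \in [\ell]\}$, placing an edge between $x_{j,k} \in L$ and $T_i \in R^*$ precisely when $T_i \cup \{x_{j,k}\} \in E(R)$. The semi-dense assumption ensures that every $T_i$ has right-degree at least $\delta t$ in $G^*$, hence $|E(G^*)| \ge \delta t \ell$.

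Next I would apply Lemma~\ref{Lem: bpt matching} with $n = \ell$ and density parameter $\delta' = \delta t/\ell$, extracting a matching $M^{**} \subseteq G^*$ in which every left vertex $x_{j,k}$ of $V(M^{**})$ satisfies $d_{G^*[V(M^{**})]}(x_{j,k}) \ge (\delta')^2\ell/8 = \delta^2 t^2/(8\ell)$. Using $\ell \le t/k$ together with $k \ge 4\delta$ (which follows from $\delta \ll 1/k$), this bound is at least $k\delta^2 t/8 \ge \delta^3 t/2$. Each matched pair $(x_{j,k}, T_i) \in M^{**}$ then lifts to a $k$-edge $f_{(i,j)} := T_i \cup \{x_{j,k}\}$ of $R$; distinctness of the $i$'s and $j$'s in $M^{**}$ together with the vertex-disjointness of $M$ ensures that $M' := \{f_{(i,j)} : (i,j) \in M^{**}\}$ is a vertex-disjoint matching.

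Finally, I relabel $M'$ by setting $y_{(i,j),1} = x_{j,k}$ and $y_{(i,j),l} = x_{i,l-1}$ for $l = 2,\ldots,k$, thereby promoting the ``last-vertex class'' of $M$ to the first class of the new $k$-partition. For each $x_{j_0,k}$ in the new first class, the half-dense count
\[
\bigl|\{(i,j) \in M^{**}: \{x_{j_0,k}\} \cup T_i \in E(R)\}\bigr|
\]
coincides exactly with the in-matching left-degree $d_{G^*[V(M^{**})]}(x_{j_0,k}) \ge \delta^3 t/2$, so $M'$ is a $\delta^3 t/2$-half-dense matching, as required.

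The main obstacle I anticipate is aligning the ``direction'' of density: the semi-dense hypothesis provides a right-degree lower bound in $G^*$, whereas the half-dense condition demands a left-degree lower bound, and Lemma~\ref{Lem: bpt matching} only controls one side. The labeling step, which places the old last-vertex class in the first coordinate of the new $k$-partition, is the conceptual bridge; the remaining work is to set up $G^*$ so that Lemma~\ref{Lem: bpt matching} applies cleanly and then to verify the bookkeeping of the relabeling.
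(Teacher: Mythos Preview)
Your proposal is correct and follows essentially the same approach as the paper: both form the same auxiliary bipartite graph between the last-vertex class and the $(k-1)$-tuples, apply Lemma~\ref{Lem: bpt matching} with the last-vertex class playing the role of~$X$, and then reorder the classes so that the resulting matching is half-dense. The only cosmetic difference is your choice of density parameter $\delta' = \delta t/\ell$ (using $\ell \le t/k$) versus the paper's $2\delta$ (using $\ell \le t/2$); both lead to the same final bound $\delta^3 t/2$.
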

\begin{proof}
    Let $M = \{v_{i, 1}v_{i, 2}\dots v_{i, k}: i \in [\ell]\}$ be a $\delta t$-semi-dense matching and let $V_k = \{v_{i, k}: i \in [\ell]\}$. Note that \begin{equation} \label{eqn: bounds on ell}
        \delta t \le \ell \le t/2.
    \end{equation}
    Let $H$ be the auxiliary bipartite graph with vertex classes $[\ell]$ and $V_k$ such that, for each~$i \in [\ell]$ and $v_{j, k} \in V_k$, we have $iv_{j, k} \in E(H)$ if and only if $v_{i, 1}v_{i, 2}\dots v_{i, k-1}v_{j, k} \in E(R)$. We deduce that
\begin{equation*}
    e(H) \ge \delta t\ell \stackrel{\eqref{eqn: bounds on ell}}{\ge} 2\delta \ell^2. 
\end{equation*}
By Lemma~\ref{Lem: bpt matching} with $H, \ell, \left(\delta/2 \right), V_k, [\ell]$ playing the roles of $G, n, \delta, X, Y$, respectively, $H$ contains a matching $M'$ such that, for all $v \in V_k \cap V(M')$, we have \begin{equation}\label{eqn: half density of the matching}
    d_{H[V(M')]}(v) \ge \delta^2 \ell/2 \stackrel{\eqref{eqn: bounds on ell}}{\ge} \delta^3 t/2.
\end{equation}
Without loss of generality, $M' = \{i v_{i, k}\}_{i \in [\ell]}$. Let $M'' = \{v_{i, k}v_{i, k-1}\dots v_{i, 1}: i \in [\ell]\}$. By~\eqref{eqn: half density of the matching}, $M''$ is the required half-dense matching.
\end{proof}

\begin{corollary} \label{Cor: half-dense from k-graph}
    Let $1/t \ll \eps_k \ll 1/r, 1/k$.  Let $R$ be an $r$-edge-coloured $k$-graph on~$t$ vertices with $|E(R)| \ge (1-\eps_k)\binom{t}{k}$. Then there exists a monochromatic tight component of $R$ containing a $\left(2^{9k}(2r)^{3\cdot 2^k} \right)^{-1}t$-half-dense matching.
\end{corollary}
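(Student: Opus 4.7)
The plan is to combine Lemma~\ref{lma: find a semi dense matching} with Proposition~\ref{prop: semi to half} in sequence, which is precisely the reason both were proved. First, choose auxiliary constants $\eps_{k-1}, \dots, \eps_2$ satisfying $\eps_k \ll \eps_{k-1} \ll \dots \ll \eps_2 \ll 1/r, 1/k$ (possible since we have $\eps_k \ll 1/r, 1/k$), and apply Lemma~\ref{lma: find a semi dense matching} to $R$. This yields a monochromatic tight component $T$ of $R$ containing a $\delta(r,k)\, t$-semi-dense matching $M$, where $\delta(r,k) = \left(2^{2^k + 3k - 5} r^{2^k - 2}\right)^{-1}$.

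Next, regard $T$ as a $k$-graph on (at most) $t$ vertices. Note $1/t \ll \delta(r,k) \ll 1/r, 1/k$ since $\delta(r,k)$ depends only on $r$ and $k$, and $t$ is sufficiently large. Applying Proposition~\ref{prop: semi to half} to $T$ with $\delta = \delta(r,k)$ produces a $\delta(r,k)^{3}t/2$-half-dense matching $M''$ living inside $T$. Since $T$ is a monochromatic tight component of $R$, so is the subgraph carrying $M''$, giving the desired monochromatic tight component.

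It remains to verify the claimed numerical bound. We have
\[
\frac{\delta(r,k)^3}{2} = \frac{1}{2^{3(2^k + 3k - 5) + 1}\, r^{3(2^k - 2)}} = \frac{1}{2^{3\cdot 2^k + 9k - 14}\, r^{3\cdot 2^k - 6}} \ge \frac{1}{2^{9k}\,(2r)^{3\cdot 2^k}},
\]
since $3\cdot 2^k + 9k - 14 \le 9k + 3\cdot 2^k$ and $3\cdot 2^k - 6 \le 3\cdot 2^k$. This confirms that $M''$ is a $\left(2^{9k}(2r)^{3\cdot 2^k}\right)^{-1}t$-half-dense matching inside the monochromatic tight component $T$.

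There is really no substantive obstacle here: the two heavy-lifting steps (finding the semi-dense matching via induction on $k$, and upgrading semi-dense to half-dense via the bipartite matching argument) are already packaged in the two earlier results. The only work in this corollary is combining them in the correct order and tracking the resulting constants, with the cube coming from the loss in Proposition~\ref{prop: semi to half}.
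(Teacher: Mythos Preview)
Your proof is correct and follows exactly the same approach as the paper's: apply Lemma~\ref{lma: find a semi dense matching} to obtain a $\delta(r,k)t$-semi-dense matching in a monochromatic tight component, then apply Proposition~\ref{prop: semi to half} to upgrade it to a $\delta(r,k)^3 t/2$-half-dense matching, and finally verify the numerical bound. If anything, you are slightly more careful than the paper in explicitly applying Proposition~\ref{prop: semi to half} to the tight component $T$ rather than to $R$, which is what is actually needed so that the half-density is witnessed by edges of $T$.
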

\begin{proof}
    Let $1/t \ll \eps_k \ll \eps_{k-1} \ll \dots \ll \eps_2 \ll 1/r, 1/k$ and let $\delta = \left( 2^{2^k+3k-5}r^{2^k - 2}\right)^{-1}$. By Lemma~\ref{lma: find a semi dense matching} there exists a monochromatic tight component of $R$ containing a $\delta t$-semi-dense matching. By Proposition~\ref{prop: semi to half}, there is a $\delta^3 t/2$-half-dense matching in~$R$. Note that $$\frac{\delta^3 t}{2} = \frac{t}{2^{3\cdot 2^k + 9k -14}r^{3\cdot 2^k -6}} \ge \frac{t}{2^{9k}(2r)^{3\cdot 2^k}}.$$
This completes the proof of the corollary.
\end{proof}

We need the following lemma on bipartite graphs (Lemma~\ref{Lem: nice fractional matching in bpt}) for the main result in this subsection. The lemma will enable us to find a suitable fractional matching. To prove it we need the following result by Gy\'arf\'as, Ruszink\'o, S\'ark\"ozy and Szemer\'{e}di~\cite{MR2274080}.

\begin{lemma}[{\cite[Lemma 5]{MR2274080}}]\label{Lma: X+ X-}
Let $1/n \ll c\le 0.001$ and $G$ be a directed graph on~$n$ vertices with minimum out-degree $\delta^+(G) \ge cn$. Then there are subsets $Y \subseteq X \subseteq V(G)$ such that $|Y| \ge cn/2$ and for all $x \in X$ and $y \in Y$, there are at least $c^6 n$ internally vertex-disjoint paths from~$x$ to~$y$ of length at most~$c^{-3}$.  
\end{lemma}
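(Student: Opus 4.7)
The plan is to combine a reachability/expansion argument with Menger's theorem. First, for each vertex $v$ I would observe that the set of vertices reachable from $v$ is out-closed, and the condition $\delta^+(G) \ge cn$ forces every terminal strongly connected component (SCC) to have size at least $cn+1$, since its vertices cannot send out-edges outside the component. By pigeonhole, at least $cn/2$ vertices of $G$ can reach a \emph{common} terminal SCC, which I take as $Y$; the set $X$ is then defined as the set of all vertices that reach $Y$, so $Y \subseteq X$ and $|Y| \ge cn/2$ as required.

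Next, I would establish short directed paths. Because $Y$ is a terminal SCC of size $\ge cn$ whose vertices all have out-degree at least $cn$ lying entirely inside $Y$, a BFS expansion argument gives a directed path of length at most $O(c^{-1})$ between any two vertices of $Y$: if the BFS ball $B_t(y)$ inside $Y$ has not yet filled $Y$, its out-boundary grows by a constant fraction of $cn$ per step. The same expansion on $G$ shows that any $x \in X$ can reach $Y$ in at most $O(c^{-1})$ steps. Concatenating, every $x \in X$ can reach every $y \in Y$ in at most a constant multiple of $c^{-1}$ steps, comfortably within the bound $c^{-3}$.

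To upgrade to many \emph{internally vertex-disjoint} paths I would proceed iteratively rather than invoking Menger directly, since Menger does not control path length. Fix a pair $(x, y) \in X \times Y$ and extract short $x$-$y$ paths greedily: after $t$ previous paths have been removed (internal vertices only), at most $tc^{-3}$ vertices are forbidden. As long as $tc^{-3} \le cn/2$, every remaining vertex still has out-degree at least $cn - tc^{-3} \ge cn/2$ in the residual digraph, so the terminal-SCC construction above goes through there as well and yields another $x$-$y$ path of length $\le c^{-3}$. Taking $t = c^6 n$ we have $tc^{-3} = c^3 n \ll cn/2$, delivering $c^6 n$ internally vertex-disjoint paths of length at most $c^{-3}$.

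The main obstacle is the quantitative bookkeeping: I have to verify that after removing up to $c^6 n$ internal vertices the residual digraph still contains a terminal SCC containing the target $y$, and that $y$ itself is not destroyed. This can be arranged by choosing $Y$ at the outset to be slightly shrunk (say, vertices of $Y$ with many in- and out-neighbours inside $Y$), so that $y$ survives vertex removals and the short-path argument in the residual graph continues to terminate at $y$. The constants $c^6$ and $c^{-3}$ are generous enough to absorb this shrinking, which is presumably why those particular exponents appear in the statement.
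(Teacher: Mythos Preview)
The paper does not prove this lemma; it is quoted from \cite[Lemma~5]{MR2274080} and used as a black box in the proof of Lemma~\ref{Lem: nice fractional matching in bpt}. So there is no in-paper argument to compare against. Your sketch, however, has two real gaps that are structural rather than quantitative.

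\textbf{Short paths.} Your assertion that inside a terminal SCC $Y$ the BFS ball ``grows by a constant fraction of $cn$ per step'' is false. Take $Y=\{a_0,\dots,a_{k-1}\}\cup\{b_0,\dots,b_{cn-1}\}$ where each $a_i$ sends an edge to $a_{i+1}$ and to every $b_j$, while each $b_j$ sends edges to $a_0$ and to every other $b_{j'}$. This is strongly connected with minimum out-degree $cn$, yet the only in-edge of $a_{k-1}$ is from $a_{k-2}$, so $d(a_0,a_{k-1})=k-1$, which can be $(1-c)n$. The out-degree hypothesis alone does not bound the diameter of a strongly connected digraph, so taking $Y$ to be an entire terminal SCC cannot in general yield paths of length $\le c^{-3}$.

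\textbf{Greedy extraction.} Even granting short paths, the iteration is not justified: after deleting the internal vertices of earlier paths, $y$ may become unreachable from $x$ altogether. You flag this, but your fix --- shrinking $Y$ to vertices with many in- and out-neighbours --- does not resolve it. High in-degree at $y$ guarantees only that $y$ has many in-neighbours, not that any of them remain reachable from $x$; in the example above, deleting the single vertex $a_{k-2}$ already disconnects $a_{k-1}$ from the rest. The obstruction is reachability, not degree, so it is not a matter of ``quantitative bookkeeping''.

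What is actually needed is a set on which one can run expansion \emph{from both sides}: forward BFS from every $x\in X$ and backward BFS to every $y\in Y$ each reach a $(1-O(c))$-fraction of the relevant set within $O(c^{-2})$ steps. It is this two-sided expansion that survives the removal of $c^3 n$ vertices and makes the greedy extraction go through; building it is the substance of the argument in \cite{MR2274080}, and your terminal-SCC shortcut bypasses exactly that work.
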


The following lemma lets us balance weights in an appropriate fractional matching, which will be required to prove Lemma~\ref{Lma: larger absorption}.

\begin{lemma}\label{Lem: nice fractional matching in bpt}
    Let $1/n \ll c, \mu < 1$ with $c+\mu \le 1/8$. Let $G$ be a bipartite graph with vertex classes $X = \{x_i : i \in [n]\}$, $Y = \{y_i: i \in [n]\}$ and a perfect matching $M = \{x_iy_i: i \in [n]\}$. Suppose that for all $x \in X$, $d(x, Y) \ge \delta n$. Then there exist disjoint subsets $I^+$ and $I^-$ of~$[n]$ with $|I^+| = |I^-| = \delta n/4$ such that the following hold.
    Let $\omega$ be~a vertex weighting of $G$ such that we have
    \begin{align} 
   \nonumber \omega(y_i) &= 1/2 & &\text{ for all } i \in [n],  \\ \nonumber
    \omega(x_i) &\in  [1/2-c, 1/2] & &\text{ for all } i \in [n]\setminus I^+, \\  
    \omega(x_i) &\in  [1/2, 1/2+c] & &\text{ for all } i \in [n]\setminus I^- \text{ and} \label{eqn: part 1} \\   
        \sum_{i \in I^+} \left(\omega(x_i)-\omega(y_i)\right) &= \sum_{j \in I^-} \left(\omega(y_j)-\omega(x_j)\right) < \frac{\delta^9 n}{8}. \label{eqn: part 2}
    \end{align}
    Then $G$ has a fractional matching~$\omega^*$ of weight at least $\sum_{i \in [n]}\omega(x_i) -  \mu n$ such that, for each~$v \in V(G)$, we have $1/2 -c \le \omega^*(v) \le \omega(v)$ and each non-zero weighted edge has weight at least~$1/8$. 
\end{lemma}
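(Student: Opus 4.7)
The plan is to choose $I^+$ and $I^-$ using the expansion supplied by Lemma~\ref{Lma: X+ X-} applied to an auxiliary digraph, and then, given any admissible $\omega$, to build $\omega^*$ as a base fractional matching supported on $M$ plus short-path corrections.

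First I would form the auxiliary digraph $D$ on $[n]$ in which $i \to j$ precisely when $i \neq j$ and $x_iy_j \in E(G)$. Since each $x_i$ has at least $\delta n$ neighbours in $Y$, we have $\delta^+(D) \ge \delta n - 1$. Applying Lemma~\ref{Lma: X+ X-} with $\delta/2$ in place of its parameter $c$ yields sets $B \subseteq A \subseteq [n]$ with $|B| \ge \delta n/4$ such that, for all $a \in A$ and $b \in B$, there are at least $(\delta/2)^6 n$ internally vertex-disjoint directed paths from $a$ to $b$ of length at most $(\delta/2)^{-3}$ in $D$. I would then take $I^+$ and $I^-$ to be two disjoint $\delta n/4$-subsets of $B$, which guarantees many short directed paths in $D$ between any $(i,j) \in I^+ \times I^-$.

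For the construction of $\omega^*$, given $\omega$, write $\eps_i := \omega(x_i) - 1/2$; then $\eps_i \in [0,c]$ for $i \in I^+$, $\eps_i \in [-c,0]$ for $i \in I^-$, $\eps_i = 0$ otherwise, and $\sum_i \eps_i = 0$. Start with the base matching $\omega_0(x_iy_i) := \min\{\omega(x_i),\, 1/2\}$ (and $\omega_0 \equiv 0$ elsewhere), which is a fractional matching of weight $n/2 - E$ where $E := \sum_{i \in I^+} \eps_i < \delta^9 n/8$; every non-zero $\omega_0$-edge has weight at least $1/2 - c \ge 3/8$, and the vertex bounds $1/2 - c \le \omega_0(v) \le \omega(v)$ are immediate. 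If $E \le \mu n$, I simply set $\omega^* := \omega_0$ and we are done. Otherwise, I would augment $\omega_0$ by routing the discrepancy along short paths: pick a flow $f \colon I^+ \times I^- \to [0,c]$ with row-sums $\eps_i$ and column-sums $-\eps_j$, and for each pair with $f(i,j) > 0$ choose a short directed path $P_{ij}$ from $i$ to $j$ in $D$. Along $P_{ij} = i \to k_1 \to \cdots \to k_m \to j$, I would add $f(i,j)$ to each non-matching edge $x_iy_{k_1}, x_{k_1}y_{k_2}, \ldots, x_{k_m}y_j$ and subtract $f(i,j)$ from each intermediate matching edge $x_{k_s}y_{k_s}$. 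A direct check shows that the two contributions cancel at every intermediate vertex, so $\omega^*(x_{k_s}) = \omega^*(y_{k_s}) = 1/2$; meanwhile $\omega^*(x_i)$ climbs by $\eps_i$ to $\omega(x_i)$, $\omega^*(y_j)$ climbs by $-\eps_j$ to $1/2$, and the total weight reaches $n/2 = \sum_i \omega(x_i)$.

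The hard part will be enforcing the lower bound $1/8$ on every non-zero edge weight, since a single value $f(i,j)$ can be any positive real up to $c$, and $c$ itself may be below $1/8$. The strategy is to channel the routing through a sparse family of \emph{hub} paths that share common non-matching edges: because each pair $(i,j) \in I^+ \times I^-$ admits $\Theta(\delta^6 n)$ internally vertex-disjoint short paths while the total flow $E$ is only $O(\delta^9 n)$, a greedy/pigeonhole selection should concentrate the flow on $O(E)$ paths in such a way that each used non-matching edge accumulates weight at least $1/8$, while at the same time the total load crossing any intermediate vertex stays below $c + \mu \le 1/8$, so the intermediate matching edges keep weight at least $3/8$. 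Handling this combinatorial packing carefully, while simultaneously respecting the row and column sums prescribed by $f$, is the technical crux of the argument.
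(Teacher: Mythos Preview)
Your choice of $I^+$ and $I^-$ via Lemma~\ref{Lma: X+ X-} on the auxiliary digraph is essentially what the paper does: it finds one set $I\subseteq [n]$ of size $\delta n/2$ with the many-internally-disjoint-paths property between every ordered pair and then simply splits $I$ in half into $I^+$ and $I^-$. So the first half of your plan is fine.

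The construction of $\omega^*$ is where your plan diverges and stalls. You try to route the exact surplus $\eps_i$ from each $i\in I^+$ to $I^-$ along short paths and then ``aggregate'' so that each used non-matching edge carries weight at least $1/8$. This cannot work as written: for any $i\in I^+$ with $0<\eps_i<1/8$, the total weight you place on non-matching edges incident to $x_i$ is exactly $\eps_i$ (since you insist on $\omega^*(x_i)=\omega(x_i)$ and leave the matching edge at $1/2$), so every such edge is too light, and aggregation on interior edges of paths does nothing for this first edge out of $x_i$. Your hub-path sketch never addresses this, and it never exploits the $\mu n$ slack in the target weight except in the trivial case $E\le\mu n$.

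The paper avoids explicit routing altogether. It takes $\omega^*$ of \emph{maximal} weight among fractional matchings satisfying (a) $1/2-c\le\omega^*(v)\le\omega(v)$, (b) $\omega^*(x_iy_i)\ge 1/8$ for every $i$, (c) the $\ell^1$-deviation bound $\|\omega^*-\omega_0^*\|\le 2\delta^{-3}\bigl(|\omega^*|-|\omega_0^*|\bigr)$, and (d) only vertices in $X^+\cup Y^-$ may be unsaturated. If $|\omega^*|$ fell short of $\sum_i\omega(x_i)-\mu n$, then (c) combined with~\eqref{eqn: part 2} forces fewer than $\delta^6 n$ edges to have drifted by at least $1/4$ from $\omega_0^*$; among the $\ge\delta^6 n$ internally-disjoint $M$-augmenting paths from a deficient $x^+$ to a deficient $x^-$ guaranteed by the choice of $I$, one avoids all such edges, and pushing the single fixed amount $\mu$ along it yields $\omega_1^*$ still satisfying (a)--(d) with strictly larger weight, contradicting maximality. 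The two features absent from your plan are precisely what make this work: the augmenting path is chosen \emph{after} $\omega^*$, so it can dodge already-loaded edges, and the quantity pushed is the constant $\mu$ rather than a data-dependent $\eps_i$, so the deviation budget (c) stays under control and matching-edge weights never dip below $1/8$.
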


\begin{proof}
    We start with the following claim identifying $I^+$ and $I^-$.
\begin{claim} \label{clm: I+ I-}
    There exists a subset $I$ of $[n]$ with $|I| = \delta n/2$ such that for all distinct $i, i' \in I$, there are at least $\delta^6 n$ internally vertex-disjoint $M$-augmented paths from $x_i$ to $x_{i'}$ of the form $x_i y_{i_1} x_{i_1}\dots y_{i'} x_{i'}$.
\end{claim}
\begin{proofclaim}
    Define an auxiliary digraph $H$ on $X$ such that $x_ix_j \in E(H)$ is directed from~$x_i$ to~$x_j$ if $x_i y_j \in E(G)$. Note that $\delta^+(H) \ge \delta n$. By Lemma~\ref{Lma: X+ X-}, there exists $X_2 \subseteq X_1 \subseteq X$ such that $|X_2| \ge \delta n/2$ and every $x_1 \in X_1$ has at least $\delta^6 n$ internally vertex-disjoint paths of length at most $\delta^{-3}$ to every $x_2 \in X_2$. Let $I \subseteq [n]$ with $|I|=\delta n/2$ be such that $\{x_i: i \in I\} \subseteq X_2$. Note that a path $x_{i_1}\dots x_{i_{\ell}}$ in~$H$ corresponds to an $M$-augmented path in~$G$ from~$x_{i_1}$ to~$x_{i_{\ell}}$, namely $x_{i_1}y_{i_2}x_{i_2}\dots y_{i_{\ell}}x_{i_{\ell}}$.
 \end{proofclaim}
 Let $I$ be given by Claim~\ref{clm: I+ I-}. Partition~$I$ into~$I^+$ and~$I^-$ so that $|I^+|=|I^-|= \delta n/4$.
We write $X^+ = \{x_i: i \in I^+\}$ and write $X^-, Y^+, Y^-, M^+$ and $M^-$ similarly. Let $\omega$ be a vertex weighting satisfying the assumptions of the lemma. 

Define a fractional matching $\omega_0^*$ on $G$ such that, for all $xy \in E(G)$ with $x \in X$ and~$y \in Y$,

\begin{align*}
    \omega_0^*(xy) = \begin{cases}
        \min\{\omega(x), \omega(y) \} = 1/2 & \text{ if } xy \in M\setminus M^-, \\
        \min\{\omega(x), \omega(y) \} = \omega(x) & \text{ if } xy \in M^-, \\
        0 & \text{ otherwise.}
    \end{cases}
\end{align*}
For a fractional matching $ \omega^*$ on $G$, let $$||\omega^* - \omega_0^* || = \sum_{e \in E(G)}|\omega^*(e) - \omega_0^*(e)|,$$ which denotes the sum of the edge-weight differences between $\omega^*$ and $\omega_0^*$. Let $\omega^*$ be a fractional matching on $G$ such that 
\begin{enumerate}[label = (\alph*)]
    \item \label{itm: (a)} $1/2-c \le \omega^*(v) \le \omega(v)$ for all $v \in V(G)$;
    \item \label{itm: (b)} $\omega^*(x_i y_i) \ge 1/8$ for all $i \in [\ell]$;
    \item \label{itm: (c)} $||\omega^* - \omega_0^* || \le 2\delta^{-3}(|\omega^*| - |\omega_0^*|)$;
    \item \label{itm: (d)} $\omega^*(v) = \omega(v)$ for all $v \in V(G) \setminus \left(X^+ \cup Y^-\right)$.
    
\end{enumerate}
    Such an $\omega^*$ exists by taking $\omega^* = \omega_0^*$.
    We further assume that $|\omega^*|$ is maximal. We may also assume that $|\omega^*| < \sum_{i \in [n]} \omega(x_i) - \mu n$ or else we are done. 
By~\ref{itm: (d)}, there exist $x^+ \in X^+$ and~$y^- \in Y^-$ such that 
\begin{equation} \label{eqn: deficient pair}
    \omega(x^+)-\omega^*(x^+), \hspace{2mm} \omega(y^-)-\omega^*(y^-) \ge \mu.
\end{equation}
 We aim to add weight to $x^+$ and $y^-$. Let $x^- = N_M(y^-)$. Let $E_0 = \{e \in E(G): |\omega^*(e) - \omega_0^*(e)| \ge 1/4\}$. Note that
  $$|E_0| \le \frac{||\omega^* - \omega_0^* ||}{1/4} \stackrel{\ref{itm: (c)}}{\le} 8\delta^{-3}\left(|\omega^*| - |\omega_0^*| \right) \stackrel{\ref{itm: (d)}}{\le} 8\delta^{-3}\left(\sum_{i \in I^+}\omega(x_i) - \omega(y_i)\right) \stackrel{\eqref{eqn: part 2}}{<} \delta^6 n.$$
Thus, by the definitions of $X^+$ and $X^-$, there exists an $M$-augmented path $P$ in $G$ from~$x^+$ to~$x^-$ of length at most $\delta^{-3}$ with $E(P) \cap E_0 = \emptyset$.
Without loss of generality,  let $P = x_1y_2x_2y_3\dots x_{\ell -1}y_{\ell}x_{\ell}$ where $x_1 = x^+$, $x_{\ell} = x^-$, $y_{\ell} = y^-$ and $\ell \le \delta^{-3}$. 
Define the fractional matching $\omega_1^*: E(G) \to [0, 1]$ such that, for any edge $e \in E(G)$, we have
\begin{align}\label{eqn: adjusted weights} 
    \omega_1^*(e) = \begin{cases} 
        \omega^*(e) - \mu & \text{ if } e=y_ix_i \text{ with } i \in [\ell-1]\setminus\{1\}, \\
        \omega^*(e) + \mu & \text{ if } e = x_i y_{i+1} \text{ with } i \in [\ell -1], \\
        \omega^*(e) &\text{ otherwise.}
    \end{cases}
\end{align}
Note that 
\begin{equation} \label{eqn: only mu increase}
    |\omega_1^*| = |\omega^*| +\mu
\end{equation}
and for $v \in V(G)$,
\begin{equation*}
    \omega_1^*(v) = \begin{cases}
        \omega^*(v) + \mu &\text{ if } v \in \{x^+, y^-\}, \\
        \omega^*(v) &\text{ otherwise.}
    \end{cases}
\end{equation*}
Therefore by~\eqref{eqn: deficient pair}, we have that $\omega_1^*$ satisfies~\ref{itm: (a)} and~\ref{itm: (d)}. 

To see $\omega_1^*$ satisfies~\ref{itm: (b)}, if $e \in E(G)\setminus \left(E(P)\setminus \{x_{\ell}y_{\ell}\}\right)$, then $\omega_1^*(e) = \omega^*(e)$. If $e \in E(P)\setminus \{x_{\ell}y_{\ell} \} \subseteq E(G)\setminus E_0$, then $|\omega_1^*(e) - \omega_0^*(e)| = \mu$ and so \begin{align*}
    \omega_1^*(e) \begin{cases}
        \ge \omega_0^*(e) - 1/4 - \mu \ge 1/8 &\text{ if } e \in E(M)\\
        \le \omega_0^*(e) + 1/4 + \mu \le 1/2 &\text{ if } e\notin E(M).
    \end{cases}
\end{align*}
Thus, $\omega_1^*$ indeed satisfies~\ref{itm: (b)} and moreover is a fractional matching on~$G$.

To verify that $\omega_1^*$ satisfies~\ref{itm: (c)}, recall that~$P$ has $2\ell -1 \le 2\delta^{-3}$ edges. Therefore, we deduce that
\begin{align*}
    ||\omega_1^* - \omega_0^*|| &\le ||\omega_1^* - \omega^*||+||\omega^*-\omega_0^*|| \stackrel{\mathclap{\eqref{eqn: adjusted weights},~\ref{itm: (c)}}}{\le} \mu(2\ell -1) + 2\delta^{-3}(|\omega^*| - |\omega_0^*|) \\
    &\le 2\delta^{-3}\mu + 2\delta^{-3}(|\omega^*| - |\omega_0^*|) \stackrel{\eqref{eqn: only mu increase}}{=} 2\delta^{-3}(|\omega_1^*|- |\omega_0^*|).
\end{align*}

Thus, $\omega_1^*$ satisfies~\ref{itm: (a)} to~\ref{itm: (d)} and has larger weight than $\omega^*$, a contradiction. 
\end{proof}

We now prove Lemma~\ref{Lma: larger absorption} assuming Lemma~\ref{Lma: how to absorb}. First, we find a monochromatic tightly connected half-dense matching in the reduced $k$-graph and reserve some unbalanced vertex clusters corresponding to the vertices of the matching, as the vertex set $U^*$. We then use Lemma~\ref{Lma: how to absorb} to cover $B^*$ with a set $\mathcal{C}$ of few monochromatic tight cycles using some vertices from $U^*$. Finally, we use the half-dense property of the matching and Lemma~\ref{Lem: nice fractional matching in bpt} to find one long tight cycle covering almost all the vertices in $U^* \setminus V(\mathcal{C})$.

\begin{proof}[Proof of Lemma~\ref{Lma: larger absorption}]
Let  $d_k = 1/r$, $\delta = \left(2^{9k}(2r)^{3\cdot 2^k}\right)^{-1}$, $$1/n \ll 1/t_1 \ll 1/t_0 \ll \eps_k \ll \eps', \mu \ll \psi \ll 1/r, 1/k, $$ $$1/n \ll 1/r', \Tilde{\eps} \ll \eps_k, d_2,\dots,d_{k-1}.$$

\noindent \textbf{Step 1}: Defining $U^*$.
    
Let $H'$ be an induced subgraph of $H$ such that $|V(H')| \equiv 0 \Mod{t_1!}$ and $|V(H\setminus H')| < t_1!$. 
By Lemma~\ref{lem:regular slice} with $H'$ playing the role of $H$, there exists a $(\cdot, \cdot, \tilde{\eps},\eps_k, r')$-regular slice~$\J$ with partition $\mathcal{P}$ of $V(H')$. Let $m = |V|$ for $V \in \mathcal{P}$,  $t=|\mathcal{P}|$ and $t_0 \le t \le t_1$.
    Note that \begin{equation}\label{eqn: relation between m, t, n}
     n \ge  mt \ge n - t_1! \ge n/2.
    \end{equation}
Let $\R = \R_{d_k}^{\J}(H')$. Note that $|E(\R)| \ge (1-r\eps_k)\binom{t}{k}$ by Lemma~\ref{lem:reduced graph edge count}. By Corollary~\ref{Cor: half-dense from k-graph} with~$\R, r\eps_k$ playing the roles of~$R, \eps$, respectively, there exists~a monochromatic tight component~$T$ of~$\R$ and~a $\delta t$-half dense matching~$M$ of size $\ell$, where \begin{equation} \label{eqn: upper bound on ell}
      \delta t \le  \ell \le t/k \le t/2.
    \end{equation} Let $M = \{x_iy_{i, 2}\dots y_{i, k}: i \in [\ell]\}$ and $X=\{x_i: i\in [\ell]\}$ be such that, for all $x \in X$, we have $|\{i \in [\ell]: xy_{i, 2}\dots y_{i, k} \in E(T) \}| \ge \delta t$.

    Define an auxiliary bipartite graph $G$ on $X$ and $Y = \{y_1,\dots, y_{\ell}\}$ such that $x_iy_j \in E(G)$ if and only if $x_iy_{j, 2}\dots y_{j, k} \in E(T)$. Note that $M$ corresponds to a perfect half-dense matching~$M' = \{x_iy_i: i \in [\ell]\}$ in~$G$ and for each $x\in X$, $$d_G(x, Y) \ge \delta t \stackrel{\eqref{eqn: upper bound on ell}}{\ge} 2\delta \ell.$$
    By Lemma~\ref{Lem: nice fractional matching in bpt} with $\ell, 2\delta$ playing the roles of $n, \delta$, respectively, there exist disjoint $I^+, I^- \subseteq [\ell]$ such that \begin{equation*}\label{eqn: I+ I-}
        |I^+| = |I^-|= \delta \ell/2 \stackrel{\eqref{eqn: upper bound on ell}}{\ge} \delta^2 t/2 \stackrel{\eqref{eqn: relation between m, t, n}}{\ge} \delta^2(n/2m) - \delta^2 t_1!/(2m) \ge \delta^2 n/4m.
    \end{equation*} Define $X^+ = \{x_i \in X: i \in I^+\}$ and define $X^-, Y^+, Y^-$ similarly.
Let
\begin{align}\label{eqn: fixing constants}
 \alpha = 2^5 rk(2r)^{2^{k+1}}, \, \gamma = \frac{4\eps^* \alpha n}{m|I^+|} \text{ and } \gamma_a = \frac{(k-1)\gamma}\alpha = \frac{4\eps^*(k-1) n}{m|I^+|}.  
\end{align}
Note that Lemma~\ref{Lem: nice fractional matching in bpt} also implies that if $\omega$ is a vertex-weighting satisfying~\eqref{eqn: part 1} and~\eqref{eqn: part 2} with $\gamma - \gamma_a, \ell$ playing the roles of $c, n$, respectively, then $G$ has a fractional matching $\omega^*$ with 
 $   |\omega^*| \ge \sum_{i \in [\ell]}\omega(x_i)-\mu\ell$,   
where each vertex $v$ has weight between $1/2 - \gamma+\gamma_a$ and $\omega(v)$ and furthermore each non-zero weighted edge has weight at least~$1/8$. 

For each~$v \in V(\R)$, recall that $V_v$ is the cluster of~$\mathcal{P}$ corresponding to~$v$. We pick disjoint $U_v, A_v \subseteq V_v$ such that 
\begin{align*}
    |A_v| &= \begin{cases}
         \gamma m & \text{ if } v \in X^+, \\
        0 & \text{ otherwise,}
    \end{cases}
\\
    |U_v| &= \begin{cases}
        \left(\frac{1}{2}-\gamma + \gamma_a \right)m &\text{ if } v \in X^-, \\
        \frac{m}{2} &\text{ if } v \in V(M')\setminus X^-, \\
        \eps' m &\text{ otherwise.}
    \end{cases}
\end{align*}
Let $A = \bigcup_{v \in V(\R)}A_v$, $U= \bigcup_{v \in V(\R)}U_v$ and $U^* = U \cup A$. Therefore, we have $|A| = |I^+|\gamma m = 4\alpha \eps^*n$ by~\eqref{eqn: fixing constants} and  \begin{align*}
    |U^*| &= |U|+|A| = km\ell/2  - (\gamma - \gamma_a)m|X^-| + \eps'm(t-k\ell) + \gamma m|X^+| \\
    &= km\ell/2 + \gamma_a m |X^-| + \eps'm(t-k\ell) =km\ell/2 + 4(k-1)\eps^*n + \eps' m(t-k\ell)\\ & \stackrel{\mathclap{\eqref{eqn: upper bound on ell}}}{\le} mt/2 + 4(k-1)\eps^* n +\eps'm(t-k\ell) \stackrel{\eqref{eqn: relation between m, t, n}}{\le} 3n/4. 
\end{align*}

\noindent \textbf{Step 2:} Verifying properties of $U^*$.\\
Let $B^* \subseteq V(H)\setminus U^*$ be such that $|B^*| = 4\eps^* n$.\\
\textbf{Step 2A:} Covering $B^*$.
\\
 By Lemma~\ref{Lma: how to absorb} with $A, B^*$ playing the roles of $X, Z$, respectively, there exists a set~$\mathcal{C}$ of vertex-disjoint monochromatic tight cycles covering $B^*$ with $$|\mathcal{C}| \le 2^{18}r\delta_0^{-5} \text{  and  }|V(\mathcal{C})\cap A| = 4(k-1)\eps^*n = \gamma_a m |I^+|.$$
\textbf{Step 2B:} Covering $U^* \setminus V(\mathcal{C})$.
\\
Let $A' = A\setminus V(\mathcal{C})$, so $|A'|= (\gamma - \gamma_a)|I^+|m$.
For each $x \in V(\R)$, let $A_x' = A' \cap V_x$, so $\bigcup_{x \in X^+} A_x' = A'$.
Note that \begin{equation}\label{eqn: bound on leftover in half dense matching}
    km\ell/2 \le |U \cup A'| \le km\ell/2 + \eps' mt.
\end{equation}
We define a vertex-weighting $\omega: V(M') \to [0, 1]$ to be such that for $v \in V(M')$, 
\begin{align*}
    \omega(v) = \frac{|V_v \cap (U \cup A')|}{m} = \frac{|U_v|+|A_v'|}{m} \begin{cases}
         \in [\frac{1}{2}, \frac{1}{2}+\gamma] &\text{ if } v \in X^+, \\
        = \frac{1}{2} - \gamma+\gamma_a &\text{ if } v \in X^-, \\
        = \frac{1}{2} &\text{ if } v \in V(M')\setminus (X^+ \cup X^-).
    \end{cases}
\end{align*}
Firstly, note that  
\begin{align*}
    \sum_{i \in I^+} (\omega(x_i) - \omega(y_i)) &= \sum_{i \in I^+}\left( \left( \frac{1}{2}+ \frac{|A_{x_i}'|}{m} \right)-\frac{1}{2}\right) = \frac{|A'|}{m}  \\
    &=(\gamma - \gamma_a)|I^+| = \sum_{i \in I^-}(\omega(y_i) - \omega(x_i)).
\end{align*}
Also 
\begin{align*}
    \sum_{i \in I^+} (\omega(x_i) - \omega(y_i)) &= (\gamma - \gamma_a)|I^+| \le \gamma |I^+| \\ &\stackrel{\mathclap{\eqref{eqn: fixing constants}}}{=}\ 4\eps^* \alpha n/m \stackrel{\eqref{eqn: relation between m, t, n}}{\le} 8\eps^* \alpha t \stackrel{\eqref{eqn: upper bound on ell}}{\le} 8\eps^* \alpha \ell / \delta < (2\delta)^9\ell/8.
\end{align*}
Therefore, all the conditions for Lemma~\ref{Lem: nice fractional matching in bpt} (with $2\delta$ playing the role of $\delta$) are satisfied. Furthermore, $\sum_{i \in [\ell]}\omega(x_i) = \sum_{i \in [\ell]}\omega(y_i)$. We deduce that~$G$ has a fractional matching~$\omega^*$ with \begin{equation}\label{eqn: weight of omega^*}
    |\omega^*| \ge \sum_{i \in [\ell]}\omega(x_i)-\mu\ell = \sum_{i \in [\ell]}\omega(y_i) - \mu \ell = \left(\frac{1}{2}-\mu\right)\ell, 
\end{equation} where each vertex $v$ has weight at least $1/2 - \gamma+\gamma_a$ and at most $\omega(v)$ and each non-zero weighted edge has weight at least~$1/8$.
Define a corresponding fractional matching $\omega_H^*$ on~$H$ such that for an edge~$e\in E(H)$, $\omega_H^*(e) = \omega^*(x_i y_j)$ if $e=x_i y_{j, 2}\dots y_{j, k}$ and $\omega_H^*(e) = 0$ otherwise. Note that $|\omega_H^*| = |\omega^*|$.
For any vertex $u \notin V(M)$ we have $\omega_H^*(u)= 0$, so $|V_u \cap (U \cup A')| = \eps' m = ((1-3\eps')\omega_H^*(u) +\eps')m$. For each $u \in V(M)$, we have $\omega_H^*(u) \ge 1/3$ and so $$|V_u \cap (U \cup A')| = |U_u \cup A_u'| = \omega(u)m \ge \omega_H^*(u) m \ge((1-3\eps')\omega_H^*(u)+\eps')m.$$
By Lemma~\ref{Lem: Lo-Pfe frac matching}, $H[A' \cup U]$ contains a tight cycle~$C$ of length $\ell_0$ with 
\begin{align*}
    \ell_0 &\ge (1-\psi)k|\omega_H^*|m-k \stackrel{\mathclap{\eqref{eqn: weight of omega^*}}}{\ge} (1-\psi)(1/2 - \mu)k\ell m - k \ge (1-\psi)km\ell/2 - \mu km\ell.
\end{align*}
Recall that $\mu, \eps', 1/t \ll \psi$ and $1/m \ll 1/k$. Hence, 
\begin{align*}
    |(A' \cup U)\setminus V(C)| &\stackrel{\eqref{eqn: bound on leftover in half dense matching}}{\le} \psi km\ell/2 + \mu km\ell + \eps' mt \le \psi km\ell \stackrel{\eqref{eqn: upper bound on ell}}{\le}\psi m t \stackrel{\eqref{eqn: relation between m, t, n}}{\le} \psi n
\end{align*} as required.
\end{proof}

\section{Edge-coloured Multigraphs} \label{section: Multigraph properties}
Let $H$ be a $k$-partite $k$-graph with vertex classes $X_1, \dots, X_k$ with $X_i = \{x_{i, j}: j \in [n]\}$. We denote the $2$-blowup of a $k$-edge by $K_k^{(k)} (2)$. Note that $K_k^{(k)} (2)$ is the complete $k$-partite $k$-graph with each vertex class of order $2$. We say that an edge-coloured multigraph $G$ \emph{respects} $H$ if the following hold:
\begin{itemize}
    \item the colour set of $G$ is a subset of $X_k$;
    \item $V(G) \subseteq \{v_i: i \in [n]\}$;
    \item if $\phi(v_iv_{i'}) = x_{k, t}$ for some $t \in [n]$, then $\{x_{i, j}x_{i',j}: j \in [k-1]\}$ forms a $K_{k-1}^{(k-1)}(2)$ in~$H(x_{k, t})$.
\end{itemize}
There is a natural correspondence between a tight cycle in $H$ and a rainbow cycle in $G$.
\begin{fact} \label{fct: equivalence with multigraph covering problem}
    Let $H$ be a $k$-partite $k$-graph with vertex sets $X_1,\dots, X_k$ such that for $i \in [k-1]$, $X_i = \{x_{i, j} : j \in [n] \}$. Let $G$ be an edge-coloured multigraph respecting $H$. Suppose that~$G$ contains a rainbow cycle $v_1v_2\cdots v_{\ell}$ with $\phi(v_i v_{i+1})= x_{k, i}$ for $i \in [\ell-1]$ and $\phi(v_{\ell} v_1) = x_{1, \ell}$. Then~$H$ contains a tight cycle with vertex set $\{x_{i, j}: i \in [k], j \in [\ell] \}$.
\end{fact}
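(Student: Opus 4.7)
The plan is to exhibit a single explicit cyclic ordering on the vertex set $\{x_{i,j}:i\in[k],\,j\in[\ell]\}$ and verify that every window of $k$ consecutive vertices is an edge of $H$. The natural candidate is the block-lexicographic ordering
\[
x_{1,1},x_{2,1},\ldots,x_{k,1},\;x_{1,2},x_{2,2},\ldots,x_{k,2},\;\ldots,\;x_{1,\ell},\ldots,x_{k,\ell},
\]
closed up cyclically. Since the cycle in $G$ is rainbow, the colours $x_{k,1},\ldots,x_{k,\ell}$ are pairwise distinct, so the second coordinates $j\in[\ell]$ labelling the blocks are distinct, and the listing above gives $k\ell$ distinct vertices. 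Two consecutive windows of length $k$ automatically share $k-1$ vertices, so it suffices to show every such window is an edge of $H$.

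Fix $j\in[\ell]$ and consider the $G$-edge $v_jv_{j+1}$ of the rainbow cycle, where indices are taken modulo $\ell$. Its colour is $x_{k,j}$; here I read $\phi(v_\ell v_1)=x_{1,\ell}$ in the statement as a typo for $x_{k,\ell}$, since $\phi(G)\subseteq X_k$ by definition of ``respects''. Because $G$ respects $H$, the respecting condition guarantees that the pairs $\{x_{p,j},x_{p,j+1}\}$ for $p\in[k-1]$ form a copy of $K_{k-1}^{(k-1)}(2)$ in the link $H(x_{k,j})$; equivalently, for every $\varepsilon\in\{j,j+1\}^{k-1}$ the set
\[
\{x_{1,\varepsilon_1},x_{2,\varepsilon_2},\ldots,x_{k-1,\varepsilon_{k-1}},x_{k,j}\}
\]
is an edge of $H$.

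Now the window starting at position $(j-1)k+i$ with $i\in[k]$ consists of $x_{i,j},x_{i+1,j},\ldots,x_{k,j}$ from block~$j$ together with $x_{1,j+1},\ldots,x_{i-1,j+1}$ from block~$j+1$ (in the extreme cases $i=1$ one recovers the full block~$j$, while $i=k$ gives $x_{k,j},x_{1,j+1},\ldots,x_{k-1,j+1}$). In every case the window meets each $X_p$ with $p\in[k-1]$ in exactly one vertex of $\{x_{p,j},x_{p,j+1}\}$ and meets $X_k$ exactly in $x_{k,j}$, matching the $\varepsilon$ defined by $\varepsilon_p=j+1$ for $p<i$ and $\varepsilon_p=j$ for $p\ge i$. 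By the preceding paragraph, the window is an edge of $H$. Letting $j$ range over $[\ell]$ and $i$ over $[k]$ (the $j=\ell$ case using the wrap-around edge $v_\ell v_1$ to identify block $\ell+1$ with block $1$) covers all windows, so the listing is a tight cycle on the required vertex set. There is no real obstacle in the argument: the entire proof is bookkeeping around the window crossing a block boundary, and then a direct application of the respecting condition.
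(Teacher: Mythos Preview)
Your proof is correct, and the paper does not supply one: the statement is labelled a Fact and left unproved as routine. Your block-lexicographic ordering and window analysis are exactly the natural way to spell this out, and your identification of $\phi(v_\ell v_1)=x_{1,\ell}$ as a typo for $x_{k,\ell}$ is right, since the colour set of $G$ is contained in $X_k$.
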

Let $G$ be an edge-coloured multigraph. Recall that $\phi(G)$ is the set of colours in~$G$ and for a colour $c \in \phi(G)$, $G_c$ is the subgraph with edges only of the colour~$c$ and no isolated vertices. The monochromatic colour degree of $G$ to be $\delta_{\text{mon}}(G) = \min_{c \in \phi(G)} \delta(G_c)$. A \emph{path system} is a collection of vertex-disjoint paths. A rainbow path system is a path system such that its union (viewed as a graph) is rainbow. A (rainbow) cycle system is defined similarly. The main goal of this section is to reduce
Lemma~\ref{Lma: how to absorb} into the following lemma, which will be proved in the next section.

\begin{lemma} \label{lma: Lemma Rainbow Cycle System}
    Let  $1/n \ll \delta_0 \le 1/8$. Suppose $G$ is an edge-coloured multigraph on $n$ vertices with $\delta_{\text{mon}}(G) \ge \delta_0 n$ and $|\phi(G)| \le \delta_0 n/16$. Then there exists a rainbow cycle system $\mathcal{C}$ with $\phi(\mathcal{C}) = \phi(G)$ and $|\mathcal{C}| \le 2^{18}\delta_0^{-5}$.
\end{lemma}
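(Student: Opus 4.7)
The plan is to prove Lemma~\ref{lma: Lemma Rainbow Cycle System} in two phases, matching the planned structure: Section~\ref{sec: rainbow path systems} would first find a rainbow \emph{path} system covering every colour in $\phi(G)$ with only $O(\delta_0^{-k_1})$ paths (for some constant $k_1$), and Section~\ref{Bowties} would close those paths into cycles (merging some of them where necessary) using ``bowtie'' gadgets, keeping the final count at $2^{18}\delta_0^{-5}$.

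The main structural observation driving both phases is that the colour budget is tiny compared to the monochromatic minimum degree: $|\phi(G)| \le \delta_0 n/16$, while $\delta(G_c) \ge \delta_0 n$ for every $c \in \phi(G)$. Any rainbow subgraph uses at most $|\phi(G)|$ edges and hence at most $2|\phi(G)|$ vertices, so at any stage of the construction at most $\delta_0 n/8$ vertices are already used, whereas each endpoint $v \in V(G_c)$ has at least $\delta_0 n$ $c$-neighbours. Consequently, greedy extension of a rainbow path in any still-unused colour $c$ is always possible if the current endpoint is already incident to a $c$-edge; and if not, we may simply start a new path at any vertex of $V(G_c)$, which is nonempty since $c \in \phi(G)$.

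For the path-system phase, naive greedy extension covers all colours but can produce as many as $|\phi(G)|$ separate paths, which is far too many. The refinement is to iteratively \emph{merge} paths: with many paths there are many endpoints, and by pigeonhole some pair of endpoints shares, for some still-unused colour $c$, a common $c$-neighbour (or short alternating gadget) through which the two paths can be spliced into a single rainbow path, strictly decreasing the number of paths. A suitable potential function (such as the number of paths, possibly weighted by path length) should then decrease until only $O(\delta_0^{-k_1})$ paths remain; most of the quantitative work sits here.

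Closing the rainbow paths into rainbow cycles is the main obstacle. For a single path $v_1 \cdots v_t$ one needs an unused colour $c$ with $v_1 v_t \in E(G_c)$, which a random choice would likely provide since each endpoint has $\delta_0 n$ $c$-neighbours in every colour it sees; but doing this \emph{simultaneously} for all paths while keeping the closing colours distinct and disjoint from the colours already used on paths is delicate. The bowtie structures of Section~\ref{Bowties} (presumably a short rainbow configuration that glues two problematic paths plus a bounded number of extra edges into a single rainbow cycle) should allow us to trade two troublesome paths for one cycle at a controlled colour cost. I expect the final bound $2^{18}\delta_0^{-5}$, which is polynomial in $1/\delta_0$ and independent of both $n$ and $|\phi(G)|$, to emerge by alternating bowtie merges with individual path closures, each step reducing a carefully designed potential; the hard part will be ensuring that this combined merge/close loop terminates before the number of components grows beyond $O(\delta_0^{-5})$, without ever exhausting the supply of unused colours available at the endpoints.
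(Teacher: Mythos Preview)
Your two-phase outline (first a rainbow path system on all colours, then close paths into cycles via bowties) matches the paper's overall architecture, and your key observation that $|\phi(G)|$ is tiny compared to $\delta_{\text{mon}}(G)$ is exactly what drives Lemma~\ref{Lma: rainbow path system} and Proposition~\ref{Prp: rm}. However, two structural points in your plan are off in ways that would block the argument.

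First, the order of operations is inverted. You propose to build the path system and \emph{then} hunt for closing gadgets among the unused colours. The paper does the opposite: it applies the absorption lemma (Lemma~\ref{Lma: absorption lemma}) \emph{first}, and in fact iterates it $4\delta_0^{-1}$ times to produce nested subgraphs $G = G^1 \supseteq G^2 \supseteq \cdots \supseteq G^*$, each step setting aside $O(\delta_0^{-4})$ vertices and colours. Only after this reservation is the rainbow path system built, inside $G^*$, so that by construction every path colour lies in $\phi(G^*)$ and is automatically disjoint from every reserved colour. The $j$-th long path is then closed using the $j$-th reserved layer $\phi(G^j)\setminus\phi(G^{j+1})$, which is why $4\delta_0^{-1}$ layers are needed (one per path, since Lemma~\ref{Lma: rainbow path system} gives at most $4\delta_0^{-1}$ paths). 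Your plan of searching for closing colours after the fact gives no mechanism to guarantee that the closing edges use colours disjoint from the path colours and from each other; the pre-reservation is precisely what makes this automatic.

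Second, your description of a bowtie as ``a short rainbow configuration that glues two problematic paths \ldots\ into a single rainbow cycle'' is not what the paper's bowties do. A bowtie $B=(v,C_1,W_1,C_2,W_2)$ is a single hub vertex $v$ with two disjoint reserved ``arms'' $(C_i,W_i)$, each of which can reach a large set $U_i(B)$ of vertices by short rainbow paths through $W_i$ using colours in $C_i$. It is used to close \emph{one} path $P$ into a cycle: both endpoints of $P$ are routed through $v$, one via each arm. The merging of paths happens entirely in the path-system phase (Lemma~\ref{Lma: rainbow path system}), not via bowties. The delicate work in Section~\ref{Bowties} is not a potential-function termination argument of the kind you sketch, but rather the construction of a $(d,g)$-partition of $V^*(G)$ by bowties (Lemma~\ref{Lma: find partition}) and an iterative refinement so that every rainbow path in $G^*$ has its interior confined to a single $U_2^*(B)$, where one of two explicit cases (small $U_2$, or $U_1^*=U_2^*$) guarantees closure.
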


To reduce Lemma~\ref{Lma: how to absorb} to Lemma~\ref{lma: Lemma Rainbow Cycle System}, we need the help of the following lemma, which will be proved in the next subsection.
\begin{lemma}\label{Lma: Revised Lemma}
  Let $H$ be an $r$-edge-coloured $K_n^{(k)}$ with disjoint vertex sets~$X$ and~$Z$ with $|X| \ge (k-1)|Z|$. Then for each $i \in [r]$, there exist an edge-coloured multigraph $G^i$ and a subgraph $H^i$ of $H$ such that
\begin{enumerate}[label=\rm{(\textbf{A\arabic*})}, ref=(\textbf{A\arabic*})]
    \item \label{itm: (A1)} $H^i$ is a monochromatic $k$-partite $k$-graph on vertex classes $X_1^i,\dots, X_k^i$ of colour $i$;
    \item \label{itm: (A2)} $\phi(G^i) = X_k^i$, $Z = \bigcup_{i \in [r]} X_k^i$ and $\bigcup_{j \in [k-1]}X_j^i \subseteq X$;
    \item \label{itm: (A3)} if $H^i$ is not empty, then $|V(G^i)| = \left(1-\frac{1}{4kr^k} \right)\frac{|X|}{r(k-1)}$, $\delta_{\text{mon}}(G^i) \ge \left( 2r\right)^{-2^k}|V(G^i)|/8$ and $G^i$ respects~$H^i$;
    \item \label{itm: (A4)} for distinct $i, j \in [r]$, we have $V(H^i) \cap V(H^j) = \emptyset$.
    
\end{enumerate}
\end{lemma}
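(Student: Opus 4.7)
The plan is to construct the $X^i_j$ and $G^i$ by a randomised procedure combining a majority-colour argument for $Z$ with a random partition and random ordering of $X$. For each $z\in Z$ pick a colour $c(z)\in[r]$ such that the number of $(k-1)$-subsets $S\subseteq X$ with $S\cup\{z\}$ a colour-$c(z)$ edge is at least $\binom{|X|}{k-1}/r$, and set $X_k^i:=\{z\in Z:c(z)=i\}$, so $Z=\bigsqcup_i X_k^i$. Set $m:=|X|/(r(k-1))$ and choose a uniformly random bijection $\pi\colon X\to[r]\times[k-1]\times[m]$; put $X_j^i:=\pi^{-1}(\{(i,j)\}\times[m])$ with the natural ordering $x^i_{j,t}:=\pi^{-1}(i,j,t)$, and let $H^i$ be the $k$-partite hypergraph on vertex classes $X_1^i,\dots,X_k^i$ consisting of all colour-$i$ transversal edges of $H$. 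A standard Azuma-type concentration argument on $\pi$ (for each $z$, followed by a union bound over $z\in Z$) shows that with positive probability, for every $z\in X_k^i$ the $(k-1)$-partite $(k-1)$-graph $B^z$ on $X_1^i,\dots,X_{k-1}^i$, whose edges are the $(k-1)$-sets $S$ with $S\cup\{z\}$ a colour-$i$ edge, has density at least $1/(2r)$.

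Set $m':=(1-1/(4kr^k))m$ and $V(G^i):=[m']$, and call $\{t,t'\}\in\binom{[m']}{2}$ \emph{good for $z\in X_k^i$} if $\{x^i_{j,t},x^i_{j,t'}\}_{j\in[k-1]}$ forms a $K_{k-1}^{(k-1)}(2)$ in $B^z$; this is precisely the condition under which an edge $v_tv_{t'}$ coloured $z$ may appear in a multigraph respecting $H^i$. An iterated Cauchy--Schwarz argument shows that any $(k-1)$-partite $(k-1)$-graph of density $\eta$ contains at least $\eta^{2^{k-1}}\binom{m}{2}^{k-1}$ copies of $K_{k-1}^{(k-1)}(2)$, and averaging over the random bijection then yields $\mathbb{E}[P_z]\ge(2r)^{-2^{k-1}}\binom{m'}{2}$, where $P_z$ denotes the number of good pairs in $[m']$. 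A second-moment calculation (using that the good-pair events for disjoint index pairs are nearly independent over the random $\pi$) bounds $\mathrm{Var}(P_z)=O(\mathbb{E}[P_z])$, so Chebyshev combined with a union bound over $z\in Z$ lets me fix $\pi$ with $P_z\ge(2r)^{-2^{k-1}}\binom{m'}{2}/2$ simultaneously for every $z\in Z$.

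With such a $\pi$, for each $z\in X_k^i$ the auxiliary graph on $V(G^i)$ whose edges are the good pairs for $z$ has average degree at least $(2r)^{-2^{k-1}}(m'-1)/2$, dominating $\delta_0 m'/4=(2r)^{-2^k}m'/4$ by a factor of order $(2r)^{2^{k-1}}$. The classical ``iteratively delete vertices of degree $<\delta_0 m'/8$'' argument therefore extracts a non-empty subgraph $G^i_z$ of minimum degree at least $\delta_0 m'/8$. Define $G^i$ as the edge-coloured multigraph on $V(G^i)$ obtained as the disjoint union of the edge sets $E(G^i_z)$, each edge of $E(G^i_z)$ coloured $z$. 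Then \ref{itm: (A1)} and the inclusion parts of \ref{itm: (A2)} are immediate from the construction; $\phi(G^i)=X_k^i$ holds because every $G^i_z$ is non-empty; \ref{itm: (A3)} combines $|V(G^i)|=m'$, the minimum-degree bound, and the fact that $G^i$ respects $H^i$ by the very definition of ``good''; and \ref{itm: (A4)} holds because $\pi$ partitions $X$ while the $X_k^i$ partition $Z$. The principal technical obstacle is making the concentration truly simultaneous over $z\in Z$: the good-pair events are correlated and $|Z|$ may be as large as $\Theta(m)$, so the second-moment estimate has to be sharp enough that the union bound over $z$ still goes through.
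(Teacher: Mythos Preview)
Your overall strategy is close to the paper's, but the concentration step for $P_z$ contains a genuine gap. The claim $\mathrm{Var}(P_z)=O(\mathbb{E}[P_z])$ is false in general. Take $k=3$ and suppose $B^z$ is a complete bipartite graph on parts $A_1\subseteq X_1^i$, $A_2\subseteq X_2^i$ of size $\eta^{1/2}m$ each (density $\eta=1/(2r)$). Then $p:=\mathbb{E}[Y_{t,t'}]\approx\eta^2$, while for overlapping pairs $\{t,t'\},\{t',s'\}$ one gets $\mathbb{E}[Y_{t,t'}Y_{t',s'}]\approx\eta^3$, so each such covariance is $\approx\eta^3-\eta^4=\Theta(\eta^3)$. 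There are $\Theta(m'^3)$ overlapping pairs, giving $\mathrm{Var}(P_z)=\Omega(\eta^3 m'^3)$, not $O(\eta^2 m'^2)$. Chebyshev then yields failure probability $\Omega(1/(\eta m'))$ per $z$, and since $|Z|$ can be as large as $|X|/(k-1)=rm\approx rm'$, the union bound over $z$ gives a constant $\Omega(r/\eta)$ rather than $o(1)$. You yourself flag this as ``the principal technical obstacle'', and indeed it does not go through with a second moment.

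The paper handles this in two moves. First, before randomising the ordering, it \emph{cleans} each link graph via Lemma~\ref{lma: every edge in enough K22}: it passes to a subgraph $J^z$ in which \emph{every} edge lies in at least $\gamma n^{k-1}/2$ copies of $K_{k-1}^{(k-1)}(2)$. Second, it then applies Azuma (Lemma~\ref{lma: permutation}) not to the quadratic quantity $P_z$ but to the number of horizontal edges in $J^z$ and in each $J^{z,e}$; these have Lipschitz constant $O(1)$ under a transposition, so the concentration is $\exp(-\Omega(m))$ and survives a union bound over all $(z,e)$. The cleaning step is what buys the minimum-degree conclusion directly, without your iterative-deletion step. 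Your route can also be salvaged: $P_z$ is $O(m')$-Lipschitz under a transposition of $\pi$, so Azuma on the $(k-1)m$-step exposure martingale gives $\mathbb{P}(|P_z-\mathbb{E}P_z|\ge cm'^2)\le\exp(-\Omega(m))$, and the union bound over $z\in Z$ then works. Replace the second-moment paragraph by this martingale argument and the rest of your sketch goes through.
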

We now prove Lemma~\ref{Lma: how to absorb} assuming Lemmas~\ref{lma: Lemma Rainbow Cycle System} and~\ref{Lma: Revised Lemma}. 

\begin{proof}[Proof of Lemma~\ref{Lma: how to absorb}]
By Lemma~\ref{Lma: Revised Lemma}, for each $i \in [r]$, there exist an edge-coloured multigraph~$G^i$ and a $k$-partite $k$-graph $H^i$ such that $G^i$ and $H^i$ satisfy $\ref{itm: (A1)}-\ref{itm: (A4)}$ and hence $G^i$ respects $H^i$. Fix $i \in [r]$. Observe that if $H^i$ is not empty, then $\delta_{\text{mon}}(G^i) \ge \delta_0 |V(G^i)|$ by~\ref{itm: (A3)}. Moreover $$\left|\phi(G^i)\right| \stackrel{\ref{itm: (A2)}}{\le} |Z| \le \frac{|X|}{\alpha} \le \frac{\delta_0 |X|}{32r(k-1)} \le \frac{\delta_0}{16}\left(1-\frac{1}{4kr^k}\right)\frac{|X|}{r(k-1)} \stackrel{\ref{itm: (A3)}}{=} \frac{\delta_0 |V(G^i)|}{16}.$$
By Lemma~\ref{lma: Lemma Rainbow Cycle System} with $G^i$ playing the role of~$G$, we deduce that $G^i$ contains a rainbow cycle system~$\mathcal{C}^i$ with $\phi(\mathcal{C}^i) = \phi(G^i) = X_k^i$ and $|\mathcal{C}^i| \le 2^{18}\delta_0^{-5}$. By Fact~\ref{fct: equivalence with multigraph covering problem} and~\ref{itm: (A1)},~$\mathcal{C}^i$ corresponds to a set~$\Hat{\mathcal{C}}^i$ of vertex-disjoint monochromatic tight cycles in~$H^i$ covering~$X_k^i$. Therefore by~\ref{itm: (A4)}, $\bigcup_{i \in [r]} \Hat{\mathcal{C}}^i$ is a set of vertex-disjoint monochromatic tight cycles covering $\bigcup_{i \in [r]}X_k^i = Z$. Note that $\left|\bigcup_{i \in [r]} \Hat{\mathcal{C}}^i \right| \le  2^{18}r\delta_0^{-5}.$ This completes the proof of the lemma.   \end{proof}

\subsection{Proof of Lemma~\ref{Lma: Revised Lemma}}
We start with the following result, which allows us to partition $X \cup Z$ into the vertex-disjoint $k$-partite $k$-graphs $H^i$ for  $i \in [r]$. Let $H$ be an $r$-edge-coloured $k$-graph. For vertex sets $X_1,\dots, X_k$, let $e_j(X_1,\dots, X_k)$ be the number of $X_1\dots X_k$-edges of colour~$j$ in~$H$. 

\begin{lemma} \label{Lma: Slicing tripartite lemma}
Let $1/N \ll 1/k, 1/r \le 1/2$. Let $H$ be an $r$-edge-coloured complete $k$-partite $k$-graph with vertex classes $X_1, X_2, \dots, X_k$ where for all $i \in [k-1]$, ${|X_i|=N}$ and ${|X_k| \le N}$. Then there exists a partition of $X_1,\dots, X_k$ into $\{X_1^j: j\in [r] \cup \{0\}  \}$, $\dots,$ $\{X_{k-1}^j: j\in [r] \cup \{0\}  \}$, $\{X_k^j: j\in [r]  \}$ such that, for each $ i \in [k-1]$, $j \in [r]$ and $x \in X_k^j$, we have $$|X_i^j|=\left(1-\frac{1}{4kr^k}\right)\left(\frac{N}{r}\right) \text{ and } e_j\left(X_1^j,\dots, X_{k-1}^j, x\right) \ge \frac{\left|X_1^j\right|\cdots\left|X_{k-1}^j\right|}{2r}.$$
\end{lemma}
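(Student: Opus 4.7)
The plan is to define $X_k^j$ by a simple pigeonhole on colours and the $X_i^j$ for $i \in [k-1]$ by an independent random labelling followed by a small trimming step. For every $x \in X_k$, pigeonhole on the $r$ colours yields a colour $j(x) \in [r]$ with $e_{j(x)}(X_1, \dots, X_{k-1}, x) \ge N^{k-1}/r$; I set $X_k^j := \{x \in X_k : j(x) = j\}$. Writing $\alpha := 1/(4kr^k)$, $s := (1-\alpha)N/r$ and $p := (1-\alpha/2)/r$, I then assign every $v \in X_i$ ($i \in [k-1]$) an independent label $\lambda(v) \in \{0, 1, \dots, r\}$ with $\mathbb{P}[\lambda(v) = j] = p$ for each $j \in [r]$ and $\mathbb{P}[\lambda(v) = 0] = \alpha/2$, and define $X_i^j := \lambda^{-1}(j)$. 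Lemma~\ref{Lma: Chernoff} applied to each $|X_i^j|$ (a binomial with mean $pN$) shows that with probability $1-o(1)$ every $|X_i^j|$ lies in the interval $[s + \Omega(\alpha N/r),\, pN(1+o(1))]$, giving a small surplus above the target $s$.

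The key quantitative step is showing, for every $j \in [r]$ and $x \in X_k^j$, that
\[
Y_x \;:=\; e_j(X_1^j, \dots, X_{k-1}^j, x) \;=\; \sum_{v_1 \cdots v_{k-1} x \in E_{j,x}} \prod_{i \in [k-1]} \mathbbm{1}[\lambda(v_i) = j]
\]
is close to its expectation $\mathbb{E} Y_x = |E_{j,x}| \, p^{k-1} \ge (pN)^{k-1}/r$, where $E_{j,x}$ denotes the set of colour-$j$ edges through $x$. Fixing an arbitrary ordering of the $(k-1)N$ vertices in $X_1 \cup \cdots \cup X_{k-1}$ and revealing labels one at a time produces a Doob martingale $Z_0, \dots, Z_{(k-1)N}$ with $Z_0 = \mathbb{E} Y_x$ and $Z_{(k-1)N} = Y_x$, whose increments satisfy $|Z_l - Z_{l-1}| \le N^{k-2}$, because the label of a single vertex $v \in X_i$ can only affect the $\le N^{k-2}$ edges of $E_{j,x}$ that pass through $v$. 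Lemma~\ref{Lma: azuma} with $a = \mathbb{E} Y_x / 4$ and $\sum_l c_l^2 \le (k-1) N^{2k-3}$ then gives a deviation probability $2 \exp\!\bigl(-\Omega(N/(k\, r^{2k}\, 4^k))\bigr)$, which is $o(1/N)$ whenever $1/N \ll 1/k, 1/r$. A union bound over the at most $N$ vertices $x \in X_k$ and the $O(rk)$ size events produces a realisation in which $Y_x \ge 3\mathbb{E} Y_x/4$ for every such $x$ and $|X_i^j| \ge s$ for every pair $(i,j)$.

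Finally, I trim each $X_i^j$ down to exactly $s$ vertices by pushing its at most $\alpha N/(2r)$ surplus vertices into $X_i^0$. Each trimmed vertex destroys at most $N^{k-2}$ contributions to $Y_x$, so the total trimming loss is at most $(k-1) \cdot (\alpha N/(2r)) \cdot N^{k-2} \le N^{k-1}/(8 r^{k+1})$, which is a lower-order correction compared with $\mathbb{E} Y_x/4 \gtrsim N^{k-1}/r^k$. After trimming one therefore still has $Y_x \ge \mathbb{E} Y_x / 2 \ge (pN)^{k-1}/(2r) \ge s^{k-1}/(2r) = |X_1^j| \cdots |X_{k-1}^j|/(2r)$, as required. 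The main obstacle is the Azuma step: the whole argument hinges on the bounded-difference constant being $N^{k-2}$ rather than $N^{k-1}$, which is precisely the observation that a single label can only touch edges through that vertex; without this the exponent $\Omega(N/r^{2k})$ would degrade to something much weaker and the union bound over $X_k$ would collapse. A secondary bookkeeping point is to calibrate the probability $p$ slightly above $(1-\alpha)/r$, so that Chernoff yields $|X_i^j| \ge s$ with just enough slack to trim into $X_i^0$ without the trim losses eating into the $1/(2r)$ density threshold.
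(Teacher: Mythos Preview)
Your proposal is correct and follows essentially the same route as the paper: pigeonhole on $X_k$, random assignment of labels to $X_1,\dots,X_{k-1}$, a concentration argument, and a final trimming step. The only real difference is in the concentration step: the paper fixes one class $X_i$ at a time and applies Chernoff to $e_j(X_1^j,\dots,X_i^j,x_{i+1},\dots,x_{k-1},x_k)$ for every fixed tuple $(x_{i+1},\dots,x_{k-1},x_k)$ (a genuine sum of independent Bernoullis), then sums; you instead reveal all labels at once and use Azuma on the Doob martingale with Lipschitz constant $N^{k-2}$. Both give an exponent of order $N/\mathrm{poly}(r,k)$ and survive the union bound over $X_k$, so the arguments are interchangeable here. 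One small bookkeeping slip: your Chernoff window gives surplus at most roughly $3\alpha N/(4r)$ rather than $\alpha N/(2r)$, but this only changes the trimming loss by a constant factor and the final inequality $Y_x \ge s^{k-1}/(2r)$ still holds comfortably.
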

    \begin{proof}
Partition $X_k$ into $X_k^1, \dots, X_k^r$ such that for all $j \in [r]$ and $x_k \in X_k^j$, $$e_j(X_1, \dots, X_{k-1}, x_k) \ge \frac{|X_1|\dots|X_{k-1}|}{r}.$$ Let $\varepsilon = (4kr^k)^{-1}$.

\begin{claim}\label{Clm: Partition Y}
    There exists a partition of $X_i$ for each $i\in [k-1]$ into $X_i^1, \dots, X_i^r$ such that for all $j\in [r]$, $\left| \left|X_i^j\right| - \frac{N}{r} \right| \le \varepsilon \frac{N}{r}$ and $$\left| e_j\left(X_1^j,\dots, X_i^j, X_{i+1},\dots, X_{k-1}, x_k\right) - \frac{e_j\left(X_1^j,\dots, X_{i-1}^j, X_i,\dots, X_k\right)}{r}  \right| < \varepsilon N^{k-1}.$$
\end{claim}
\begin{proofclaim}
We proceed by induction on $i$. We only prove the base case $i=1$ as the rest can be proved analogously.

Let $X_1 = X_1^1 \cup \dots \cup X_1^r$ be a random partition of $X_1$, where for all $x_1 \in X_1$ and $i \in [r]$,~$x_1$ is assigned to $X_1^j$ independently with probability $1/r$.  
Consider $x_i \in X_i$ for $i \in [2, k]$ and $j \in [r]$. Note that $\mathbb{E}\left(e_j(X_1^j,x_2,\dots, x_k)\right) = \frac{e_j(X_1, x_2,\dots, x_k)}{r}$. If $e_j(X_1, x_2, \dots, x_k) \le \varepsilon N$, then clearly $e_j(X_1^i,x_2,\dots, x_k) \le \varepsilon N$. If $e_j(X_1^j,x_2, \dots, x_k) \ge \varepsilon N$, then by Lemma~\ref{Lma: Chernoff}, we have $$\mathbb{P}\left(\left|e_j(X_1^j,x_2,\dots, x_k)-\frac{e_j(X_1, x_2, \dots, x_k)}{r}\right| > \varepsilon\frac{e_j(X_1, x_2,\dots, x_k)}{r} \right) \le 2e^{-\frac{\varepsilon^3 N}{3r}}.$$
Note that $\mathbb{E}|X_1^j| = N/r$. By Lemma~\ref{Lma: Chernoff}, we have $$\mathbb{P}\left(\left| |X_1^j| - \frac{N}{r}\right| > \varepsilon \frac{N}{r} \right) < 2e^{- \frac{\varepsilon^2 N}{3r}}.$$ 
Hence by the union bound, with positive probability for all $j \in [r]$, $i\in [2, k]$ and ${x_i \in X_i}$, we have $\left| |X_1^j| - \frac{N}{r}\right| < \varepsilon \frac{N}{r}$ and $\left| e_j(X_1^j, x_2,\dots,x_k) - \frac{e_j(X_1, x_2, \dots x_k)}{r}\right| \le \varepsilon N$. 
Fix such a partition. We have
\begin{align*}
\left|e_j(X_1^j, X_2,\dots, X_{k-1}, x_k) - \frac{e_j(X_1, X_2,\dots, X_{k-1}, x_k)}{r}\right| & \hspace{39mm}
\end{align*} 
\begin{align*}
= \left|\sum_{\substack{x_i \in X_i, \\ i \in [2, k-1]}}\left( e_j(X_1^j, x_2,\dots,  x_k) - \frac{e_j(X_1, x_2,\dots, x_{k-1}, x_k)}{r}\right) \right| \le \sum_{\substack{x_i \in X_i,\\ i \in [2, k-1]}} \varepsilon N  \le \varepsilon N^{k-1}. 
\end{align*}
\end{proofclaim}

Fix a partition of $X_1,\dots, X_{k-1}$ given by Claim~\ref{Clm: Partition Y}. For each $i \in [k-1]$ and $ j \in [r]$, remove additional vertices from $X_i^j$ if necessary so that, for all $i \in [k-1]$ and $ j \in [r]$, $\left|X_i^j\right| = \left(1-\varepsilon \right)\frac{N}{r}$. Let $X_i^0 = X_i \setminus \bigcup_{j \in [r]} X_i^j$ for each $i \in [k-1]$. Each $X_i^j$ has at most $2\varepsilon N/r$ vertices removed. Thus for all $j \in [r]$ and $ x_k \in X_k^j$, we have 
\begin{equation*}
    \begin{split}
      e_j(X_1^j,\dots, X_{k-1}^j, x_k) & \ge \frac{e_j(X_1, \dots, X_{k-1}, x_k)}{r^{k-1}} - k\varepsilon N^{k-1} - N^{k-2}\left(\frac{2\varepsilon N}{r}\right) \\ & \ge \frac{N^{k-1}}{r^k} - 2k\varepsilon N^{k-1}\ge \frac{N^{k-1}}{2r^k} \ge \frac{|X_1^j|...|X_{k-1}^j|}{2r}. \qedhere 
    \end{split}
\end{equation*} 
\end{proof}

We need a lower bound on the number of $K_k^{(k)}(2)$ in a dense $k$-partite $k$-graph~$H$.

\begin{lemma} \label{lma: lots of K22}
Let $1/n \ll 1/k, d\le 1$. Let $H$ be a $k$-partite $k$-graph with $n$ vertices in each vertex class and  $e(H) \ge dn^k$. Then there are at least $d^{2^k} n^{2k}/2$ many $K_k^{(k)}(2)$ in $H$. 
\end{lemma}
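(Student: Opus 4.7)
The plan is to count labeled copies via iterated Cauchy-Schwarz, in the spirit of standard supersaturation arguments. Let $f\colon X_1 \times \cdots \times X_k \to \{0,1\}$ denote the edge indicator of $H$, so that $\sum f = e(H) \ge dn^k$. Writing $x_i^0 = x_i$ and $x_i^1 = x_i'$, I will count ordered $2k$-tuples $(x_1, x_1', \ldots, x_k, x_k')$ for which $\prod_{\epsilon \in \{0,1\}^k} f(x_1^{\epsilon_1}, \ldots, x_k^{\epsilon_k}) = 1$, and denote this count by $S_k$. Each non-degenerate such tuple (one with $x_i \neq x_i'$ for all $i$) is precisely a labeled copy of $K_k^{(k)}(2)$ in $H$.

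The main claim, which I would prove by induction on $k$, is the supersaturation estimate $S_k \ge e(H)^{2^k}/n^{k(2^k - 2)}$. The base case $k=1$ is immediate from $S_1 = e(H)^2$. For the inductive step, I would split the product according to the value of $\epsilon_k$ to rewrite
\[
S_k = \sum_{x_1, x_1', \ldots, x_{k-1}, x_{k-1}'} \left(\sum_{x_k} \prod_{\epsilon' \in \{0,1\}^{k-1}} f(x_{\epsilon'}, x_k)\right)^{\!2}.
\]
Pulling the square outside via Cauchy-Schwarz transforms the problem into bounding $\sum_{x_k} T(x_k)$, where $T(x_k)$ is the analogous quantity applied to the link $(k-1)$-graph of $x_k$. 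The induction hypothesis supplies $T(x_k) \ge d(x_k)^{2^{k-1}}/n^{(k-1)(2^{k-1} - 2)}$, and Jensen's inequality applied to $t \mapsto t^{2^{k-1}}$ combined with $\sum_{x_k} d(x_k) = e(H)$ converts this into the desired power of $e(H)$ over the correct power of $n$.

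Since $S_k \ge e(H)^{2^k}/n^{k(2^k - 2)} \ge d^{2^k} n^{2k}$, it remains to subtract the degenerate ordered tuples (those with $x_i = x_i'$ for some~$i$). These number at most $k n^{2k-1}$, and the hypothesis $1/n \ll 1/k, d$ ensures this error term is below $d^{2^k} n^{2k}/2$, yielding the required bound. The main obstacle I anticipate is purely the exponent bookkeeping through the Cauchy-Schwarz and Jensen steps; one must verify the arithmetic $2(k-1) + 2\bigl((k-1)(2^{k-1}-2) + (2^{k-1}-1)\bigr) = k(2^k - 2)$, which collapses cleanly. Conceptually there is no real difficulty, since this is a routine convexity-based counting argument.
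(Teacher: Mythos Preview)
Your proposal is correct and is essentially the same iterated Cauchy--Schwarz/convexity argument the paper uses, arriving at the identical bound $S_k \ge e(H)^{2^k}/n^{k(2^k-2)}$ before subtracting the $kn^{2k-1}$ degenerate tuples. The only organizational difference is that the paper keeps $k$ fixed and doubles one coordinate at a time (defining $f(j)$ and proving $f(j+1) \ge f(j)^2/n^{k+j-1}$ via Cauchy--Schwarz alone), whereas you recurse on the uniformity via link graphs and use Jensen in addition to Cauchy--Schwarz; both are standard packagings of the same box-counting supersaturation and the exponent arithmetic you checked is exactly what the paper's Claim~\ref{claim: f(t) to f(0)} encodes.
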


\begin{proof}
Let the vertex classes of $H$ be $X_1,\dots, X_k$. For vertices $x_1,\dots,x_k \in V(H)$, we write $\1(x_1\dots x_k)$ for the indicator function $\1(x_1\dots x_k\in E(H))$. For $j \in [k]\cup\{0\}$, let $$f(j)= \sum_{\substack{x_i \in X_i, \\ i \in [k-j]}} \sum_{\substack{x_s, y_s \in X_s, \\ s\in [k-j+1, k]}} \prod_{\substack{v_t \in \{x_t, y_t\}, \\ t \in [k-j+1, k]}} \1(x_1\dots x_{k-j} v_{k-j+1}\dots v_k).$$ Note that
\begin{equation}
 dn^k\le |E(H)| = \sum_{\substack{x_i \in X_i, i \in [k]}} \1(x_1\dots x_k) = f(0).   \label{eqn: base relation}
\end{equation}

\begin{claim} \label{claim: f(t) to f(0)}
    For $t \in [k]$, we have $n^{2^t k -k-t} f(t) \ge  f(0)^{2^t}$.
\end{claim}
\begin{proofclaim}
For any $j \in [k-1]\cup \{0\}$, we apply the Cauchy-Schwarz inequality to obtain
\begin{align}
        f(j)^2 & = \left( \sum_{\substack{x_i \in X_i, \\ i \in [k-j-1]}} \sum_{\substack{x_s, y_s \in X_s, \\ s\in [k-j+1, k]}} \left( \sum_{x_{k-j} \in X_{k-j}} \prod_{\substack{v_t \in \{x_t, y_t\}, \\ t \in [k-j+1, k]}} \1(x_1\dots x_{k-j} v_{k-j+1}\dots v_k)\right) \right)^2 \nonumber \\
        &\le n^{k+j-1} \sum_{\substack{x_i \in X_i, \\ i \in [k-j-1]}} \sum_{\substack{x_s, y_s \in X_s,\\ s \in [k-j+1, k]}}  \left(\sum_{x_{k-j} \in X_{k-j}} \prod_{\substack{v_t \in \{x_t, y_t \}, \\ t \in [k-j+1, k]}} \1(x_1\dots x_{k-j} v_{k-j+1}\dots v_k) \right)^2 \nonumber\\
        & = n^{k+j-1} \sum_{\substack{x_i \in X_i, \\ i \in [k-j-1]}} \sum_{\substack{x_s, y_s \in X_s, \\ s\in [k-j, k]}} \prod_{\substack{v_t \in \{x_t, y_t\}, \\ t \in [k-j, k]}} \1(x_1\dots x_{k-j-1} v_{k-j}\dots v_k) \nonumber\\
        & = n^{k+j-1} f(j+1). 
    \label{eqn: relation between f(j) and f(j+1)}
\end{align}
We now prove the claim by induction on $t$.
Note that $f(0)^2 \le n^{k-1}f(1)$ by~\eqref{eqn: relation between f(j) and f(j+1)}. Suppose that $t > 1$. Thus
\begin{equation}
    n^{2^t k -k-t} f(t) \stackrel{\eqref{eqn: relation between f(j) and f(j+1)}}{\ge} n^{2^t k -k-t} n^{-(k+t-2)} f(t-1)^2 = \left(n^{2^{t-1}k-k-(t+1)} f(t-1) \right)^2 \ge f(0)^{2^t},
\end{equation}
where the last inequality is due to our induction hypotheses.
\end{proofclaim}
The number of $K_k^{(k)}(2)$ in $H$ is 
\begin{equation*}
    \begin{split}
          \sum_{\substack{x_i, y_i \in X_i, \\ x_i \neq y_i, \\ i \in [k] }} \prod_{\substack{v_t \in \{x_t, y_t\}, \\ t \in [k]}} \1(v_1\dots v_k) 
         &\ge f(k) - kn^{2k-1} \stackrel{\rm{Claim}~\ref{claim: f(t) to f(0)}}{\ge} n^{2k-k2^k}f(0)^{2^k} - kn^{2k-1} \\
         & \stackrel{\eqref{eqn: base relation}}{\ge} n^{2k-k 2^k}(dn^k)^{2^k} - kn^{2k-1} \ge d^{2^k}n^{2k}/2. \qedhere
    \end{split}
\end{equation*} \end{proof}

We now show that there exists a (non-empty) subgraph $H_0$ of $H$ such that each edge is in many $K_k^{(k)}(2)$. 
\begin{lemma} \label{lma: every edge in enough K22}
    Let $1/n \ll d \le 1$, $k \ge 2$ and $\gamma = d^{2^k}/2$. Let $H$ be a $k$-partite $k$-graph with $n$ vertices in each vertex class and $e(H) \ge dn^k$. Then there exists a subgraph $H_0$ of $H$ such that $\left|E(H_0)\right| \ge  2^{k-1}\gamma n^k$ and every edge of $H_0$ is contained in at least $\gamma n^{k}/2$ many $K_k^{(k)}(2)$ in $H_0$.
\end{lemma}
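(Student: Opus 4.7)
This is a standard supersaturation-plus-cleaning argument built on top of Lemma~\ref{lma: lots of K22}. First I would apply Lemma~\ref{lma: lots of K22} to conclude that $H$ contains at least $d^{2^k} n^{2k}/2 = \gamma n^{2k}$ copies of $K_k^{(k)}(2)$. For an edge $e$ of a subgraph $H'\subseteq H$, write $t_{H'}(e)$ for the number of copies of $K_k^{(k)}(2)$ in $H'$ that use $e$.

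Next I would perform the following iterative cleaning. Starting from $H$, while the current subgraph $H'$ contains an edge $e$ with $t_{H'}(e) < \gamma n^k/2$, delete $e$. Each deletion destroys strictly fewer than $\gamma n^k/2$ copies of $K_k^{(k)}(2)$, and since $H$ has at most $n^k$ edges, the total number of copies of $K_k^{(k)}(2)$ destroyed throughout the whole process is strictly less than
\begin{equation*}
n^k \cdot \frac{\gamma n^k}{2} = \frac{\gamma n^{2k}}{2}.
\end{equation*}
Hence the terminal subgraph $H_0$ still contains more than $\gamma n^{2k}/2$ copies of $K_k^{(k)}(2)$; in particular $H_0$ is non-empty, and by the stopping rule every remaining edge $e$ satisfies $t_{H_0}(e)\ge \gamma n^k/2$, which is one of the two required properties.

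For the edge count, I would double count edge–$K_k^{(k)}(2)$ incidences in $H_0$. Every copy of $K_k^{(k)}(2)$ contains exactly $2^k$ edges, while an edge $e=\{x_1,\dots,x_k\}$ of the $k$-partite $k$-graph $H_0$ lies in at most $(n-1)^k \le n^k$ copies of $K_k^{(k)}(2)$ in $H_0$, since such a copy is determined by choosing one further vertex from each of the $k$ classes. Therefore
\begin{equation*}
|E(H_0)| \cdot n^k \;\ge\; \sum_{e \in E(H_0)} t_{H_0}(e) \;=\; 2^k \cdot \#\{K_k^{(k)}(2) \text{ in } H_0\} \;>\; 2^k \cdot \frac{\gamma n^{2k}}{2},
\end{equation*}
which gives $|E(H_0)| \ge 2^{k-1}\gamma n^k$, as required. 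There is no real obstacle here; all of the substantive combinatorial work is already carried out by Lemma~\ref{lma: lots of K22}, and the present lemma is just a supersaturation-cleaning wrapper around it.
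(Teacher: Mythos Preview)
Your argument is correct and follows essentially the same strategy as the paper: invoke Lemma~\ref{lma: lots of K22} to get at least $\gamma n^{2k}$ copies of $K_k^{(k)}(2)$, run a cleaning process that removes fewer than $\gamma n^{2k}/2$ copies in total, and then double count edge--$K_k^{(k)}(2)$ incidences (using that each copy has $2^k$ edges and each edge lies in at most $n^k$ copies) to get the edge bound. The only cosmetic difference is that the paper phrases the cleaning via an auxiliary $2k$-graph~$\mathcal{A}$ whose edges are the copies of $K_k^{(k)}(2)$, deleting all hyperedges of~$\mathcal{A}$ through a transversal $k$-set~$S$ with $d_{\mathcal{A}}(S)<\gamma n^k/2$, whereas you delete edges directly from~$H$; the two procedures are equivalent.
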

\begin{proof}

Define the auxillary $2k$-graph $\mathcal{A}$ to be such that $V(\mathcal{A}) = V(H)$ and each $K_k^{(k)}(2)$ of $H$ is an edge of $\mathcal{A}$. By Lemma~\ref{lma: lots of K22}, $|E(\mathcal{A})| \ge \gamma n^{2k}$.  
If there exists $S \in \binom{V(\mathcal{A})}{k}$ such that $|S \cap V_i| = 1$ for $i\in [k]$ and $d_{\mathcal{A}}(S) < (\gamma/2)n^k$, then we delete all edges in $\mathcal{A}$ containing~$S$. Repeat this process and call the resulting $2k$-graph $\mathcal{A}'$. Note that $S$ can be chosen in at most~$n^k$ ways, so $$|E(\mathcal{A}')| \ge \gamma n^{2k} - n^k\left(\frac{\gamma n^k}{2}\right) = \frac{\gamma n^{2k}}{2}.$$
Let $H_0$ be the subgraph of $H$ induced by $e \in E(H)$ with $d_{\mathcal{A}'}(e) \ge 1$. For each $e \in E(H_0)$, we have $d_{\mathcal{A}'}(e) \ge \gamma n^k/2$ and therefore $e$ is contained in at least $\gamma n^{k}/2$ many $K_k^{(k)}(2)$ in~$H_0$. Since each edge can be contained in at most $n^k$ many $K_k^{(k)}(2)$ in $H_0$, 
$$|E(H_0)| \ge \frac{2^k |E(\mathcal{A}')|}{n^k} = 2^{k-1}\gamma n^k$$ as required.
\end{proof}

Let $S_n$ denote the set of permutations on~$[n]$. The following lemma shows that if the vertices within each class of a $k$-partite $k$-graph are permuted randomly, then the number of `horizontal' edges is concentrated around its expectation.  

\begin{lemma} \label{lma: permutation}
Let $1/n \ll 1/k \le 1/2$ and $H$ be a $k$-partite $k$-graph with vertex classes $X_1,\dots, X_k$ each of size $n$. Suppose $\sigma_1,\dots, \sigma_{k-1} \in S_n$ are chosen independently and uniformly at random. Let $H_{\sigma_1,\dots, \sigma_{k-1}} =  \{x_{1, \sigma_1(i)}\cdots x_{k-1, \sigma_{k-1} (i)} x_{k, i} : i \in [n]  \} \cap E(H).$ Then      
$$\mathbb{P}\left( \left| \mathbb{E}|H_{\sigma_1, \dots,\sigma_{k-1}}| - \frac{|E(H)|}{n^{k-1}} \right| > \varepsilon n\right) \le 2 e^{-\frac{\varepsilon^2 n}{32}}.$$
\end{lemma}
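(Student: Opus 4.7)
The plan is to apply Azuma's inequality (Lemma~\ref{Lma: azuma}) to a Doob martingale that exposes the values of $\sigma_1,\dots,\sigma_{k-1}$ one position at a time. Before doing so, I would observe that by linearity of expectation, for each $i \in [n]$ the probability that $x_{1,\sigma_1(i)}\cdots x_{k-1,\sigma_{k-1}(i)}x_{k,i} \in E(H)$ equals $d_H(x_{k,i})/n^{k-1}$, so
\[
\mathbb{E}\bigl[|H_{\sigma_1,\dots,\sigma_{k-1}}|\bigr] \;=\; \frac{1}{n^{k-1}}\sum_{i \in [n]} d_H(x_{k,i}) \;=\; \frac{|E(H)|}{n^{k-1}}.
\]
I therefore read the stated inequality as bounding the deviation of $|H_{\sigma_1,\dots,\sigma_{k-1}}|$ from its mean (the inner $\mathbb{E}$ in the statement being a typo).

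Next I would enumerate the $(k-1)(n-1)$ revelations $(j,t)$ with $j \in [k-1]$ and $t \in [n-1]$ in lex order, each step exposing $\sigma_j(t)$; the $n$th value of each $\sigma_j$ is then forced. For $s = 0,\dots,(k-1)(n-1)$, let $Z_s$ be the conditional expectation of $|H_{\sigma_1,\dots,\sigma_{k-1}}|$ given the first $s$ exposed values. Then $(Z_s)$ is a Doob martingale with $Z_0 = |E(H)|/n^{k-1}$ and $Z_{(k-1)(n-1)} = |H_{\sigma_1,\dots,\sigma_{k-1}}|$.

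The technical heart of the proof is the bounded-difference estimate $|Z_s - Z_{s-1}| \le 2$, which I expect to be the main obstacle. Writing $Y_i = \mathbbm{1}\bigl[x_{1,\sigma_1(i)}\cdots x_{k-1,\sigma_{k-1}(i)}x_{k,i} \in E(H)\bigr]$, I would argue via a swap coupling: conditional on the first $s-1$ exposures, the two outcomes $\sigma_j(t) = a$ and $\sigma_j(t) = b$ at step $s$ can be coupled so that the two realised permutations differ only by the transposition exchanging positions $t$ and $t'$, where $t'$ is the position at which $\sigma_j$ otherwise sends value $b$. Such a transposition only alters $Y_t$ and $Y_{t'}$, each by at most $1$, so $|H_{\sigma_1,\dots,\sigma_{k-1}}|$ changes by at most $2$ under the coupling; averaging over the remaining randomness preserves this bound for the conditional expectations, giving $|Z_s - Z_{s-1}| \le 2$. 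The bookkeeping for this coupling (in particular checking that the conditional distribution on the remaining coordinates is truly symmetric under the swap) is the only nonroutine part.

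Applying Azuma's inequality with $c_s = 2$ and $a = \varepsilon n$ then yields
\[
\mathbb{P}\Bigl(\bigl||H_{\sigma_1,\dots,\sigma_{k-1}}| - |E(H)|/n^{k-1}\bigr| > \varepsilon n\Bigr) \;\le\; 2\exp\!\left(-\frac{\varepsilon^2 n^2}{8(k-1)(n-1)}\right),
\]
which, for $n$ large with respect to $k$ as guaranteed by the hierarchy $1/n \ll 1/k$, is of the form $2\exp(-c_k \varepsilon^2 n)$ for some $c_k > 0$. The precise constant $1/32$ in the statement should be read as absorbing a $k$-dependent factor arising from the $(k-1)(n-1)$ steps of the martingale; only exponential decay in $n$ is used in the downstream application (Lemma~\ref{Lma: Slicing tripartite lemma} and subsequent arguments), so this loss is immaterial.
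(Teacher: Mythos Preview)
Your proposal is correct and follows essentially the same route as the paper: compute $\mathbb{E}|H_{\sigma_1,\dots,\sigma_{k-1}}|=|E(H)|/n^{k-1}$, expose the positions of $\sigma_1,\dots,\sigma_{k-1}$ one at a time to form a Doob martingale, bound the increments via a transposition coupling, and apply Azuma. Two minor remarks: your Lipschitz bound of $2$ is in fact sharper than the paper's bound of $4$ (the paper counts the four candidate edges in the symmetric difference rather than the two affected indicators $Y_t,Y_{t'}$), and you correctly flag that the exponent carries a hidden $(k-1)$ factor from the number of martingale steps --- the paper's own calculation yields $\varepsilon^2 n/(32(k-1))$ rather than the stated $\varepsilon^2 n/32$, but as you note only exponential decay in $n$ is needed downstream.
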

    \begin{proof}
       We consider the martingale obtained by exposing each position of $\sigma_1,\dots,\sigma_{k-1}$ in turns as follows.  For $i \in [k-1]$ and $j \in [n]$, let $Z_{i, j}$ be the $j^{\text{th}}$ exposed position of $\sigma_i$. 
       Let
       \begin{align*}
       B_{0} &= \mathbb{E}\left|H_{\sigma_1,\dots,\sigma_{k-1}}\right|  = \sum_{ x_{1,i_1}  \dots x_{k-1,i_{k-1}}x_{k, i} \in E(H)} \mathbb{P}(i_1=\sigma_1(i),\dots, i_{k-1}=\sigma_{k-1}(i)) \\
       &=\sum_{x_{1,i_1} \dots x_{k-1,i_{k-1}} x_{k, i} \in E(H)}\ \prod_{j \in [k-1]}\mathbb{P}(i_j = \sigma_j(i)) =\frac{|E(H)|}{n^{k-1}}.      
       \end{align*}        
Define \begin{equation*}
    B_{(i-1)n+j} =\mathbb{E}\left(\left|H_{\sigma_1,\dots,\sigma_{k-1}} \right||Z_{1, 1},\dots,Z_{1, n}, Z_{2, 1},\dots,Z_{2, n},\dots, Z_{i,1},\dots, Z_{i, j}\right).
\end{equation*} Note that $B_0,\dots, B_{(k-1)n}$ forms a martingale. 

\begin{claim}
For $t \in [(k-1)n -1]$, we have $|B_t - B_{t+1}|\le 4$.
\end{claim}
\begin{proofclaim}
For simplicity, we only consider the case when $t \le n-1$ (and the other cases are proved similarly). Thus $$B_{t} = \mathbb{E}\left(\left|H_{\sigma_1,\dots,\sigma_{k-1}} \right| | Z_{1,1},\dots,Z_{1, t}\right) \text{ and } B_{t+1} = \mathbb{E}(\left|H_{\sigma_1,\dots,\sigma_{k-1}} \right| | Z_{1,1},\dots, Z_{1, t+1}).$$ If $t = n-1$, then $B_t = B_{t+1}$. Thus we may assume $t\le n-2$. For $t_1, t_2 \in [n]$, define $\pi_{t_1,t_2} : S_n \to S_n$ to be such that for all $\sigma' \in S_n$ and $s \in [n]$, $$\pi_{t_1,t_2} (\sigma') (s) = \begin{cases}
\sigma'(t_2) & \text{if}\ s=t_1, \\
\sigma'(t_1) & \text{if}\ s=t_2, \\
\sigma'(s) & \text{otherwise.}
\end{cases}$$
Equivalently $\pi_{t_1,t_2}(\sigma')$ swaps the $t_1^{\text{th}}$ and $t_2^{\text{th}}$ position of $\sigma'$. 
For all $\sigma_1', \dots, \sigma_{k-1}' \in S_n$,

\begin{align}
    &\left|H_{\sigma_1', \dots, \sigma_{k-1}' }\Delta H_{\pi_{t_1, t_2}(\sigma_1') \sigma_2', \dots, \sigma_{k-1}' }\right| & \nonumber\\
    &\le \left|\{x_{1, \sigma_1'(i)}\dots x_{k-1, \sigma_{k-1}'(i)}x_{k, i}, \ x_{1, \pi_{t_1, t_2}(\sigma_1'(i))}\dots x_{k-1, \pi_{t_1, t_2}(\sigma_{k-1}'(i) )}  x_{k, i}: i\in \{t_1, t_2\}\}\right| \le 4. \label{eq: verification for Azuma}
\end{align}
 Let $A= \{\sigma \in S_n: \sigma(j)=Z_{1, j}\ \text{for}\ j \in [t] \}$. Given $Z_{1,1},\dots, Z_{1, t}$, the probability space is now reduced to $A \times \left(S_n\right)^{k-2}$. Pick $j_0 \in [n] \setminus \{Z_{1,1}\dots Z_{1,t}\}$. Let $A'= \{\sigma \in A: \sigma(t+1)=j_0 \}$. Note that $A$ can be partitioned into $\{\pi_{t, j}(A') : j \in [t+1, n] \}$. Hence the probability space can be partitioned into $\{\pi_{t, j}(A')\times \left(S_n\right)^{k-2}: j \in [t+1, n] \}$. Hence the claim follows by~\eqref{eq: verification for Azuma}.\end{proofclaim} 

By Azuma's inequality (Lemma~\ref{Lma: azuma}), we have
$$\mathbb{P}\left( \left| \mathbb{E}\left|H_{\sigma_1, \dots,\sigma_{k-1}}\right| - \frac{|E(H)|}{n^{k-1}} \right| > \varepsilon n\right) \le 2 e^{-\frac{\varepsilon^2 n}{32}}$$ as required.
\end{proof}

We are now ready to prove Lemma~\ref{Lma: Revised Lemma}.

\begin{proof}[Proof of Lemma~\ref{Lma: Revised Lemma}]
Let $$n = \left(1-\frac{1}{4kr^k}\right)\left(\frac{|X|}{r(k-1)}\right).$$
Partition $X$ into equally sized subsets $X_1,\dots, X_{k-1}$ so that for $j \in [k-1]$, we have $\left|X_j\right| = |X|/(k-1) \ge |Z|$. By Lemma~\ref{Lma: Slicing tripartite lemma}, there exists a partition of $X_1,\dots, X_{k-1}, Z$ into $\{X_1^j: j\in [r] \cup \{0\}  \},\dots, \{X_{k-1}^j: j\in [r] \cup \{0\}  \},  \{Z^j: j\in [r]  \}$ such that for each $ i \in [k-1]$, $j \in [r]$ and $z \in Z^j$, we have $$ \left|X_i^j\right| = n 
\text{ and } e_j\left(X_1^j,\dots, X_{k-1}^j, z\right) \ge \frac{\left|X_1^j\right|\cdots\left|X_{k-1}^j\right|}{2r} = \frac{n^{k-1}}{2r}.$$ 
For each $j \in [r]$, let $H^j$ be the $k$-partite $k$-graph with vertex classes $X_1^j, \dots, X_{k-1}^j, Z^j$ and $E(H^j) = E\left(H_j\left[X_1^j,\dots, X_{k-1}^j, Z^j\right]\right)$.

    Let $d = (2r)^{-1}$ and $\gamma= d^{2^{k}}/2$. Fix $j \in [r]$. Consider $z \in Z^j$ and let $H^j(z)$ be the link graph of~$z$ on vertex set $X_1^j,\dots,X_{k-1}^j$. Note that $|E(H^j(z))| \ge dn^{k-1}$. By Lemma~\ref{lma: every edge in enough K22} with~$H^j(z)$ playing the role of~$H$, there exists a $(k-1)$-partite $(k-1)$-uniform subhypergraph~$J^{z}$ of~$H^j(z)$ such that $|E(J^{z})| \ge  \gamma2^{k-2}n^{k-1}$ and every edge in~$J^{z}$ is contained in at least $\gamma n^{k-1}/2$ many~$K_{k-1}^{(k-1)}(2)$ in~$J^{z}$. For each edge~$e$ in~$J^{z}$, let $J^{z, e}$ be the $(k-1)$-partite $(k-1)$-graph on vertex classes $X_1^j,\dots, X_{k-1}^j$ such that~$f$ is an edge in~$J^{z, e}$ if and only if $e \cup f$ forms a~$K_{k-1}^{(k-1)}(2)$ in~$J^{z}$. 
For all $z \in Z^j$ and $e \in E(J^{z})$, we have $$\left|E(J^{z})\right| \ge  2^{k-2}\gamma n^{k-1} \ \text{and}\ \left|E(J^{z, e})\right| \ge \gamma n^{k-1}/2.$$
Let $X_i^j = \{x_{i, 1},\dots, x_{i, n}\}$. Choose $\sigma_1,\dots, \sigma_{k-2} \in S_n$ independently and uniformly at random. For a $(k-1)$-partite $(k-1)$-graph $J$ with vertex classes $X_1^j, \dots X_{k-1}^j$, let $$J_{\sigma_1,\dots \sigma_{k-2}} = \{x_{1,\sigma_1(i)} \dots x_{k-2, \sigma_{k-2}(i)}x_{k-1, i} : i\in [n]\} \cap E(J). $$
By Lemma~\ref{lma: permutation} we have $$\mathbb{P}\left(\left|J_{\sigma_1, \dots, \sigma_{k-2}}^{z}\right| < \gamma 2^{k-3}n \right) \le 2 e^{-\gamma^2 2^{2k-11}n} \ \text{and}\ \mathbb{P}\left( \left|J_{\sigma_1,\dots, \sigma_{k-2}}^{z, e} \right| < \frac{\gamma n}{4} \right) \le 2 e^{-\frac{\gamma^2 n}{512}}.$$
By the union bound, there exist some $\sigma_1,\dots, \sigma_{k-2}\in S_n$ such that for all $z \in Z^j$ and all~$e \in E(J^{z})$,
\begin{equation}
\left|J_{\sigma_1, \dots, \sigma_{k-2}}^{z}\right| \ge \gamma 2^{k-3}n = d^{2^k}2^{k-4}n \ \text{and}\  \left|J_{\sigma_1,\dots, \sigma_{k-2}}^{z, e}\right|\ge \frac{\gamma n}{4} =  \frac{d^{2^k} n}{8} = \frac{(2r)^{-2^k}n}{8}. \label{eq: bound on J} 
\end{equation}
 Without loss of generality (by relabelling vertices of $X_1^j,\dots, X_{k-2}^j$) we may assume $\sigma_1 =\dots=\sigma_{k-2} = \rm{id}$. For $z \in Z^j$, define the edge-coloured graph $G_{z}$ on $V= \{v_i: i \in [n]\}$ with colour~$z$ to be such that $v_iv_j \in E(G)$ if and only if $x_{1,i}\dots x_{k-1,i} x_{1,j}\dots x_{k-1, j}$ forms a $K_{k-1}^{(k-1)}(2)$ in $J^{z}$. Hence after removing isolated vertices from $G_z$, $\delta(G_{z}) \ge  (2r)^{-2^k}n/8$ by~\eqref{eq: bound on J}. Let $G^j= \bigcup_{z \in Z^j}G_{z}$. We now show that $\ref{itm: (A1)}-\ref{itm: (A4)}$ are satisfied. 

Note that by the construction of $H^j$ for each $j \in [r]$, $\ref{itm: (A1)}$ holds, $Z = \bigcup_{j \in [r]}Z^j$ and~$\bigcup_{i \in [k-1]}X_i^j \subseteq X$. We also have $$\phi(G^j) = \phi\left(\bigcup_{z \in Z^j} G_z \right) =\bigcup_{z \in Z^j} \phi(G_z)  = Z^j.$$ Thus $\ref{itm: (A2)}$ holds. 

Let $H^j$ be non-empty for some $j \in [r]$. Note that $$|V(G^j)| = \left| \bigcup_{z \in Z^j} V(G_z) \right| = n = \left(1-\frac{1}{4kr^k}\right)\left(\frac{|X|}{r(k-1)}\right).$$ Since $\delta(G_z) \ge (2r)^{-2^k}n/8$, we have $\delta_{\text{mon}}(G^j) \ge (2r)^{-2^k} |V(G^j)|/8$ and $G^j$ respects $H^j$ by construction. So $\ref{itm: (A3)}$ holds.

To see $\ref{itm: (A4)}$, recall that Lemma~\ref{Lma: Slicing tripartite lemma} partitions $X_1, \dots, X_{k-1}, Z$. Hence, for distinct $i, j \in [r]$,~$H^i$ and~$H^j$ are vertex-disjoint. This completes the proof of the lemma.
\end{proof}

\section{Rainbow Path Systems} \label{sec: rainbow path systems}

From now on, we will work with edge-coloured multigraph $G$. Note that the colours of $G$ are different to those in $K_n^{(k)}$. We often assume that $\delta_{\text{mon}}(G)$ is linear in $|V(G)|$. Recall that a \emph{rainbow path system} is a path system (collection of vertex-disjoint paths) such that its union (viewed as a graph) is rainbow. Our first goal is to prove the following result, which shows that there is a rainbow path system in $G$ with few paths.
\begin{lemma} \label{Lma: rainbow path system}
    Let $n, d \in \mathbb{N}$ and $G$ be an edge-coloured multigraph on $n$ vertices with $\delta_{\text{mon}}(G) \ge d \ge 4|\phi(G)|$. Then there exists a rainbow path system $\mathcal{P}$ such that $\phi(\mathcal{P}) = \phi(G)$ and $|\mathcal{P}| \le 2n/d$. 
\end{lemma}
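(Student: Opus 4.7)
I will build $\mathcal{P}$ by a greedy algorithm. Process the $r := |\phi(G)|$ colors of $\phi(G)$ one at a time, maintaining a rainbow linear forest $F$; when color $c$ is processed, add an edge $e \in E(G_c)$ to $F$ so that $F \cup \{e\}$ remains a rainbow linear forest, preferring edges that \emph{merge} two components of $F$, then edges that \emph{extend} an existing path, and only \emph{creating} a new path of length one when no merge or extension is available. At the end set $\mathcal{P} := F$.

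To see that the greedy step is always feasible, note that at any stage $|E(F)| \le r - 1 < d/4$, so at most $r - 1 < d/4$ vertices have $F$-degree~$2$. By $\delta(G_c) \ge d$, every $v \in V(G_c)$ has at least $d - (r-1) \ge 3d/4$ color-$c$ neighbours of $F$-degree $\le 1$; excluding at most one cycle-closing neighbour (the other endpoint of $v$'s path, when $v$ is itself an endpoint) still leaves $\ge 3d/4 - 1 \ge 1$ legal color-$c$ edges at $v$, so an edge is always available.

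It remains to bound the final number of paths $p$ by $2n/d$. If $d^2 \le 8n$ this is immediate from $p \le r \le d/4 \le 2n/d$. In the complementary regime $d^2 > 8n$ I will use the identity
\[
\sum_{c \in \phi(G)} |V(G_c)| \;=\; \sum_{v \in V(G)} |\phi_G(v)| \;\ge\; r(d+1) \;>\; 2n,
\]
so that on average every vertex lies in more than two color classes. The plan is to process colors adaptively: at each stage pick an unused color $c$ that maximises $|V(G_c) \cap \mathrm{end}(F)|$, where $\mathrm{end}(F)$ is the set of path endpoints of $F$; merges or extensions are then always possible unless every unused color satisfies $V(G_c) \subseteq V \setminus \mathrm{end}(F)$. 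A double count via the displayed inequality then bounds the number of unused colors in this degenerate state by $(n-2p)r/d$, limiting how many further creations can occur.

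The hardest step will be converting this bookkeeping into the exact bound $p \le 2n/d$, as opposed to some $O(n/d)$-type bound. A likely remedy is a local swap argument: if the next move would push $p$ above $2n/d$, replace the edge chosen for some already-processed color $c'$ (whose $V(G_{c'})$ meets $\mathrm{end}(F)$) by another color-$c'$ edge guaranteed by $\delta(G_{c'}) \ge d$, arranged so that the would-be creation becomes a merge and $p$ stays within the target throughout the algorithm.
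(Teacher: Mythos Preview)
Your proposal has a genuine gap: the case $d^2 > 8n$ is not proved. You write ``the plan is\ldots'', ``the hardest step will be\ldots'', and ``a likely remedy is a local swap argument'', but no such swap argument is actually carried out. The inequality $\sum_c |V(G_c)| > 2n$ that you display does not by itself control the number of creation steps in your greedy procedure, and the heuristic of always picking the unused colour with the most endpoints in its vertex set does not obviously prevent the path count from exceeding $2n/d$. A purely forward greedy algorithm that never rewires past choices can get stuck above the target, which is exactly why you reach for swaps; but you have not shown that a single swap suffices, that swaps can be chained without interference, or even that the needed alternative colour-$c'$ edge exists with the right endpoint configuration.

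The paper avoids all of this with an extremal argument. Start from \emph{any} rainbow path system $\mathcal{P}$ with $\phi(\mathcal{P})=\phi(G)$ (a rainbow matching, which you already know exists), and take one with $|\mathcal{P}|$ minimal. If $|\mathcal{P}|>2n/d$, label the paths so that the first edge of $P_i$ has colour $i$, with second vertex $v_2^i$. Since $|V(\mathcal{P})|=|\phi(G)|+|\mathcal{P}|\le 2|\phi(G)|\le d/2$, each $v_2^i$ has at least $d/2$ colour-$i$ neighbours outside $V(\mathcal{P})$. These $|\mathcal{P}|>2n/d$ neighbourhoods, each of size $\ge d/2$, cannot be pairwise disjoint inside a set of $<n$ vertices, so some $w\notin V(\mathcal{P})$ lies in two of them, say for colours $i$ and $j$. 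Then deleting the first vertex of $P_i$ and of $P_j$ and splicing through $w$ (using colours $i$ and $j$ on the two new edges) merges $P_i$ and $P_j$ into a single rainbow path, contradicting minimality. This pigeonhole-plus-merge step replaces your entire adaptive-ordering and swap machinery.
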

To show this, we need the help of the following proposition which lets us start building~$\mathcal{P}$ with a rainbow matching.
\begin{proposition} \label{Prp: rm}
    Let $G$ be an edge-coloured multigraph with $\delta_{\text{mon}}(G)  \ge 2|\phi(G)|-1$. Then there exists a rainbow matching $M$ such that $\phi(M)=\phi(G)$.
\end{proposition}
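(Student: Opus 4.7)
The plan is to take a maximum rainbow matching $M$ in $G$ and argue, via a short extremal argument, that $\phi(M) = \phi(G)$. Suppose for contradiction $\phi(M) \neq \phi(G)$, so $|M| = |\phi(M)| \le |\phi(G)| - 1$ and consequently $|V(M)| \le 2(|\phi(G)| - 1) = 2|\phi(G)| - 2$. Fix a missing colour $c \in \phi(G) \setminus \phi(M)$; by hypothesis $\delta(G_c) \ge 2|\phi(G)| - 1$.

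The first step is to show $V(G_c) \subseteq V(M)$. Indeed, if some $u \in V(G_c) \setminus V(M)$ existed, then $u$ would have at least $2|\phi(G)| - 1$ distinct neighbours in $G_c$, while $V(M)$ has at most $2|\phi(G)| - 2$ vertices, so some neighbour $u'$ of $u$ in $G_c$ would lie outside $V(M)$; the edge $uu'$ would then be vertex-disjoint from $V(M)$, and $M \cup \{uu'\}$ would be a strictly larger rainbow matching, contradicting maximality of $M$.

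Given this, pick any $v \in V(G_c) \subseteq V(M)$. All $\ge 2|\phi(G)| - 1$ distinct neighbours of $v$ in $G_c$ lie in $V(G_c) \setminus \{v\} \subseteq V(M) \setminus \{v\}$, which forces $|V(M)| \ge 2|\phi(G)|$, contradicting the earlier upper bound and completing the proof. The only subtle point I expect to be worth spelling out is that the degree hypothesis must be interpreted as giving many \emph{distinct} neighbours in each colour class; this is exactly the setting of the paper, where parallel edges in the multigraph carry distinct colours and each $G_c$ is effectively a simple graph, so that $d_{c,G}(v) = |N_{c,G}(v)|$ as in Section~\ref{Notation}.
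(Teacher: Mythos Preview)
Your proof is correct. The paper's argument is a direct greedy construction rather than an extremal one: labelling the colours $1,\dots,d$, it builds $M_i$ from $M_{i-1}$ by noting that $|V(G_i)|\ge \delta(G_i)+1\ge 2|\phi(G)|>|V(M_{i-1})|$, so some $v\in V(G_i)\setminus V(M_{i-1})$ exists, and then $d_i(v)\ge 2|\phi(G)|-1>|V(M_{i-1})|$ furnishes a $v'\in N_i(v)\setminus V(M_{i-1})$ to add. Your maximum-matching packaging reaches the same contradiction from the other side; the underlying observation (the degree bound in any colour class exceeds $|V(M)|$, so an edge of the missing colour disjoint from $M$ exists) is identical in both, and neither approach buys anything over the other here. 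Your remark that $d_{c,G}(v)=|N_{c,G}(v)|$ counts distinct neighbours is exactly the paper's convention, so no issue arises.
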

    \begin{proof}
    Without loss of generality let $\phi(G) = [d]$. Suppose for some $i \in [d]$, we have already found a rainbow matching ${M}_{i-1}$ such that $\phi({M}_{i-1}) = [i-1]$. We now construct~${M}_{i}$ as follows. Since   $$\delta_{\text{mon}}(G) \ge 2|\phi(G)|-1 > 2|\phi(M_{i-1})| = |V({M}_{i-1})|,$$ there exists a vertex $v \in V(G)\setminus V({M}_{i-1})$ such that $N_i(v, \overline{V(M_{i-1})}) \neq \emptyset$. Pick a vertex $v' \in N_i(v, \overline{ V({M}_{i-1})})$ and let ${M}_{i}= {M}_{i-1} \cup \{vv'\}$.    
    Let $M={M}_d$.  
    \end{proof}

We now present the proof of Lemma~\ref{Lma: rainbow path system}.

    \begin{proof}[Proof of Lemma~\ref{Lma: rainbow path system}]
       Let $\mathcal{P}$ be a rainbow path system of $G$ with $\phi(\mathcal{P})=\phi(G)$, which exists by Proposition~\ref{Prp: rm}. Suppose that $|\mathcal{P}|$ is minimal. If $|\mathcal{P}| \le 2n/d$, then we are done. So we may assume $|\mathcal{P}| > 2n/d$. Let $\mathcal{P} = \{P_1, P_2,\dots, P_{\ell} \}$ be such that $P_i = v_1^i...v_{q_i}^i$ and without loss of generality $\phi(v_1^iv_2^i) = i$.
       Note that
$$d_i(v_2^i , \overline{V(\mathcal{P})}) \ge d-|V(\mathcal{P})|  \ge d - 2|\phi(G)| \ge d/2.$$
Hence there exist distinct $i, j \in [\ell]$ such that $(N_i (v_2^i) \cap N_j(v_2^j)) \setminus V(\mathcal{P}) \neq \emptyset$. Let $w \in \left(N_i (v_2^i) \cap N_j(v_2^j)\right) \setminus V(\mathcal{P})$. Let $\mathcal{P}' = (\mathcal{P} \setminus \{P_i, P_j\}) \cup \{v_q^j...v_3^jv_2^j w v_2^i v_3^i ... v_q^i\}$. Note that $\mathcal{P}'$ is also a rainbow path system with $\phi(\mathcal{P}') = \phi(G)$ and $|\mathcal{P}'|< |\mathcal{P}|$. This contradicts the minimality of $|\mathcal{P}|$.
    \end{proof}
The following lemma is our absorption result, the proof of which will be presented in the next section.

\begin{lemma} \label{Lma: absorption lemma}
    Let $1/n \ll \delta$ and~$G$ be an edge-coloured multigraph satisfying~$|V(G)|=n$ and $\delta_{\text{mon}}^*(G) \ge  \delta n$. Then there exists a subgraph $G^*$ of $G$ such that  
		\begin{align*}
		|V(G)\setminus V(G^*)|, |\phi(G)\setminus \phi(G^*)| < 2000\delta^{-4}.
		\end{align*}
		Moreover, for all rainbow paths $P^*$ in $G^*$ with $|V(P^*)| \ge 3$ and vertex sets $S \subseteq V(G^*)\setminus V(P^*)$ such that $|V(P^*) \cup S| \le \delta n/4$, $G \setminus S$ contains a rainbow cycle $C^*$ with $\phi(P^*) \subseteq \phi(C^*) \subseteq \phi(P^*) \cup \left(\phi(G)\setminus \phi(G^*)\right)$. 
\end{lemma}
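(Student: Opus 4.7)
The strategy is to reserve a small set $V_r \subseteq V(G)$ of vertices and a small set $C_r \subseteq \phi(G)$ of colours, each of size less than $2000\delta^{-4}$, and to define $G^*$ by deleting $V_r$ from the vertex set and all edges of colour in $C_r$. The required bounds on $|V(G)\setminus V(G^*)|$ and $|\phi(G)\setminus \phi(G^*)|$ are then automatic. The reserve must play the role of an absorbing structure with the following key property: for every pair $u,v\in V(G^*)$ and every set $S\subseteq V(G^*)\setminus\{u,v\}$ with $|S|\le \delta n/4$, there is a rainbow $u$-$v$ path $Q$ in $G\setminus S$ all of whose colours lie in $C_r$.

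Once such a reserve is built, the ``moreover'' clause follows cleanly. Given a rainbow path $P^*$ in $G^*$ with endpoints $u,v$ and a set $S\subseteq V(G^*)\setminus V(P^*)$ with $|V(P^*)\cup S|\le \delta n/4$, set $S' = S\cup (V(P^*)\setminus\{u,v\})$, so that $|S'|\le \delta n/4$; applying the reserve property to $(u,v,S')$ yields a rainbow $u$-$v$ path $Q$ in $G\setminus S'$ with $\phi(Q)\subseteq C_r$. Since $\phi(P^*)\subseteq \phi(G^*) = \phi(G)\setminus C_r$ is disjoint from $\phi(Q)$, the union $C^* := P^*\cup Q$ is a rainbow cycle in $G\setminus S$ satisfying $\phi(P^*)\subseteq \phi(C^*)\subseteq \phi(P^*)\cup (\phi(G)\setminus \phi(G^*))$.

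The heart of the proof is therefore the construction of the reserve. My plan is probabilistic: choose $C_r$ as a uniformly random subset of $\phi(G)$ of size $\Theta(\delta^{-4})$, and define $V_r$ deterministically by collecting vertices poorly connected through $C_r$---in particular, every $v\notin V^*(G)$ whose unique colour lies outside $C_r$, together with a small slack pool. The hypothesis $\delta_{\text{mon}}^*(G)\ge \delta n$ supplies at least $\delta n$ neighbours of each colour at each vertex that sees it. Fixing an ordered quadruple $(c_1,c_2,c_3,c_4)\in C_r^4$ and greedily picking internal vertices $w_1\in N_{c_1}(u)$, $w_2\in N_{c_2}(w_1)$, $w_3\in N_{c_3}(w_2)$, with $v\in N_{c_4}(w_3)$, produces $\Omega((\delta n)^3)$ rainbow $u$-$v$ walks of length four per colour tuple, since at each step at most $O(|V_r|+|S|) \le \delta n/2$ vertices are blocked. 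A union bound over all admissible triples $(u,v,S)$, combined with a standard concentration estimate on the random choice of $C_r$, shows that with positive probability every such triple has a surviving rainbow path.

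The main obstacle---and the reason for the exponent $4$ in the bound $\delta^{-4}$---is balancing three competing constraints simultaneously: $|C_r|$ must be large enough to generate many rainbow walks, $|V_r|$ must stay within the $2000\delta^{-4}$ budget even after absorbing all exceptional vertices (those with too few reserve colours or too few reserve neighbours), and the surviving path count must dominate the deletions permitted by $|S|\le \delta n/4$. I expect the bookkeeping in this calculation---optimising the random choice of $C_r$ against the deterministic definition of $V_r$---to be the most technically involved step, and the step in which the constant $2000$ will be seen to arise.
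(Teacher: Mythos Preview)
Your plan has a genuine gap at its core: the absorbing property you aim for --- a rainbow $u$--$v$ path through $C_r$ for \emph{every} pair $u,v\in V(G^*)$ --- is not deliverable from the hypothesis $\delta^*_{\text{mon}}(G)\ge\delta n$, and your counting argument for it is incorrect.

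Concretely, the assertion that a fixed tuple $(c_1,c_2,c_3,c_4)\in C_r^4$ yields $\Omega((\delta n)^3)$ rainbow $u$--$v$ walks of length four fails on two counts. First, $\delta^*_{\text{mon}}(G)\ge\delta n$ only guarantees $|N_{c}(w)|\ge\delta n$ when $w$ actually sees colour~$c$; there is no reason $u$ sees $c_1$, or that a typical $w_1\in N_{c_1}(u)$ sees $c_2$, and so on. Second, even granting that all relevant vertices see all four colours, the final step demands $w_3\in N_{c_3}(w_2)\cap N_{c_4}(v)$; two sets of size $\delta n$ inside an $n$-vertex ground set need not intersect when $\delta<1/2$, so the count per tuple can be zero. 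The proposed union bound over triples $(u,v,S)$ is also untenable: there are $\binom{n}{\lfloor\delta n/4\rfloor}$ choices of $S$, while the only randomness lies in a subset $C_r$ of size $O(\delta^{-4})$, far too little entropy to beat an exponential union bound.

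The paper avoids all of this by \emph{not} attempting to rainbow-connect arbitrary pairs. Instead it builds an iterated family of bowties $\mathcal{B}^1,\dots,\mathcal{B}^i$ and exploits the ``trapping'' property of $g$-maximal sets (Lemma~\ref{Lma: path}\ref{itm: lma: path: ii}): in the resulting $G^*$, the interior of any rainbow path is confined to a single $U_2^*(B)$ for some bowtie $B$. Closing such a path is then a local problem --- either $|U_2(B)|\le 3d/4$ and a common-neighbour pigeonhole inside $U_2(B)$ produces a vertex replacing both endpoints, or $U_1^*(B)=U_2^*(B)$ and one routes through the bowtie's centre using the two reserved halves. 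The reserved vertices and colours are exactly $W(\mathcal{B}^j)$ and $\phi(\mathcal{B}^j)$, whose sizes are controlled deterministically by Proposition~\ref{Prp: skeleton colour max size} and Fact~\ref{Fct: size of family}, giving the $2000\delta^{-4}$ bound. So the key idea you are missing is this structural trapping via $g$-maximality; a purely probabilistic reservation of colours cannot substitute for it.
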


We now prove Lemma~\ref{lma: Lemma Rainbow Cycle System} using Lemma~\ref{Lma: absorption lemma}. We iteratively reserve a set of vertices and colours, which shall be used to close the rainbow path system constructed at the end, one path at a time. 

\begin{proof}[Proof of Lemma~\ref{lma: Lemma Rainbow Cycle System}]
If $|\phi(G)| \le 2^{18}\delta_0^{-5}$ then we have $\delta_{\text{mon}}(G) \ge 2|\phi(G)|$. By Proposition~\ref{Prp: rm},~$G$ contains a rainbow matching~$M$ such that $\phi(M) = \phi(G)$. Since each edge is a degenerate cycle, we are done setting $\mathcal{C} = M$. Therefore, we may assume $|\phi(G)| > 2^{18}\delta_0^{-5}$.

Let $G^1 = G$. For each $j \in \left[4\delta_0^{-1}\right]$ in turns, we apply Lemma~\ref{Lma: absorption lemma} with $G^j, \delta_0/2$ playing the roles of $G, \delta$, respectively, to obtain a subgraph $G^{j+1}$ of $G$ such that  
$$|V(G^j)\setminus V(G^{j+1})|, \ |\phi(G^j) \setminus \phi(G^{j+1})| < 2000 (\delta_0/2)^{-4} \le 2^{15}\delta_0^{-4}.$$
Moreover, for all rainbow paths $P$ in $G^{j+1}$ with $|V(P)| \ge 3$ and a vertex set $S \subseteq V(G^{j+1}) \setminus V(P)$ such that $|V(P) \cup S| \le \delta_0 n/8$,  $G^{j+1}\setminus S$ contains a rainbow cycle $C$ with $$\phi(P) \subseteq \phi(C) \subseteq \phi(P) \cup (\phi(G^j)\setminus \phi(G^{j+1})).$$
Note that 
\begin{align}\label{eqn: min degree of G(j+1)}
    \delta_{\text{mon}}(G^{j+1}) &\ge \delta_{\text{mon}}(G^j) - |V(G^j)\setminus V(G^{j+1})|  
\ge \delta_{\text{mon}}(G^j) - 2^{15}\delta_0^{-4} \nonumber \\ 
    &\ge \delta_{\text{mon}}(G) - 2^{15}\delta_0^{-4}j 
\ge \delta_0n/2. 
\end{align}
Let $G^* = G^{4\delta_0^{-1}}$. Note that
\begin{equation*}
\delta_{\text{mon}}(G^*) = \delta_{\text{mon}}(G^{4\delta_0^{-1}}) \stackrel{}{\ge} \delta_0 n/2 \ge 8\phi(G^*)     
\end{equation*}
 and
 \begin{equation}
 |\phi(G)\setminus \phi(G^*)|,|V(G)\setminus V(G^*)| \le 2^{15}\delta_0^{-4}(4\delta_0^{-1}) = 2^{17}\delta_0^{-5}. \label{eqn: Number of reserved colours and vertices}    
 \end{equation}
  By Lemma~\ref{Lma: rainbow path system} with $G^*, \delta_0n/2$ playing the roles of $G, d$, respectively, there exists a rainbow path system~$\mathcal{P}$ such that $\phi(\mathcal{P}) = \phi(G^*)$ and \begin{equation}
      |\mathcal{P}| \le \frac{2|V(G^*)|}{\delta_0 n/2} \le 4\delta_0^{-1}. \label{eq: upper bound on number of paths}
  \end{equation} 
Note that 
\begin{equation} \label{eqn: upper bound on V(P)}
       |V(\mathcal{P})|  =  |\phi(G^*)| + |\mathcal{P}|  \le \frac{\delta_0 n}{16} + 4\delta_0^{-1} \le \frac{\delta_0 n}{8}. 
\end{equation} 
Moreover $$\delta_{\text{mon}}(G \setminus V(\mathcal{P})) \stackrel{\eqref{eqn: upper bound on V(P)}}{\ge} \delta_0 n - \frac{\delta_0 n}{8} \ge  \frac{\delta_0n}{2} \ge 2\phi(G)$$ and
 $\phi(G \setminus V(\mathcal{P})) \supseteq \phi(G)\setminus \phi(G^*)$. By Proposition~\ref{Prp: rm}, we deduce that there exists a rainbow matching~$M$ in $(G\setminus V(\mathcal{P}))_{\phi(G) \setminus \phi(G^*)}$ with $\phi(M) = \phi(G) \setminus \phi(G^*)$. Let $Q_0 = M \cup \mathcal{P}$, so $\phi(Q_0) = \phi(G)$ and $$|Q_0| \le |\mathcal{P}|+ |\phi(G) \setminus \phi(G^*)| \stackrel{\eqref{eq: upper bound on number of paths},~\eqref{eqn: Number of reserved colours and vertices}}{\le} 4\delta_0^{-1} + 2^{17}\delta_0^{-5}\le 2^{18}\delta_0^{-5}.$$
Let $P_1,\dots, P_{\ell}$ be paths in $Q_0$ such that for $j \in [\ell]$, $|V(P_j)| \ge 3$. Suppose for some $j \in [\ell-1]\cup\{0\}$, we have constructed $Q_j$ such that
\begin{enumerate} 
    \item[(i)] $Q_j$ is a union of vertex-disjoint paths and cycles with $|Q_j| \le |Q_0|$;
    \item[(ii)] $Q_j$ is rainbow with $\phi(Q_j) = \phi(G)$;
    \item[(iii)] the paths in $Q_j$ of order at least $3$ are precisely $P_{j+1},\dots, P_{\ell}$;
    \item[(iv)] all edges in $Q_j$ of colours in $\phi(G^{j+1})\setminus \phi(G^*)$ form a rainbow matching.
\end{enumerate}
We now construct $Q_{j+1}$ as follows. Note that $P_{j+1} \subseteq G^* \subseteq G^{j+1}$. Let $S = V(Q_{j} \setminus P_{j+1}) \cap V(G^{j+1})$. Note that \begin{align*}
    |S \cup V(P_{j+1})| &\le |V(Q_{j})| \le |\phi(Q_{j})|+|Q_{j}| \le |\phi(G)|+|Q_0| \\ &\le \frac{\delta_0 n}{16} +2^{18}\delta_0^{-5} \le \frac{\delta_0 n}{8} \stackrel{\eqref{eqn: min degree of G(j+1)}}{\le} \frac{\delta_{\text{mon}}(G^{j+1})}{4}.
\end{align*}
By the property of $G^{j+1}$, there is a rainbow cycle $C^{j+1}$ in  $G^{j+1} \setminus S$ such that $$\phi(P_{j+1}) \subseteq \phi(C^{j+1}) \subseteq \phi(P_j) \cup \left(\phi(G^{j})\setminus \phi(G^{j+1})\right).$$ Since $\mathcal{P} \subseteq G^*$, the subgraph $G_{\phi(C^{j+1})}\cap Q_j$ of $Q_j$ induced by the colours in $\phi(C^{j+1})$ is precisely the path~$P_{j+1}$ and single edges of colour in $\phi(C^{j+1})\setminus \phi(P_{j+1}) \subseteq \phi(G^j) \setminus \phi(G^{j+1})$. 
Let $Q_{j+1} = \left(Q_j - G_{\phi\left(C^{j+1}\right)}\right) \cup C^{j+1}$. Thus $Q_{j+1}$ consists of the cycles $C^1, \dots, C^{j+1}$, a rainbow matching with colours in $\phi(G^{j+2})\setminus \phi(G^*)$ and all paths of length at least~$3$ from~$Q_j$ except~$P_{j+1}$. Therefore $|Q_{j+1}| \le |Q_j| \le |Q_0|$. Furthermore $\phi(Q_{j+1}) = \phi(Q_j)\cup \phi(C^{j+1})=  \phi(G)$ and by construction, $Q_{j+1}$ is rainbow. Therefore,~$Q_{j+1}$ satisfies (i)--(iv).

Finally, note that $Q_{\ell}$ consists of cycles and single edges. The proof of the lemma is complete.
\end{proof}

\section{Closing rainbow paths} \label{Bowties}
The aim of this section is to prove Lemma~\ref{Lma: absorption lemma}. To prove Lemma~\ref{Lma: absorption lemma}, we would need to reserve some vertices and colours so that we can close any rainbow path. Ideally our aim is to find a vertex $v$, small disjoint vertex sets $W_1, W_2$ and small disjoint colour sets~$C_1, C_2$ such that for every $x \in V(G)$ and $i \in [2]$, there exists a rainbow path $P_{i, x}$ from $x$ to $v$ such that $V(P_{i,x}) \subseteq \{x, v\} \cup W_i$ and $\phi(P_{i, x})\subseteq C_i$. We now reserve the vertex set $v \cup W_1 \cup W_2$ and colour set $C_1 \cup C_2$. Given any rainbow path $P$ with end vertices $x$ and~$y$, $P \cup P_{1, x} \cup P_{1, y}$ forms a rainbow cycle. This will motivate our definition of a bowtie (see later). First, we need the following definition which describes how to reach a vertex~$v$ through a vertex set~$W$ using a colour set~$C$.

Recall that $V^*(G) = \{v \in V(G): |\phi_G(v)| \ge 2\}$ and for a path $P = v_1\dots v_{\ell}$, we denote $\text{int}(P) = \{ v_i : 2\le i \le \ell-1\}$. Let $G$ be an edge-coloured multigraph with vertex set~$V$.
For $v\in V$, $C \subseteq \phi(G)$ and $W \subseteq V$, define $U_G(v, C', W)$ to be the subset $U \subseteq V$ such that for all vertices $u \in U$, there is a rainbow path $P$ from $v$ to $u$ with $\phi(P) \subseteq C$ and $\mbox{int}(P) \subseteq W$. We will always assume that $\{v\} \cup W \subseteq V^*(G)\cap U_G(v, C, W)$. Let $U= U_G(v, C, W)$ and $g \in \mathbb{N}$. We say that $U$ is \emph{$g$-maximal in~$G$} if for all $u \in U, w_1, w_2 \in V$ and distinct $c, c_1, c_2 \notin C$ with $\phi(uw_1) = c_1$ and $\phi(w_1w_2)=c_2$, $$d_c(u, \overline{U}), d_{c_1}(w_1, \overline{U}), d_{c_2}(w_2, \overline{U}) < g.$$
If $U$ is not $g$-maximal, then by adding at most two vertices to $W$ and three colours to $C$, we can enlarge $U$ by at least $g$. This leads to the following proposition. 
 
\begin{proposition} \label{Prp: skeleton colour max size}
    Let $G$ be an edge-coloured multigraph. Let $v \in V^*(G)$ and $c \in \phi_G(v)$. Then there exists a colour set $C \subseteq \phi(G)$ containing $c$ and a vertex-set $W \subseteq V^*(G) \setminus v$ such that $U_G(v, C, W)$ is $g$-maximal, $|U_G(v, C, W)| \ge d_c(v)$ and $|C|, |W| \le 3 |U_G(v, C, W)|/g$. 
    \end{proposition}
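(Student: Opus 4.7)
The plan is to construct $(C, W)$ by an iterative greedy extension starting from $(C_0, W_0) = (\{c\}, \emptyset)$. Since $U_G(v, \{c\}, \emptyset) = \{v\} \cup N_c(v)$, we have $|U_0| \ge d_c(v)$ at the outset, which already secures the middle conclusion of the proposition. At each stage $i \ge 0$, if the current $U_i := U_G(v, C_i, W_i)$ is not $g$-maximal I use the failure to enlarge $(C_i, W_i)$ to a strictly larger pair $(C_{i+1}, W_{i+1})$ with $|U_{i+1}| \ge |U_i| + g$; this must terminate at a $g$-maximal $U$ since $|U| \le |V(G)|$.

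Failure of $g$-maximality gives a witness $u \in U_i$, $w_1, w_2 \in V$ and pairwise distinct colours $c^*, c_1, c_2 \notin C_i$ with $\phi(uw_1) = c_1$, $\phi(w_1w_2) = c_2$, such that at least one of $d_{c^*}(u, \overline{U_i})$, $d_{c_1}(w_1, \overline{U_i})$, $d_{c_2}(w_2, \overline{U_i})$ is at least $g$. I extend by adjoining a relevant subset of $\{c^*, c_1, c_2\}$ to $C$ and of $\{u, w_1, w_2\}$ to $W$, each time arranging that at least $g$ previously unreachable vertices become reachable by rainbow paths using only the new colours and intermediates. In the straightforward case $d_{c^*}(u, \overline{U_i}) \ge g$, one simply adjoins $c^*$ and $u$ and appends a $c^*$-edge to the existing rainbow path $v \to \cdots \to u$. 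In the remaining cases one additionally incorporates the edges $uw_1, w_1w_2$ into the path, exploiting that the three fresh colours $c^*, c_1, c_2$ are pairwise distinct and all absent from $C_i$ to avoid any colour clash. Any vertex added to $W$ belongs to $V^*(G)$, since it sees the colour used to reach it (in the new $C$) and a second colour from the witness.

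Since each step adjoins at most three colours and three vertices while enlarging $|U|$ by at least $g$, the factor $3$ in the target bound $|C|,|W| \le 3|U|/g$ exactly matches the per-step accounting: after $T$ extensions one has $|C|\le 1+3T$, $|W|\le 3T$, and $|U|\ge d_c(v)+Tg$, from which the ratio bounds follow by a short computation. The delicate point is the extension step in the last two cases, where one must explicitly verify that the $g$ new vertices are reachable by rainbow paths — the obvious route $v \to \cdots \to u \to w_1 \to \cdots$ would re-use $c_1$, so the construction re-routes through $w_2$ (using $c_2$) or terminates via the fresh colour $c^*$. Several boundary situations such as $u = v$, $w_1 \in U_i$, or $w_2$ coinciding with an earlier intermediate vertex must be handled separately, but each is routine and does not disturb the overall counting.
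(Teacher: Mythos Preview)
Your greedy-extension strategy is exactly the paper's: start from $(\{c\},\emptyset)$ and, whenever $g$-maximality fails, enlarge $(C,W)$ by at most three colours and three vertices so that $|U|$ grows by at least $g$, terminating because $|U|\le |V(G)|$.

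The gap is in your treatment of the second and third failure cases. If $d_{c_1}(w_1,\overline{U_i})\ge g$ with $\phi(uw_1)=c_1$, then neither of your proposed fixes works: ``re-routing through $w_2$'' leaves you at $w_2$ rather than at any $c_1$-neighbour of $w_1$, and ``terminating via the fresh colour $c^*$'' would require $d_{c^*}(w_1,\overline{U_i})\ge g$, which you do not have (that bound is only promised at $u$). The analogous problem occurs in the third case with $c_2$ repeated on $w_1w_2$ and on the final hop from $w_2$. The paper sidesteps this entirely by phrasing its three extension cases so that the final-hop colour $c'$ is always chosen \emph{distinct} from the intermediate edge colours $\phi(uw)$ and $\phi(ww')$; this is consistent with how $g$-maximality is actually invoked later (see the proof of Lemma~\ref{Lma: path}) and indicates that the subscripts $c_1,c_2$ in the displayed degree conditions should all be read as the free colour $c$. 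With that reading the route $v\to\cdots\to u\to w_1\,(\to w_2)\to x$ is automatically rainbow, no re-routing is needed, and your argument goes through unchanged.
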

    \begin{proof}

Initially, we set  $W = \emptyset$ and $C = \{c\}$. Note that $U_G(v, C, W) = N_c (v)$, so $|U_G(v, C, W)| \ge d_c(v)$ and $|W|, |C| \le  3| U_G(v,C,W) |/g$.

If $U_G(v, C, W)$ is $g$-maximal, then we are done. Suppose~$U_G(v,C,W)$ is not $g$-maximal. If there exists~$u \in U_G(v, C, W)$ and a colour~$c' \notin C$  such~that~$d_{c'}(u, \overline{U_G(v, C, W)})\ge g$ then add~$c'$ to~$C$ and~$u$ to~$W$. If there exists $u \in U_G(v, C, W)$, $w\in V^*(G)$ and a colour~$c' \notin C$ such that $d_{c'} (w, \overline{ U_G(v, C, W)}) \ge g$ and $\phi(uw) \notin \{c'\} \cup C$, then add~$c', \phi(uw)$ to~$C$ and $u, w$ to $W$. If there exists $u \in U_G(v, C, W)$, $ w, w' \in V^*(G)$ and a colour $c'$ such that $d_{c'} (w', \overline{ U_G(v, C, W)}) \ge g$ and $\phi(uw), \phi(ww'), c'$ are distinct colours not in $C$, then add $c', \phi(uw), \phi(ww')$ to $C$ and $u, w, w'$ to~$W$.

Note that we still have $|W|, |C| \le 3| U_G(v,C,W)|/g$ and $|U_G(v, C, W)| \ge d_c(v)$. Note that $W \subseteq V^*(G)$. We repeat this process until $U_G(v,C,W)$ is $g$-maximal.
    \end{proof}

  Recall that $\delta_{\text{mon}}^* (G) = \min_{v \in V^*(G)}\ \min_{c \in \phi_G(v)} d_c(v)$. In other words, $\delta_{\text{mon}}^* (G)$ is the (non-zero) minimum number of edges seen by a vertex~$v$ in a particular colour, minimised over all the vertices in~$G$ that see at least two colours. We are mainly concerned about vertices in $V^*(G)$ because if a vertex $v \notin V^*(G)$, then it cannot be contained in a non-degenerate rainbow cycle. Also recall that the \emph{closed neighborhood} of a vertex~$w$ in a graph $G$ is defined as $\{w\} \cup N_G(w)$ and denoted by $N_G[w]$. 
  
  Let $U=U(v, C, W)$ be $g$-maximal in $G$. The following crucial lemma says that if  a rainbow path~$P$ satisfies $\text{int}(P) \cap N_{G-G_C}[U] \neq \phi$, then $\text{int}(P) \subseteq N_{G-G_C}[U]$.

\begin{lemma} \label{Lma: path}
    Let $g \in \mathbb{N}$ and let~$G$ be an edge-coloured multigraph on~$n$ vertices with ${\delta_{\text{mon}}^*(G)>g}$. Let $v \in V(G)$, $W \subseteq V(G)$ and $C \subseteq \phi(G)$. Suppose that $U=U_G(v, C, W)$ is~$g$-maximal. Furthermore let $G'=G-G_C$ and $U^* = N_{G'} [U]$. Then the following hold:
    \begin{enumerate}[label={\rm(\roman*)}]
        \item  for all $x \in U^* \cap V^*(G')$ and $c \in \phi(G')$, we have $d_c (x, \overline{U}) <g $; \label{itm: lma: path: i}
        \item if $P$ is a rainbow path in $G'$ such that $\rm{int}(P) \cap U^* \neq \emptyset$, then $\rm{int}(P) \subseteq U^*$. \label{itm: lma: path: ii}
    \end{enumerate}  
\end{lemma}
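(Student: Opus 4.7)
My plan is to unpack the $g$-maximality of $U$ into three forbidden-enlargement clauses matching the three cases in the proof of Proposition~\ref{Prp: skeleton colour max size}:
(C1) for all $u \in U$ and $c \notin C$, $d_c(u, \overline{U}) < g$;
(C2) for all $w \in V^*(G)$ and $c \notin C$ such that some $u \in U$ has $\phi(uw) \notin \{c\} \cup C$, $d_c(w, \overline{U}) < g$;
(C3) for every rainbow $2$-path $u - w - w'$ in $G'$ with $u \in U$, $w, w' \in V^*(G)$, and every colour $c \notin C$ distinct from $\phi(uw)$ and $\phi(ww')$, $d_c(w', \overline{U}) < g$.
Together with $\delta_{\text{mon}}^*(G) > g$, these three clauses will do all the work.

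For (i), fix $x \in U^* \cap V^*(G')$ and $c \in \phi(G')$. If $x \in U$, clause~(C1) is immediate. Otherwise $x \in U^* \setminus U$ has a $G'$-neighbour $u \in U$ with $c_0 := \phi(ux) \notin C$; when $c_0 \neq c$, clause~(C2) applied to $(u, x, c)$ yields $d_c(x, \overline{U}) < g$ at once. The only remaining case is $c_0 = c$, which I will handle by first producing an alternative $G'$-neighbour $u' \in U$ of $x$ with $\phi(u'x) \neq c$ and then reusing the previous step. To produce $u'$, suppose no such neighbour exists; then $d_{c'}(x, U) = 0$ for every $c' \in \phi_{G'}(x) \setminus \{c\}$. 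Since $x \in V^*(G')$ such a $c'$ exists, and then $d_{c'}(x, \overline{U}) = d_{c'}(x) > g$ by $\delta_{\text{mon}}^*(G) > g$, contradicting clause~(C2) applied to $(u, x, c')$.

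For (ii), I will iterate the following local propagation along $P$: if $v_i$ is internal, $v_i \in U^*$, and $v_{i+1}$ is also internal, then $v_{i+1} \in U^*$ (the step toward $v_{i-1}$ is symmetric, and iteration from any internal vertex in $U^*$ to both ends covers all of $\text{int}(P)$). Let $c_+ = \phi(v_iv_{i+1})$. As $v_{i+1}$ is internal it witnesses two distinct colours in $G'$, so $v_{i+1} \in V^*(G)$ and $\delta_{\text{mon}}^*(G) > g$ gives $d_{c_+}(v_{i+1}) > g$. If $v_i \in U$, then $v_{i+1} \in N_{G'}[U] = U^*$ trivially. Otherwise $v_i \in U^* \setminus U$; exactly the bootstrap from (i)---applying part~(i) to $v_i$ at a colour $c' \in \phi_{G'}(v_i) \setminus \{c_+\}$, which exists since $v_i$ is internal---yields a $G'$-neighbour $u \in U$ of $v_i$ with $\phi(uv_i) \neq c_+$. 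Now clause~(C3) applied to the rainbow $2$-path $u - v_i - v_{i+1}$ (with any colour of $\phi(G') \setminus \{c_+, \phi(uv_i)\}$ playing the role of~$c$) gives $d_{c_+}(v_{i+1}, \overline{U}) < g$. Combined with $d_{c_+}(v_{i+1}) > g$, this forces $d_{c_+}(v_{i+1}, U) > 0$, so $v_{i+1}$ has a $c_+$-coloured $G'$-neighbour in $U$ and hence $v_{i+1} \in U^*$. The main technical nuisance I foresee is the colour-collision cases ($c_0 = c$ in (i) and its analogue in (ii)); both are dispatched by the same trick of bootstrapping through an auxiliary colour at the relevant vertex and invoking $\delta_{\text{mon}}^*(G) > g$ to produce a contradiction with clause~(C2).
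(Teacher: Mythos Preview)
Your proof of~(i) is correct and essentially identical to the paper's. The overall strategy for~(ii) is also the right one, but there is a genuine error in your application of~(C3).

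In the propagation step, with $u \in U$, $\phi(uv_i)=c'$ and $\phi(v_iv_{i+1})=c_+$, your own clause~(C3) bounds $d_c(v_{i+1},\overline{U})$ only for colours~$c$ \emph{distinct from} $c'$ and~$c_+$. It does not bound $d_{c_+}(v_{i+1},\overline{U})$, since $c_+=\phi(ww')$ is one of the two excluded colours. Thus the line ``gives $d_{c_+}(v_{i+1},\overline{U})<g$'' does not follow, and the subsequent conclusion $d_{c_+}(v_{i+1},U)>0$ is unjustified.

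The fix---and this is exactly what the paper does---is to use the \emph{next} edge of~$P$: take $c'':=\phi(v_{i+1}v_{i+2})$ as the free colour in~(C3), and specifically choose $c'=\phi(v_{i-1}v_i)$ when producing $u$. Then $c',c_+,c''$ are three colours of the rainbow path~$P$, hence pairwise distinct, and none lie in~$C$. Now~(C3) applied to $u\text{--}v_i\text{--}v_{i+1}$ with free colour~$c''$ gives $d_{c''}(v_{i+1},\overline{U})<g$, while $d_{c''}(v_{i+1})\ge\delta_{\text{mon}}^*(G)>g$, so $N_{c''}(v_{i+1},U)\ne\emptyset$ and $v_{i+1}\in U^*$. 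With this correction your argument goes through and coincides with the paper's proof.
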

    \begin{proof}
Consider $x \in U^* \cap V^*(G')$ and $c \in \phi(G')$.
If $x \in U$, then since $c \notin C$, by the $g$-maximality of $U$, we have $d_c(x, \overline{U}) < g$. If $x \in U^* \setminus U$, then there exists a vertex $u \in U$ and a colour $c' \notin C$ such that $\phi(ux)=c'$. If $c' \neq c$, then $d_c(x, \overline{U}) < g$ by the $g$-maximality of $U$. If $c=c'$, then there exists a colour $c'' \in \phi_{G'}(v) \setminus \{c\}$ as $v \in V^*(G')$. By the argument above, we have $d_{c''}(x, \overline{U}) < g$. Since $\delta_{\text{mon}}^*(G) > g$, we deduce that $N_{c''}(x, U) \neq \emptyset$. Pick $u' \in N_{c''}(x, U)$. Now, $\phi(u' x) = c''\neq c$. Again by the previous argument we have $d_c(x, \overline{U}) < g$. Hence~\ref{itm: lma: path: i} holds.

We now prove~\ref{itm: lma: path: ii}. Let $P=x_1\dots x_{\ell}$ be a rainbow path in $G'$ with $\text{int}(P) \cap U^* \neq \emptyset$. Suppose that $\phi(x_{j-1}x_j) = j$ for $j \in [\ell]\setminus \{1\}$. Furthermore assume that $x_2 \in U^*$ and $\ell \ge 4$. (Indeed, if $x_i \in U^*$ with $i \in [3, \ell-2]$, then consider the two rainbow paths $x_{i-1}x_i...x_{\ell}$ and $x_{i+1}x_i...x_1$ separately.) Thus it is enough to show that $x_3 \in U^*$, as we can then consider the rainbow path~$x_2x_3...x_\ell$. If $x_2 \in U$, then $x_3 \in U^*$. If $x_2 \in U^*\setminus U$, then~\ref{itm: lma: path: i} implies $d_2(x_2, \overline{U}) < g$ and so $N_2(x_2, U) \neq \emptyset$. Then by $g$-maximality of~$U$, $d_4(x_3, \overline{U})< g$ and so $N_4(x_3, U) \neq \emptyset$. Thus $x_3 \in U^*$ as required.
\end{proof}

Let $G$ be an edge-coloured multigraph. We say that $B=(v, C_1, W_1, C_2, W_2)$ is a \emph{bowtie in $G$} if~$v \in V(G), \  W_1, W_2 \subseteq V(G) \setminus \{v\}$ are disjoint, $C_1$ and $C_2$ are disjoint nonempty colour sets. Denote $\phi(B)=C_1 \cup C_2$, $W(B)=\{v\} \cup W_1 \cup W_2$ and for $i \in [2]$, $U_i(B|G) = U_{G \setminus W_{3-i} - G_{C_{3-i}}}(v, C_i, W_i)$. A bowtie~$B$ is \emph{$g$-maximal} in~$G$ if for $i \in [2]$, $U_i(B|G)$ is $g$-maximal in $G\setminus W_{3-i} - G_{C_{3-i}}$.

The following corollary shows that a $g$-maximal bowtie exists, which follows from Proposition~\ref{Prp: skeleton colour max size}. 

\begin{corollary} \label{Cor: Corollary for size restriction}
Let $g \in \mathbb{N}$ and $G$ be an edge-coloured multigraph on $n$ vertices. Let $v$ be a vertex such that $d_{c_1}(v), d_{c_2}(v) \ge g + 3n/g$ for distinct colours $c_1, c_2$. Then there exists a $g$-maximal bowtie $B(v, C_1, W_1, C_2, W_2)$ such that for $i \in [2]$, $c_i \in C_i$, $W(B) \subseteq V^*(G)$, $|U_i(B|G)| \ge d_{c_i}(v) - 3n/g$ and $|\phi(B)|, |W(B)| \le 6n/g$.
\end{corollary}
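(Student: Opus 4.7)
The plan is to construct the two halves of the bowtie sequentially by two applications of Proposition~\ref{Prp: skeleton colour max size}, with the first application engineered so that $c_2$ cannot be absorbed into $C_1$. First I would apply Proposition~\ref{Prp: skeleton colour max size} to the edge-coloured multigraph $G-G_{c_2}$ (the subgraph of $G$ with all edges of colour $c_2$ deleted) with the vertex $v$ and the colour $c_1$. This produces a colour set $C_1\ni c_1$ and a vertex set $W_1\subseteq V^*(G-G_{c_2})\setminus\{v\}\subseteq V^*(G)\setminus\{v\}$ such that $U^0:=U_{G-G_{c_2}}(v,C_1,W_1)$ is $g$-maximal in $G-G_{c_2}$, $|U^0|\ge d_{c_1}(v)$, and $|C_1|,|W_1|\le 3|U^0|/g\le 3n/g$. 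Since $c_2$ does not appear in $G-G_{c_2}$, automatically $c_2\notin C_1$.

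Next I would set $G':=G\setminus W_1-G_{C_1}$ and apply Proposition~\ref{Prp: skeleton colour max size} to $G'$ with the vertex $v$ and the colour $c_2$. Because $c_2\notin C_1$ and $|W_1|\le 3n/g$, we have $d_{c_2,G'}(v)\ge d_{c_2}(v)-|W_1|\ge g>0$, so the application is valid. It yields $C_2\ni c_2$ and $W_2\subseteq V^*(G')\setminus\{v\}\subseteq V^*(G)\setminus\{v\}$ such that $U_2:=U_{G'}(v,C_2,W_2)$ is $g$-maximal in $G'$, $|U_2|\ge d_{c_2,G'}(v)\ge d_{c_2}(v)-3n/g$, and $|C_2|,|W_2|\le 3n/g$. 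By construction $C_2\subseteq\phi(G')$ and $W_2\subseteq V(G')$, so $C_1\cap C_2=\emptyset$ and $W_1\cap W_2=\emptyset$; hence $B:=(v,C_1,W_1,C_2,W_2)$ is a well-defined bowtie satisfying $|\phi(B)|,|W(B)|\le 6n/g$, and $W(B)\subseteq V^*(G)$ (noting that $v\in V^*(G)$ from $d_{c_1}(v),d_{c_2}(v)>0$ and $c_1\ne c_2$).

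It remains to check the two $g$-maximality conditions and the two size bounds on $U_i(B|G)$. For the $U_2$ side, the graph used in the definition of $U_2(B|G)$ is exactly $G\setminus W_1-G_{C_1}=G'$, so $U_2(B|G)=U_2$, and both $g$-maximality and the size bound were delivered by the second application. For the $U_1$ side, using $C_1\cap C_2=\emptyset$ and $W_1\cap W_2=\emptyset$, a rainbow $v$-path with colour set contained in $C_1$ and interior in $W_1$ exists in $G\setminus W_2-G_{C_2}$ if and only if it exists in $G-G_{c_2}$ and does not end in a vertex of $W_2$; consequently $U_1(B|G)=U^0\setminus W_2$, giving $|U_1(B|G)|\ge d_{c_1}(v)-3n/g$. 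Finally, every colour $c$ to be tested against the $g$-maximality of $U_1(B|G)$ lies outside $C_1\cup C_2$ and in particular is not $c_2$, so the relevant neighbourhood counts in $G\setminus W_2-G_{C_2}$ are bounded above by the corresponding counts in $G-G_{c_2}$, which are strictly less than $g$ by the $g$-maximality of $U^0$ in $G-G_{c_2}$ (an analogous argument handles the two- and three-step growth conditions in the definition of $g$-maximality).

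The main obstacle is exactly the point resolved by deleting $G_{c_2}$ before the first application: one has to coordinate the two applications so that $C_1,C_2$ and $W_1,W_2$ are disjoint while both sides remain $g$-maximal in the correct subgraph, and without that initial deletion there is no obvious way to prevent $c_2$ from being drawn into $C_1$ during the Phase A growth. Everything else is bookkeeping: the disjointness of the $C_i$ and $W_i$ is free from the order of the two applications, and the $g$-maximality of the first side survives passage to $G\setminus W_2-G_{C_2}$ because the colours that matter are disjoint from $\{c_2\}$.
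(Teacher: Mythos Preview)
Your proof is correct and follows essentially the same approach as the paper's: apply Proposition~\ref{Prp: skeleton colour max size} first in $G-G_{c_2}$ with $(v,c_1)$, then in $G':=G\setminus W_1-G_{C_1}$ with $(v,c_2)$, and set $B=(v,C_1,W_1,C_2,W_2)$. The paper simply asserts that $B$ is $g$-maximal and that $|U_i(B|G)|\ge d_{c_i}(v)-3n/g$, whereas you supply the missing verification for the $i=1$ side (namely $U_1(B|G)=U^0\setminus W_2$ and the transfer of $g$-maximality from $G-G_{c_2}$ to $G\setminus W_2-G_{C_2}$), which is a genuine improvement in rigor.
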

\begin{proof}
    Apply Proposition~\ref{Prp: skeleton colour max size} with $v, c_1, G- G_{c_2}$ playing the roles of $v, c, G,$ respectively and obtain a colour set $C_1$ and a vertex set $W_1$ such that $c_1 \in C_1$ and $U_{G-G_{c_2}}(v, C_1, W_1)$ is $g$-maximal in $G-G_{c_2}$ and $|U_{G-G_{c_2}}(v, C_1, W_1)|\ge d_{c_1}(v)$. Note that $$|C_1|, |W_1| \le 3|U_{G-G_{c_2}}(v, C_1, W_1)|/g \le 3n/g.$$ 
    Let $G' = G\setminus W_1 - G_{C_1}$. Apply Proposition~\ref{Prp: skeleton colour max size} again with $v, c_2, G'$, playing the roles of $v, c, G$, respectively and obtain a colour set $C_2$ and a vertex set $W_2$ such that $c_2 \in C_2$ and $U_{G'}(v, C_2, W_2)$ is $g$-maximal in~$G'$ and $$|U_{G'}(v, C_2, W_2)|\ge d_{c_2, G'}(v) \ge d_{c_2, G}(v) - |W_1| \ge d_{c_2, G}(v) - 3n/g.$$ Set $B=(v, C_1, W_1, C_2, W_2).$ Note that $B$ is $g$-maximal and for $i \in [2]$, $|U_i(B|G)| \ge d_{c_i}(v) - 3n/g$. Furthermore, $$|\phi(B)|, |W(B)| \le \frac{3}{g}\left( \sum_{i \in [2]} |U_i(B|G)| \right) \le \frac{6n}{g}.$$ \end{proof} 
We now continue our motivation for the proof of Lemma~\ref{Lma: absorption lemma}. Recall that our aim is to use a bowtie to close a rainbow path. First, we show that there exist a small set of bowties $B_1, \dots, B_t$ such that $\{U_2(B_i|G): i \in [t]\}$ partitions $V^*(G)$. This will ensure that any rainbow path can be extended via one of these bowties (but we may not be able to close it).
 
Let $d, g \in \mathbb{N}$ and $G$ be an edge-coloured multigraph. Let $\mathcal{B}$ be a family of bowties $B_1, \dots, B_{t}$ in $G$. Let $\phi(\mathcal{B}) = \bigcup_{i \in [t]} \phi(B_i)$ and $W(\mathcal{B}) = \bigcup_{i \in [t]} W(B_i)$. We write $G - \mathcal{B}$ for  $ G \setminus W(\mathcal{B}) - G_{\phi(\mathcal{B})}$. For a bowtie $B \in \mathcal{B}$ and $i\in [2]$, we  denote $U_i(B|G, \mathcal{B})= U_i(B | G - (\mathcal{B}\setminus B))$ and $U_i^*(B|G, \mathcal{B}) = N_{G-G_{\phi(\mathcal{B})}}[U_i(B|G, \mathcal{B})] \cap V^*(G - \mathcal{B})$. We say $\mathcal{B}$ is a \emph{$(d, g)$-partition of $G$} if the following hold

\begin{enumerate}[label=(\textbf{P\arabic*}), ref=(\textbf{P\arabic*})]
        \item\label{itm: P1} for all $i \in [2]$ and $ j \in [t]$, we have $ |U_i(B_j| G, \mathcal{B})| \ge d$; 
        \item\label{itm: P2} $W(B_1), \dots, W(B_{t})$ are all disjoint and $W(\mathcal{B}) \subseteq V^*(G)$; 
        \item\label{itm: P3} $\phi(B_1), \dots, \phi(B_{t})$ are all disjoint; 
        \item\label{itm: P4} for all $j \in [t]$, $B_j$ is $g$-maximal in $ G - (\mathcal{B}\setminus B_j)$;
        \item\label{itm: P5} for all distinct $j, j' \in [t]$, $U_2^*(B_j| G, \mathcal{B}) \cap U_2^*(B_{j'}| G, \mathcal{B}) = \emptyset$; 
        \item\label{itm: P6} $U_2^*(B_1| G, \mathcal{B}), U_2^*(B_2| G, \mathcal{B}), \dots ,
        U_{2}^*(B_t| G, \mathcal{B})$ partition $V^*(G-\mathcal{B})$.  
\end{enumerate}
We say that \emph{$\mathcal{B}$ is a weak $(d, g)$-partition of~$G$} if only~\ref{itm: P1} to~\ref{itm: P5} hold.

\begin{fact} \label{Fct: size of family}
  Let $d, g, n \in \mathbb{N}$ and $G$ be a graph on $n$ vertices. Let $\mathcal{B}$ be a weak $(d, g)$-partition of~$G$. Then $|\mathcal{B}| \le n/d$. 
  \end{fact}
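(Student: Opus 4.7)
The plan is a counting argument using properties P1, P2 and P5. By P5 the sets $\{U_2^*(B_j|G,\mathcal{B})\}_{j\in[t]}$ are pairwise disjoint, and by the definition of $U_2^*$ they lie in $V^*(G-\mathcal{B})\subseteq V(G)\setminus W(\mathcal{B})$; combined with P2 they are also disjoint from the pairwise disjoint family $\{W(B_j)\}_{j\in[t]}$. Hence
\[
n \;\ge\; \sum_{j\in[t]}\bigl(|W(B_j)|+|U_2^*(B_j|G,\mathcal{B})|\bigr).
\]
It will suffice to show each summand is at least $d$, whence $n\ge td$ and so $|\mathcal{B}|=t\le n/d$.

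To bound each summand, I would establish the inclusion $U_2(B_j|G,\mathcal{B})\subseteq U_2^*(B_j|G,\mathcal{B})\cup W(B_j)$. Since the two sets on the right are disjoint and $|U_2(B_j|G,\mathcal{B})|\ge d$ by P1, this will force $|U_2^*(B_j|G,\mathcal{B})|+|W(B_j)|\ge d$. For $x\in U_2(B_j|G,\mathcal{B})\setminus W(B_j)$, one has $x\in V(G)\setminus W(\mathcal{B})$ (since $U_2(B_j|G,\mathcal{B})\subseteq V(G)\setminus W(\mathcal{B}\setminus B_j)$ and $x\notin W(B_j)$), and $x\in N_{G-G_{\phi(\mathcal{B})}}[U_2(B_j|G,\mathcal{B})]$ trivially. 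The remaining task is to verify that $x\in V^*(G-\mathcal{B})$.

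The main obstacle will be precisely this last verification. The rainbow path witnessing $x\in U_2(B_j|G,\mathcal{B})$ uses only colours in $C_2(B_j)\subseteq\phi(\mathcal{B})$, all of which are removed in $G-\mathcal{B}$, so the two distinct colours required at $x$ are not directly provided by the path. I expect to combine the standing assumption $\{v\}\cup W\subseteq V^*(\cdot)$ built into the definition of $U_G(v,C,W)$ with the $g$-maximality condition P4 of $B_j$ to force $x$ to see at least two colours outside $\phi(\mathcal{B})$ in $G$. Once this inclusion is in place, the displays above yield $n\ge td$ and complete the proof.
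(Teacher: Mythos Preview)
Your overall strategy coincides with the paper's: show that the sets $U_2^*(B_j\mid G,\mathcal{B}) \cup W(B_j)$ are pairwise disjoint (using \textbf{P2}, \textbf{P5} and $U_2^*(B_j)\subseteq V^*(G-\mathcal{B})\subseteq V(G)\setminus W(\mathcal{B})$) and that each has size at least $d$ via the inclusion $U_2(B_j)\subseteq U_2^*(B_j)\cup W(B_j)$. The paper asserts precisely this inclusion in one line (``By (P1), $|U_2^*(B\mid G,\mathcal{B})\cup W(B)|\ge |U_2(B\mid G,\mathcal{B})|\ge d$'') without further comment; you have gone further and correctly isolated the only non-obvious step, namely that $x\in U_2(B_j)\setminus W(B_j)$ should lie in $V^*(G-\mathcal{B})$.

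Your proposed fix via \textbf{P4} does not close this gap, however. The $g$-maximality of $B_j$ only \emph{upper-bounds} degrees of vertices of $U_2$ in colours outside $C_2$ into $\overline{U_2}$; it gives no lower bound and hence cannot force $x$ to see even one colour in $G-\mathcal{B}$, let alone two. The standing assumption $\{v\}\cup W_2\subseteq V^*(\cdot)$ speaks only about $v$ and the internal vertices $W_2$, not about an arbitrary endpoint $x\in U_2\setminus(\{v\}\cup W_2)$. Concretely, take three vertices $v^1,v^2,s$, give $v^js$ parallel edges in colours $c_1^j,c_2^j,c_3^j$, and set $B_j=(v^j,\{c_1^j\},\emptyset,\{c_2^j\},\emptyset)$ with $d=2$: one checks \textbf{P1}--\textbf{P5} hold, yet $V^*(G-\mathcal{B})=\emptyset$, so $U_2^*(B_j)=\emptyset$, $|W(B_j)|=1$, and $|\mathcal{B}|=2>3/2=n/d$. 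Thus the inclusion you (and the paper) rely on can fail, and the Fact as stated seems to need an extra hypothesis---for instance that $\delta_{\text{mon}}^*(G)$ is large compared with $|W(\mathcal{B})|+|\phi(\mathcal{B})|$---which is always present when the Fact is invoked in the paper but cannot be recovered from \textbf{P1}--\textbf{P5} alone.
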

  \begin{proof}
Consider $B \in \mathcal{B}$. By~\ref{itm: P1}, $|U_2^*(B|G, \mathcal{B}) \cup W(B)| \ge |U_2(B|G, \mathcal{B})| \ge d$. Note that $U_2^*(B|G, \mathcal{B}) \subseteq V^*(G-\mathcal{B}) \subseteq V(G) \setminus W(\mathcal{B})$. By~\ref{itm: P2} and~\ref{itm: P3}, for any distinct $B, B' \in \mathcal{B}$, $$(U_2^*(B| G, \mathcal{B}) \cup W(B)) \cap (U_2^*(B'|G, \mathcal{B}) \cup W(B')) = \emptyset.$$ Therefore, $|\mathcal{B}| \le n/d$.
  \end{proof}
The next lemma shows that one can extend a weak $(d, g)$-partition into a $(d, g)$-partition.

\begin{lemma}\label{Lma: find partition}
    Let $d, g, n \in \mathbb{N}$ with $d \ge 4g$ and $g \ge \max\{3n/g, 12n^2/gd\}$. Let $G$ be an edge-coloured multigraph on $n$ vertices such that $\delta_{\text{mon}}^*(G) \ge d$. Suppose $\mathcal{B}_0$ is a weak $(d/2, g)$-partition of $G$ with $|\phi(\mathcal{B}_0)|, |W(\mathcal{B}_0)| \le 6n|\mathcal{B}_0|/g$. Then there exists a $(d/2, g)$-partition~$\mathcal{B}^*$ so that~$\mathcal{B}_0 \subseteq \mathcal{B}^*$ and $|\phi(\mathcal{B}^*)|, |W(\mathcal{B}^*)| \le 6n|\mathcal{B}^*|/g \le 12n^2/gd$.
\end{lemma}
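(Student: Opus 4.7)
The plan is to extend $\mathcal{B}_0$ to the desired $\mathcal{B}^*$ by iteratively adding one bowtie at a time until property~\ref{itm: P6} is satisfied, maintaining invariants~\ref{itm: P1}--\ref{itm: P5} together with the size bound $|\phi(\mathcal{B})|, |W(\mathcal{B})| \le 6n|\mathcal{B}|/g$ throughout the process. Since the intermediate families stay weak $(d/2, g)$-partitions, Fact~\ref{Fct: size of family} gives $|\mathcal{B}| \le 2n/d$ at every stage, so the invariant implies $|\phi(\mathcal{B})|, |W(\mathcal{B})| \le 12n^2/(gd) \le g$, which makes the final size bound in the conclusion automatic.

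At an iteration where~\ref{itm: P6} fails, pick $v \in V^*(G - \mathcal{B}) \setminus \bigcup_{B_j \in \mathcal{B}} U_2^*(B_j \mid G, \mathcal{B})$. Since $v \in V^*(G - \mathcal{B}) \subseteq V^*(G)$, it sees two colours $c_1, c_2$ in $G - \mathcal{B}$, and $\delta_{\text{mon}}^*(G) \ge d$ together with $|W(\mathcal{B})| \le g$ yields $d_{c_i, G - \mathcal{B}}(v) \ge d - g \ge g + 3n/g$. Invoking Corollary~\ref{Cor: Corollary for size restriction} inside $G - \mathcal{B}$ at $v$ with colours $c_1, c_2$ then produces a $g$-maximal bowtie $B = (v, C_1, W_1, C_2, W_2)$ of $G - \mathcal{B}$ with $|\phi(B)|, |W(B)| \le 6n/g$, $W(B) \subseteq V^*(G - \mathcal{B})$, and $|U_i(B \mid G - \mathcal{B})| \ge d - 2g \ge d/2$. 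Set $\mathcal{B}' = \mathcal{B} \cup \{B\}$: properties~\ref{itm: P2} and~\ref{itm: P3} are immediate because $B$ lives in $G - \mathcal{B}$, and~\ref{itm: P4} for old $B_j$ is preserved since deleting more vertices and colours only makes the degree inequalities in the definition of $g$-maximality easier to meet. Property~\ref{itm: P1} for the new bowtie $B$ follows directly from the Corollary.

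The main obstacle is establishing~\ref{itm: P1} for the old bowties in the enlarged family and verifying~\ref{itm: P5}. A direct unpacking of definitions shows that, since $\phi(B) \cap C_i^j = \emptyset$ and $W(B) \cap W_i^j = \emptyset$ by~\ref{itm: P2}--\ref{itm: P3}, one has $U_i(B_j \mid G, \mathcal{B}') = U_i(B_j \mid G, \mathcal{B}) \setminus W(B)$. The task therefore reduces to showing $W(B) \cap U_i(B_j \mid G, \mathcal{B}) = \emptyset$ for each old $B_j$ and each $i$. The plan here is to apply Lemma~\ref{Lma: path}: part~(i) says that every vertex in $U_i^*(B_j \mid G, \mathcal{B}) \cap V^*$ sends fewer than $g$ edges of any colour outside the relevant colour set to $\overline{U_i(B_j \mid G, \mathcal{B})}$, while part~(ii) forces any rainbow path whose interior meets $U_i^*(B_j \mid G, \mathcal{B})$ to have its entire interior inside $U_i^*(B_j \mid G, \mathcal{B})$. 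Tracking how Proposition~\ref{Prp: skeleton colour max size} grows $W_2$ by appending a target vertex $u$ and at most two witness vertices $w, w' \in V^*$ reached by short rainbow paths from $v$, the fact that $v \notin U_2^*(B_j \mid G, \mathcal{B})$ propagates to show that each of these choices can be kept outside $U_2(B_j \mid G, \mathcal{B})$; a symmetric argument applied to the construction of $W_1$ (after $W_2, C_2$ have been fixed) handles the $U_1$ side. Once this is in hand,~\ref{itm: P5} follows because $U_2^*(B \mid G, \mathcal{B}')$ lies in the closed neighbourhood of $\{v\} \cup W_2$ in the appropriate residual graph, which has just been shown to avoid each $U_2^*(B_j \mid G, \mathcal{B}')$.

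Termination is immediate: each iteration strictly increases $|\mathcal{B}|$, and $\mathcal{B}'$ remains a weak $(d/2, g)$-partition, so by Fact~\ref{Fct: size of family} the process halts after at most $2n/d$ steps. The final family $\mathcal{B}^*$ satisfies~\ref{itm: P6} by construction, all the other invariants inductively, and the size bound $|\phi(\mathcal{B}^*)|, |W(\mathcal{B}^*)| \le 6n|\mathcal{B}^*|/g \le 12n^2/(gd)$, yielding the desired $(d/2, g)$-partition.
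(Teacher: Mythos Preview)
Your overall strategy---iterating, picking $v\in V^*(G-\mathcal{B})\setminus\bigcup_jU_2^*(B_j\mid G,\mathcal{B})$, and invoking Corollary~\ref{Cor: Corollary for size restriction} inside $G-\mathcal{B}$---is exactly what the paper does, and your treatment of \ref{itm: P2}--\ref{itm: P4} and termination is fine. But two of your verifications have real gaps.

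For \ref{itm: P1} you reduce to showing $W(B)\cap U_i(B_j\mid G,\mathcal{B})=\emptyset$ for every old $B_j$ and both $i\in\{1,2\}$. Your propagation argument only uses $v\notin U_2^*(B_j\mid G,\mathcal{B})$; nothing in the setup tells you $v\notin U_1^*(B_j\mid G,\mathcal{B})$, so the ``symmetric argument'' for $i=1$ does not go through. This detour is also unnecessary: the paper simply notes that $U_i(B_j\mid G,\mathcal{B}')$ contains $N_{c_i}(v^j)$ in the graph $G-(\mathcal{B}'\setminus B_j)$ with $W_{3-i}^j$ and $C_{3-i}^j$ removed, and the total number of deleted vertices is at most $|W(\mathcal{B}')|\le 12n^2/(gd)\le g$, so $|U_i(B_j\mid G,\mathcal{B}')|\ge d-g\ge d/2$ directly from $\delta_{\text{mon}}^*(G)\ge d$.

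More seriously, your argument for \ref{itm: P5} is based on a false description of $U_2^*(B\mid G,\mathcal{B}')$. By definition it is the closed neighbourhood of $U_2(B\mid G,\mathcal{B}')$ in $G-G_{\phi(\mathcal{B}')}$, \emph{not} of $\{v\}\cup W_2$; the set $U_2(B\mid G,\mathcal{B}')$ consists of all reachable endpoints and is typically of linear size, far larger than $\{v\}\cup W_2$. Even if $\{v\}\cup W_2$ were disjoint from each $U_2^*(B_j\mid G,\mathcal{B}')$, this would not prevent $U_2^*(B\mid G,\mathcal{B}')$ from meeting some $U_2^*(B_j\mid G,\mathcal{B}')$. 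The paper instead argues by contradiction: if $u\in U_2^*(B\mid G,\mathcal{B}')\cap U_2^*(B_j\mid G,\mathcal{B}')$, then Lemma~\ref{Lma: path}\ref{itm: lma: path: i} forces $u$ to have more than half its colour-$c$ neighbours inside $U_2(B_j\mid G,\mathcal{B}')$ \emph{and} more than half inside $U_2(B\mid G,\mathcal{B}')$, so these two sets intersect in some $w$. One then takes the rainbow path from $w$ to $v$ through $W_2$ (colours in $C_2$), extends it by one edge of a colour in $C_1$ on the $v$ side, and applies Lemma~\ref{Lma: path}\ref{itm: lma: path: ii} to conclude $v\in U_2^*(B_j\mid G,\mathcal{B})$, contradicting the choice of $v$. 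This use of part~\ref{itm: lma: path: ii} of Lemma~\ref{Lma: path}, with $v$ as an \emph{interior} vertex of an extended path, is the missing idea.
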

    \begin{proof}

Suppose we have already constructed a weak $(d/2, g)$-partition $\mathcal{B} = \{B_1,\dots, B_{t}\}$ with $\mathcal{B}_0 \subseteq \mathcal{B}$ and $|\phi(\mathcal{B})|, |W(\mathcal{B})| \le t(6n/g)$. By Fact~\ref{Fct: size of family}, 
\begin{equation}
t \le 2n/d. \label{eq: bound on t}     
\end{equation}
Hence $$|\phi(\mathcal{B}^*)|, |W(\mathcal{B}^*)| \le  \left(\frac{2n}{d}\right)\left(\frac{6n}{g}\right) = \frac{12n^2}{gd}.$$
We further assume that $t$ is maximal. If $\mathcal{B}$ satisfies~\ref{itm: P6} then we are done by setting $\mathcal{B}^* = \mathcal{B}$. 

Thus we may assume that~\ref{itm: P6} does not hold. We now construct a bowtie $B_{t+1}$ as follows. Let $G' = G - \mathcal{B}$. Pick $v^{t+1} \in V^*(G')\setminus \bigcup_{i \in [t]} U_2^*(B_i | G, \mathcal{B})$ and $c_1, c_2 \in \phi_{G'}(v^{t+1})$, which exist as $\ref{itm: P6}$ does not hold for $\mathcal{B}$. Note that $$\delta_{\text{mon}}^*(G')\ge \delta_{\text{mon}}^*(G)- |W(\mathcal{B})| \ge d - (12n^2/gd) \ge d-g \ge d/2 \ge 2g \ge g+3n/g.$$  By Corollary~\ref{Cor: Corollary for size restriction}, $G'$ contains a $g$-maximal bowtie $B_{t+1}$ such that $|\phi(B_{t+1})|, |W(B_{t+1})| \le 6n/g$ and $W(B_{t+1}) \subseteq V^*(G')$. 

Let $\mathcal{B}' = \mathcal{B} \cup B_{t+1}$. Clearly, $|\phi(\mathcal{B}')|, |W(\mathcal{B}')| \le (t+1)(6n/g)$. We now show that $\mathcal{B}'$ is a weak $(d/2, g)$-partition, contradicting the maximality of $t$.
Fix $i \in [2]$ and $j \in [t+1]$. Note that $$\delta_{\text{mon}}^*(G-(\mathcal{B}'\setminus B_j)) \ge d - |W(\mathcal{B}'\setminus B_j)| \ge d - (12n^2/gd) \ge d/2.$$ Thus, $|U_i (B_j|G, \mathcal{B}')| \ge \delta_{\text{mon}}^*(G-(\mathcal{B}'\setminus B_j)) \ge d/2$ and so~\ref{itm: P1} holds. Note that~\ref{itm: P2} and~\ref{itm: P3} hold by our construction. 
For $j \in [t]$, $B_j$ is $g$-maximal in $G-(\mathcal{B}\setminus B_j)$ so it is $g$-maximal in $G-(\mathcal{B}'\setminus B_j)$. Recall $B_{t+1}$ is $g$-maximal in $G' = G - \mathcal{B} = G - (\mathcal{B}'\setminus B_{t+1})$. Thus~\ref{itm: P4} holds for~$\mathcal{B}'$.

 It remains to show~\ref{itm: P5} holds for $\mathcal{B}'$. For $i \in [t]$, $U_2^*(B_i| G, \mathcal{B}') \subseteq U_2^*(B_i| G, \mathcal{B})$. Since $\mathcal{B}$ is a weak $(d/2, g)$-partition of $G$,~\ref{itm: P5} of $\mathcal{B}$ implies that $U_2^*(B_1|G, \mathcal{B}'), \dots, U_2^*(B_t|G, \mathcal{B}')$ are disjoint. 
Suppose that $U_2^*(B_{t+1}|G, \mathcal{B}') \cap U_2^*(B_j| G, \mathcal{B}') \neq \emptyset$ for some $j \in [t]$. Let $u \in U_2^*(B_{t+1}|G, \mathcal{B}') \cap U_2^*(B_j|G, \mathcal{B}') \subseteq V^*(G - \mathcal{B})$ and $c \in \phi_{G - \mathcal{B}'}(u)$. Note that $$d_{c, G-\mathcal{B}'}(u) \ge \delta_{\text{mon}}^*(G - \mathcal{B}') \ge d - (12n^2/gd) > g.$$ Lemma~\ref{Lma: path}\ref{itm: lma: path: i} and~\ref{itm: P4} imply that $d_{c, G-\mathcal{B}'}(u, \overline{U_2(B_j | G, \mathcal{B}')}) < g$. Therefore, 

\begin{align*}
d_{c, G}(u, U_2(B_j | G, \mathcal{B}')) &\ge d_{c, G}(u) - |W(\mathcal{B}')| - d_{c, G- \mathcal{B}'}(u, \overline{U_2(B_j| G,\mathcal{B}')})\\& > d_{c, G}(u) - \frac{12n^2}{gd} - g \ge \frac{d_{c, G}(u)}{2}.    
\end{align*}
Similarly, we have $d_{c, G}(u, U_2(B_{t+1}|G, \mathcal{B}')) > d_{c, G}(u)/2$. Hence,
\begin{align*}
\begin{split}
    &|U_2(B_j | G, \mathcal{B}') \cap U_2(B_{t+1}|G, \mathcal{B}')|
		  \ge |U_2(B_j | G, \mathcal{B}') \cap U_2(B_{t+1}|G, \mathcal{B}') \cap N_{c, G}(u)|\\ 
		\ge& d_{c, G}(u, U_2(B_j|G, \mathcal{B}')) + d_{c, G}(u, U_2(B_{t+1}|G, \mathcal{B}'))- d_{c, G}(u)		 > 0 .   
\end{split}
\end{align*}
 Let $w \in U_2(B_j|G, \mathcal{B}') \cap  U_2(B_{t+1}| G, \mathcal{B}')$. Let $B_{t+1} = (v^{t+1}, C_1^{t+1}, W_1^{t+1}, C_2^{t+1}, W_2^{t+1})$. Since $w \in U_2(B_{t+1}|G, \mathcal{B}')$, there exists a rainbow path $P=wv_1...v_{\ell}v^{t+1}$ with $\phi(P) \subseteq C_2^{t+1}$ and $\rm{int}(P) \subseteq W_2^{t+1}$. By~\ref{itm: P3}, $\phi(P) \cap \phi(B_j) = \emptyset$. We deduce that $v_1 \in U_2^*(B_j | G, \mathcal{B}')$. Let $c_1 \in C_1^{t+1} \cap \phi_G(v^{t+1})$. Note that $$d_{c_1, G'}(v^{t+1})\ge \delta_{\text{mon}}^*(G') \ge d/2 > |C_2^{t+1}| \ge |V(P)|.$$ Pick $v' \in N_{c_1, G'}(v^{t+1})\setminus V(P)$. Then the path $P' = wv_1\dots v_{\ell} v^{t+1} v'$ is a rainbow path with $\phi(P') \cap \phi(B_j) = \emptyset$. By Lemma~\ref{Lma: path}\ref{itm: lma: path: ii}, $v^{t+1} \in \mbox{int}(P') \subseteq U_2^*(B_j| G, \mathcal{B})$, contradicting the fact that~$v^{t+1}$ was chosen from $V^*(G')\setminus \bigcup_{i \in [t]} U_2^*(B_i|G, \mathcal{B})$.  
    \end{proof}

\subsection{Proof of Lemma~\ref{Lma: absorption lemma}}

We now present the ideas in the proof of Lemma~\ref{Lma: absorption lemma}. Let $d=\delta n$.

We apply Lemma~\ref{Lma: find partition} to obtain a $(d/2, g)$-partition $\mathcal{B}$ of $G$. Let $P$ be a rainbow path in~$G-\mathcal{B}$. Since $\rm{int}(P) \subseteq V^*(G-\mathcal{B})$, we deduce (by~\ref{itm: P4},~\ref{itm: P6} and Lemma~\ref{Lma: path}\ref{itm: lma: path: ii}) that there exists $B \in \mathcal{B}$ such that $\rm{int}(P) \subseteq U_2^*(B|G, \mathcal{B})$. We now discuss how to augment~$P$ into a rainbow cycle using~$B$. 

Let $P= x_1x_2\dots x_{\ell}$ with $\phi(x_1x_2)=2$ and $\phi(x_{\ell -1}x_{\ell})=\ell$.

\noindent\textbf{Case 1}: $|U_2(B|G, \mathcal{B})| \le 3d/4$. By Lemma~\ref{Lma: path}\ref{itm: lma: path: i}, we can find $x \in U_2(B|G, \mathcal{B})$ to replace both $x_1$ and $x_{\ell}$ in $P$. This transforms~$P$ into a rainbow cycle.

\noindent\textbf{Case 2}: $U_2^*(B|G, \mathcal{B}) = U_1^*(B|G, \mathcal{B})$. By Lemma~\ref{Lma: path}\ref{itm: lma: path: i}, we may assume that $x_1 \in U_1(B|G, \mathcal{B})$ and $x_{\ell} \in U_2(B|G, \mathcal{B})$. Let $B=(v, C_1, W_1, C_2, W_2)$. There exists a rainbow path $P'$ from~$x_1$ to~$x_{\ell}$ through $v$ such that $\phi(P') \subseteq \phi(B)$ and $\rm{int}(P') \subseteq W(B)$. Hence $PP'$ is a rainbow cycle.

Therefore we would like to ensure Case 1 or 2 hold. Our aim is to refine~$\mathcal{B}$ by replacing bowties with smaller ones so that Case 1 will hold eventually. In particular, we will increase the number of bowties in each step. For simplicity, suppose~$V^*(G-\mathcal{B}) = V(G)$. Note that $B \in \mathcal{B}$ is $g$-maximal in $G-(\mathcal{B}\setminus B)$. Lemma~\ref{Lma: path}\ref{itm: lma: path: i} implies that ``the subgraph $H_B$ induced by $U_2^*(B|G, \mathcal{B})$ satisfies $\delta_{\text{mon}}^*(H_B) \ge d-g\ge d/2$". Hence we can apply Lemma~\ref{Lma: find partition} to obtain a $(d/4, g)$-partition~$\mathcal{B}'$ of~$H_B$. Thus, we refine $\mathcal{B}$ by replacing~$B$ with $\mathcal{B}'$.

Suppose that we are unable to refine $\mathcal{B}$ further and for simplicity $\mathcal{B}$ consists of only one bowtie $B=(v, C_1, W_1, C_2, W_2)$. If Case 2 fails, then $U_1^*(B|G, \mathcal{B})$ is smaller than $U_2^*(B|G, \mathcal{B})$. We consider the `swapped' bowtie $B'=(v, C_2, W_2, C_1, W_1)$ instead. We extend this weak partition~$\{B'\}$ into a $(d/4, g)$-partition of~$G$ (using Lemma~\ref{Lma: find partition}). Note that we increase the number of bowties in the partition.

\begin{proof}[Proof of Lemma~\ref{Lma: absorption lemma}]
     Set $d=\delta n$, $\gamma = \delta^2/16$ and $g = \gamma n$. Note that $$d \ge 4g+2 \text{ and } g \ge \max\{6n/g, 12n^2/gd\}.$$
   Let $G^0=G$, $J^0=G$ and $H^0=G$. By Lemma~\ref{Lma: find partition} with $H^0, d, g,\emptyset$ playing the roles of $G, d, g, \mathcal{B}_0$, respectively, we obtain a $(d/2, g)$-partition $\mathcal{B}^1$ of $H^0$ and $|\phi(\mathcal{B}^1)|, |W(\mathcal{B}^1)| \le 2n|\mathcal{B}^1|/g$ and $W(\mathcal{B}^1) \subseteq V^*(H^0)$.

Suppose for some $i \in \mathbb{N}$ we have already constructed families $\mathcal{B}^1, \mathcal{B}^2, \dots, \mathcal{B}^i$ of bowties, edge-coloured multigraphs $J^0, G^0, H^0, \dots, J^{i-1}, G^{i-1}, H^{i-1}$ whose properties will be specified later. Let
\begin{align*}
J^i &= G^{i-1} - \mathcal{B}^i, \\
G^i &= J^i - \bigcup_{\substack{B, B' \in \mathcal{B}^i \\ B \neq B'}} J^i[U_2^*(B|H^{i-1}, \mathcal{B}^i), U_2^*(B'|H^{i-1}, \mathcal{B}^i)].
\end{align*}
In other words, $G^i$ is obtained from $G^{i-1} - \mathcal{B}^i$ by removing all the edges that are between $U_2^*(B|H^{i-1}, \mathcal{B}^i)$ and $U_2^*(B'|H^{i-1}, \mathcal{B}^i)$ for distinct $B, B' \in \mathcal{B}^i$.

Let $\mathcal{B}_1^i = \{B \in \mathcal{B}^i : |U_2(B|H^{i-1}, \mathcal{B}^i)| \le 3d/4 \}$. (These will consist of bowties that satisfy Case 1.) Consider $i \in \mathbb{N}$ and $B \in \mathcal{B}^i$. For a bowtie family $ \mathcal{B}'$ in $H^i$, define $$\partial_{\mathcal{B}'}(B) = \{B' \in \mathcal{B}' : W(B') \subseteq U_2^*(B|H^{i-1}, \mathcal{B}^i)\}.$$
We say $B$ is \emph{covered by $\mathcal{B}'$} if $\partial_{\mathcal{B}'}(B) = \{B'\}$ and $U_2^*(B'|H^i, \mathcal{B}') = U_1^*(B'|H^i, \mathcal{B}')$. We write $\partial B$ for~$\partial_{\mathcal{B}^{i+1}}B$. Let $$\mathcal{B}_2^i =
 \begin{cases}
   \bigcup\{\partial B\colon B\in \mathcal{B}^{i-1} \ \text{is covered by}\ \mathcal{B}^i \} & \text{if}\ i \ge 2, \\
   \emptyset & \text{if}\ i=1. 
 \end{cases}
 $$ (These bowties in $\mathcal{B}_2^i$ will satisfy Case 2.)
  Let $\mathcal{B}_3^i = \mathcal{B}^i \setminus (\mathcal{B}_1^i \cup \mathcal{B}_2^i)$ and
  \begin{align*}
      H^i &=G^i \setminus \left(V^*(G^i) \setminus \bigcup_{B \in \mathcal{B}_3^i}U_2^* (B| H^{i-1}, \mathcal{B}^i)\right) \\&= G^i\left[(V(G^i)\setminus V^*(G^i)) \cup \bigcup_{B \in \mathcal{B}_3^i} U_2^*(B|H^{i-1}, \mathcal{B}^i)\right].
  \end{align*}
In other words,  $H^i$ is induced by $U_2^*(B|H^{i-1}, \mathcal{B}^i)$ for all $B \in \mathcal{B}_3^i$ and the vertices in $G^i$ that see at most one colour. From now on, for $B \in \mathcal{B}^j$, we write $U_i^*(B)$ for $U_i^*(B|H^{j-1}, \mathcal{B}^j)$. We now list the desired properties of $G^i$ and $H^i$.
Suppose that for all $j \in [i]$, 

\begin{enumerate}[label = {\rm(\roman*)}]
    \item \label{itm: (i)} $\mathcal{B}^j$ is a $(d/4, g)$-partition of $H^{j-1}$ with $|\phi(\mathcal{B}^j)|, |W(\mathcal{B}^j)| < 384 \delta^{-3}$ and $W(\mathcal{B}^j) \subseteq V^*(H^{j-1})$;
    \item \label{itm: (ii)} if $B' \in \mathcal{B}^{j-1}$ and $|\partial B'| = 1$, then $B'$ is covered by $\mathcal{B}^j$;
    \item \label{itm: (iii)} if $j \ge 2$, then $\mathcal{B}^j = \bigcup_{B \in \mathcal{B}_3^{j-1}} \partial B$;
    \item \label{itm: (iv)} for all $B \in \mathcal{B}_3^j$, $|U_2^*(B)| \le n - j(d/4)$;
    \item \label{itm: (v)}  $\delta_{\text{mon}}(G^j) \ge d-j(g+12n^2/gd) \ge d-2gj$;
    \item \label{itm: (vi)} if $V^*(H^j)$ is not empty, then $\delta_{\text{mon}}^* (H^j) \ge d - j(g+12n^2/gd) \ge d-2gj$;
    \item \label{itm: (vii)} $V^*(H^j) \subseteq \bigcup_{B \in \mathcal{B}_3^j} U_2^*(B)$;
    \item \label{itm: (viii)}  $\phi(G^j) = \phi(G^{j-1}) \setminus \phi(\mathcal{B}^j)$;
    \item \label{itm: (ix)} $G^j$ has no edge between $U_2^*(B)$ and $V^*(G^j)\setminus U_2^*(B)$ for all $B \in \mathcal{B}^j$;
    \item \label{itm: (x)} for each $B \in \mathcal{B}^j$, $B$ is $g$-maximal in $G^{j-1} - (\mathcal{B}^j \setminus B)$.
\end{enumerate}
It should be noted that~\ref{itm: (i)} and~\ref{itm: (ii)} imply~\ref{itm: (iii)} to~\ref{itm: (x)}. Hence we technically only require $\mathcal{B}^1, \dots, \mathcal{B}^j$ to satisfy~\ref{itm: (i)} and~\ref{itm: (ii)}.
Note that~\ref{itm: (iv)} implies that
\begin{equation}
i \le 4n/d = 4/\delta. \label{eq: bound on i}    
\end{equation}

\noindent\textbf{Case A: $V^*(H^i) \neq \emptyset$}. We now construct $\mathcal{B}^{i+1}$ as follows.  We have $$\delta_{\text{mon}}^*(H^i) \stackrel{\mathclap{\text{(\ref{eq: bound on i}),~\ref{itm: (vi)}}}}{\ge} d- 2g(4\delta^{-1})\ge d/2.$$  By Lemma~\ref{Lma: find partition} with $H^i, d/2, g, \emptyset$ playing the roles of $G, d, g, \mathcal{B}_0$, respectively, there exists a $(d/4, g)$-partition $\mathcal{B}$ of $H^i$ with $|\phi(\mathcal{B})|, |W(\mathcal{B})| \le 6n|\mathcal{B}|/g = 192 \delta^{-3}$.

\begin{claim} \label{Clm: inside parent}
Let $\mathcal{B}$ be a $(d/4, g)$-partition of $H^i$ with $|\phi(\mathcal{B})|, |W(\mathcal{B})| \le 12n^2/gd$. Let $B^+ \in \mathcal{B}^i$ and $B \in \partial_{\mathcal{B}}B^+$. Then for $t \in [2]$, $U_t^*(B|H^i, \mathcal{B}) \subseteq U_2^*(B^+)$. Moreover, if $\partial_{\mathcal{B}}B^+ = \{B\}$, then $U_2^*(B|H^i, \mathcal{B}) = U_2^*(B^+) \cap V^*(H^i - \mathcal{B})$.
\end{claim}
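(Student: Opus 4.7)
The strategy is to combine property~(ix) of $G^i$, which forbids $G^i$-edges between $U_2^*(B^+)$ and $V^*(G^i)\setminus U_2^*(B^+)$, with the fact that $B\in\partial_{\mathcal{B}}B^+$ means precisely $W(B)\subseteq U_2^*(B^+)$. For the first inclusion, let $u\in U_t^*(B|H^i,\mathcal{B})$, so in particular $u\in V^*(H^i-\mathcal{B})\subseteq V^*(G^i)$. Unwinding the definition of $U_t^*$, either $u\in U_t(B|H^i,\mathcal{B})$ or $u$ has a neighbour $u'\in U_t(B|H^i,\mathcal{B})$ via an edge of $H^i$ coloured outside $\phi(\mathcal{B})$. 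Further unwinding $U_t$, every such vertex either equals $v$ or is joined in $H^i$ by an edge of colour in $C_t$ to some vertex of $\{v\}\cup W_t\subseteq W(B)\subseteq U_2^*(B^+)$. Hence $u$ is $G^i$-adjacent (possibly via a single intermediate step) to a vertex of $U_2^*(B^+)$, and since $u\in V^*(G^i)$, property~(ix) forces $u\in U_2^*(B^+)$.

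For the second statement, the inclusion $\subseteq$ is immediate from the first part. Conversely, take $u\in U_2^*(B^+)\cap V^*(H^i-\mathcal{B})$; by~\ref{itm: P6} applied to $\mathcal{B}$, there is a unique $B'\in\mathcal{B}$ with $u\in U_2^*(B'|H^i,\mathcal{B})$. The crucial sub-step is to show $B'\in\partial_{\mathcal{B}}B^{++}$ for some $B^{++}\in\mathcal{B}^i_3$. By~\ref{itm: P2} applied to $\mathcal{B}$, $W(B')\subseteq V^*(H^i)$, and property~(vii) together with~\ref{itm: P5} applied to $\mathcal{B}^i$ exhibit $V^*(H^i)$ as the disjoint union of $\{U_2^*(B^\dagger):B^\dagger\in\mathcal{B}^i_3\}$. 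Hence the centre of $B'$ lies in some $U_2^*(B^{++})$; tracing each rainbow path from this centre out through $W'_1$ or $W'_2$ and applying~(ix) along each consecutive pair of vertices (all of which sit in $V^*(G^i)$) propagates $U_2^*(B^{++})$-membership along the whole path, yielding $W(B')\subseteq U_2^*(B^{++})$. The first part, applied to $B'$ and $B^{++}$, then gives $u\in U_2^*(B'|H^i,\mathcal{B})\subseteq U_2^*(B^{++})$. Disjointness of the sets $\{U_2^*(B^\dagger):B^\dagger\in\mathcal{B}^i\}$ via~\ref{itm: P5} for $\mathcal{B}^i$ forces $B^{++}=B^+$, and the hypothesis $\partial_{\mathcal{B}}B^+=\{B\}$ yields $B'=B$, as required.

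The main obstacle I anticipate is the propagation argument in the sub-step. I need to verify that every vertex along the rainbow paths witnessing reachability inside the bowtie $B'$ lies in $V^*(G^i)$, so that~(ix) can be invoked at each edge; this holds because the interior vertices lie in $W(B')\subseteq V^*(H^i)\subseteq V^*(G^i)$ by~\ref{itm: P2}, and the endpoints are the centre of $B'$ (also in $V^*(H^i)$) together with the successive vertices of $W(B')$ themselves. Once the propagation is nailed down, the remainder is a bookkeeping exercise with the partition axioms~\ref{itm: P2},~\ref{itm: P5},~\ref{itm: P6} and the construction property~(vii).
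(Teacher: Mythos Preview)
Your proof is correct and takes a genuinely different route from the paper's.

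\medskip
\textbf{Comparison.} For the first inclusion the paper argues by contradiction: assuming $u\in U_t^*(B|H^i,\mathcal{B})\setminus U_2^*(B^+)$, it builds a rainbow path from the centre $v$ of $B$ through $W_t$ to $u$, extends it by one vertex on each side inside $H^{i-1}$, and then invokes Lemma~\ref{Lma: path}\ref{itm: lma: path: ii} together with the $g$-maximality of $B^+$ in $H^{i-1}-(\mathcal{B}^i\setminus B^+)$ to force $u\in\mathrm{int}(P')\subseteq U_2^*(B^+)$. Your argument instead uses the already-established inductive hypothesis~\ref{itm: (ix)} as a black box: any $G^i$-edge leaving $U_2^*(B^+)$ must go to $V(G^i)\setminus V^*(G^i)$, so a one- or two-step walk in $H^i\subseteq G^i$ from $W(B)\subseteq U_2^*(B^+)$ through vertices of $V^*(G^i)$ cannot exit $U_2^*(B^+)$. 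This is cleaner and more modular; the only care needed (which you note) is that the intermediate vertex $u'\in U_t(B|H^i,\mathcal{B})\setminus W(B)$ is indeed in $V^*(G^i)$, which holds because it sees both a colour in $C_t\subseteq\phi(\mathcal{B})$ (towards $W(B)$) and a colour outside $\phi(\mathcal{B})$ (towards $u$).

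For the ``moreover'' statement the paper writes only ``The moreover statement follows'', whereas you supply a complete argument: \ref{itm: P6} for $\mathcal{B}$ locates $u$ in some $U_2^*(B'|H^i,\mathcal{B})$; the same propagation via~\ref{itm: (ix)} along rainbow paths inside $W(B')$ shows $W(B')\subseteq U_2^*(B^{++})$ for the unique $B^{++}\in\mathcal{B}^i_3$ containing the centre of $B'$; the first part gives $u\in U_2^*(B^{++})$; and \ref{itm: P5} for $\mathcal{B}^i$ together with $\partial_{\mathcal{B}}B^+=\{B\}$ forces $B^{++}=B^+$ and $B'=B$. This is a legitimate and fully explicit alternative to whatever implicit reasoning the paper had in mind.
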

    \begin{proofclaim}
        We prove the case for $t=2$ (and the case for $t=1$ is proven analogously). Let $B = (v, C_1, W_1, C_2, W_2)$, so $v \in U_2^*(B^+)$. Suppose there exists a vertex
        \begin{equation}
        u \in U_2^*(B|H^i, \mathcal{B}) \setminus U_2^*(B^+). \label{eq: contradicting vertex}    
        \end{equation}
Then there exists a rainbow path $P = vw_1 \dots w_{\ell}u$ such that $w_1,\dots, w_{\ell-1} \in W_2$, $\phi(P) \subseteq C_2 \cup \phi(w_{\ell} u) \subseteq \phi(H^i)$ and $C_1 \cap \phi(P) = \emptyset$. Note that \begin{equation} 
        |P| \le |W(\mathcal{B})| + 2 \le 192\delta^{-3}+2 < d/2  \label{eq: bound on path}    
        \end{equation}
        and $u \in V^*(H^i - \mathcal{B})$. Thus, there exists a colour $c' \in \phi_{H^i - \mathcal{B}}(u) \setminus \phi(w_{\ell}v)$. By~\ref{itm: (vi)}, $$d_{c', H^i}(u) \ge d - 2ig \ge d - \frac{8g}{\delta}\  \stackrel{\mathclap{\text{\eqref{eq: bound on path}}}}{>}  |P|.$$
Pick $u' \in N_{c', H^i}(u)\setminus V(P)$. Recall that $\phi(H^{i})\supseteq \phi(\mathcal{B}) \supseteq C_1 \neq \emptyset$. Pick $c_1 \in C_1 \cap \phi_{H^{i}}(v) \subseteq \phi(H^i)$. By~\ref{itm: (vi)}, $$d_{c_1, H^{i-1}}(v) \ge d - 2g(i-1) \ge d - \frac{8g}{\delta}\ \stackrel{\mathclap{\text{\eqref{eq: bound on path}}}}{>} |P|+1.$$ Pick $v' \in N_{c_1, H^{i-1}}(u) \setminus (V(P) \cup \{u'\})$. Then the path $P' = v'Pu' = v'vw_1\dots w_{\ell}uu'$ is rainbow in~$H^{i-1}$ with $$\phi(P') \cap \phi(\mathcal{B}^i) \subseteq \phi(H^i) \cap \phi(\mathcal{B}^i) \subseteq \phi(G^i) \cap \phi(\mathcal{B}^i) = \emptyset$$ by~\ref{itm: (viii)}. Recall that $U_2^*(B^+)$ is $g$-maximal in $H^{i-1} - (\mathcal{B}^i \setminus B^+) $. Lemma~\ref{Lma: path}\ref{itm: lma: path: ii} implies that $u \in \rm{int}(P') \subseteq U_2^*(B^+)$ contradicting~\eqref{eq: contradicting vertex}. The moreover statement follows.
    \end{proofclaim}

For each $B^+ \in \mathcal{B}^i$ with $\partial_{\mathcal{B}} B^+ = \{B\}$ that is not covered by $\mathcal{B}$, we replace the bowtie $B=(v, C_1, W_1, C_2, W_2)$ with $B' = (v, C_2, W_2, C_1, W_1)$. We call the resulting family $\mathcal{B}'$. We now show that $\mathcal{B}'$ is a weak $(d/4, g)$-partition of $H^i$. Note that~\ref{itm: P1} to~\ref{itm: P4} hold. To show~\ref{itm: P5}, note that if $B' \in \mathcal{B}'\setminus \mathcal{B}$, then there exist unique $B \in \mathcal{B}\setminus \mathcal{B}'$ and $B^+ \in \mathcal{B}^i$ with $$B=(v, C_1, W_1, C_2, W_2), \ B' = (v, C_2, W_2, C_1, W_1)\ \text{and}\  \partial_{\mathcal{B}} B^+ = \{B\}.$$ By Claim~\ref{Clm: inside parent} and the fact $\partial_{\mathcal{B}} B^+ = \{B\}$, we deduce that $$U_2^*(B'|H^i, \mathcal{B}') = U_1^*(B|H^i, \mathcal{B}) \subseteq U_2^*(B^+) \cap V^*(G^{i+1}) = U_2^*(B|H^i, \mathcal{B}).$$
Recall that $\mathcal{B}$ is a $(d/4, g)$-partition of $H^i$, in particular this implies that $\{U_2^*(B|H^i, \mathcal{B}): B \in \mathcal{B}\}$ are pairwise disjoint. Thus, $\mathcal{B}'$ satisfies~\ref{itm: P5} and is a weak $(d/4, g)$-partition. Apply Lemma~\ref{Lma: find partition} with $H^i, d/2, g, \mathcal{B}'$ playing the roles of $G, d, g, \mathcal{B}_0$, respectively and obtain $(d/4, g)$-partition $\mathcal{B}^{i+1}$ containing~$\mathcal{B}'$ and satisfying $W(\mathcal{B}^{i+1}) \subseteq V^*(H^i)$ and $$|\phi(\mathcal{B}^{i+1})|, |W(\mathcal{B}^{i+1})| \le 2(6n|\mathcal{B}^{i+1}|/g) \le 24n^2/gd \le 384\delta^{-3}.$$ 

\begin{claim}
    $\mathcal{B}^{i+1}, G^{i+1}, H^{i+1}$ satisfy~\ref{itm: (i)} to \ref{itm: (x)}.
\end{claim}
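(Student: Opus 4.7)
The plan is to verify properties~\ref{itm: (i)}--\ref{itm: (x)} in turn for the newly built $\mathcal{B}^{i+1}$, $G^{i+1}$, $H^{i+1}$, using the previous iteration as the inductive hypothesis. Most fall straight out of the definitions; the substantive work is concentrated in~\ref{itm: (ii)} (together with the preparatory~\ref{itm: (iii)}), both of which rely on Claim~\ref{Clm: inside parent}.

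First I would dispatch the routine properties. \ref{itm: (i)} is exactly the output of the final invocation of Lemma~\ref{Lma: find partition} in the construction, together with the recorded size bounds on $\phi(\mathcal{B}^{i+1})$ and $W(\mathcal{B}^{i+1})$. \ref{itm: (viii)} is direct: $G^{i+1}$ is obtained from $G^i$ by deleting the vertices of $W(\mathcal{B}^{i+1})$, the edges of colour in $\phi(\mathcal{B}^{i+1})$, and some cross-edges (which remove no further colours). \ref{itm: (v)} and~\ref{itm: (vi)} follow from the inductive monochromatic-degree bounds for $G^i$ and $H^i$, each lowered by at most $|W(\mathcal{B}^{i+1})| + |\phi(\mathcal{B}^{i+1})|$, which fits within the additional slack $g + 12n^2/gd$. \ref{itm: (ix)} is built into the definition of $G^{i+1}$, which explicitly deletes all edges between distinct $U_2^*(B|H^i,\mathcal{B}^{i+1})$. \ref{itm: (x)} is the $g$-maximality property~\ref{itm: P4} of the $(d/4,g)$-partition $\mathcal{B}^{i+1}$. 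Finally,~\ref{itm: (vii)} is encoded in the definition of $H^{i+1}$, whose $V^*$ sits inside $\bigcup_{B \in \mathcal{B}_3^{i+1}} U_2^*(B|H^i,\mathcal{B}^{i+1})$ by construction.

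Property~\ref{itm: (iii)}, that $\mathcal{B}^{i+1} = \bigcup_{B \in \mathcal{B}_3^i} \partial B$, follows because $W(\mathcal{B}^{i+1}) \subseteq V^*(H^i)$ (Lemma~\ref{Lma: find partition}), which by the inductive~\ref{itm: (vii)} is contained in $\bigcup_{B \in \mathcal{B}_3^i} U_2^*(B)$; Claim~\ref{Clm: inside parent} applied with $\mathcal{B}^{i+1}$ in the role of $\mathcal{B}$ then forces each $B^* \in \mathcal{B}^{i+1}$ into exactly one parent's $U_2^*$. Property~\ref{itm: (iv)} follows quickly from~\ref{itm: (iii)}: any $B^* \in \mathcal{B}_3^{i+1}$ has a parent $B^+ \in \mathcal{B}_3^i$, and $B^* \notin \mathcal{B}_2^{i+1}$ combined with~\ref{itm: (ii)} forces $|\partial B^+| \ge 2$, so by~\ref{itm: P5} and~\ref{itm: P1} a sibling of $B^*$ occupies at least $d/4$ vertices of $U_2^*(B^+)$, yielding $|U_2^*(B^*)| \le |U_2^*(B^+)| - d/4 \le n - (i+1)d/4$. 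The heart of the claim is~\ref{itm: (ii)}: for each $B^+ \in \mathcal{B}^i$ with $|\partial B^+| = 1$, writing $\partial B^+ = \{B^*\}$, we must show $U_1^*(B^*|H^i,\mathcal{B}^{i+1}) = U_2^*(B^*|H^i,\mathcal{B}^{i+1})$. The swap step in the construction of $\mathcal{B}'$ was precisely designed to secure this equality via the symmetric ($t=1$) version of the moreover clause of Claim~\ref{Clm: inside parent}, while Lemma~\ref{Lma: find partition}'s extension of $\mathcal{B}'$ to $\mathcal{B}^{i+1}$ cannot break it: once $U_2^*(B^+) \cap V^*(H^i - \mathcal{B}')$ is entirely absorbed into $U_2^*(B'|H^i,\mathcal{B}')$, no bowtie added during the greedy extension can have its $W$-vertices inside $U_2^*(B^+)$. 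The hard part will be making this last point precise, since one must carefully track how the greedy extension interacts with the swap and verify the symmetric moreover clause of Claim~\ref{Clm: inside parent} by rerunning its path-extension argument with the roles of $C_1$ and $C_2$ exchanged.
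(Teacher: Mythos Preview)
Your overall plan matches the paper's, and you correctly identify Claim~\ref{Clm: inside parent} as the workhorse for \ref{itm: (ii)}--\ref{itm: (iv)}. However, several of the items you dismiss as ``routine'' are not, and your treatment of them contains genuine gaps.

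\textbf{Property \ref{itm: (x)}.} You write that \ref{itm: (x)} ``is the $g$-maximality property~\ref{itm: P4} of the $(d/4,g)$-partition $\mathcal{B}^{i+1}$''. This is false: \ref{itm: P4} only gives that each $B\in\mathcal{B}^{i+1}$ is $g$-maximal in $H^i-(\mathcal{B}^{i+1}\setminus B)$, whereas \ref{itm: (x)} requires $g$-maximality in the larger graph $G^i-(\mathcal{B}^{i+1}\setminus B)$. A witness $(u,w,w',c')$ to non-maximality could a priori live in $G^i$ but not in $H^i$. The paper closes this gap by invoking the inductive \ref{itm: (ix)} for $G^i$: since $W(B)\subseteq U_2^*(B^+)$ for some $B^+\in\mathcal{B}^i$ and $G^i$ has no edges from $U_2^*(B^+)$ to $V^*(G^i)\setminus U_2^*(B^+)$, the relevant neighbourhoods in $G^i$ and $H^i$ coincide, so any such witness would already violate \ref{itm: P4}.

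\textbf{Property \ref{itm: (ix)}.} This is not ``built into the definition of $G^{i+1}$''. The definition only removes edges between $U_2^*(B|H^i,\mathcal{B}^{i+1})$ and $U_2^*(B'|H^i,\mathcal{B}^{i+1})$ for distinct $B,B'\in\mathcal{B}^{i+1}$, and by~\ref{itm: P6} these sets only partition $V^*(H^i-\mathcal{B}^{i+1})$, which can be strictly smaller than $V^*(G^{i+1})$. To rule out edges from $U_2^*(B)$ to vertices of $V^*(G^{i+1})$ lying outside the parent $U_2^*(B^+)$ you must again use the inductive \ref{itm: (ix)} for $G^i$ together with Claim~\ref{Clm: inside parent}.

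\textbf{Properties \ref{itm: (v)}/\ref{itm: (vi)}.} Your degree accounting is wrong: deleting the colours in $\phi(\mathcal{B}^{i+1})$ does not lower $d_{c}(\cdot)$ for any surviving colour $c$. The two sources of loss are $|W(\mathcal{B}^{i+1})|\le 12n^2/gd$ from vertex deletion, and the cross-edge removal in passing from $J^{i+1}$ to $G^{i+1}$; for the latter you need Lemma~\ref{Lma: path}\ref{itm: lma: path: i} (via the $g$-maximality just established) to bound the loss by~$g$. This is where the ``$g$'' in the slack $g+12n^2/gd$ actually comes from.

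Finally, for \ref{itm: (iii)} note that $W(\mathcal{B}^{i+1})\subseteq V^*(H^i)\subseteq\bigcup_{B^+\in\mathcal{B}_3^i}U_2^*(B^+)$ does not yet show that the $W$-set of a \emph{single} bowtie lies in a \emph{single} $U_2^*(B^+)$; the paper argues this via connectivity of $W(B)$ in $H^i$ and the inductive \ref{itm: (ix)}, not via Claim~\ref{Clm: inside parent}.
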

\begin{proofclaim}
   
 Note that~\ref{itm: (i)} holds by construction.

Consider $B^+ \in \mathcal{B}^i$. If $|\partial_{\mathcal{B}}B^+| = 1$ and $B^+$ is not covered by $\mathcal{B}$, then we have $|\partial_{\mathcal{B}^{i+1}} B^+| >  1$. Otherwise $\partial_{\mathcal{B}}B^+ = \partial_{\mathcal{B}^{i+1}}B^+$. Suppose that $\partial B^+ = \{B\}$. By~\ref{itm: P6} and Claim~\ref{Clm: inside parent}, $U_2^*(B|H^i, \mathcal{B}^{i+1}) = U_2^*(B^+|H^{i-1}, \mathcal{B}^i) \cap V^*(H^i - \mathcal{B}^{i+1})$. Note that $B^+$ is  covered by $\mathcal{B}$, therefore covered by $\mathcal{B}^{i+1}$. Thus~\ref{itm: (ii)} holds.

Consider $B = (v, C_1, W_1, C_2, W_2) \in \mathcal{B}^{i+1}$. For each $w \in W(B)\setminus \{v\}$, there is a path from~$v$ to~$w$ in~$H^i$. Thus $W(B)$ is contained in a component of $H^i[V^*(H^i)]$. Since $H^i \subseteq G^i$,~\ref{itm: (ix)} for $G^i$ implies that $W(B) \subseteq U_2^*(B^+)$ for some $B^+\in \mathcal{B}_3^i$. Therefore~\ref{itm: (iii)} holds for $G^{i+1}$.

We now show that~\ref{itm: (iv)} holds. Let $B \in \mathcal{B}_3^{i+1}$. Then there exists $B^+ \in \mathcal{B}^i$ and $B' \in \mathcal{B}_3^{i+1}$ such that~$B, B' \in \partial B^+$. By~\ref{itm: P1},~\ref{itm: P5} and Claim~\ref{Clm: inside parent}, we deduce that $U_2^*(B) \cup W(B)$ and $U_2^*(B')\cup W(B')$ each has size at least $d/4$, are disjoint and contained in $U_2^*(B^+)$. Hence,  
\begin{equation*}
    \begin{split}
    |U_2^*(B)| & \le |U_2^*(B^+) - |U_2^*(B') \cup W(B')| \\
    & \stackrel{\mathclap{\text{\ref{itm: (iv)}}}}{\le} n-i(d/4) - (d/4) = n-(i+1)(d/4).
    \end{split}
\end{equation*}

To show~\ref{itm: (v)}, consider a vertex $u \in V(G^{i+1})$ and a colour $c \in \phi_{G^{i+1}}(u)$. It is enough to show that $d_{c, G^{i+1}}(u) \ge d - (i+1)(g+12n^2/gd)$. By~\ref{itm: (v)} for $G^i$, we have $d_{c, G^i}(u) \ge d - i(g+12n^2/gd)$. Then $$d_{c, J^{i+1}}(u) \ge d_{c, G^i}(u) - | W(\mathcal{B}^{i+1})| \ge d - ig - (i+1)(12n^2/gd).$$ If $u \in V(G^{i+1})\setminus V^*(G^{i+1})$, then $d_{c, G^{i+1}}(u)= d_{c, J^{i+1}}(u) $. If $u \in V^*(G^{i+1})$, then by~\ref{itm: P6},  $u \in U_2^*(B)$ for some $B \in \mathcal{B}^{i+1}$. Then by $g$-maximality of $B$ in $H^i - (\mathcal{B}^{i+1}\setminus B)$, Lemma~\ref{Lma: path}\ref{itm: lma: path: i} implies that $d_{c, G^{i+1}} (u, \overline{U_2^*(B)}) < g$. We deduce that $$d_{c, G^{i+1}}(u) \ge d_{c, J^{i+1}}(u) - d_{c, G^{i+1}}(u, \overline{U_2^*(B))}) \ge d - (i+1)(g+12n^2/gd).$$ This argument also implies~\ref{itm: (vi)},~\ref{itm: (vii)} and $\phi(J^{i+1}) = \phi(G^{i+1})$. Since $\delta_{\text{mon}}(G^i) \ge d/4 >  |W(\mathcal{B}^{i+1})|$, we have $\phi(J^{i+1}) = \phi(G^i) \setminus \phi(\mathcal{B}^{i+1})$ implying~\ref{itm: (viii)}.

To show~\ref{itm: (ix)}, consider $B \in \mathcal{B}^{i+1}$. Let $B^+ \in \mathcal{B}^i$ with $B \in \partial B^+$. By Claim~\ref{Clm: inside parent}, $U_2(B) \subseteq U_2^*(B^+)$. Hence $G^{i+1}$ contains no edge between $U_2^*(B)$ and $U_2^*(B^+)\setminus U_2^*(B)$. By~\ref{itm: (ix)} for~$G^i$, there are no edges in~$G^i$ (and so in $G^{i+1}$) between $U_2^*(B^+)$ and $V^*(G^i)\setminus U_2^*(B^+)$ in $G^i$ and so in $G^{i+1}$. Thus~\ref{itm: (ix)} holds for $G^{i+1}$.

Suppose for contradiction~\ref{itm: (x)} does not hold. That is, some bowtie $B \in \mathcal{B}^{i+1}$ is not $g$-maximal in $G^{i} - (\mathcal{B}^{i+1} \setminus B)$. Note that $B$ is $g$-maximal in $H^{i} - (\mathcal{B}^{i+1} \setminus B)$. Let $B^+ \in \mathcal{B}^{i}$ with $B \in \partial B^+$. By~\ref{itm: (ix)}, $G^{i}$ has no edge between $U_2^*(B^+|H^{i-1}, \mathcal{B}^i)$ and $V^*(G^{i}) \setminus U_2^*(B^+|H^{i-1}, \mathcal{B}^{i})$. Thus for any~$v \in V^*(H^{i})$, the neighbourhoods of $v$ are the same in~$H^{i}$ and in $G^{i}$. Hence any vertices $u, w, w'$ that could prevent $g$-maximality must also be contained in $V^*(H^{i})$. Therefore $B$ is $g$-maximal in $G^{i} - (\mathcal{B}^{i+1} \setminus B)$.\end{proofclaim}
 
\noindent\textbf{Case B: $V^*(H^i) = \emptyset$}. Set $C^* = \bigcup_{j \in [i]} \phi(\mathcal{B}^j)$ and $W^* = \bigcup_{j \in [i]} W(\mathcal{B}^j)$.  Let $G^* = G^i$. Clearly, 
\begin{equation}
|W^*| \le \sum_{j \in [i]} \left|W(\mathcal{B}^j)\right| \stackrel{\ref{itm: (i)},~\eqref{eq: bound on i}}{\le} (384\delta^{-3})(4\delta^{-1}) < 2000 \delta^{-4}. \label{eq: bound on W^*}    
\end{equation}
There is a similar upper bound on $|C^*|$. Note that $\phi(G^*) = \phi(G) \setminus C^*$ by~\ref{itm: (viii)}. Moreover, by~\ref{itm: (v)}, $$\delta_{\text{mon}}(G^*) \ge d - |W^*| \stackrel{\eqref{eq: bound on W^*}}{\ge} d - 2000 \delta^{-4}.$$ 

Consider a rainbow path $P$ in $G^*$ with $|V(P)| \ge 3$ and $S \subseteq V^*(G) \setminus V(P)$ such that $|V(P) \cup S| \le d/4$. We shall find a rainbow cycle $C^*$ in $G\setminus S$, which closes~$P$ with $\phi(P) \subseteq \phi(C^*) \subseteq \phi(P) \cup \phi(G)\setminus \phi(G^*)$. 

\begin{claim}
    There exists $j \in [i]$ and $B \in \mathcal{B}_1^j \cup \mathcal{B}_2^j$ such that $\rm{int}(P) \subseteq U_2^*(B)$.
\end{claim}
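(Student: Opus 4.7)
The plan is to prove, by induction on $j \in [i]$, the following dichotomy: either \emph{(a)} there exist $j' \le j$ and $B' \in \mathcal{B}_1^{j'} \cup \mathcal{B}_2^{j'}$ with $\mbox{int}(P) \subseteq U_2^*(B' \mid H^{j'-1}, \mathcal{B}^{j'})$, or \emph{(b)} there exists $B_j \in \mathcal{B}_3^j$ with $\mbox{int}(P) \subseteq U_2^*(B_j \mid H^{j-1}, \mathcal{B}^j)$. The claim then follows at $j=i$: alternative~\emph{(b)} at this level will be shown to force $\mbox{int}(P) \subseteq V^*(H^i)$, which contradicts the Case~B assumption $V^*(H^i) = \emptyset$ since $|V(P)| \ge 3$ gives $\mbox{int}(P) \ne \emptyset$.

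For the base case $j = 1$ we have $H^0 = G$ and $G^i \subseteq G^1 \subseteq J^1 = G - \mathcal{B}^1$, so $\mbox{int}(P) \subseteq V^*(G^i) \subseteq V^*(G - \mathcal{B}^1)$. Property~\ref{itm: P6} applied to the $(d/4,g)$-partition $\mathcal{B}^1$ assigns each internal vertex of $P$ to some $U_2^*(B \mid G, \mathcal{B}^1)$ with $B \in \mathcal{B}^1$. Consecutive internal vertices $x_t, x_{t+1}$ of $P$ are joined by an edge of $E(P) \subseteq E(G^1)$ with both endpoints in $V^*(G^1)$, so~\ref{itm: (ix)} for $G^1$ forces them into the same class $U_2^*(B_1)$. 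Thus $\mbox{int}(P) \subseteq U_2^*(B_1)$ for some $B_1 \in \mathcal{B}^1$, and we are in case~\emph{(a)} if $B_1 \in \mathcal{B}_1^1$ (recalling $\mathcal{B}_2^1 = \emptyset$), or case~\emph{(b)} if $B_1 \in \mathcal{B}_3^1$.

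For the inductive step, assume case~\emph{(b)} at level $j$, so $\mbox{int}(P) \subseteq U_2^*(B_j)$ for some $B_j \in \mathcal{B}_3^j$. The central observation is that every internal vertex $v$ of $P$ satisfies $v \in V^*(H^j - \mathcal{B}^{j+1})$. Indeed, $B_j \in \mathcal{B}_3^j$ ensures $U_2^*(B_j) \subseteq V(H^j)$; property~\ref{itm: (ix)} for $G^j$ forces every $G^j$-neighbour of $v$ lying in $V^*(G^j)$ to be inside $U_2^*(B_j) \subseteq V(H^j)$, while the remaining neighbours lie in $V(G^j) \setminus V^*(G^j) \subseteq V(H^j)$; hence $v$ has identical neighbourhoods in $H^j$ and in $G^j$. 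Combined with $v \in V^*(G^{j+1}) \subseteq V^*(G^j - \mathcal{B}^{j+1})$ and $v \notin W(\mathcal{B}^{j+1})$, this gives $v \in V^*(H^j - \mathcal{B}^{j+1})$. Now~\ref{itm: P6} applied to $\mathcal{B}^{j+1}$ places each internal vertex into some $U_2^*(B \mid H^j, \mathcal{B}^{j+1})$, and~\ref{itm: (ix)} for $G^{j+1}$ (exactly as in the base case) merges them into a common class $U_2^*(B_{j+1})$, yielding case~\emph{(a)} or case~\emph{(b)} at level $j+1$. Applying the same neighbourhood-preservation argument at $j = i$, but without invoking any $\mathcal{B}^{i+1}$, shows that case~\emph{(b)} at level~$i$ forces $\mbox{int}(P) \subseteq V^*(H^i)$, delivering the promised contradiction.

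The main obstacle is this neighbourhood-preservation step: one must verify that descending from $G^j$ to the induced subgraph $H^j$ does not remove any colour incident to an internal vertex of $P$, so that~\ref{itm: P6} remains applicable at the next level and the $(P)$-edge argument via~\ref{itm: (ix)} continues to merge vertices into a single class. This boils down to two facts built into the construction: $H^j$ retains precisely the $U_2^*$-classes of bowties in $\mathcal{B}_3^j$, and~\ref{itm: (ix)} separates those classes from the rest of $V^*(G^j)$.
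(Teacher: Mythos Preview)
Your proof is correct and achieves the claim, but it takes a genuinely different route from the paper's argument. The paper's proof is much shorter: it picks a single vertex $v \in \mbox{int}(P)$, uses the descending chain $V^*(H^0) \supseteq \cdots \supseteq V^*(H^i) = \emptyset$ to locate the unique level $j$ at which $v$ drops out, deduces from~\ref{itm: (vii)} (combined implicitly with the neighbourhood-preservation fact you spell out) that $v \in U_2^*(B)$ for some $B \in \mathcal{B}_1^j \cup \mathcal{B}_2^j$, and then invokes property~\ref{itm: (x)} together with Lemma~\ref{Lma: path}\ref{itm: lma: path: ii} to sweep all of $\mbox{int}(P)$ into that same $U_2^*(B)$ in one shot.

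By contrast, you carry the \emph{whole} set $\mbox{int}(P)$ down the levels inductively, using~\ref{itm: (ix)} (no $G^j$-edges between distinct $U_2^*$-classes) rather than the path lemma to keep the internal vertices together at each stage. This buys you a proof that is entirely internal to the structural properties \ref{itm: (i)}--\ref{itm: (x)} and does not appeal to Lemma~\ref{Lma: path} at all; the trade-off is a longer argument and the need to justify the neighbourhood-preservation step explicitly at every level. The paper's approach is more economical precisely because Lemma~\ref{Lma: path}\ref{itm: lma: path: ii} already encapsulates the ``rainbow paths cannot escape a $g$-maximal class'' phenomenon that your induction rebuilds by hand via~\ref{itm: (ix)}.
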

    \begin{proofclaim}
Let $v \in \rm{int}(P)\subseteq V^*(G^*) \subseteq V^*(G)$. Note that $V^*(G)=V^*(H^0) \supseteq \dots \supseteq V^*(H^i) = \emptyset$. Let $j \in [i]$ be such that $v \in V^*(H^j)$ but $v \notin V^*(H^{j+1})$. By~\ref{itm: (vii)}, there exists $B \in \mathcal{B}_1^j \cup \mathcal{B}_2^j$ such that $v \in U_2^*(B)$.   By~\ref{itm: (x)}, $B$ is $g$-maximal in $G^{j-1} - (\mathcal{B}^j \setminus B)$. Therefore Lemma~\ref{Lma: path}\ref{itm: lma: path: ii} implies that $\rm{int}(P) \subseteq U_2^*(B)$.   
\end{proofclaim}

Let $B$ and $j$ be as given by the claim. Let $P=x_1\dots x_{\ell}$ with $\phi(x_1x_2)=2$ and $\phi(x_{\ell -1}x_{\ell})=\ell$. Let $H' = H^{j-1} - (\mathcal{B}^j \setminus B)$.

\noindent\textbf{Case B(i):} $B \in \mathcal{B}_1^j$. Note that $\text{int}(P) \subseteq U_2^*(B) \subseteq V^*(H^{j-1})$. Moreover, $|\mbox{int}(P) \cup S| < d/4$. Let $U_2 = U_2(B)$, so $|U_2| \le 3d/4$ as $B \in \mathcal{B}_1^j$. By Lemma~\ref{Lma: path}\ref{itm: lma: path: i}, $d_{2, H'}(x_2, \overline{U_2}) < g.$  Then by~\ref{itm: (vi)}, $$d_{2, H'}(x_2, U_2) > \delta_{\text{mon}}^*\left(H^{j-1}\right) - g - |W^*| \stackrel{\eqref{eq: bound on W^*}}{\ge} d - 2g(j-1)-g - 2000\delta^{-4} \ge d/2.$$ Similarly, we have $d_{\ell, H'}(x_{\ell -1}, U_2)\ge d/2$. We deduce that
\begin{align*}
       &|N_{2, H'}(x_2, U_2) \cap N_{\ell, H'} (x_{\ell-1}, U_2) \setminus (S \cup V(P))| 
			\\ 
			\ge& d_{2, H'}(x_2, U_2)+ d_{\ell, H'} (x_{\ell-1}, U_2) - |U_2| - |S \cup V(P)| \\
        >&  d/2+d/2 -3d/4 -d/4 = 0. 
\end{align*}
 Pick $x \in (N_{2, H'}(x_2, U_2) \cap N_{\ell, H'} (x_{\ell-1}, U_2)) \setminus (S \cup V(P))$. Then $C^* = xx_2\cdots x_{\ell -1}x$ is a rainbow cycle with  $\phi(C^*)=\phi(P)$.

\noindent\textbf{Case B(ii):} $B \in \mathcal{B}_2^j$. There exists a bowtie $B' \in \mathcal{B}^{j-1}$ with $\partial B' = \{B\}$. Let $B=(v, C_1, W_1, C_2, W_2)$ and furthermore, $V^*(G^j) \cap U_2^*(B') = U_1^*(B) = U_2^*(B)$. Since $P$ is in~$G^*$, we have $ \rm{int}(P) \subseteq V^*(G^*)$. Note that $B$ is $g$-maximal in $H'$.
We deduce that
\begin{align*}
d_{2, H'}(x_2, U_1(B|H^{j-1}, \mathcal{B}^j)) &\ge \delta_{\text{mon}}^*(H^j) - d_{2, H'}(x_2, U_1(B))\\ & \stackrel{\mathclap{\ref{itm: (vi)},\ \rm{ Lemma  }~\ref{Lma: path}\ref{itm: lma: path: i}}}{\ge} d-2gj-g \ge d-(2i+1)g \\ &\stackrel{\mathclap{\eqref{eq: bound on i}, \eqref{eq: bound on W^*}}}{\ge} |S \cup V(P)|+|V(G)\setminus V(G^*)|. 
\end{align*}
Pick $y_1 \in U_1(B) \cap \left(V(G^*)\setminus (S \cup V(P)) \right)$ such that $\phi(y_1 x_2) = 2$. Similarly, we can find a distinct vertex $y_2 \in U_2(B) \cap \left(V(G^*)\setminus (S \cup V(P)) \right) $ such that $\phi(y_2 x_{\ell -1}) = \ell$.
Let $P_{{y_1}, v}$ be the rainbow path from $y_1$ to $v$ with $\text{int}(P_{{y_1}, v}) \subseteq W_1$ and $\phi(P_{y_1, v}) \subseteq C_1$; and $P_{v, {y_2}}$ be a rainbow path from~$y_2$ to~$v$ with $\text{int}(P_{ v, {y_2}}) \subseteq W_2$ and $\phi(P_{v, y_2}) \subseteq C_2$. Then $C^* = y_1P_{y_1, v}P_{v, y_2}y_2x_{\ell-1}x_{\ell-2}...x_2y_1$ is a rainbow cycle in $G\setminus S$. Note that $$\phi(P) \subseteq \phi(C^*) \subseteq \phi(P) \cup \phi(G)\setminus \phi(G^*).$$ This concludes the proof of the lemma. 
\end{proof}     

\section{Concluding Remarks}\label{Sec: Conclusion}
In this work, we showed that there exists a monochromatic tight cycle partition of any large~$r$-edge-coloured $K_n^{(k)}$ with a number of cycles that is a polynomial of~$r$. For $k \ge 3$, it is easy to construct an~$r$-edge-coloured hypergraph which needs at least~$r$ monochromatic tight cycles to partition all the vertices. We show such a construction below for completeness. 

\begin{proposition}
    Let $k, r \in \N$ such that $k \ge 2$. There exists an $r$-edge-coloured complete~$k$-graph such that at least~$r$ monochromatic cycles are required for a monochromatic tight cycle partition of its vertex set. 
\end{proposition}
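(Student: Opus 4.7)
The plan is to produce, for every $r$ and $k\ge 2$, an explicit $r$-edge-coloured $K_n^{(k)}$ on $n=(r-1)k+1$ vertices in which every monochromatic tight cycle is degenerate (that is, has at most $k$ vertices). Once such a colouring is exhibited, every monochromatic tight cycle partition consists of parts of size at most $k$, and so has at least $\lceil n/k\rceil = r$ parts, proving the lower bound. Note that the vertex count $n=(r-1)k+1$ is chosen precisely so that $\lceil n/k\rceil = r$.

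To construct the colouring I would first set up the vertex partition $V=\{v_0\}\sqcup B_1\sqcup\cdots\sqcup B_{r-1}$ with $|B_i|=k$, and assign colour $i$ to the unique $k$-edge lying inside $B_i$ (for each $i\in[r-1]$). The resulting colours $1,\dots,r-1$ each consist of a single edge, so in those colours only a degenerate tight cycle is possible. The remaining edges---those containing $v_0$ or spanning at least two distinct blocks---would then be distributed among the $r$ colours in a structured way so that no colour class contains a tight cycle of length exceeding $k$. For $r=2$ the argument is immediate: every colour-$2$ edge contains $v_0$, so any colour-$2$ tight cycle must contain $v_0$ in each of its edges, which forces the cycle to have at most $k$ vertices since $v_0$ lies in only $k$ consecutive windows of a cyclic arrangement.

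For $r\ge 3$ the single colour $r$ would in general support long tight cycles, so the main task---and the principal obstacle---is to split the ``leftover'' edges among the $r$ colours so that each colour class is free of tight cycles of length greater than $k$. In the graph case $k=2$ this is exactly the classical linear arboricity decomposition of $K_{2r-1}$ into $r$ linear forests, which is known to exist. For $k\ge 3$ an analogous tight-cycle-free decomposition into $r$ classes is plausible on density grounds: the total edge count $\binom{n}{k}$ is of order $r^{k-1}k^{k-1}$, which is comparable to $r$ times the Tur\'an-type extremal count of tight-cycle-free $k$-graphs on $n$ vertices. Producing this decomposition explicitly (or at least verifying its existence with the precise constant $r$) is the technical step that would need to be executed in detail; once accomplished, the lower bound $r$ on the number of monochromatic tight cycles follows instantly from the degenerate-cycle counting above.
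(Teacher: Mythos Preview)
Your strategy---force every monochromatic tight cycle to be degenerate and then count---is genuinely different from the paper's construction, and it does go through cleanly in the two special cases you handle: for $k=2$ you correctly invoke the arboricity of $K_{2r-1}$ (which is $r$), and for $r=2$ your argument about $v_0$ lying in every edge of a colour-$2$ tight cycle is valid.

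The gap is exactly where you locate it yourself: for $k\ge 3$ and $r\ge 3$ you have no construction of the required $r$-colouring of $K_{(r-1)k+1}^{(k)}$ with all colour classes tight-cycle-free above length~$k$. The density heuristic you offer is not a proof---``comparable'' edge counts do not guarantee a decomposition, and tight-cycle-free $k$-graphs have Tur\'an density zero, so for large $r$ the margin here is genuinely delicate. You would need an explicit colouring or a real existence argument, and neither is supplied; as written, the proposal is a proof for $k=2$ or $r=2$ and a plausible programme otherwise.

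By contrast, the paper's construction avoids this issue entirely. It takes vertex classes $V_1,\dots,V_r$ with $|V_i|>(k-1)\sum_{j<i}|V_j|$ and colours each edge by the smallest index it meets. The point is not that monochromatic cycles are short, but that any monochromatic cycle of colour $<i$ touching $V_i$ must spend at least a $1/k$ fraction of its vertices outside $V_i$; the size condition then forces some vertex of $V_i$ to be covered by a cycle of colour exactly~$i$. This gives one cycle per colour with a five-line counting argument and no decomposition problem to solve. If you want a complete proof, adopting this hierarchical colouring is far simpler than carrying out the programme you sketch.
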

\begin{proof}
    Let $H$ be a complete $k$-graph with vertex set partitioned as $V = \bigcup_{i \in [r]} V_i$, where $V_1 \neq \emptyset$, for distinct $i, j \in [r]$, $V_i \cap V_j = \emptyset$ and for each $i \in [r]\setminus \{1\}$, \begin{equation} \label{eqn: size distribution}
        |V_i| > (k-1) \sum_{j \in [i-1]} |V_j|.
    \end{equation} We colour each $e \in E(G)$ with colour $ \phi (e) =  \min \{i \in [r]: V_i \cap e \neq \emptyset\}$. Let~$\mathcal{C}$ be a set of vertex-disjoint monochromatic tight cycles that partition~$V(H)$. It is enough to show that for each $i \in [r]$, there is a tight cycle in~$\mathcal{C}$ coloured~$i$. Clearly, a monochromatic tight cycle $C$ that contains at least one vertex from $V_i$ satisfies $\phi(C) \le i$. Let
    $$\mathcal{C}^{<i} = \{C \in \mathcal{C}: V(C) \cap V_i \neq \emptyset \text{ and } \phi(C) < i\}.$$
It suffices to show that $V_i \nsubseteq V(\mathcal{C}^{<i})$. Note that each $C \in \mathcal{C}^{<i}$ satisfies $\left| V(C) \cap \bigcup_{j < i} V_j \right| \ge |V(C)|/k$ and therefore $|V(C) \cap V_i| \le (k-1)\left|V(C) \cap \bigcup_{j < i} V_j \right|$.
     We deduce that $$|V\left(\mathcal{C}^{<i}\right) \cap V_i| \le \sum_{C \in \mathcal{C}^{<i}} |V(C) \cap V_i| \le (k-1)\sum_{j \in [i-1]}|V_j| \stackrel{\eqref{eqn: size distribution}}{<} |V_i|.$$ This completes the proof of the proposition.
\end{proof}
When $k =2$ and $r \ge 3$, Pokrovskiy~\cite{MR3194196} showed that there exist infinitely many $r$-edge-coloured~$K_n$ which require at least $r+1$  monochromatic cycles. Lo and Pfenninger~\cite{MR4533706} showed that for any $k \ge 3$, there are $2$-edge-coloured $K_n^{(k)}$ that cannot be partitioned into two monochromatic tight cycles of distinct colours. It is interesting to consider whether the same holds for~$r$-edge-coloured complete $k$-graphs when $r, k \ge 3$. 

We believe that an~$r$-edge-coloured $K_n^{(k)}$ can be partitioned into a linear number of monochromatic tight cycles.

\begin{conjecture}
    Let $r, k \in \N$ with $r, k \ge 2$. Then there exists a constant $C = C(k)$ such that every $r$-edge-coloured $K_n^{(k)}$ can be partitioned into at most $Cr$ monochromatic tight cycles.
\end{conjecture}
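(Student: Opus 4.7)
The plan is to follow the same overall pipeline as Theorem~\ref{Thm: ourthm} but replace the two exponential-type loss points by linear ones. Recall that in the proof of Theorem~\ref{Thm: ourthm} the $O(r\log r)$ factor comes from Proposition~\ref{Prp: leftover} (geometric removal of cycles), while the polynomial-in-$r$ factor comes from the absorbing lemma (Lemma~\ref{lma: reserved set absorbs everything}), which in turn inherits a $(2r)^{\Theta(2^k)}$ factor from Lemma~\ref{lma: Lemma Rainbow Cycle System}. To obtain $O_k(r)$ cycles, both the cover and the absorber must each cost only $O_k(r)$ cycles.

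First I would replace the greedy almost-cover (Proposition~\ref{Prp: leftover}) by a \emph{connected-matching}-style argument executed inside the reduced $k$-graph $\R$ of Section~\ref{section:regularity}. By Lemma~\ref{lem:reduced graph edge count}, $\R$ is an $r$-edge-coloured $k$-graph which is almost complete, so every colour class has density $\ge 1/r$. The goal is to show that $V(\R)$ can be covered by $O_k(r)$ vertex-disjoint monochromatic tightly connected subgraphs, each carrying a tightly connected fractional matching of weight $\Omega(|V|/k)$; by Lemma~\ref{Lem: Lo-Pfe frac matching} each such piece lifts to one monochromatic tight cycle in $H$ covering almost all vertices of the corresponding clusters. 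The natural way to produce the cover is to iterate: take the densest monochromatic tight component, apply Theorem~\ref{Thm: almostcoverref} to extract a tightly connected fractional matching of weight $\Omega(t/r)$, delete its support, and repeat. To get \emph{linearly many} rounds (rather than logarithmically many), one would argue that after each extraction the residual reduced graph loses a constant fraction of its edges in the deleted colour, so that colour is entirely exhausted after $O_k(1)$ rounds; summing over $r$ colours gives $O_k(r)$ rounds. Making this ``colour-exhaustion'' step rigorous is the main conceptual novelty needed.

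Second, the absorber must be trimmed from $2^{O(2^k)}r^{O(2^k)}$ cycles to $O_k(r)$. I would retain the skeleton of Section~\ref{sec: triangle cycles, larger absorption, tight cycle}: reserve a long monochromatic triangle cycle $T_m^{(k)}$ via Corollary~\ref{Cor: Triangle cycle exists} and a long flexible cycle via Lemma~\ref{Lem: tight cycle reservoir}, and then use Lemma~\ref{Lma: how to absorb} to absorb the leftover into $T_m^{(k)}$. The expensive step is Lemma~\ref{lma: Lemma Rainbow Cycle System} in the auxiliary edge-coloured multigraph $G^i$, where the bowtie machinery costs $\delta_0^{-5}$ cycles per colour and $\delta_0 = (2r)^{-2^k}$. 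The plan is to reprove a version of Lemma~\ref{lma: Lemma Rainbow Cycle System} with $O(|\phi(G)|)$ rainbow cycles rather than $\delta_0^{-5}$. The first observation is that Lemma~\ref{Lma: rainbow path system} already gives a rainbow path system with $O(n/d)$ paths, and each path corresponds to a partial tight cycle in $H$; the remaining task is to close each path into a cycle using $O(1)$ additional colours rather than by absorbing into a single super-rich bowtie. Concretely, I would use an iterative switching/rotation argument à la Pósa on the monochromatic colour classes of $G^i$ to close the paths in sequence, each closure consuming only a constant number of new colours. Because $|\phi(G^i)| \le |Z| \le O(r)$, this gives $O(r)$ rainbow cycles and hence $O(r)$ monochromatic tight cycles per colour class $i \in [r]$; summing over $i$ yields $O_k(r)$.

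The main obstacle is that even for graphs ($k=2$) the best known partition bound is $O(r \log r)$ (Gyárfás, Ruszinkó, Sárközy and Szemerédi), and the original Erd\H{o}s--Gyárf\'as--Pyber conjecture of an $O(r)$ bound has been open for decades and was even disproved in its exact form by Pokrovskiy~\cite{MR3194196}. So both sub-steps above are genuinely open: removing the $\log r$ in the cover phase and removing the $(2r)^{2^k}$ in the absorber phase. In particular, the ``colour-exhaustion'' claim in the reduced graph can fail if edges of many colours sit on the same vertex clusters, and the Pósa-style closure for rainbow paths in $G^i$ can fail when the colour classes are sparse and unbalanced. I would expect the decisive ingredient to be a strengthening of Lemma~\ref{Lma: absorption lemma}: an absorber of constant size that simultaneously closes \emph{all} rainbow paths in a rainbow path system, so that the bowtie family collapses from $O(\delta_0^{-5})$ to $O(1)$ per colour. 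Producing such a universal absorber in edge-coloured multigraphs of linear monochromatic minimum degree appears to be the crux.
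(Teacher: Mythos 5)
This statement is a conjecture, not a theorem: the paper offers no proof of it, and indeed explicitly flags both of the sub-problems your plan depends on as open. Your proposal is an honest research outline rather than a proof, and by your own admission neither of its two pillars is established. Concretely: (1) the ``colour-exhaustion'' step in the cover phase --- that iterating the connected-matching extraction terminates after $O_k(r)$ rounds rather than $O(r\log r)$ --- is precisely the content of the paper's second conjecture, which the authors note is open even for $k=2$; the obstruction you name (many colours concentrated on the same clusters, so deleting one matching's support need not deplete any single colour by a constant fraction) is real and is exactly why the geometric argument of Proposition~\ref{Prp: leftover} only yields $2r\log(1/\eps)$; and (2) the replacement of the bowtie machinery of Lemma~\ref{Lma: absorption lemma} by a P\'osa-rotation closure costing $O(1)$ colours per path is asserted, not argued --- in a multigraph whose colour classes each have minimum degree only $\delta_0 n$ with $\delta_0=(2r)^{-2^k}$, rotation--extension does not obviously produce rainbow closures without consuming many colours or revisiting vertices, and no mechanism is given for keeping the closures vertex-disjoint across the $O(n/d)$ paths.

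The paper's own remark in Section~\ref{Sec: Conclusion} makes the structural point precise: even a perfect strengthening of Lemma~\ref{Lma: how to absorb} to $f(r,k)$ cycles would still leave $O(r\log r)+2f(r,k)+3$ cycles in total, because the $O(r\log r)$ term comes from the almost-cover and is independent of the absorber. So your plan cannot succeed without first resolving the covering conjecture, which is a separate and long-standing open problem (open since Erd\H{o}s--Gy\'arf\'as--Pyber for $k=2$). In short: the proposal correctly locates the two bottlenecks and proposes plausible directions, but it proves nothing; both reductions remain conjectural, and the statement stands as an open conjecture exactly as the paper presents it.
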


If one can strengthen Lemma~\ref{Lma: how to absorb} to show that $f(r, k)$ monochromatic tight cycles suffice, then our proof method would imply $O(r\log r) + 2f(r, k) +3$ cycles partition any $r$-edge-coloured~$K_n^{(k)}$. The $O(r \log r)$ term corresponds to the number of monochromatic cycles required to cover almost all the vertices. 

\begin{conjecture}
    For all $r, k \ge 2$, there exists $C = C(k)$ such that any $r$-edge-coloured $K_n^{(k)}$ contains at most $Cr$ cycles covering $(1-o(1))n$ vertices. 
\end{conjecture}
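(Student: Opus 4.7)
The plan is to strengthen Proposition~\ref{Prp: leftover} so that the factor $\log(1/\varepsilon)$ is removed, yielding an absolute $O(r)$ bound on the number of tight cycles needed to cover $(1-o(1))n$ vertices. The greedy argument used to prove Proposition~\ref{Prp: leftover} loses a $\log$ factor because each removed tight cycle only covers a $1/(2r)$-fraction of the remaining vertices, coming from the Tur\'an-type density bound in Theorem~\ref{Thm: almostcoverref}. To break this barrier, we must produce, in a single step, a family of $O(r)$ monochromatic tight cycles whose union is near-spanning rather than iterating a ``shrink-by-constant-factor'' procedure.

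First I would invoke the Regular Slice Lemma (Lemma~\ref{lem:regular slice}) to obtain a $(k-1)$-complex $\mathcal{J}$ that is simultaneously a regular slice for every colour class~$H_i$, giving an $r$-edge-coloured reduced $k$-graph $\mathcal{R}$ on $t$ clusters with $|E(\mathcal{R})| \geq (1-r\varepsilon_k)\binom{t}{k}$, as guaranteed by Lemma~\ref{lem:reduced graph edge count}. The problem then reduces to locating inside $\mathcal{R}$ a family $\varphi_1,\dots,\varphi_m$ of $m = O(r)$ tightly connected monochromatic fractional matchings, with $\sum_j \varphi_j(v) \leq 1$ for every cluster $v$, whose total weight satisfies $\sum_j |\varphi_j| \geq (1-o(1))t/k$. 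Indeed, applying Lemma~\ref{Lem: Lo-Pfe frac matching} (with suitable reserved ``error weight'' as in Section~\ref{subsec: matchings}) to each $\varphi_j$ on disjoint vertex subsets of the clusters would lift them to $m$ vertex-disjoint monochromatic tight cycles covering $(1-o(1))n$ vertices of $K_n^{(k)}$.

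The main obstacle is the combinatorial step of constructing these $O(r)$ monochromatic tightly connected fractional matchings in $\mathcal{R}$. In the graph case $k=2$, the classical result of Gy\'arf\'as that every $r$-coloured complete graph is covered by $r$ monochromatic connected components immediately yields such a family via standard matching arguments. For $k\ge 3$, tight connectivity is a far more restrictive relation and no analogous covering statement is known; in particular, the half-dense matching produced by Corollary~\ref{Cor: half-dense from k-graph} only guarantees a single monochromatic tight component with a matching of linear size, not a covering by $O(r)$ of them. One plausible line of attack is an iterative removal scheme: extract a tightly connected fractional matching of weight $\Omega(t/(kr))$ inside one dominant colour (as in the proof of Lemma~\ref{Lma: larger absorption}), remove the corresponding clusters, and repeat. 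The crux would be proving that after $O(r)$ iterations, the residual reduced graph has density so low that almost all clusters are already used; equivalently, one must rule out the possibility of a ``hidden'' colour whose tight components are all simultaneously small.

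An alternative route is to prove a hypergraph analogue of the Gy\'arf\'as covering theorem directly: show that in any $r$-edge-coloured (nearly) complete $k$-graph, the vertex set can be covered by $f(r,k)$ monochromatic tight components with $f(r,k) = O(r)$. Such a statement appears to require a careful blend of Ramsey-type arguments and the absorbing and dense-subgraph techniques developed in Sections~\ref{section: Multigraph properties} and~\ref{Bowties}. I expect this covering statement, rather than the subsequent lifting step, to be the genuinely hard part, and proving or refuting it would largely settle the conjecture. If established, the absorption framework of Lemma~\ref{lma: reserved set absorbs everything} already provides the remaining machinery to convert a covering of $(1-o(1))n$ vertices by $O(r)$ cycles into a \emph{partition} by $O(r)$ cycles, thereby also resolving the preceding conjecture on linear monochromatic tight cycle partitions.
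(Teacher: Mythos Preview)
The statement you are addressing is a \emph{conjecture} in the paper, not a theorem; the paper offers no proof, and indeed explicitly flags the $k=2$ case as still open. So there is no ``paper's own proof'' to compare against.

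What you have written is a research plan, not a proof, and you are candid about this: you correctly identify that the greedy argument of Proposition~\ref{Prp: leftover} loses a $\log(1/\varepsilon)$ factor, you reduce the problem (via the regular slice machinery and Lemma~\ref{Lem: Lo-Pfe frac matching}) to finding $O(r)$ monochromatic tightly connected fractional matchings in the reduced $k$-graph with near-perfect total weight, and you isolate the genuine obstruction --- the absence of a hypergraph analogue of Gy\'arf\'as's covering theorem for tight components. Your iterative-removal suggestion runs into exactly the difficulty you name: after extracting one large tightly connected matching and deleting its clusters, the residual reduced graph need not remain nearly complete, so Corollary~\ref{Cor: half-dense from k-graph} no longer applies, and there is no mechanism preventing the remaining colours from fragmenting into many small tight components. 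This is not a gap you overlooked; it is precisely why the statement is a conjecture rather than a theorem. Your outline is a sensible map of the territory, but it does not constitute a proof, and neither does the paper claim one.
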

Note that the case $k=2$ and general $r$ is still open. 

\subsection*{Acknowledgements}
The authors would like to thank the anonymous referee, Vincent Pfenninger, Andrea Freschi and Simona Boyadzhiyska for their comments on the draft and helpful discussions. The research leading to these results was supported by EPSRC, grant no. EP/V002279/1 (A.~Lo) and EP/V048287/1 (A.~Lo).
There are no additional data beyond that contained within the main manuscript.

\printbibliography   
\end{document}